\theoremstyle{definition}
\newtheorem{definition}{Definition}[section]
\newtheorem{corollary}[definition]{Corollary}
\newtheorem{example}[definition]{Example}
\newtheorem{notation}[definition]{Notation}
\newtheorem*{notation*}{Notation}
\newtheorem{proposition}[definition]{Proposition}
\newtheorem{question}[definition]{Question}
\newtheorem{remark}[definition]{Remark}
\newtheorem*{remark*}{Remark}
\newtheorem{theorem}[definition]{Theorem}
\newtheorem*{theorem*}{Theorem}
\newtheorem{workingbase}[definition]{Working base}
\renewcommand*\p@enumii{}                                                                          
\newcommand{\BIGOP}[1]{\mathop{\mathchoice%
{\raise-0.22em\hbox{\huge $#1$}}%
{\raise-0.05em\hbox{\Large $#1$}}%
{\hbox{\large $#1$}}%
{#1}}}
\def\morchoice#1#2{\begingroup\setbox0=\hbox{$#1\xrightarrow{#2}$}%
        \setbox1=\hbox{$#1\longrightarrow$}%
        \ifdim\wd0<\wd1
        \stackrel{#2}\longrightarrow
        \else
        \xrightarrow{#2}\fi\endgroup}
\newcommand{\booktitle}[1]{\textsl{#1}}                                                            
\newcommand{\eigenname}[1]{\textsc{#1}}                                                            
\newcommand{\newnotion}[1]{\textit{#1}}                                                            
\newenvironment{smallpmatrix}{\left( \begin{smallmatrix}}{\end{smallmatrix} \right)}               
\newcommand{\nbd}{\nobreakdash-\hspace{0pt}}                                                       
\DeclareMathOperator{\AutomorphismGroup}{\mathrm{Aut}}                                             
\DeclareMathOperator{\Cokernel}{\mathrm{Coker}}                                                    
\DeclareMathOperator{\ConstantFunctor}{\mathrm{Const}}                                             
\DeclareMathOperator{\Coskeleton}{\mathrm{Cosk}}                                                   
\DeclareMathOperator{\DiagonalFunctor}{\mathrm{Diag}}                                              
\DeclareMathOperator{\GroupPart}{\mathrm{Gp}}	                                                      
\DeclareMathOperator{\Image}{\mathrm{Im}}                                                          
\DeclareMathOperator{\Kernel}{\mathrm{Ker}}                                                        
\DeclareMathOperator{\ModulePart}{\mathrm{Mp}}                                                     
\DeclareMathOperator{\Ob}{\mathrm{Ob}}                                                             
\DeclareMathOperator{\ShiftUp}{\mathrm{Sh}}                                                        
\DeclareMathOperator{\Truncation}{\mathrm{Trunc}}                                                  
\newcommand{\AbGrp}{\mathbf{AbGrp}}                                                                
\newcommand{\act}{\cdot}                                                                           
\newcommand{\ascendinginterval}[2]{\lceil #1, #2 \rceil}                                           
\newcommand{\bigcart}{\BIGOP{\times}}                                                              
\newcommand{\bigintersection}{\bigcap}                                                             
\newcommand{\BoundaryGroup}[1][]{\mathrm{B}_{#1}}                                                  
\newcommand{\canonicalepimorphism}[1][]{\uppi^{#1}}                                                
\newcommand{\canonicalmonomorphism}[1][]{\upiota^{#1}}                                             
\newcommand{\cart}{\times}                                                                         
\newcommand{\Cat}{\mathbf{Cat}}                                                                    
\newcommand{\CatNaturalNumber}[1]{[#1]}                                                            
\newcommand{\ClassifyingSimplicialSet}[1][]{\ifthenelse{\equal{#1}{}}{\mathrm{B}}{\mathrm{B}^{(#1)}}} 
\newcommand{\CoboundaryGroup}[1][]{\mathrm{B}^{#1}}                                                
\newcommand{\CocycleGroup}[1][]{\mathrm{Z}^{#1}}                                                   
\newcommand{\CochainComplex}[1][]{\mathrm{Ch}^{#1}}                                                
\newcommand{\cocycleofextension}[1]{\mathrm{z}^{#1}}                                               
\newcommand{\cohomologyclassofextension}[1][]{\mathrm{cl}^{#1}}                                    
\newcommand{\CohomologyGroup}[1][]{\mathrm{H}^{#1}}                                                
\newcommand{\comp}{\circ}                                                                          
\newcommand{\Complexes}[1][]{\mathbf{C}^{#1}}	                                                      
\newcommand{\CrMod}{\mathbf{CrMod}}                                                                
\newcommand{\CycleGroup}[1][]{\mathrm{Z}_{#1}}                                                     
\newcommand{\CyclicGroup}{\mathrm{C}}	                                                              
\newcommand{\degeneracy}[1]{\mathrm{s}_{#1}}                                                       
\newcommand{\degeneracymodel}[1]{\upsigma^{#1}}                                                    
\newcommand{\descendinginterval}[2]{\lfloor #1, #2 \rfloor}                                        
\newcommand{\differential}{\partial}                                                               
\newcommand{\directprod}{\times}                                                                   
\newcommand{\directsum}{\oplus}                                                                    
\newcommand{\ExtensionClasses}[2][]{\mathrm{Ext}_{#1}^{#2}}                                        
\newcommand{\Extensions}[2][]{\underline{\mathrm{Ext}}_{#1}^{#2}}                                  
\newcommand{\extensionequivalent}[1][]{\approx_{#1}}                                               
\newcommand{\face}[1]{\mathrm{d}_{#1}}                                                             
\newcommand{\facemodel}[1]{\updelta^{#1}}                                                          
\newcommand{\fibreprod}[3][]{\mathbin{{_{#2}^{}}\times_{#3}^{#1}}}                                 
\newcommand{\functorcategory}[2]{\pmb{(} #1, #2 \pmb{)}}                                           
\newcommand{\Grp}{\mathbf{Grp}}                                                                    
\newcommand{\Hom}{\mathrm{Hom}}                                                                    
\newcommand{\HomologyGroup}[1][]{\mathrm{H}_{#1}}                                                  
\newcommand{\HomotopyGroup}[1][]{\uppi_{#1}}                                                       
\newcommand{\id}{\mathrm{id}}                                                                      
\newcommand{\inc}{\mathrm{inc}}                                                                    
\newcommand{\Integers}{\mathbb{Z}}                                                                 
\newcommand{\intersection}{\cap}                                                                   
\newcommand{\isomorphic}{\cong}                                                                    
\newcommand{\KanClassifyingSimplicialSet}{\overline{\KanResolvingSimplicialSet}}                   
\newcommand{\KanResolvingSimplicialSet}{\mathrm{W}}                                                
\newcommand{\leftadjoint}{\dashv}                                                                  
\newcommand{\Map}{\mathrm{Map}}                                                                    
\newcommand{\map}{\rightarrow}                                                                     
\newcommand{\MooreComplex}[1][]{\mathrm{M}_{#1}}                                                   
\newcommand{\morphism}[1][]{\mathpalette\morchoice{#1}}                                            
\newcommand{\Naturals}{\mathbb{N}}                                                                 
\newcommand{\Nerve}[1][]{\mathrm{N}_{#1}}                                                          
\newcommand{\op}{\mathrm{op}}                                                                      
\newcommand{\PathSimplicialObject}[1][]{\ifthenelse{\equal{#1}{}}{\mathrm{P}}{\mathrm{P}^{(#1)}}}  
\newcommand{\pointisation}[2][]{\ifthenelse{\equal{#1}{}}{{#2}^{\mathrm{pt}}}{{#2}^{\mathrm{pt}, #1}}} 
\newcommand{\pointiser}[2][]{\ifthenelse{\equal{#1}{}}{\mathrm{p}_{#2}}{\mathrm{p}_{#2}^{#1}}}     
\newcommand{\postnikovinvariant}[2]{\mathrm{k}_{#2}^{#1}}                                          
\newcommand{\PosettoCat}{\mathtt{Cat}}                                                             
\newcommand{\quo}{\mathrm{quo}}                                                                    
\newcommand{\ResolvingSimplicialSet}[1][]{\ifthenelse{\equal{#1}{}}{\mathrm{E}}{\mathrm{E}^{(#1)}}}
\newcommand{\sCategory}[1][]{\mathbf{s}^{#1}}                                                      
\newcommand{\semidirect}[1][]{\mathbin{{_{#1}}\!\rtimes}}                                          
\newcommand{\Set}{\mathbf{Set}}                                                                    
\newcommand{\sGrp}[1][]{\sCategory[#1]\Grp}                                                        
\newcommand{\SimplexTypes}{\mathbf{\Delta}}	                                                      
\newcommand{\sSet}[1][]{\sCategory[#1]\Set}                                                        
\newcommand{\StandardExtension}{\mathrm{E}}                                                        
\newcommand{\standardisation}[2][]{\ifthenelse{\equal{#1}{}}{{#2}^{\mathrm{st}}}{{#2}^{\mathrm{st}, #1}}} 
\newcommand{\standardiser}[2][]{\ifthenelse{\equal{#1}{}}{\mathrm{s}_{#2}}{\mathrm{s}_{#2}^{#1}}}  
\newcommand{\standardsectionsystem}{\mathrm{s}}                                                    
\newcommand{\structuremorphism}[1][]{\upmu^{#1}}                                                   
\newcommand{\subobjecteq}{\leq}                                                                    
\newcommand{\triv}{\mathrm{triv}}                                                                  
\newcommand{\union}{\cup}                                                                          
\newcommand{\unit}{\upvarepsilon}                                                                  
\tikzset{equality/.style={-, double}}
\tikzset{diagram/.style={matrix of math nodes, row sep=#1, column sep=#1, text height=1.6ex, text depth=0.45ex}, diagram/.default=2.5em, draw, inner sep=0pt, nodes={inner sep=0.333em}}
\title{On the second cohomology group of a simplicial group}
\author{Sebastian Thomas}
\date{September 29, 2010}
\begin{document}

\maketitle

\renewcommand{\thefootnote}{\fnsymbol{footnote}}
\footnotetext[0]{Mathematics Subject Classification 2010: 55U10, 18G30, 18D35, 20J06. \\ This article has been published in condensed form in Homology, Homotopy and Applications \textbf{12}(2) (2010), pp.~167--210.}
\renewcommand{\thefootnote}{\arabic{footnote}}

\begin{abstract}
We give an algebraic proof for the result of \eigenname{Eilenberg} and \eigenname{Mac\,Lane} that the second cohomology group of a simplicial group \(G\) can be computed as a quotient of a fibre product involving the first two homotopy groups and the first Postnikov invariant of \(G\). Our main tool is the theory of crossed module extensions of groups.
\end{abstract}

\section{Introduction} \label{sec:introduction}

In~\cite{eilenberg_maclane:1946:determination_of_the_second_homology_and_cohomology_groups_of_a_space_by_means_of_homotopy_invariants}, \eigenname{Eilenberg} and \eigenname{Mac\,Lane} assigned to an arcwise connected pointed topological space \(X\) a topological invariant \(k^3 \in \CohomologyGroup[3](\HomotopyGroup[1](X), \HomotopyGroup[2](X))\), that is, a \(3\)-cohomology class of the fundamental group \(\HomotopyGroup[1](X)\) with coefficients in the \(\HomotopyGroup[1](X)\)-module \(\HomotopyGroup[2](X)\), which is nowadays known as the first Postnikov invariant of \(X\). Thereafter, they showed that the second cohomology group of \(X\) with coefficients in an abelian group \(A\) only depends on \(\HomotopyGroup[1](X)\), \(\HomotopyGroup[2](X)\) and \(k^3\). Explicitly, they described this dependency as follows. We let \(\CochainComplex(\HomotopyGroup[1](X), A)\) denote the cochain complex of \(\HomotopyGroup[1](X)\) with coefficients in \(A\) and \(\Hom_{\HomotopyGroup[1](X)}(\HomotopyGroup[2](X), A)\) denote the group of \(\HomotopyGroup[1](X)\)-equivariant group homomorphisms from \(\HomotopyGroup[2](X)\) to \(A\), where \(\HomotopyGroup[1](X)\) is supposed to act trivially on \(A\).

\begin{theorem*}[{\eigenname{Eilenberg}, \eigenname{Mac\,Lane}, 1946~\cite[thm.~2]{eilenberg_maclane:1946:determination_of_the_second_homology_and_cohomology_groups_of_a_space_by_means_of_homotopy_invariants}}]
We choose a \(3\)-cocycle \(z^3 \in \CocycleGroup[3](\HomotopyGroup[1](X), \HomotopyGroup[2](X))\) such that \(k^3 = z^3 \CoboundaryGroup[3](\HomotopyGroup[1](X), \HomotopyGroup[2](X))\). The second cohomology group \(\CohomologyGroup[2](X, A)\) is isomorphic to the quotient group
\[Z^2 / B^2,\]
where \(Z^2\) is defined to be the fibre product of
\[\begin{tikzpicture}[baseline=(m-2-1.base)]
  \matrix (m) [diagram]{
    & \Hom_{\HomotopyGroup[1](X)}(\HomotopyGroup[2](X), A) \\
    \CochainComplex[2](\HomotopyGroup[1](X), A) & \CochainComplex[3](\HomotopyGroup[1](X), A) \\};
  \path[->, font=\scriptsize] let \p1=(1.25em, 0) in
    (m-1-2) edge (m-2-2)
    (m-2-1) edge node[above] {\(\differential\)} (m-2-2);
\end{tikzpicture}\]
with vertical map given by \(\varphi \mapsto z^3 \varphi\), and where \(B^2\) is defined to be the subgroup
\[B^2 := \{0\} \directprod \CoboundaryGroup[2](\HomotopyGroup[1](X), A) \subobjecteq Z^2 \subobjecteq \Hom_{\HomotopyGroup[1](X)}(\HomotopyGroup[2](X), A) \directprod \CochainComplex[2](\HomotopyGroup[1](X), A).\]
\end{theorem*}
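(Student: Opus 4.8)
The plan is to transport the statement from topology into the algebra of simplicial groups and crossed modules, and then to read off the fibre product from an explicit cochain computation. First I would replace \(X\) by a simplicial group \(G\) with \(\KanClassifyingSimplicialSet G \simeq X\), so that \(\HomotopyGroup[0](G) \isomorphic \HomotopyGroup[1](X)\), \(\HomotopyGroup[1](G) \isomorphic \HomotopyGroup[2](X)\) and \(\CohomologyGroup[2](X, A) \isomorphic \CohomologyGroup[2](\KanClassifyingSimplicialSet G, A)\). Since cohomology in degree \(2\) with coefficients in \(A\) depends only on the \(2\)-type, I would truncate \(G\) so that its Moore complex is concentrated in degrees \(0\) and \(1\); the bottom of this Moore complex is a crossed module \(\mu \colon M \to N\) with \(\Cokernel \mu \isomorphic \HomotopyGroup[1](X)\) and \(\Kernel \mu \isomorphic \HomotopyGroup[2](X)\). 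Equivalently, I would work with the crossed module extension \(0 \to \HomotopyGroup[2](X) \to M \xrightarrow{\mu} N \to \HomotopyGroup[1](X) \to 1\), whose class in \(\CohomologyGroup[3](\HomotopyGroup[1](X), \HomotopyGroup[2](X))\) is \(k^3\), together with a chosen section system producing the \(3\)-cocycle \(z^3\).

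Next I would describe \(\CohomologyGroup[2](\KanClassifyingSimplicialSet G, A)\) through normalised cochains, using the low-dimensional simplices \((\KanClassifyingSimplicialSet G)_1 \isomorphic G_0\) and \((\KanClassifyingSimplicialSet G)_2 \isomorphic G_1 \directprod G_0\). Via the chosen section I would split a normalised \(2\)-cochain into two parts: the part detecting \(\Kernel \mu \isomorphic \HomotopyGroup[2](X)\), and a genuine \(2\)-cochain of the group \(\HomotopyGroup[1](X)\). Imposing the cocycle condition — a constraint living over \((\KanClassifyingSimplicialSet G)_3 \isomorphic G_2 \directprod G_1 \directprod G_0\) — I expect the first part to be forced to be additive and \(\HomotopyGroup[1](X)\)-equivariant, hence an element \(\varphi \in \Hom_{\HomotopyGroup[1](X)}(\HomotopyGroup[2](X), A)\), while the coupling of the two parts yields exactly the relation \(z^3 \varphi = \differential c\), with \(c \in \CochainComplex[2](\HomotopyGroup[1](X), A)\) the second part. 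This assigns to each normalised \(2\)-cocycle a pair \((\varphi, c) \in Z^2\), and I would verify that the assignment is a group isomorphism onto \(Z^2\) by building its inverse from the section data.

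I would then compute coboundaries. A normalised \(2\)-coboundary is the differential of a normalised \(1\)-cochain, that is, of a function on \(G_0\); I would check that under the above correspondence such a coboundary is sent to a pair of the form \((0, \differential b)\) with \(b \in \CochainComplex[1](\HomotopyGroup[1](X), A)\), and conversely that every element of \(\{0\} \directprod \CoboundaryGroup[2](\HomotopyGroup[1](X), A) = B^2\) arises in this way. This identifies the subgroup of coboundaries with \(B^2\), so the isomorphism of the previous step descends to the desired isomorphism \(\CohomologyGroup[2](X, A) \isomorphic Z^2 / B^2\).

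The main obstacle I anticipate is twofold. First, one must prove that the \(3\)-cocycle \(z^3\) extracted from the section system genuinely represents the Postnikov invariant \(k^3\) and enters the cocycle identity with the correct normalisation; this is precisely where the classification of crossed module extensions by \(\CohomologyGroup[3]\) carries the argument, replacing the Serre spectral sequence computation whose relevant \(d_3\)-differential is cup product with \(k^3\). Second, one must show that the whole construction is independent of the auxiliary choices — the simplicial model \(G\), the section of \(N \to \HomotopyGroup[1](X)\), and the chosen lifts — so that the resulting isomorphism is canonical; I would do this by tracking how a change of choices alters \((\varphi, c)\) only by an element of \(B^2\), leaving the class in \(Z^2 / B^2\) unchanged.
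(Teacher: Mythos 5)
Your proposal is correct and follows essentially the same route as the paper: reduce to the crossed module segment (proposition~\ref{prop:second_cohomology_group_of_a_simplicial_group_and_its_1-truncation}), then use a section system to split a pointed/standardised \(2\)-cocycle into a \(\Pi_0\)-equivariant homomorphism on \(\HomotopyGroup[1]\) and a \(2\)-cochain of \(\Pi_0\) coupled by the relation \(z^3\varphi = \differential c\) (proposition~\ref{prop:characterising_properties_of_standard_2-cocycles_and_coboundaries_of_crossed_module_extensions} and corollary~\ref{cor:second_cohomology_group_of_crossed_module_extension_via_3-cocycle}), with coboundaries landing in \(\{0\} \directprod \CoboundaryGroup[2]\) and independence of choices handled exactly as you anticipate (corollary~\ref{cor:cohomology_groups_of_cohomologous_3-cocycles_are_isomorphic}, proposition~\ref{prop:compatible_choices_of_section_systems_along_an_extension_equivalence_lead_to_isomorphic_standard_cohomology_groups}). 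The only gap between your sketch and the paper is the substantial explicit computation needed to verify these claims, which is where the bulk of the work lies.
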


In this article, we give an algebraic proof of the simplicial group version of the theorem of \eigenname{Eilenberg} and \eigenname{Mac\,Lane}, cf.\ theorem~\ref{th:second_cohomology_group_of_crossed_module_via_homotopy_invariants}\ref{th:second_cohomology_group_of_crossed_module_via_homotopy_invariants:simplicial_group}. Since simplicial groups are algebraic models for path connected homotopy types of CW-spaces, this yields an algebraic proof for their original theorem mentioned above.

It turns out to be convenient to work on the level of crossed modules. To any simplicial group \(G\), we can attach its crossed module segment \(\Truncation^1 G\), while to any crossed module \(V\), we can attach its simplicial group coskeleton \(\Coskeleton_1 V\). We have \(\CohomologyGroup[2](G, A) \isomorphic \CohomologyGroup[2](\Coskeleton_1 \Truncation^1 G, A)\). Moreover, the crossed module segment of \(G\) suffices to define the Postnikov invariant \(k^3\) of \(G\) via choices of certain sections, see~\cite[ch.~IV, sec.~5]{brown:1982:cohomology_of_groups} or~\cite[sec.~4]{thomas:2009:the_third_cohomology_group_classifies_crossed_module_extensions}. These sections pervade our algebraic approach.

Related to this theorem, \eigenname{Ellis}~\cite[th.~10]{ellis:1992:homology_of_2-types} has shown that there exists a long exact sequence involving the second cohomology group \(\CohomologyGroup[2](V, A)\) of a crossed module \(V\) starting with
\[0 \morphism \CohomologyGroup[2](\HomotopyGroup[0](V), A) \morphism \CohomologyGroup[2](V, A) \morphism \Hom_{\HomotopyGroup[0](V)}(\HomotopyGroup[1](V), A).\]
This part of his sequence is also a consequence of our \eigenname{Eilenberg}-\eigenname{Mac\,Lane}-type description of \(\CohomologyGroup[2](V, A)\), cf.\ theorem~\ref{th:second_cohomology_group_of_crossed_module_via_homotopy_invariants}. (\footnote{Our notation here differs from \eigenname{Ellis}' by a dimension shift.})

Concerning Postnikov invariants, cf.\ also~\cite{bullejos_faro_garcia-munoz:2005:postnikov_invariants_of_crossed_complexes}, where general Postnikov invariants for crossed complexes, which are generalisations of crossed modules, are constructed.

\paragraph{Outline} \label{par:outline}
In section~\ref{sec:preliminaries_on_simplicial_objects_crossed_modules_cohomology_and_extensions}, we recall some basic facts from simplicial algebraic topology, in particular cohomology of simplicial groups. We will recall how simplicial groups, crossed modules and (ordinary) groups interrelate. Finally, we will give a brief outline how a cohomology class can be attached to a crossed module -- and hence to a simplicial group -- and conversely.

In section~\ref{sec:low_dimensional_cohomology_of_a_simplicial_group}, we will consider the low-dimensional cohomology groups of a simplicial group. The aim of this section is to give algebraic proofs of the well-known facts that the first cohomology group depends only on the group segment and the second cohomology group depends only on the crossed module segment of the given simplicial group. This gives already a convenient description of simplicial group cohomology in dimensions \(0\) and \(1\), and can be seen in dimension \(2\) as a reduction step allowing us to work with crossed modules in the following.

In section~\ref{sec:crossed_module_extensions_and_standard_2-cocycles}, we introduce a certain standardised form of \(2\)-cocycles and \(2\)-coboundaries of a crossed module, which suffices to compute the second cohomology group. On the other hand, this standardisation directly yields the groups \(Z^2\) and \(B^2\) occurring in the description of \eigenname{Eilenberg} and \eigenname{Mac\,Lane}.

We apply our results of sections~\ref{sec:low_dimensional_cohomology_of_a_simplicial_group} and~\ref{sec:crossed_module_extensions_and_standard_2-cocycles} in section~\ref{sec:second_eilenberg-maclane_cohomology_group} to simplicial groups, thus obtaining the analogon of \eigenname{Eilenberg}s and \eigenname{Mac\,Lane}s theorem. Finally, we discuss some corollaries and examples.

\paragraph{Acknowledgement} \label{par:acknowledgement}

I thank \eigenname{Matthias K{\"u}nzer} for many useful discussions on this article and for directing me to the article of \eigenname{Eilenberg} and \eigenname{Mac\,Lane}~\cite{eilenberg_maclane:1946:determination_of_the_second_homology_and_cohomology_groups_of_a_space_by_means_of_homotopy_invariants}.

\subsection*{Conventions and notations} \label{ssec:conventions_and_notations}

We use the following conventions and notations.

\begin{itemize}
\item The composite of morphisms \(f\colon X \map Y\) and \(g\colon Y \map Z\) is usually denoted by \(f g\colon X \map Z\). The composite of functors \(F\colon \mathcal{C} \map \mathcal{D}\) and \(G\colon \mathcal{D} \map \mathcal{E}\) is usually denoted by \(G \comp F\colon \mathcal{C} \map \mathcal{E}\).
\item We use the notations \(\Naturals = \{1, 2, 3, \dots\}\) and \(\Naturals_0 = \Naturals \cup \{0\}\).
\item Given a map \(f\colon X \rightarrow Y\) and subsets \(X' \subseteq X\), \(Y' \subseteq Y\) with \(X' f \subseteq Y'\), we write \(f|_{X'}^{Y'}\colon X' \rightarrow Y', x' \mapsto x' f\). Moreover, we abbreviate \(f|_{X'} := f|_{X'}^Y\) and \(f|^{Y'} := f|_X^{Y'}\).
\item Given integers \(a, b \in \Integers\), we write \([a, b] := \{z \in \Integers \mid a \leq z \leq b\}\) for the set of integers lying between \(a\) and \(b\). If we need to specify orientation, then we write \(\ascendinginterval{a}{b} := (z \in \Integers \mid a \leq z \leq b)\) for the \newnotion{ascending interval} and \(\descendinginterval{a}{b} = (z \in \Integers \mid a \geq z \geq b)\) for the \newnotion{descending interval}. Whereas we formally deal with tuples, we use the element notation; for example, we write \(\prod_{i \in \ascendinginterval{1}{3}} g_i = g_1 g_2 g_3\) and \(\prod_{i \in \descendinginterval{3}{1}} g_i = g_3 g_2 g_1\) or \((g_i)_{i \in \descendinginterval{3}{1}} = (g_3, g_2, g_1)\) for group elements \(g_1\), \(g_2\), \(g_3\).
\item Given tuples \((x_j)_{j \in A}\) and \((x_j)_{j \in B}\) with disjoint index sets \(A\) and \(B\), we write \((x_j)_{j \in A} \union (x_j)_{j \in B}\) for their concatenation.
\item Given groups \(G\) and \(H\), we denote by \(\triv\colon G \map H\) the trivial group homomorphism \(g \mapsto 1\).
\item Given a group homomorphism \(\varphi\colon G \map H\), we denote its kernel by \(\Kernel \varphi\), its cokernel by \(\Cokernel \varphi\) and its image by \(\Image \varphi\). Moreover, we write \(\inc = \inc^{\Kernel \varphi}\colon \Kernel \varphi \map G\) for the inclusion and \(\quo = \quo^{\Cokernel \varphi}\colon H \map \Cokernel \varphi\) for the quotient morphism.
\item The distinguished point in a pointed set \(X\) will be denoted by \(* = *^X\).
\item The fibre product of group homomorphisms \(\varphi_1\colon G_1 \map H\) and \(\varphi_2\colon G_2 \map H\) will be denoted by \(G_1 \fibreprod{\varphi_1}{\varphi_2} G_2\).
\end{itemize}

\paragraph{A remark on functoriality} \label{par:a_remark_on_functoriality}

Most constructions defined below, for example \(\MooreComplex\), \(\CochainComplex\), etc., are functorial, although we only describe them on the objects of the respective source categories. For the definitions on the morphisms and other details, we refer the reader for example to~\cite{thomas:2007:co_homology_of_crossed_modules}.

\paragraph{A remark on Grothendieck universes} \label{par:a_remark_on_grothendieck_universes}

To avoid set-theoretical difficulties, we work with Grothendieck universes~\cite[exp.~I, sec.~0]{artin_grothendieck_verdier:1972:sga_4_1} in this article. In particular, every category has an object \emph{set} and a morphism \emph{set}.

We suppose given a Grothendieck universe \(\mathfrak{U}\). A \newnotion{set in \(\mathfrak{U}\)} (or \(\mathfrak{U}\)-set) is a set that is an element of \(\mathfrak{U}\), a \newnotion{map in \(\mathfrak{U}\)} (or \(\mathfrak{U}\)-map) is a map between \(\mathfrak{U}\)-sets. The \newnotion{category of \(\mathfrak{U}\)-sets} consisting of the set of \(\mathfrak{U}\)-sets, that is, of \(\mathfrak{U}\), as object set and the set of \(\mathfrak{U}\)-maps as morphism set will be denoted by \(\Set_{(\mathfrak{U})}\). A \newnotion{group in \(\mathfrak{U}\)} (or \(\mathfrak{U}\)-group) is a group whose underlying set is a \(\mathfrak{U}\)-set, a \newnotion{group homomorphism in \(\mathfrak{U}\)} (or \(\mathfrak{U}\)-group homomorphism) is a group homomorphism between \(\mathfrak{U}\)-groups. The \newnotion{category of \(\mathfrak{U}\)-groups} consisting of the set of \(\mathfrak{U}\)-groups as object set and the set of \(\mathfrak{U}\)-group homomorphisms as morphism set will be denoted by \(\Grp_{(\mathfrak{U})}\).

Because we do not want to overload our text with the usage of Grothendieck universes, we may suppress them in notation, provided we work with a single fixed Grothendieck universe. For example, instead of
\begin{remark*}
We suppose given a Grothendieck universe \(\mathfrak{U}\). The forgetful functor \(\Grp_{(\mathfrak{U})} \map \Set_{(\mathfrak{U})}\) is faithful.
\end{remark*}
we may just write
\begin{remark*}
The forgetful functor \(\Grp \map \Set\) is faithful.
\end{remark*}

Grothendieck universes will play a role in the discussion of crossed module extensions, cf.\ section~\ref{ssec:crossed_module_extensions}.

\section{Preliminaries on simplicial objects, crossed modules, cohomology and extensions} \label{sec:preliminaries_on_simplicial_objects_crossed_modules_cohomology_and_extensions}

In this section, we recall some standard definitions and basic facts of simplicial algebraic topology and crossed modules. Concerning simplicial algebraic topology, the reader is refered for example to the books of \eigenname{Goerss} and \eigenname{Jardine}~\cite{goerss_jardine:1999:simplicial_homotopy_theory} or \eigenname{May}~\cite{may:1992:simplicial_objects_in_algebraic_topology}, and a standard reference on crossed modules is the survey of \eigenname{Brown}~\cite{brown:1999:groupoids_and_crossed_objects_in_algebraic_topology}.

The main purpose of this section is to fix notation and to explain how the cocycle formulas in the working base~\ref{wb:low_dimensional_analysed_cocycles} can be deduced. The reader willing to believe the working base~\ref{wb:low_dimensional_analysed_cocycles} can start to read at that point, occasionally looking up notation.

\subsection{The category of simplex types} \label{ssec:the_category_of_simplex_types}

We suppose given a Grothendieck universe containing an infinite set. For \(n \in \Naturals_0\), we let \(\CatNaturalNumber{n}\) denote the category induced by the totally ordered set \([0, n]\) with the natural order, and we let \(\SimplexTypes\) be the full subcategory in \(\Cat\) defined by \(\Ob \SimplexTypes := \{\CatNaturalNumber{n} \mid n \in \Naturals_0\}\), called the \newnotion{category of simplex types}.

For \(n \in \Naturals\), \(k \in [0, n]\), we let \(\facemodel{k}\colon \CatNaturalNumber{n - 1} \map \CatNaturalNumber{n}\) be the injection that omits \(k\), and for \(n \in \Naturals_0\), \(k \in [0, n]\), we let \(\degeneracymodel{k}\colon \CatNaturalNumber{n + 1} \map \CatNaturalNumber{n}\) be the surjection that repeats \(k\).

There is a \newnotion{shift functor} \(\ShiftUp\colon \SimplexTypes \map \SimplexTypes\) given by \(\ShiftUp{\CatNaturalNumber{n}} := \CatNaturalNumber{n + 1}\) and
\[i (\ShiftUp \theta) := \begin{cases} i \theta & \text{for \(i \in [0, m]\)}, \\ n + 1 & \text{for \(i = m + 1\)}, \end{cases}\]
for \(i \in [0, m + 1]\), morphisms \(\theta \in {_{\SimplexTypes}}(\CatNaturalNumber{m}, \CatNaturalNumber{n})\), where \(m, n \in \Naturals_0\).

\subsection{Simplicial objects} \label{ssec:simplicial_objects}

The \newnotion{category of simplicial objects} in a given category \(\mathcal{C}\) is defined to be the functor category \(\sCategory \mathcal{C} := \functorcategory{\SimplexTypes^\op}{\mathcal{C}}\). Moreover, the \newnotion{category of bisimplicial objects} \(\sCategory[2] \mathcal{C}\) in \(\mathcal{C}\) is defined to be \(\functorcategory{\SimplexTypes^\op \cart \SimplexTypes^\op}{\mathcal{C}}\). The objects resp.\ morphisms of \(\sCategory \mathcal{C}\) are called \newnotion{simplicial objects} in \(\mathcal{C}\) resp.\ \newnotion{simplicial morphisms} in \(\mathcal{C}\). Likewise for \(\sCategory[2] \mathcal{C}\).

Given a simplicial object \(X\) in a category \(\mathcal{C}\), we abbreviate \(X_n := X_{\CatNaturalNumber{n}}\) for \(n \in \Naturals_0\). Moreover, given a morphism \(\theta \in {_{\SimplexTypes}}(\CatNaturalNumber{m}, \CatNaturalNumber{n})\), where \(m, n \in \Naturals_0\), we write \(X_{\theta}\) for the image of \(\theta\) under \(X\) and call this morphism the \newnotion{simplicial operation} induced by \(\theta\). Similarly for bisimplicial objects. The images of the morphisms \(\facemodel{k}\) resp.\ \(\degeneracymodel{k}\) under a simplicial object \(X\) in a given category \(\mathcal{C}\) are denoted by \(\face{k} = \face{k}^X := X_{\facemodel{k}}\), called the \(k\)-th \newnotion{face}, for \(k \in [0, n]\), \(n \in \Naturals\), resp.\ \(\degeneracy{k} = \degeneracy{k}^X := X_{\degeneracymodel{k}}\), called the \(k\)-th \newnotion{degeneracy}, for \(k \in [0, n]\), \(n \in \Naturals_0\). For the simplicial identities between the faces and degeneracies in our composition order, see for example~\cite[prop.~(1.14)]{thomas:2007:co_homology_of_crossed_modules}. We use the ascending and descending interval notation for composites of faces resp.\ degeneracies, that is, we write \(\face{\descendinginterval{l}{k}} := \face{l} \face{l - 1} \dots \face{k}\) resp.\ \(\degeneracy{\ascendinginterval{k}{l}} := \degeneracy{k} \degeneracy{k + 1} \dots \degeneracy{l}\).

Given an object \(X \in \Ob \mathcal{C}\), we have the \newnotion{constant simplicial object} \(\ConstantFunctor X\) in \(\mathcal{C}\) with \(\ConstantFunctor_n X := X\) for \(n \in \Naturals_0\) and \(\ConstantFunctor_{\theta} X := 1_X\) for \(\theta \in {_{\SimplexTypes}}(\CatNaturalNumber{m}, \CatNaturalNumber{n})\), \(m, n \in \Naturals_0\).

Given a bisimplicial object \(X \in \Ob \sCategory[2] \mathcal{C}\), we have the \newnotion{diagonal simplicial object} \(\DiagonalFunctor X\), defined by \(\DiagonalFunctor_n X := X_{n, n}\) and \(\DiagonalFunctor_{\theta} X := X_{\theta, \theta}\) for \(\theta \in {_{\SimplexTypes}}(\CatNaturalNumber{m}, \CatNaturalNumber{n})\), \(m, n \in \Naturals_0\).

A \newnotion{simplicial set} resp.\ a \newnotion{simplicial map} is a simplicial object resp.\ a simplicial morphism in \(\Set_{(\mathfrak{U})}\) for some Grothendieck universe \(\mathfrak{U}\).
A \newnotion{simplicial group} resp.\ a \newnotion{simplicial group homomorphism} is a simplicial object resp.\ a simplicial morphism in \(\Grp_{(\mathfrak{U})}\) for some Grothendieck universe \(\mathfrak{U}\).

\subsection{Complexes of groups} \label{ssec:complexes_of_groups}

We denote by \(\PosettoCat(\Integers)\) the ordered set \(\Integers\) considered as a category. Given \(m, n \in \Integers\) with \(m < n\), the unique morphism from \(m\) to \(n\) will be written by \((m, n)\). A \newnotion{complex} of groups is a functor \(G\colon \PosettoCat(\Integers) \map \Grp_{(\mathfrak{U})}\) for some Grothendieck universe \(\mathfrak{U}\) such that \(G(n, n + 2)\) is trivial for all \(n \in \Integers\). The groups \(G^n = G_{- n} := G(n)\) for \(n \in \Integers\) are called the \newnotion{entries}, the homomorphisms \(\differential = \differential^{G, n} = \differential_{- n}^G := G(n, n + 1)\) for \(n \in \Integers\) are called the \newnotion{differentials} of \(G\).

We suppose given complexes of groups \(G\) and \(H\). A \newnotion{complex morphism} from \(G\) to \(H\) is a natural transformation \(\varphi\colon G \map H\). The group homomorphisms \(\varphi^n = \varphi_{- n} := \varphi(n)\) for \(n \in \Integers\) are called the \newnotion{entries} of \(\varphi\).

When defining a complex of groups \(G\) by declaring entries \(G^n\) and differentials \(\differential^{G, n}\) only for \(n \in \Naturals_0\) resp.\ \(G_n\) for \(n \in \Naturals_0\) and \(\differential^G_n\) for \(n \in \Naturals\), this means that the not explicitly defined entries and differentials are set to be trivial. Likewise for morphisms of complexes of groups.

Given a complex of groups \(G\), we define \(\CocycleGroup[n] G := \Kernel \differential^{G, n}\) and \(\CoboundaryGroup[n] G := \Image \differential^{G, n - 1}\) for \(n \in \Integers\). A complex of groups \(G\) is said to be \newnotion{normal} if \(\CoboundaryGroup[n] G\) is a normal subgroup in \(\CocycleGroup[n] G\) for all \(n \in \Integers\). If \(G\) is normal, we define \(\CohomologyGroup[n] G := \CocycleGroup[n] G / \CoboundaryGroup[n] G\) for \(n \in \Integers\). Moreover, we also write \(\CycleGroup[n] G := \CocycleGroup[- n] G\), etc.

\subsection{The Moore complex of a simplicial group} \label{ssec:the_moore_complex_of_a_simplicial_group}

We suppose given a simplicial group \(G\). The \newnotion{Moore complex} of \(G\) is the complex of groups \(\MooreComplex G\) with entries \(\MooreComplex[n] G := \bigintersection_{k \in [1, n]} \Kernel \face{k} \subobjecteq G_n\) for \(n \in \Naturals_0\) and differentials \(\differential := \face{0}|_{\MooreComplex[n] G}^{\MooreComplex[n - 1] G}\) for \(n \in \Naturals\). In particular, \(\MooreComplex[0] G = G_0\). The boundary group \(\BoundaryGroup[n] \MooreComplex G\) is a normal subgroup of \(G_n\) for all \(n \in \Naturals_0\), so in particular, the Moore complex \(\MooreComplex G\) is a normal complex of groups.

\subsection{Simplicial group actions} \label{ssec:simplicial_group_actions}

We suppose given a category \(\mathcal{C}\) and a group \(G\). Recall that a (\newnotion{group}) \newnotion{action} of \(G\) on an object \(X \in \Ob \mathcal{C}\) is a group homomorphism \(\alpha\colon G^\op \map \AutomorphismGroup_{\mathcal{C}} X\).

A \newnotion{\(G\)-set} consists of a set \(X\) together with an action \(\alpha\) of \(G\) on \(X\). By abuse of notation, we often refer to the \(G\)-set as well as to its underlying set by \(X\). The action \(\alpha\) is called the \newnotion{\(G\)-action} of the \(G\)-set \(X\). Given a \(G\)-set \(X\) with \(G\)-action \(\alpha\), we often write \(g x = g \act x := x (g \alpha)\) for \(x \in X\), \(g \in G\). For an element \(x \in X\), we let \(G x := \{g x \mid g \in G\}\) and \(X / G := \{G x \mid x \in X\}\).

A \newnotion{\(G\)-module} consists of a (not necessarily abelian) group \(M\) together with an action \(\alpha\) of \(G\) on \(M\), that is, a group homomorphism \(\alpha\colon G^\op \map \AutomorphismGroup_{\Grp} M\). By abuse of notation, we often refer to the module over \(G\) as well as to its underlying group by \(M\). The action \(\alpha\) is called the \newnotion{\(G\)-action} of the \(G\)-module \(M\). Given a \(G\)-module \(M\) with \(G\)-action \(\alpha\), we often write \({^g}m := m (g \alpha)\) for \(m \in M\), \(g \in G\). A \(G\)-module \(M\) is said to be \newnotion{abelian} if its underlying group is abelian. (\footnote{In the literature, a \(G\)-module for a given group \(G\) is often called a \newnotion{\(G\)-group} while an abelian \(G\)-module is just a \newnotion{\(G\)-module}. However, the module part of a crossed module is in general a non-abelian module over the group part, cf.\ section~\ref{ssec:crossed_modules}; this would be more complicated to phrase using the terms from the literature.}) As usual, we often write \(M\) additively in this case, and we write \(g m = g \act m := m (g \alpha)\) for \(m \in M\), \(g \in G\), where \(\alpha\) denotes the \(G\)-action of \(M\).

A \(G\)-module structure on \(G\) itself is provided by the conjugation homomorphism \(G^\op \map \AutomorphismGroup G, g \mapsto {^g}(-)\), where \({^g}x = g x g^{- 1}\) for \(x, g \in G\).

We suppose given a simplicial group \(G\). A \newnotion{\(G\)-simplicial set} consists of a simplicial set \(X\) together with actions of \(G_n\) on \(X_n\) for \(n \in \Naturals_0\) such that \((g_n x_n) X_{\theta} = (g_n G_{\theta}) (x_n X_{\theta})\) for all \(g_n \in G_n\), \(x_n \in X_n\), \(\theta \in {_{\SimplexTypes}}(\CatNaturalNumber{m}, \CatNaturalNumber{n})\), where \(m, n \in \Naturals_0\). Given a \(G\)-simplicial set \(X\), we obtain an induced simplicial structure on the sets \(X_n / G_n\) for \(n \in \Naturals_0\), and the resulting simplicial set is denoted by \(X / G\).

An (\newnotion{abelian}) \newnotion{\(G\)-simplicial module} consists of a simplicial (abelian) group \(M\) together with actions of \(G_n\) on \(M_n\) for \(n \in \Naturals_0\) such that \(({^{g_n}}m_n) M_\theta = {^{g_n G_\theta}}(m_n M_\theta)\) for all \(\theta \in {_{\SimplexTypes}}(\CatNaturalNumber{m}, \CatNaturalNumber{n})\), where \(m, n \in \Naturals_0\).

\subsection{Crossed modules} \label{ssec:crossed_modules}

A \newnotion{crossed module} consists of a group \(G\), a \(G\)-module \(M\) and a group homomorphism \(\mu\colon M \rightarrow G\) such that the following two axioms hold.
\begin{itemize}
\item[(Equi)] \emph{Equivariance}. We have \(({^g}m) \mu = {^g}(m \mu)\) for all \(m \in M\), \(g \in G\).
\item[(Peif)] \emph{Peiffer identity}. We have \({^{n \mu}}m = {^n}m\) for all \(m, n \in M\).
\end{itemize}
Here, the action of the elements of \(G\) on \(G\) resp.\ of \(M\) on \(M\) denote in each case the conjugation. We call \(G\) the \newnotion{group part}, \(M\) the \newnotion{module part} and \(\mu\) the \newnotion{structure morphism} of the crossed module. Given a crossed module \(V\) with group part \(G\), module part \(M\) and structure morphism \(\mu\), we write \(\GroupPart V := G\), \(\ModulePart V := M\) and \(\structuremorphism = \structuremorphism[V] := \mu\). For a list of examples of crossed modules, we refer the reader to~\cite[sec.~2]{ellis:1992:homology_of_2-types} and~\cite[ex.~(5.2), ex.~(5.6)]{thomas:2007:co_homology_of_crossed_modules}.

We let \(V\) and \(W\) be crossed modules. A \newnotion{morphism of crossed modules} (or \newnotion{crossed module morphism}) from \(V\) to \(W\) consists of group homomorphisms \(\varphi_0\colon \GroupPart V \rightarrow \GroupPart W\) and \(\varphi_1\colon \ModulePart V \rightarrow \ModulePart W\) such that \(\varphi_1 \structuremorphism[W] = \structuremorphism[V] \varphi_0\) and such that \(({^g}m)\varphi_1 = {^{g \varphi_0}}(m \varphi_1)\) holds for all \(m \in \ModulePart V\), \(g \in \GroupPart V\). The group homomorphisms \(\varphi_0\) resp.\ \(\varphi_1\) are said to be the \newnotion{group part} resp.\ the \newnotion{module part} of the morphism of crossed modules. Given a crossed module morphism \(\varphi\) from \(V\) to \(W\) with group part \(\varphi_0\) and module part \(\varphi_1\), we write \(\GroupPart \varphi := \varphi_0\) and \(\ModulePart \varphi := \varphi_1\). Composition of morphisms of crossed modules is defined by the composition on the group parts and on the module parts.

We let \(\mathfrak{U}\) be a Grothendieck universe. A crossed module \(V\) is said to be \newnotion{in \(\mathfrak{U}\)} (or a \newnotion{\(\mathfrak{U}\)-crossed module}) if \(\GroupPart V\) is a group in \(\mathfrak{U}\) and \(\ModulePart V\) is a \(G\)-module in \(\mathfrak{U}\). The \newnotion{category of \(\mathfrak{U}\)-crossed modules} consisting of \(\mathfrak{U}\)-crossed modules as objects and morphisms of \(\mathfrak{U}\)-crossed modules as morphisms will be denoted by \(\CrMod = \CrMod_{(\mathfrak{U})}\).

Given a crossed module \(V\), the image \(\Image \structuremorphism\) is a normal subgroup of \(\GroupPart V\) and the kernel \(\Kernel \structuremorphism\) is a central subgroup of \(\ModulePart V\). Moreover, the action of \(\GroupPart V\) on \(\ModulePart V\) restricts to a trivial action of \(\Image \structuremorphism\) on \(\Kernel \structuremorphism\). In particular, an action of \(\Cokernel \structuremorphism\) on \(\Kernel \structuremorphism\) is induced. See for example~\cite[sec.~3.1, sec.~3.2]{brown:1999:groupoids_and_crossed_objects_in_algebraic_topology} or~\cite[prop.~(5.3)]{thomas:2007:co_homology_of_crossed_modules}.

\begin{notation*}
Given a crossed module \(V\), the module part \(\ModulePart V\) resp.\ its opposite \((\ModulePart V)^{\op}\) act on (the underlying set of) the group part \(\GroupPart V\) by \(m g := (m \structuremorphism) g\) resp.\ \(g m := g (m \structuremorphism)\) for \(m \in \ModulePart V\), \(g \in \GroupPart V\). Using this, we get for example \({^{m g}}n = {^m}({^g}n)\) and \(g m = g (m \structuremorphism) = ({^g}m) g\) for \(m, n \in \ModulePart V\), \(g \in \GroupPart V\), cf.~\cite[p.~5]{thomas:2009:the_third_cohomology_group_classifies_crossed_module_extensions}. Also note that \((m g) n = m (g n)\) for \(m, n \in \ModulePart V\), \(g \in \GroupPart V\).

Given a set \(X\) and a map \(f\colon \GroupPart V \map X\), we usually write \(m f := m \structuremorphism f\) for \(m \in \ModulePart V\). Similarly for maps \(\GroupPart V \cart \GroupPart V \map X\), etc.

Moreover, given crossed modules \(V\) and \(W\) and a morphism of crossed modules \(\varphi\colon V \map W\), we may write \(m \varphi\) and \(g \varphi\) instead of \(m (\ModulePart \varphi)\) and \(g (\GroupPart \varphi)\). Using this, we have \((m g) \varphi = (m \varphi) (g \varphi)\) for \(m \in \ModulePart V\), \(g \in \GroupPart V\), cf.\ again~\cite[p.~5]{thomas:2009:the_third_cohomology_group_classifies_crossed_module_extensions}.
\end{notation*}

\subsection{Truncation and coskeleton} \label{ssec:truncation_and_coskeleton}

Groups, crossed modules and simplicial groups can be related to each other in the following way.

We suppose given a simplicial group \(G\). Then we define the group \(\Truncation^0 G := \MooreComplex[0] G / \BoundaryGroup[0] \MooreComplex G = G_0 / \BoundaryGroup[0] \MooreComplex G\), called the \newnotion{group segment} (or the \newnotion{\(0\)-truncation}) of \(G\). Moreover, we define a crossed module \(\Truncation^1 G\), called the \newnotion{crossed module segment} (or the \newnotion{\(1\)-truncation}) of \(G\), as follows. The group part is defined by \(\GroupPart \Truncation^1 G := \MooreComplex[0] G = G_0\), the module part is defined by \(\ModulePart \Truncation^1 G := \MooreComplex[1] G / \BoundaryGroup[1] \MooreComplex G\), the structure morphism is given by \((g_1 \BoundaryGroup[1] \MooreComplex G) \structuremorphism[\Truncation^1 G] := g_1 \differential = g_1 \face{0}\) for \(g_1 \in \MooreComplex[1] G\) and the group part acts on the module part by \({^{g_0}}(g_1 \BoundaryGroup[1] \MooreComplex G) := {^{g_0 \degeneracy{0}}}g_1 \BoundaryGroup[1] \MooreComplex G\) for \(g_i \in \MooreComplex[i] G\), \(i \in [0, 1]\).

Next, we suppose given a crossed module \(V\). We define a group \(\Truncation_1^0 V := \Cokernel \structuremorphism\), called the \newnotion{group segment} (or the \newnotion{\(0\)-truncation}) of \(V\). Moreover, we obtain a simplicial group \(\Coskeleton_1 V\), the \newnotion{coskeleton simplicial group} (or just the \newnotion{coskeleton}) of \(V\), constructed as follows. We suppose given \(n \in \Naturals_0\). Denoting the elements in \((\ModulePart V)^{\directprod n} \directprod \GroupPart V\) by \((m_i, g)_{i \in \descendinginterval{n - 1}{0}} := (m_i)_{i \in \descendinginterval{n - 1}{0}} \cup (g)\), we equip this set with a multiplication given by
\[(m_i, g)_{i \in \descendinginterval{n - 1}{0}} (m_i', g')_{i \in \descendinginterval{n - 1}{0}} := (m_i \, {^{(\prod_{k \in \descendinginterval{i - 1}{0}} m_k) g}}m_i', g g')_{i \in \descendinginterval{n - 1}{0}}\]
for \(m_i, m_i' \in \ModulePart V\), where \(i \in \descendinginterval{n - 1}{0}\), \(g, g' \in \GroupPart V\). The resulting group will be denoted by \(\ModulePart V \semidirect[n] \GroupPart V\). For \(\theta \in {_{\SimplexTypes}}(\CatNaturalNumber{m}, \CatNaturalNumber{n})\), we define a group homomorphism \(\ModulePart V \semidirect[\theta] \GroupPart V\colon \ModulePart V \semidirect[n] \GroupPart V \map \ModulePart V \semidirect[m] \GroupPart V\) by
\[(m_j, g)_{j \in \descendinginterval{n - 1}{0}} (\ModulePart V \semidirect[\theta] \GroupPart V) := (\prod_{k \in \descendinginterval{(i + 1) \theta - 1}{i \theta}} m_k, (\prod_{k \in \descendinginterval{0 \theta - 1}{0}} m_k) g)_{i \in \descendinginterval{m - 1}{0}}.\]
The resulting simplicial group \(\Coskeleton_1 V := \ModulePart V \semidirect[*] \GroupPart V\) is the coskeleton of \(V\). (\footnote{The category of crossed modules is equivalent to the category of (\newnotion{strict}) \newnotion{categorical groups}, cf.~\cite[thm.~1]{brown_spencer:1976:g-groupoids_crossed_modules_and_the_fundamental_groupoid_of_a_topological_group},~\cite[sec.~6, thm.]{porst:2008:strict_2-groups_are_crossed_modules} or~\cite[thm.~(5.25)]{thomas:2007:co_homology_of_crossed_modules}. The coskeleton functor from crossed modules to simplicial groups can be obtained via a nerve functor from the category of categorical groups to the category of simplicial groups. Cf.~\cite[sec.~1]{bullejos_cegarra_duskin:1993:on_catn-groups_and_homotopy_types} and~\cite[ch.~VI, \S\S 1--2]{thomas:2007:co_homology_of_crossed_modules}. For another truncation-coskeleton-pair, cf.~\cite[exp.~V, sec.~7.1]{artin_grothendieck_verdier:1972:sga_4_2} and~\cite[sec.~(0.7)]{duskin:1975:simplicial_methods_and_the_interpretation_of_triple_cohomology}.})

Finally, we suppose given a group \(G\). Then we obtain a simplicial group \(\Coskeleton_0 G := \ConstantFunctor G\), the \newnotion{coskeleton simplicial group} (or just the \newnotion{coskeleton}) of \(G\). Moreover, we obtain a crossed module \(\Coskeleton_0^1 G\), called the \newnotion{coskeleton crossed module} (or the \newnotion{\(1\)-coskeleton}) of \(G\), as follows. The group part is defined by \(\GroupPart \Coskeleton_0 G := G\) and the module part is defined by \(\ModulePart \Coskeleton_0 G := \{1\}\) (and the structure morphism and the action of the group part on the module part are both trivial).

All mentioned truncation and coskeleton constructions are functorial and the resulting truncation functors are left adjoint to the resulting coskeleton functors. The unit \(\unit\colon \id_{\sGrp} \map \Coskeleton_0 \comp \Truncation^0\) is given by \(g_n (\unit_G)_n = g_n \face{\descendinginterval{n}{1}} \BoundaryGroup[0] \MooreComplex G\) for \(g_n \in G_n\), \(n \in \Naturals_0\), \(G \in \Ob \sGrp\), cf.~\cite[prop.~(4.15)]{thomas:2007:co_homology_of_crossed_modules}. The unit \(\unit\colon \id_{\sGrp} \map \Coskeleton_1 \comp \Truncation^1\) fulfills \(g_0 (\unit_G)_0 = (g_0)\) for \(g_0 \in G_0\) and \(g_1 (\unit_G)_1 = (g_1 (g_1 \face{1} \degeneracy{0})^{- 1} \BoundaryGroup[1] \MooreComplex G, g_1 \face{1})\) for \(g_1 \in G_1\), \(G \in \Ob \sGrp\), cf.\ for example~\cite[def.~(6.11), def.~(6.15), rem.~(6.14), prop.~(6.9), th.~(5.25)]{thomas:2007:co_homology_of_crossed_modules}.

Further, we have \(\Truncation^0 \comp \Coskeleton_0 \isomorphic \id_{\Grp}\), \(\Truncation^1 \comp \Coskeleton_1 \isomorphic \id_{\CrMod}\) and \(\Truncation_1^0 \comp \Coskeleton_0^1 \isomorphic \id_{\Grp}\), as well as \(\Coskeleton_0 = \Coskeleton_1 \comp \Coskeleton_0^1\) and \(\Truncation^0 = \Truncation_1^0 \comp \Truncation^1\).
\[\begin{tikzpicture}[baseline=(m-2-1.base)]
  \matrix (m) [matrix of math nodes, row sep=4.0em, column sep=6.5em, text height=1.6ex, text depth=0.45ex, inner sep=0pt, nodes={inner sep=0.333em}]{
    \sGrp & \CrMod \\
    \sGrp & \Grp \\};
  \path[->, font=\scriptsize]
    (m-1-1) edge[equality] (m-2-1)
    ([yshift=0.5ex]m-1-1.east) edge node[above] {\(\Truncation^1\)} ([yshift=0.5ex]m-1-2.west)
    ([yshift=-0.5ex]m-1-2.west) edge node[below] {\(\Coskeleton_1\)} ([yshift=-0.5ex]m-1-1.east)
    ([xshift=0.5ex]m-1-2.south) edge node[right] {\(\Truncation_1^0\)} ([xshift=0.5ex]m-2-2.north)
    ([xshift=-0.5ex]m-2-2.north) edge node[left] {\(\Coskeleton_0^1\)} ([xshift=-0.5ex]m-1-2.south)
    ([yshift=0.5ex]m-2-1.east) edge node[above] {\(\Truncation^0\)} ([yshift=0.5ex]m-2-2.west)
    ([yshift=-0.5ex]m-2-2.west) edge node[below] {\(\Coskeleton_0\)} ([yshift=-0.5ex]m-2-1.east);
\end{tikzpicture}\]

Given a group \(G\), we have
\[\MooreComplex(\Coskeleton_0 G) = (\dots \morphism 1 \morphism 1 \morphism G),\]
and given a crossed module \(V\), we have
\[\MooreComplex(\Coskeleton_1 V) = (\dots \morphism 1 \morphism \ModulePart V \morphism[{\structuremorphism[V]}] \GroupPart V);\]
cf.~\cite[prop.~(6.22)]{thomas:2007:co_homology_of_crossed_modules}.

\subsection{Homotopy groups} \label{ssec:homotopy_groups}

Given a simplicial group \(G\), we call \(\HomotopyGroup[n](G) := \HomologyGroup[n] \MooreComplex G\) the \newnotion{\(n\)-th homotopy group} of \(G\) for \(n \in \Naturals_0\). It is abelian for \(n \in \Naturals\), and we have \(\HomotopyGroup[0] = \Truncation^0\).

The \newnotion{homotopy groups} of a crossed module \(V\) are defined by
\[\HomotopyGroup[n](V) := \begin{cases} \Cokernel \structuremorphism & \text{for \(n = 0\),} \\ \Kernel \structuremorphism & \text{for \(n = 1\),} \\ \{1\} & \text{for \(n \in \Naturals_0 \setminus \{0, 1\}\).} \end{cases}\]
As remarked in section~\ref{ssec:crossed_modules}, the first homotopy group \(\HomotopyGroup[1](V)\) carries the structure of an abelian \(\HomotopyGroup[0](V)\)-module, where the action of \(\HomotopyGroup[0](V)\) on \(\HomotopyGroup[1](V)\) is induced by the action of \(\GroupPart V\) on \(\ModulePart V\), that is, for \(k \in \HomotopyGroup[1](V)\) and \(p \in \HomotopyGroup[0](V)\) we have \({^p}k = {^g}k\) for any \(g \in \GroupPart V\) with \(g (\Image \structuremorphism) = p\).

We remark that the definitions of the homotopy groups of a crossed module are adapted to the definition of the homotopy groups of a simplicial group: Given a crossed module \(V\), we have \(\HomotopyGroup[n](V) \isomorphic \HomotopyGroup[n](\Coskeleton_1 V)\) for all \(n \in \Naturals_0\), cf.\ for example~\cite[ch.~VI, \S 3]{thomas:2007:co_homology_of_crossed_modules}. Moreover, given a simplicial group \(G\), we have \(\HomotopyGroup[n](G) = \HomotopyGroup[n](\Truncation^1 G)\) for \(n \in \{0, 1\}\). (\footnote{In particular, given a simplicial group \(G\), we have \(\HomotopyGroup[n](G) \isomorphic \HomotopyGroup[n](\Coskeleton_1 \Truncation^1 G)\) for \(n \in \{0, 1\}\). This property fails for the truncation-coskeleton pair in~\cite[sec.~(0.7)]{duskin:1975:simplicial_methods_and_the_interpretation_of_triple_cohomology}.})

\subsection{Semidirect product decomposition} \label{ssec:semidirect_product_decomposition}

We suppose given a simplicial group \(G\). The group of \(n\)-simplices \(G_n\), where \(n \in \Naturals_0\), is isomorphic to an iterated semidirect product in terms of the entries \(\MooreComplex[k] G\) for \(k \in [0, n]\) of the Moore complex \(\MooreComplex G\). For example, we have \(G_0 = \MooreComplex[0] G\) and \(G_1 \isomorphic \MooreComplex[1] G \semidirect \MooreComplex[0] G\) and \(G_2 \isomorphic (\MooreComplex[2] G \semidirect \MooreComplex[1] G) \semidirect (\MooreComplex[1] G \semidirect \MooreComplex[0] G)\), where \(\MooreComplex[0] G\) acts on \(\MooreComplex[1] G\) via \({^{g_0}}g_1 := {^{g_0 \degeneracy{0}}}g_1\) for \(g_i \in \MooreComplex[i] G\), \(i \in \{0, 1\}\), \(\MooreComplex[1] G\) acts on \(\MooreComplex[2] G\) via \({^{g_1}}g_2 := {^{g_1 \degeneracy{0}}}g_2\) for \(g_i \in \MooreComplex[i] G\), \(i \in \{1, 2\}\) and \(\MooreComplex[1] G \semidirect \MooreComplex[0] G\) acts on \(\MooreComplex[2] G \semidirect \MooreComplex[1] G\) via \({^{(g_1, g_0)}}(g_2, h_1) := ({^{(g_1 \degeneracy{1}) (g_0 \degeneracy{0} \degeneracy{1})}}(g_2 (h_1 \degeneracy{0})) \, {^{(g_1 \degeneracy{0}) (g_0 \degeneracy{0} \degeneracy{1})}}((h_1 \degeneracy{0})^{- 1}), {^{g_1 (g_0 \degeneracy{0})}}h_1)\) for \(g_i, h_i \in \MooreComplex[i] G\), \(i \in [0, 2]\). The isomorphisms are given by
\begin{align*}
& \varphi_1\colon G_1 \map \MooreComplex[1] G \semidirect \MooreComplex[0] G, g_1 \mapsto (g_1 (g_1 \face{1} \degeneracy{0})^{- 1}, g_1 \face{1}), \\
& \varphi_1^{- 1}\colon \MooreComplex[1] G \semidirect \MooreComplex[0] G \map G_1, (g_1, g_0) \mapsto g_1 (g_0 \degeneracy{0})
\end{align*}
and
\begin{align*}
& \varphi_2\colon G_2 \map (\MooreComplex[2] G \semidirect \MooreComplex[1] G) \semidirect (\MooreComplex[1] G \semidirect \MooreComplex[0] G), \\
& \qquad g_2 \mapsto ((g_2 (g_2 \face{2} \degeneracy{1})^{- 1} (g_2 \face{2} \degeneracy{0}) (g_2 \face{1} \degeneracy{0})^{- 1}, (g_2 \face{1}) (g_2 \face{2})^{- 1}), ((g_2 \face{2}) (g_2 \face{2} \face{1} \degeneracy{0})^{- 1}, g_2 \face{2} \face{1})), \\
& \varphi_2^{- 1}\colon (\MooreComplex[2] G \semidirect \MooreComplex[1] G) \semidirect (\MooreComplex[1] G \semidirect \MooreComplex[0] G) \map G_2, ((g_2, h_1), (g_1, g_0)) \mapsto g_2 (h_1 \degeneracy{0}) (g_1 \degeneracy{1}) (g_0 \degeneracy{0} \degeneracy{1}).
\end{align*}
For more details, see~\cite{carrasco_cegarra:1991:group-theoretic_algebraic_models_for_homotopy_types} or~\cite[ch.~IV, \S 2]{thomas:2007:co_homology_of_crossed_modules}.

\subsection{Cohomology of simplicial sets} \label{ssec:cohomology_of_simplicial_sets}

We suppose given a simplicial set \(X\) and an abelian group \(A\). The \newnotion{cochain complex} of \(X\) with coefficients in \(A\) is the complex of abelian groups \(\CochainComplex_{\sSet}(X, A)\) with abelian groups \(\CochainComplex[n]_{\sSet}(X, A) := \Map(X_n, A)\) for \(n \in \Naturals_0\) and differentials defined by \(x (c \differential) := \sum_{k \in [0, n + 1]} (- 1)^k (x \face{k}) c\) for \(x \in X_{n + 1}\), \(c \in \CochainComplex[n]_{\sSet}(X, A)\). We call \(\CochainComplex[n]_{\sSet}(X, A)\) the \newnotion{\(n\)-th cochain group} of \(X\) with coefficients in \(A\). Moreover, we define the \newnotion{\(n\)-th cocycle group} \(\CocycleGroup[n]_{\sSet}(X, A) := \CocycleGroup[n] \CochainComplex_{\sSet}(X, A)\), the \newnotion{\(n\)-th coboundary group} \(\CoboundaryGroup[n]_{\sSet}(X, A) := \CoboundaryGroup[n] \CochainComplex_{\sSet}(X, A)\) and the \newnotion{\(n\)-th cohomology group} \(\CohomologyGroup[n]_{\sSet}(X, A) := \CohomologyGroup[n] \CochainComplex_{\sSet}(X, A) = \CocycleGroup[n]_{\sSet}(X, A) / \CoboundaryGroup[n]_{\sSet}(X, A)\) of \(X\) with coefficients in \(A\) (\footnote{In the literature, \(\CocycleGroup[n]_{\sSet}(X, A)\) resp.\ \(\CoboundaryGroup[n]_{\sSet}(X, A)\) resp.\ \(\CohomologyGroup[n]_{\sSet}(X, A)\) are often defined by an isomorphic complex of abelian groups (cf.\ for example~\cite[def.~(2.18)]{thomas:2007:co_homology_of_crossed_modules}) and are just denoted \(\CocycleGroup[n](X, A)\) resp.\ \(\CoboundaryGroup[n](X, A)\) resp.\ \(\CohomologyGroup[n](X, A)\).}). An element \(c \in \CochainComplex[n]_{\sSet}(X, A)\) resp.\ \(z \in \CocycleGroup[n]_{\sSet}(X, A)\) resp.\ \(b \in \CoboundaryGroup[n]_{\sSet}(X, A)\) resp.\ \(h \in \CohomologyGroup[n]_{\sSet}(X, A)\) is said to be an \newnotion{\(n\)-cochain} resp.\ an \newnotion{\(n\)-cocycle} resp.\ an \newnotion{\(n\)-coboundary} resp.\ an \newnotion{\(n\)-cohomology class} of \(X\) with coefficients in \(A\).

\subsection{Nerves} \label{ssec:nerves}

Given a group \(G\), we define a simplicial set \(\Nerve G\), called the \newnotion{nerve} of \(G\), by \(\Nerve[n] G := G^{\directprod n}\) for \(n \in \Naturals_0\) and by
\[(g_j)_{j \in \descendinginterval{n - 1}{0}} (\Nerve[\theta] G) := (\prod_{j \in \descendinginterval{(i + 1) \theta - 1}{i \theta}} g_j)_{i \in \descendinginterval{m - 1}{0}}\]
for \((g_j)_{j \in \descendinginterval{n - 1}{0}} \in \Nerve[n] G\) and \(\theta \in {_\SimplexTypes}(\CatNaturalNumber{m}, \CatNaturalNumber{n})\), where \(m, n \in \Naturals_0\).

Moreover, given a simplicial group \(G\), the nerve functor for groups (in a suitable Grothendieck universe) yields the \newnotion{nerve} of the simplicial group \(G\), defined by \(\Nerve G := \Nerve \comp G\). We obtain a simplicial object in \(\sSet\). By abuse of notation, we also denote the corresponding bisimplicial set by \(\Nerve G\).
\[\begin{tikzpicture}[baseline=(m-2-1.base)]
  \matrix (m) [diagram]{
    & \sSet \\
    \SimplexTypes^\op & \Grp \\};
  \path[->, font=\scriptsize]
    (m-2-1) edge node[above] {\(G\)} (m-2-2)
            edge node[left] {\(\Nerve G\)} (m-1-2)
    (m-2-2) edge node[right] {\(\Nerve\)} (m-1-2);
\end{tikzpicture}\]

\subsection{Cohomology of simplicial groups with coefficients in an abelian group} \label{ssec:cohomology_of_simplicial_groups_with_coefficients_in_an_abelian_group}

Cohomology of simplicial sets can be used to define cohomology of a simplicial group \(G\). This is done via the \newnotion{Kan classifying simplicial set} \(\KanClassifyingSimplicialSet G\) of \(G\), see \eigenname{Kan}~\cite[def.~10.3]{kan:1958:on_homotopy_theory_and_c_s_s_groups}, which is given by \(\KanClassifyingSimplicialSet_n G := \bigcart_{j \in \descendinginterval{n - 1}{0}} G_j\) for all \(n \in \Naturals_0\) and
\[(g_j)_{j \in \descendinginterval{n - 1}{0}} (\KanClassifyingSimplicialSet_{\theta} G) := (\prod_{j \in \descendinginterval{(i + 1) \theta - 1}{i \theta}} g_j G_{\theta|_{\CatNaturalNumber{i}}^{\CatNaturalNumber{j}}})_{i \in \descendinginterval{m - 1}{0}}\]
for \((g_j)_{j \in \descendinginterval{n - 1}{0}} \in \KanClassifyingSimplicialSet_n G\) and \(\theta \in {_{\SimplexTypes}}(\CatNaturalNumber{m}, \CatNaturalNumber{n})\), where \(m, n \in \Naturals_0\), cf.\ for example~\cite[rem.~(4.19)]{thomas:2007:co_homology_of_crossed_modules}. In particular, the faces are given by
\[(g_j)_{j \in \descendinginterval{n - 1}{0}} \face{k}^{\KanClassifyingSimplicialSet G} = \begin{cases} (g_{j + 1} \face{0}^{G})_{j \in \descendinginterval{n - 2}{0}} & \text{for \(k = 0\),} \\ (g_{j + 1} \face{k}^{G})_{j \in \descendinginterval{n - 2}{k}} \union ((g_k \face{k}^{G}) g_{k - 1}) \union (g_j)_{j \in \descendinginterval{k - 2}{0}} & \text{for \(k \in [1, n - 1]\),} \\ (g_j)_{j \in \descendinginterval{n - 2}{0}} & \text{for \(k = n\),} \end{cases}\]
for \((g_j)_{j \in \descendinginterval{n - 1}{0}} \in \KanClassifyingSimplicialSet_n G\), \(n \in \Naturals\). The \newnotion{cochain complex} of \(G\) with coefficients in an abelian group \(A\) is defined to be \(\CochainComplex(G, A) = \CochainComplex_{\sGrp}(G, A) := \CochainComplex_{\sSet}(\KanClassifyingSimplicialSet G, A)\). Moreover, we define the \newnotion{\(n\)-th cocycle group} \(\CocycleGroup[n](G, A) = \CocycleGroup[n]_{\sGrp}(G, A) := \CocycleGroup[n]_{\sSet}(\KanClassifyingSimplicialSet G, A)\), etc., for \(n \in \Naturals_0\). The differentials of \(\CochainComplex(G, A)\) are given by
\begin{align*}
(g_j)_{j \in \descendinginterval{n}{0}} (c \differential) & = (g_{j + 1} \face{0})_{j \in \descendinginterval{n - 1}{0}} c + \sum_{k \in [1, n]} (- 1)^k ((g_{j + 1} \face{k})_{j \in \descendinginterval{n - 1}{k}} \union ((g_k \face{k}) g_{k - 1}) \union (g_j)_{j \in \descendinginterval{k - 2}{0}}) c 
\\ & \qquad + (- 1)^{n + 1} (g_j)_{j \in \descendinginterval{n - 1}{0}} c
\end{align*}
for \((g_j)_{j \in \descendinginterval{n}{0}} \in \KanClassifyingSimplicialSet_{n + 1} G\), \(c \in \CochainComplex[n](G, A)\), \(n \in \Naturals_0\).

Instead of \(\KanClassifyingSimplicialSet G\), one can also use \(\DiagonalFunctor \Nerve G\), the diagonal simplicial set of the nerve of \(G\), see for example~\cite[app.~Q.3]{friedlander_mazur:1994:filtrations_on_the_homology_of_algebraic_varieties},~\cite[p.~41]{jardine:1981:algebraic_homotopy_theory_groups_and_k-theory} and~\cite{thomas:2007:co_homology_of_crossed_modules}. The simplicial sets \(\DiagonalFunctor \Nerve G\) and \(\KanClassifyingSimplicialSet G\) are simplicially homotopy equivalent~\cite[thm.]{thomas:2008:the_functors_wbar_and_diag_nerve_are_simplicially_homotopy_equivalent}, cf.\ also~\cite[thm.~1.1]{cegarra_remedios:2005:the_relationship_between_the_diagonal_and_the_bar_constructions_on_a_bisimplicial_set}, and thus \(\CohomologyGroup[n](G, A) = \CohomologyGroup[n]_{\sSet}(\KanClassifyingSimplicialSet G, A) \isomorphic \CohomologyGroup[n]_{\sSet}(\DiagonalFunctor \Nerve G, A)\) for \(n \in \Naturals_0\), where \(A\) is an abelian group. 

\subsection{Cohomology of simplicial groups with coefficients in an abelian module} \label{ssec:cohomology_of_simplicial_groups_with_coefficients_in_an_abelian_module}

To generalise cohomology of a simplicial group \(G\) with coefficients in an abelian group \(A\) to cohomology with coefficients in an abelian \(\HomotopyGroup[0](G)\)-module \(M\), we have to introduce a further notion on simplicial sets: Given a simplicial set \(X\), the \newnotion{path simplicial set} of \(X\) is the simplicial set \(\PathSimplicialObject X := X \circ (\ShiftUp)^\op\), which is simplicially homotopy equivalent to \(\ConstantFunctor X_0\)~\cite[8.3.14]{weibel:1997:an_introduction_to_homological_algebra}. The faces \(\face{n + 1}^X\colon \PathSimplicialObject_n X \map X_n\) for \(n \in \Naturals_0\) form a canonical simplicial map \(\PathSimplicialObject X \map X\).

Now we follow \eigenname{Quillen}~\cite[ch.~II, p.~6.16]{quillen:1967:homotopical_algebra} and consider for a given simplicial group \(G\) the \newnotion{Kan resolving simplicial set} \(\KanResolvingSimplicialSet G := \PathSimplicialObject \KanClassifyingSimplicialSet G\). The simplicial group \(G\) acts on \(\KanResolvingSimplicialSet G\) by \(g (g_j)_{j \in \descendinginterval{n}{0}} := (g g_n) \union (g_j)_{j \in \descendinginterval{n - 1}{0}}\) for \(g \in G_n\), \((g_j)_{j \in \descendinginterval{n}{0}} \in \KanResolvingSimplicialSet_n G\), \(n \in \Naturals_0\), and the canonical simplicial map \(\KanResolvingSimplicialSet G \map \KanClassifyingSimplicialSet G\) given by \(\KanResolvingSimplicialSet_n G \map \KanClassifyingSimplicialSet_n G, (g_j)_{j \in \descendinginterval{n}{0}} \mapsto (g_j)_{j \in \descendinginterval{n - 1}{0}}\) induces a simplicial bijection \(\KanResolvingSimplicialSet G / G \map \KanClassifyingSimplicialSet G\).

We suppose given an abelian \(\HomotopyGroup[0](G)\)-module \(M\). Then \(\ConstantFunctor M\) is a simplicial abelian \(\HomotopyGroup[0](G)\)-module, and the unit \(\unit\colon \id_{\sGrp} \map \Coskeleton_0 \comp \HomotopyGroup[0]\) of the adjunction \(\HomotopyGroup[0] = \Truncation^0 \leftadjoint \Coskeleton_0\) turns \(\ConstantFunctor M\) into an abelian \(G\)-simplicial module via \(g_n x_n := (g_n (\unit_G)_n) x_n = (g_n \face{\descendinginterval{n}{1}} \BoundaryGroup[0] \MooreComplex G) x_n\) for \(g_n \in G_n\), \(x_n \in M_n\), \(n \in \Naturals_0\). Since \(\unit_G\) is a simplicial group homomorphism, we have \(g_n G_{\theta} (\unit_G)_m = g_n (\unit_G)_n\) for all \(g_n \in G_n\), \(\theta \in {_{\SimplexTypes}}(\CatNaturalNumber{m}, \CatNaturalNumber{n})\), \(m, n \in \Naturals_0\).

We consider the subcomplex \(\CochainComplex_{\text{hom}}(G, M) = \CochainComplex_{\sGrp, \text{hom}}(G, M)\) of the cochain complex \(\CochainComplex_{\sSet}(\KanResolvingSimplicialSet G, M)\) with entries \(\CochainComplex[n]_{\text{hom}}(G, M) := \Map_{G_n}(\KanResolvingSimplicialSet_n G, M)\) and differentials given by
\[(g_j)_{j \in \descendinginterval{n + 1}{0}} (c \differential) := \sum_{k \in [0, n + 1]} (- 1)^k ((g_j)_{j \in \descendinginterval{n + 1}{0}} \face{k}) c\]
for \((g_j)_{j \in \descendinginterval{n + 1}{0}} \in \KanResolvingSimplicialSet_{n + 1} G\), \(c \in \CochainComplex[n]_{\text{hom}}(G, M)\), \(n \in \Naturals_0\), called the \newnotion{homogeneous cochain complex} of \(G\) with coefficients in \(M\). We want to introduce an isomorphic variant of \(\CochainComplex_{\text{hom}}(G, M)\) using transport of structure. We have
\begin{align*}
& (g_j)_{j \in \descendinginterval{n + 1}{0}} (c \differential) = g_{n + 1} \face{\descendinginterval{n + 1}{1}} \BoundaryGroup[0] \MooreComplex G \act \big( ((1) \union (g_{j + 1} \face{0})_{j \in \descendinginterval{n - 1}{0}}) c \\
& \qquad + \sum_{k \in [1, n]} (- 1)^k ((1) \union (g_{j + 1} \face{k})_{j \in \descendinginterval{n - 1}{k}} \union ((g_k \face{k}) g_{k - 1}) \union (g_j)_{j \in \descendinginterval{k - 2}{0}}) c \\
& \qquad + (- 1)^{n + 1} (g_n \face{\descendinginterval{n}{1}} \BoundaryGroup[0] \MooreComplex G) \act ((1) \union (g_j)_{j \in \descendinginterval{n - 1}{0}}) c \big)
\end{align*}
for \((g_j)_{j \in \descendinginterval{n + 1}{0}} \in \KanResolvingSimplicialSet_{n + 1} G\), \(c \in \CochainComplex[n]_{\text{hom}}(G, M)\), \(n \in \Naturals_0\). Thus \(\CochainComplex_{\text{hom}}(G, M)\) is isomorphic to a complex \(\CochainComplex(G, M)\), called the \newnotion{cochain complex} of \(G\) with coefficients in the abelian \(\HomotopyGroup[0](G)\)-module \(M\), with entries \(\CochainComplex[n](G, M) := \Map(\bigcart_{j \in \descendinginterval{n - 1}{0}} G_j, M) = \CochainComplex[n]_{\sSet}(\KanClassifyingSimplicialSet G, M)\) and differentials given by
\begin{align*}
(g_j)_{j \in \descendinginterval{n}{0}} (c \differential) & = (g_{j + 1} \face{0})_{j \in \descendinginterval{n - 1}{0}} c + \sum_{k \in [1, n]} (- 1)^k ((g_{j + 1} \face{k})_{j \in \descendinginterval{n - 1}{k}} \union ((g_k \face{k}) g_{k - 1}) \union (g_j)_{j \in \descendinginterval{k - 2}{0}}) c \\
& \qquad + (- 1)^{n + 1} (g_n \face{\descendinginterval{n}{1}} \BoundaryGroup[0] \MooreComplex G) \act (g_j)_{j \in \descendinginterval{n - 1}{0}} c
\end{align*}
for \((g_j)_{j \in \descendinginterval{n}{0}} \in \KanClassifyingSimplicialSet_{n + 1} G\), \(c \in \CochainComplex[n](G, M)\), \(n \in \Naturals_0\), and where an isomorphism \(\varphi\colon \CochainComplex_{\text{hom}}(G, M) \map \CochainComplex(G, M)\) is given by
\[(g_j)_{j \in \descendinginterval{n - 1}{0}} (c \varphi^n) = ((1) \union (g_j)_{j \in \descendinginterval{n - 1}{0}}) c\]
for \((g_j)_{j \in \descendinginterval{n - 1}{0}} \in \KanClassifyingSimplicialSet_n G\), \(c \in \CochainComplex[n]_{\text{hom}}(G, M)\), \(n \in \Naturals_0\). Moreover, we set \(\CocycleGroup[n]_{\sGrp}(G, M) = \CocycleGroup[n](G, M) := \CocycleGroup[n](\CochainComplex(G, M))\), etc., and call \(\CochainComplex[n](G, M)\) the \newnotion{\(n\)-th cochain group} of \(G\) with coefficients in \(M\), etc. We see that this definition coincides with \(\CochainComplex(G, A)\) for an abelian group \(A\) considered as an abelian \(\HomotopyGroup[0](G)\)-module with the trivial action of \(\HomotopyGroup[0](G)\).

Isomorphic substitution of \(G\) with its semidirect product decomposition, cf.\ section~\ref{ssec:semidirect_product_decomposition}, leads to an isomorphic substitution of the cochain complex \(\CochainComplex(G, M)\) to the \newnotion{analysed cochain complex} \(\CochainComplex_{\text{an}}(G, M) = \CochainComplex_{\sGrp, \text{an}}(G, M)\). Similarly, isomorphic substitution yields \(\CocycleGroup[n]_{\text{an}}(G, M) = \CocycleGroup[n]_{\sGrp, \text{an}}(G, M)\), etc., and we call \(\CochainComplex[n]_{\text{an}}(G, M)\) the \newnotion{\(n\)-th analysed cochain group} of \(G\) with coefficients in \(M\), etc. See~\ref{wb:low_dimensional_analysed_cocycles} for formulas in low dimensions.

Altogether, we have
\[\CochainComplex_{\text{hom}}(G, M) \isomorphic \CochainComplex(G, M) \isomorphic \CochainComplex_{\text{an}}(G, M).\]

\subsection{Cohomology of groups and cohomology of crossed modules} \label{ssec:cohomology_of_groups_and_cohomology_of_crossed_modules}

Since groups and crossed modules can be considered as truncated simplicial groups, the cohomology groups of these algebraic objects is defined via cohomology of simplicial groups.

Given a group \(G\) and an abelian \(G\)-module \(M\), we define the \newnotion{cochain complex} \(\CochainComplex(G, M) = \CochainComplex_{\Grp}(G, M) \linebreak 
:= \CochainComplex_{\sGrp}(\Coskeleton_0 G, M)\) of \(G\) with coefficients in \(M\). Similarly, we set \(\CocycleGroup[n](G, M) = \CocycleGroup[n]_{\Grp}(G, M) := \linebreak 
\CocycleGroup[n]_{\sGrp}(\Coskeleton_0 G, M)\) for \(n \in \Naturals_0\), etc., and call \(\CochainComplex[n](G, M)\) the \newnotion{\(n\)-th cochain group} of \(G\) with coefficients in \(M\), etc. Since \(\DiagonalFunctor \Nerve \Coskeleton_0 G = \KanClassifyingSimplicialSet \Coskeleton_0 G = \Nerve G\), this definition of cohomology coincides with the standard one via \(\ClassifyingSimplicialSet G := \Nerve G\) and \(\ResolvingSimplicialSet G := \PathSimplicialObject \ClassifyingSimplicialSet G\).

Given a crossed module \(V\) and an abelian \(\HomotopyGroup[0](V)\)-module \(M\), we define the \newnotion{cochain complex} \(\CochainComplex(V, M) = \CochainComplex_{\CrMod}(V, M) := \CochainComplex_{\sGrp}(\Coskeleton_1 V, M)\) of \(V\) with coefficients in \(M\). Similarly, we set \(\CocycleGroup[n](V, M) = \linebreak 
\CocycleGroup[n]_{\CrMod}(V, M) := \CocycleGroup[n]_{\sGrp}(\Coskeleton_1 V, M)\) for \(n \in \Naturals_0\), etc., and call \(\CochainComplex[n](V, M)\) the \newnotion{\(n\)-th cochain group} of \(V\) with coefficients in \(M\), etc.

\[\begin{tikzpicture}[baseline=(m-1-1.base)]
  \matrix (m) [matrix of math nodes, row sep=2.5em, column sep=4.5em, text height=1.6ex, text depth=0.45ex, inner sep=0pt, nodes={inner sep=0.333em}]{
    \Grp & \CrMod & \sGrp & \sSet & \Complexes(\AbGrp) & \AbGrp \\};
  \path[->, font=\scriptsize]
    (m-1-1) edge node[above] {\(\Coskeleton_0^1\)} (m-1-2)
            edge[out=-15, in=-165] node[below] {\(\Coskeleton_0\)} (m-1-3)
    (m-1-2) edge node[above] {\(\Coskeleton_1\)} (m-1-3)
    ([yshift=0.5ex]m-1-3.east) edge node[above] {\(\KanClassifyingSimplicialSet\)} ([yshift=0.5ex]m-1-4.west)
    ([yshift=-0.5ex]m-1-3.east) edge node[below] {\(\DiagonalFunctor \comp \Nerve\)} ([yshift=-0.5ex]m-1-4.west)
    (m-1-4) edge node[above] {\(\CochainComplex_{\sSet}(-, M)\)} (m-1-5)
    (m-1-5) edge node[above] {\(\CohomologyGroup[n]\)} (m-1-6);
\end{tikzpicture}\]

The semidirect product decomposition of \(\Coskeleton_1 V\) is -- up to simplified notation -- already built into the definition of \(\Coskeleton_1 V\). So the cochain complex and the analysed cochain complex of \(\Coskeleton_1 V\) are essentially equal. Therefore there is no need to explicitly introduce analysed cochains for crossed modules.

\eigenname{Ellis} defines in~\cite[sec.~3]{ellis:1992:homology_of_2-types} the cohomology of a crossed module \(V\) with coefficients in an abelian group \(A\) via the composition \(\DiagonalFunctor \comp \Nerve\). In this article, we will make use of the Kan classifying simplicial set functor \(\KanClassifyingSimplicialSet\) instead of \(\DiagonalFunctor \comp \Nerve\) since \(\KanClassifyingSimplicialSet\) provides smaller objects, which is more convenient for direct calculations. For example, a \(2\)-cocycle in \(\CocycleGroup[2](\DiagonalFunctor \Nerve \Coskeleton_1 V, A)\) is a map \((\ModulePart V)^{\cart 4} \cart (\GroupPart V)^{\cart 2} \map A\), while a \(2\)-cocycle in \(\CocycleGroup[2](V, A) = \CocycleGroup[2](\KanClassifyingSimplicialSet \Coskeleton_1 V, A)\) is a map \(\ModulePart V \cart (\GroupPart V)^{\cart 2} \map A\).

\subsection{Pointed cochains} \label{ssec:pointed_cochains}

We let \(G\) be a simplicial group and \(M\) be an abelian \(\HomotopyGroup[0](G)\)-module. As we have seen above, an \(n\)-cochain of \(G\) with coefficients in \(M\) is just a map \(c\colon \KanClassifyingSimplicialSet_n G \map M\), where \(n \in \Naturals_0\). Since the sets \(\KanClassifyingSimplicialSet_n G = \bigcart_{j \in \descendinginterval{n - 1}{0}} G_j\) carry structures as direct products of groups for \(n \in \Naturals_0\), they are pointed in a natural way with \(1 = (1)_{\descendinginterval{n - 1}{0}}\) as distinguished points. Moreover, the module \(M\) is in particular an abelian group and therefore pointed with \(0\) as distinguished point. An \(n\)-cochain \(c \in \CochainComplex[n](G, M)\) is said to be \newnotion{pointed} if it is a pointed map, that is, if \(1 c = 0\). The subset of \(\CochainComplex[n](G, M)\) consisting of all pointed \(n\)-cochains of \(G\) with coefficients in \(M\) will be denoted by \(\CochainComplex[n]_{\text{pt}}(G, M) := \{c \in \CochainComplex[n](G, M) \mid \text{\(c\) is pointed}\}\). We set \(\CocycleGroup[n]_{\text{pt}}(G, M) := \CochainComplex[n]_{\text{pt}}(G, M) \intersection \CocycleGroup[n](G, M)\) for the set of pointed \(n\)-cocycles, \(\CoboundaryGroup[n]_{\text{pt}}(G, M) := \CochainComplex[n]_{\text{pt}}(G, M) \intersection \CoboundaryGroup[n](G, M)\) for the set of pointed \(n\)-coboundaries and \(\CohomologyGroup[n]_{\text{pt}}(G, M) := \CocycleGroup[n]_{\text{pt}}(G, M) / \CoboundaryGroup[n]_{\text{pt}}(G, M)\) for the set of pointed \(n\)-cohomology classes of \(G\) with coefficients in \(M\).

We suppose given an odd natural number \(n \in \Naturals\). Every \(n\)-cocycle \(z \in \CocycleGroup[n](G, M)\) is pointed, and hence we have \(\CocycleGroup[n]_{\text{pt}}(G, M) = \CocycleGroup[n](G, M)\), \(\CoboundaryGroup[n]_{\text{pt}}(G, M) = \CoboundaryGroup[n](G, M)\) and \(\CohomologyGroup[n]_{\text{pt}}(G, M) = \CohomologyGroup[n](G, M)\).
Moreover, we have \(\CoboundaryGroup[n + 1]_{\text{pt}}(G, M) = (\CochainComplex[n]_{\text{pt}}(G, M)) \differential\).

So we suppose given an even natural number \(n \in \Naturals\) and an \(n\)-cocycle \(z \in \CocycleGroup[n](G, M)\). The \newnotion{pointisation} of \(z\) is given by \(\pointisation{z} := z - \pointiser{z} \differential\), where the \newnotion{pointiser} of \(z\) is defined to be the \((n - 1)\)-cochain \(\pointiser{z} \in \CochainComplex[n - 1](G, M)\) given by \((g_j)_{j \in \descendinginterval{n - 2}{0}} \pointiser{z} := (1)_{j \in \descendinginterval{n - 1}{0}} z\) for \(g_j \in G_j\), \(j \in \descendinginterval{n - 2}{0}\). We obtain
\[(g_j)_{j \in \descendinginterval{n - 1}{0}} \pointisation{z} = (g_j)_{j \in \descendinginterval{n - 1}{0}} z - g_{n - 1} \face{\descendinginterval{n - 1}{1}} \BoundaryGroup[0] \MooreComplex G \act (1)_{j \in \descendinginterval{n - 1}{0}} z\]
for \(g_j \in G_j\), \(j \in \descendinginterval{n - 1}{0}\). In particular, the pointisation \(\pointisation{z}\) of every \(z \in \CocycleGroup[n](G, M)\) is pointed. Moreover, we have \(\CocycleGroup[n]_{\text{pt}}(G, M) = \{z \in \CocycleGroup[n](G, M) \mid \pointisation{z} = z\}\) and the embedding \(\CocycleGroup[n]_{\text{pt}}(G, M) \map \CocycleGroup[n](G, M)\) and the pointisation homomorphism \(\CocycleGroup[n](G, M) \map \CocycleGroup[n]_{\text{pt}}(G, M), z \mapsto \pointisation{z}\) induce mutually inverse isomorphisms between \(\CohomologyGroup[n]_{\text{pt}}(G, M)\) and \(\CohomologyGroup[n](G, M)\).

Altogether, we have
\[\CohomologyGroup[n](G, M) \isomorphic \CohomologyGroup[n]_{\text{pt}}(G, M)\]
for all \(n \in \Naturals\).

Given a crossed module \(V\) and an abelian \(\HomotopyGroup[0](V)\)-module \(M\), we write \(\CochainComplex_{\text{pt}}(V, M) := \CochainComplex_{\text{pt}}(\Coskeleton_1 V, M)\), etc. Similarly, given a group \(G\) and an abelian \(G\)-module \(M\), we write \(\CochainComplex_{\text{pt}}(G, M) := \CochainComplex_{\text{pt}}(\Coskeleton_0 G, M)\), etc.

\subsection{Crossed module extensions} \label{ssec:crossed_module_extensions}

We suppose given a group \(\Pi_0\) and an abelian \(\Pi_0\)-module \(\Pi_1\), which will be written multiplicatively.

A \newnotion{crossed module extension} (or \newnotion{\(2\)-extension}) of \(\Pi_0\) with \(\Pi_1\) consists of a crossed module \(E\) together with a group monomorphism \(\iota\colon \Pi_1 \map \ModulePart E\) and a group epimorphism \(\pi\colon \GroupPart E \map \Pi_0\) such that
\[\Pi_1 \morphism[\iota] \ModulePart E \morphism[{\structuremorphism}] \GroupPart E \morphism[\pi] \Pi_0\]
is an exact sequence of groups and such that the induced action of \(\Pi_0\) on \(\Pi_1\) caused by the action of the crossed module \(E\) coincides with the a priori given action of \(\Pi_0\) on \(\Pi_1\), that is, such that \({^{g}}(k \iota) = ({^{g \pi}}k) \iota\) for \(g \in \GroupPart E\) and \(k \in \Pi_1\). By abuse of notation, we often refer to the crossed module extension as well as to its underlying crossed module by \(E\). The homomorphisms \(\iota\) resp.\ \(\pi\) are said to be the \newnotion{canonical monomorphism} resp.\ the \newnotion{canonical epimorphism} of the crossed module extension \(E\). Given a crossed module extension \(E\) of \(\Pi_0\) with \(\Pi_1\) with canonical monomorphism \(\iota\) and canonical epimorphism \(\pi\), we write \(\canonicalmonomorphism = \canonicalmonomorphism[E] := \iota\) and \(\canonicalepimorphism = \canonicalepimorphism[E] := \pi\).

We suppose given a Grothendieck universe \(\mathfrak{U}\). A crossed module extension is said to be \newnotion{in \(\mathfrak{U}\)} (or a \newnotion{\(\mathfrak{U}\)-crossed module extension}) if its underlying crossed module is in \(\mathfrak{U}\). The set of crossed module extensions in \(\mathfrak{U}\) of \(G\) with \(M\) will be denoted by \(\Extensions{2}(G, M) = \Extensions[\mathfrak{U}]{2}(G, M)\).

By definition, we have \(\HomotopyGroup[0](E) \isomorphic \Pi_0\) and \(\HomotopyGroup[1](E) \isomorphic \Pi_1\) for every crossed module extension \(E\) of \(\Pi_0\) with \(\Pi_1\). Conversely, given an arbitrary crossed module \(V\), we have the crossed module extension
\[\HomotopyGroup[1](V) \morphism[\inc] \ModulePart V \morphism[\structuremorphism] \GroupPart V \morphism[\quo] \HomotopyGroup[0](V),\]
again denoted by \(V\). That is, the canonical monomorphism of \(V\) is \(\canonicalmonomorphism[V] = \inc^{\HomotopyGroup[1](V)}\), and the canonical epimorphism is \(\canonicalepimorphism[V] = \quo^{\HomotopyGroup[0](V)}\).

We let \(E\) and \(\tilde E\) be crossed module extensions of \(\Pi_0\) with \(\Pi_1\). An \newnotion{extension equivalence} from \(E\) to \(\tilde E\) is a morphism of crossed modules \(\varphi\colon E \map \tilde E\) such that \(\canonicalmonomorphism[\tilde E] = \canonicalmonomorphism[E] (\ModulePart \varphi)\) and \(\canonicalepimorphism[E] = (\GroupPart \varphi) \canonicalepimorphism[\tilde E]\).
\[\begin{tikzpicture}[baseline=(m-2-1.base)]
  \matrix (m) [diagram]{
    \Pi_1 & \ModulePart E & \GroupPart E & \Pi_0 \\
    \Pi_1 & \ModulePart \tilde E & \GroupPart \tilde E & \Pi_0 \\};
  \path[->, font=\scriptsize]
    (m-1-1) edge node[above] {\(\canonicalmonomorphism[E]\)} (m-1-2)
            edge[equality] (m-2-1)
    (m-1-2) edge node[above] {\(\structuremorphism[E]\)} (m-1-3)
            edge node[right] {\(\ModulePart \varphi\)} (m-2-2)
    (m-1-3) edge node[above] {\(\canonicalepimorphism[E]\)} (m-1-4)
            edge node[right] {\(\GroupPart \varphi\)} (m-2-3)
    (m-1-4) edge[equality] (m-2-4)
    (m-2-1) edge node[above] {\(\canonicalmonomorphism[\tilde E]\)} (m-2-2)
    (m-2-2) edge node[above] {\(\structuremorphism[\tilde E]\)} (m-2-3)
    (m-2-3) edge node[above] {\(\canonicalepimorphism[\tilde E]\)} (m-2-4);
\end{tikzpicture}\]

We suppose given a Grothendieck universe \(\mathfrak{U}\) and we let \({\extensionequivalent} = {\extensionequivalent[\mathfrak{U}]}\) be the equivalence relation on \(\Extensions[\mathfrak{U}]{2}(\Pi_0, \Pi_1)\) generated by the following relation: Given extensions \(E, \tilde E \in \Extensions[\mathfrak{U}]{2}(\Pi_0, \Pi_1)\), the extension \(E\) is in relation to the extension \(\tilde E\) if there exists an extension equivalence \(E \map \tilde E\). Given \(\mathfrak{U}\)-crossed module extensions \(E\) and \(\tilde E\) with \(E \extensionequivalent \tilde E\), we say that \(E\) and \(\tilde E\) are \newnotion{extension equivalent}. The set of equivalence classes of crossed module extensions in \(\mathfrak{U}\) of \(\Pi_0\) with \(\Pi_1\) with respect to \({\extensionequivalent[\mathfrak{U}]}\) is denoted by \(\ExtensionClasses{2}(\Pi_0, \Pi_1) = \ExtensionClasses[\mathfrak{U}]{2}(\Pi_0, \Pi_1) := \Extensions[\mathfrak{U}]{2}(\Pi_0, \Pi_1) / {\extensionequivalent[\mathfrak{U}]}\), and an element of \(\ExtensionClasses{2}(\Pi_0, \Pi_1)\) is said to be a \newnotion{crossed module extension class} of \(\Pi_0\) with \(\Pi_1\) \newnotion{in \(\mathfrak{U}\)} (or a \newnotion{\(\mathfrak{U}\)-crossed module extension class} of \(\Pi_0\) with \(\Pi_1\)).

The following theorem appeared in various guises, see \eigenname{Mac\,Lane}~\cite{maclane:1979:historical_note} and \eigenname{Ratcliffe}~\cite[th.~9.4]{ratcliffe:1980:crossed_extensions}. It has been generalised to crossed complexes by \eigenname{Holt}~\cite[th.~4.5]{holt:1979:an_interpretation_of_the_cohomology_groups_h_n_g_m} and, independently, \eigenname{Huebschmann}~\cite[p.~310]{huebschmann:1980:crossed_n-fold_extensions_of_groups_and_cohomology}. Moreover, there is a version for \(n\)-cat groups given by \eigenname{Loday}~\cite[th.~4.2]{loday:1982:spaces_with_finitely_many_non-trivial_homotopy_groups}.

\begin{theorem*}
There is a bijection between the set of crossed module extension classes \(\ExtensionClasses[\mathfrak{U}]{2}(\Pi_0, \Pi_1)\) and the third cohomology group \(\CohomologyGroup[3](\Pi_0, \Pi_1)\), where \(\mathfrak{U}\) is supposed to be a Grothendieck universe containing an infinite set. 
\end{theorem*}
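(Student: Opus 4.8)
The plan is to construct mutually inverse maps between $\ExtensionClasses[\mathfrak{U}]{2}(\Pi_0, \Pi_1)$ and $\CohomologyGroup[3](\Pi_0, \Pi_1)$. Here $\CohomologyGroup[3](\Pi_0, \Pi_1)$ is, by the definitions of section~\ref{ssec:cohomology_of_groups_and_cohomology_of_crossed_modules}, ordinary group cohomology of $\Pi_0$ with coefficients in $\Pi_1$, computed from $\CochainComplex(\Coskeleton_0 \Pi_0, \Pi_1)$. The forward map sends an extension to an ``obstruction to associativity'' $3$\nbd cocycle read off from a choice of sections; the backward map realises a given $3$\nbd cocycle by an explicit crossed module built from a free presentation of $\Pi_0$. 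The whole difficulty is concentrated in verifying that these assignments respect the cocycle and equivalence relations and are genuinely inverse to one another.

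For the forward map I would start with an extension $E$, choose a normalised set-theoretic section $s$ of $\canonicalepimorphism[E]$ (so $s_1 = 1$ and $s_p \canonicalepimorphism[E] = p$), and then, since $s_p s_q s_{pq}^{-1} \in \Kernel \canonicalepimorphism[E] = \Image \structuremorphism[E]$, a normalised lift $f \colon \Pi_0 \directprod \Pi_0 \map \ModulePart E$ with $f_{p, q} \structuremorphism[E] = s_p s_q s_{pq}^{-1}$. Reducing the product $s_p s_q s_r$ to $s_{pqr}$ along the two bracketings and comparing the resulting elements of $\ModulePart E$ --- using equivariance (Equi) to move $s_p$ past the $\structuremorphism[E]$\nbd image of $f_{q, r}$, which produces the action term ${^{s_p}}f_{q, r}$ --- shows that
\[({^{s_p}}f_{q, r}) \, f_{p, qr} \, (f_{p, q} \, f_{pq, r})^{- 1}\]
lies in $\Kernel \structuremorphism[E] = \Image \canonicalmonomorphism[E] \isomorphic \Pi_1$. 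This defines a map $\cocycleofextension{E} \colon \Pi_0^{\directprod 3} \map \Pi_1$. A computation with four section factors $s_p s_q s_r s_t$ (the associativity of associativity) then yields $\cocycleofextension{E} \in \CocycleGroup[3](\Pi_0, \Pi_1)$, where the Peiffer identity (Peif) and the centrality of $\Kernel \structuremorphism[E]$ are exactly what is needed to collapse the occurring conjugations to the induced $\Pi_0$\nbd action on $\Pi_1$.

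Next I would check well-definedness: replacing $s$ and $f$ by other choices alters $\cocycleofextension{E}$ by the coboundary of the corresponding $2$\nbd cochain, so $\cohomologyclassofextension[E] := \cocycleofextension{E} \CoboundaryGroup[3](\Pi_0, \Pi_1)$ is independent of the choices, and transporting $s, f$ along an extension equivalence $\varphi \colon E \map \tilde E$ via $\GroupPart \varphi$ and $\ModulePart \varphi$ shows that $\extensionequivalent[\mathfrak{U}]$\nbd related extensions share a class; hence $E \mapsto \cohomologyclassofextension[E]$ descends to a well-defined map on $\ExtensionClasses[\mathfrak{U}]{2}(\Pi_0, \Pi_1)$. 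For surjectivity I would fix a free group $F$ with an epimorphism onto $\Pi_0$ and assemble a crossed module whose group part is $F$ and whose module part is built from the relation data of the presentation together with $\Pi_1$, with structure morphism and $F$\nbd action twisted by a representing cocycle of the given class, arranged so that the recipe above returns a cocycle cohomologous to it. Verifying (Equi) and (Peif) for this construction, together with $\Kernel \structuremorphism \isomorphic \Pi_1$ and $\Cokernel \structuremorphism \isomorphic \Pi_0$, is the technical core of this direction.

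The hard part, I expect, is injectivity. Two extensions with cohomologous cocycles must be shown to be connected through $\extensionequivalent[\mathfrak{U}]$; from a coboundary relating the cocycles one builds a crossed module morphism, but because $\extensionequivalent[\mathfrak{U}]$ is only the \emph{generated} equivalence relation, a single morphism need not run in the required direction, so one generally has to route the comparison through an auxiliary (pullback- or pushout-type) extension and argue by a zig-zag. Keeping the crossed module axioms intact throughout these manipulations, while simultaneously tracking that the forward and backward assignments compose to the respective identities, is where the bulk of the bookkeeping lies. The set-theoretic hypothesis that $\mathfrak{U}$ contains an infinite set enters precisely at the realisation step, ensuring that the free group $F$ and the crossed module constructed from it are themselves objects in $\mathfrak{U}$.
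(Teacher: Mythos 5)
Your proposal follows essentially the same route as the paper's own (sketched) construction: a section $Z^1$ of $\canonicalepimorphism$ and a lift $Z^2$ of the non-abelian $2$-cocycle yield the associativity-obstruction $3$-cocycle $\cocycleofextension{3}$, the class is shown independent of all choices and of the representative in its extension class, and the inverse map is realised by the standard extension $\StandardExtension(z^3)$ built on a free group on $\Pi_0$ (which is where the hypothesis on $\mathfrak{U}$ enters). Your ordering of the factors in the obstruction element inverts the paper's convention, but since the values lie in the central kernel this only changes the cocycle by a sign and does not affect the bijection; the remaining verifications, including the zig-zag argument for injectivity, are exactly those the paper defers to the cited manuscript.
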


This theorem can also be shown by arguments due to \eigenname{Eilenberg} and \eigenname{Mac\,Lane}, see~\cite[sec.~7, sec.~9]{eilenberg_maclane:1947:cohomology_theory_in_abstract_groups_II_group_extensions_with_a_non-abelian_kernel} and~\cite[sec.~7]{maclane:1949:cohomology_theory_in_abstract_groups_III_operator_homomorphisms_of_kernels}. A detailed proof following these arguments, using the language of crossed modules, can be found in the manuscript~\cite{thomas:2009:the_third_cohomology_group_classifies_crossed_module_extensions}, where a bijection \(\ExtensionClasses[\mathfrak{U}]{2}(\Pi_0, \Pi_1) \map \CohomologyGroup[3](\Pi_0, \Pi_1), [E]_{\extensionequivalent[\mathfrak{U}]} \mapsto \cocycleofextension{3}_E \CoboundaryGroup[3](\Pi_0, \Pi_1)\) is explicitly constructed. This construction is used throughout section~\ref{sec:crossed_module_extensions_and_standard_2-cocycles}. The inverse bijection \(z^3 \CoboundaryGroup[3](\Pi_0, \Pi_1) \mapsto [\StandardExtension(z^3)]_{\extensionequivalent[\mathfrak{U}]}\) is used in corollary~\ref{cor:cohomology_groups_of_cohomologous_3-cocycles_are_isomorphic}. We give a sketch of these constructions. That is, we indicate how a \(3\)-cohomology class of \(\Pi_0\) with coefficients in \(\Pi_1\) can be associated to a crossed module extension (class) of \(\Pi_0\) with \(\Pi_1\), and conversely, how a crossed module extension can be constructed from a given \(3\)-cohomology class.

Given pointed sets \(X_i\) for \(i \in I\) and \(Y\), where \(I\) is supposed to be an index set, let us call a map \(f\colon \bigcart_{i \in I} X_i \map Y\) \newnotion{componentwise pointed} if \((x_i)_{i \in I} f = *\) for all \((x_i)_{i \in I} \in \bigcart_{i \in I} X_i\) with \(x_i = *\) for some \(i \in I\). So in particular, interpreting groups as pointed sets in the usual way, a \(3\)-cochain \(c^3 \in \CochainComplex[3](\Pi_0, \Pi_1)\) is componentwise pointed if it fulfills \((q, p, 1) c^3 = (q, 1, p) c^3 = (1, q, p) c^3 = 1\) for all \(p, q \in \Pi_0\). The set of componentwise pointed \(3\){\nbd}cochains of \(\Pi_0\) with coefficients in \(\Pi_1\) will be denoted by \(\CochainComplex[3]_{\text{cpt}}(\Pi_0, \Pi_1)\), the set of componentwise pointed \(3\)-cocycles by \(\CocycleGroup[3]_{\text{cpt}}(\Pi_0, \Pi_1) := \CocycleGroup[3](\Pi_0, \Pi_1) \intersection \CochainComplex[3]_{\text{cpt}}(\Pi_0, \Pi_1)\), the set of componentwise pointed \(3\)-coboundaries by \(\CoboundaryGroup[3]_{\text{cpt}}(\Pi_0, \Pi_1) := \CoboundaryGroup[3](\Pi_0, \Pi_1) \intersection \CochainComplex[3]_{\text{cpt}}(\Pi_0, \Pi_1)\) and the set of componentwise pointed \(3\)-cohomology classes by \(\CohomologyGroup[3]_{\text{cpt}}(\Pi_0, \Pi_1) := \CocycleGroup[3]_{\text{cpt}}(\Pi_0, \Pi_1) / \CoboundaryGroup[3]_{\text{cpt}}(\Pi_0, \Pi_1)\). With these notations, we have \(\CohomologyGroup[3](\Pi_0, \Pi_1) \isomorphic \CohomologyGroup[3]_{\text{cpt}}(\Pi_0, \Pi_1)\). Analogously in other dimensions, cf.\ for example~\cite[cor.~(3.7)]{thomas:2009:the_third_cohomology_group_classifies_crossed_module_extensions}.

We suppose given a crossed module extension \(E\) of \(\Pi_0\) with \(\Pi_1\). First, we choose a lift of \(\id_{\Pi_0}\) along the underlying pointed map of \(\canonicalepimorphism\), that is, a pointed map \(Z^1\colon \Pi_0 \map \GroupPart E\) with \(Z^1 \canonicalepimorphism = \id_{\Pi_0}\). We obtain the componentwise pointed map
\[\cocycleofextension{2} = \cocycleofextension{2}_{E, Z^1}\colon \Pi_0 \cart \Pi_0 \map \Image \structuremorphism, (q, p) \mapsto (q Z^1) (p Z^1) ((q p) Z^1)^{- 1}\]
fulfilling the non-abelian \(2\)-cocycle condition
\[(r, q) \cocycleofextension{2} (r q, p) \cocycleofextension{2} = {^{r Z^1}}((q, p) \cocycleofextension{2}) (r, q p) \cocycleofextension{2}\]
for \(p, q, r \in \Pi_0\). Therefore, we will call \(\cocycleofextension{2}\) the \newnotion{non-abelian \(2\)-cocycle of \(E\) with respect to \(Z^1\)}. Next, we choose a componentwise pointed lift of \(\cocycleofextension{2}\) along \(\structuremorphism|^{\Image \structuremorphism}\), that is, a componentwise pointed map \(Z^2\colon \Pi_0 \cart \Pi_0 \map \ModulePart E\) with \(Z^2 (\structuremorphism|^{\Image \structuremorphism}) = \cocycleofextension{2}\). This leads to the map
\begin{align*}
& \cocycleofextension{3} = \cocycleofextension{3}_{E, (Z^2, Z^1)}\colon \Pi_0 \cart \Pi_0 \cart \Pi_0 \map \Pi_1, \\
& \qquad (r, q, p) \mapsto \big( (r, q) Z^2 (r q, p) Z^2 ((r, q p) Z^2)^{- 1} ({^{r Z^1}}((q, p) Z^2))^{- 1} \big) (\canonicalmonomorphism|^{\Image \canonicalmonomorphism})^{- 1},
\end{align*}
which is shown to be a componentwise pointed \(3\)-cocycle of \(\Pi_0\) with coefficients in \(\Pi_1\), that is, an element of \(\CocycleGroup[3]_{\text{cpt}}(\Pi_0, \Pi_1)\). One shows that the cohomology class of \(\cocycleofextension{3}\) is independent from the choices of \(Z^1\), \(Z^2\) and the representative \(E\) in its extension class.

A pair \((Z^2, Z^1)\) of componentwise pointed maps \(Z^1\colon \Pi_0 \map \GroupPart E\) and \(Z^2\colon \Pi_0 \cart \Pi_0 \map \ModulePart E\) such that \(Z^1 \canonicalepimorphism = \id_{\Pi_0}\) and \(Z^2 (\structuremorphism|^{\Image \structuremorphism}) = \cocycleofextension{2}\) is called a \newnotion{lifting system} for \(E\). Moreover, a pair \((s^1, s^0)\) of pointed maps \(s^0\colon \Pi_0 \map \GroupPart E\) and \(s^1\colon \Image \structuremorphism \map \ModulePart E\) such that \(s^0 \canonicalepimorphism = \id_{\Pi_0}\) and \(s^1 (\structuremorphism|^{\Image \structuremorphism}) = \id_{\Image \structuremorphism}\) is said to be a \newnotion{section system} for \(E\). Every section system \((s^1, s^0)\) for \(E\) provides a lifting system \((Z^2, Z^1)\) for \(E\) by setting \(Z^1 := s^0\) and \(Z^2 := \cocycleofextension{2}_{E, s^0} s^1\), called the lifting system \newnotion{coming from} \((s^1, s^0)\). The \(3\)-cocycle \(\cocycleofextension{3} \in \CocycleGroup[3]_{\text{cpt}}(\Pi_0, \Pi_1)\) constructed as indicated above will be called the \newnotion{\(3\)-cocycle of \(E\)} with respect to \((Z^2, Z^1)\). If \((Z^2, Z^1)\) comes from a section system \((s^1, s^0)\), we also write \(\cocycleofextension{3} = \cocycleofextension{3}_{E, (s^1, s^0)} := \cocycleofextension{3}_{E, (Z^2, Z^1)}\) and call this the \newnotion{\(3\)-cocycle of \(E\)} with respect to \((s^1, s^0)\). Finally, we call \(\cohomologyclassofextension(E) := \cocycleofextension{3} \CoboundaryGroup[3]_{\text{cpt}}(\Pi_0, \Pi_1)\) the \newnotion{cohomology class associated to \(E\)}.

We note two more facts: First, for every componentwise pointed \(3\)-cocycle \(z^3 \in \CocycleGroup[3]_{\text{cpt}}(\Pi_0, \Pi_1)\) with \(\cohomologyclassofextension(E) = z^3 \CoboundaryGroup[3]_{\text{cpt}}(\Pi_0, \Pi_1)\) there exists a lifting system \((Z^2, Z^1)\) such that \(\cocycleofextension{3}_{E, (Z^2, Z^1)} = z^3\), cf.\ for example~\cite[prop.~(5.19)]{thomas:2009:the_third_cohomology_group_classifies_crossed_module_extensions}. Second, given crossed module extensions \(E\) and \(\tilde E\) and an extension equivalence \(\varphi\colon E \map \tilde E\), there exist a section system \((s^1, s^0)\) for \(E\) and a section system \((\tilde s^1, \tilde s^0)\) for \(\tilde E\) such that \(\tilde s^0 = s^0 (\GroupPart \varphi)\) and \(s^1 (\ModulePart \varphi) = (\GroupPart \varphi)|_{\Image \structuremorphism[E]}^{\Image \structuremorphism[\tilde E]} \tilde s^1\), and moreover such that \(\cocycleofextension{3}_{E, (s^1, s^0)} = \cocycleofextension{3}_{\tilde E, (\tilde s^1, \tilde s^0)}\), cf.\ for example~\cite[prop.~(5.16)(b), prop.~(5.14)(c)]{thomas:2009:the_third_cohomology_group_classifies_crossed_module_extensions}.

Conversely, for a componentwise pointed \(3\)-cocycle \(z^3 \in \CocycleGroup[3]_{\text{cpt}}(\Pi_0, \Pi_1)\), the standard extension of \(\Pi_0\) with \(\Pi_1\) with respect to \(z^3\) is constructed as follows.

We let \(F\) be a free group on the underlying pointed set of \(\Pi_0\) with basis \(s^0 = Z^1\colon \Pi_0 \map F\), that is, \(F\) is a free group on the set \(\Pi_0 \setminus \{1\}\) and \(s^0\) maps \(x \in \Pi_0 \setminus \{1\}\) to the corresponding generator \(x s^0 \in F\), and \(1 s^0 = 1\). We let \(\pi\colon F \map \Pi_0\) be induced by \(\id_{\Pi_0}\colon \Pi_0 \map \Pi_0\). The basis \(s^0\) is a section of the underlying pointed map of \(\pi\). We let \(z^2\colon \Pi_0 \cart \Pi_0 \map \Kernel \pi, (q, p) \mapsto (q s^0) (p s^0) ((q p) s^0)^{- 1}\). We let \(\iota\colon \Pi_1 \map \Pi_1 \directprod \Kernel \pi, m \mapsto (m, 1)\) and \(\mu\colon \Pi_0 \directprod \Kernel \pi \map F, (m, f) \mapsto f\). We let \(s^1\colon \Kernel \pi \map \Pi_0 \directprod \Kernel \pi, f \mapsto (1, f)\) and we let \(Z^2\colon \Pi_0 \cart \Pi_0 \map \Pi_1 \directprod \Kernel \pi\) be given by \(Z^2 := z^2 s^1\). The direct product \(\Pi_1 \directprod \Kernel \pi\) is generated by \(\Image \iota \union \Image Z^2\) and carries the structure of an \(F\)-module uniquely determined on this set of generators by \({^{r Z^1}}(k \iota) := ({^r}k) \iota\) for \(k \in \Pi_1\), \(r \in \Pi_0\), and \({^{r Z^1}}((q, p) Z^2) := ((r, q, p) z^3 \iota)^{- 1} ((r, q) Z^2) ((r q, p) Z^2) ((r, q p) Z^2)^{- 1}\) for \(p, q, r \in G\).

These data define the \newnotion{standard extension} \(\StandardExtension(z^3)\) and the \newnotion{standard section system} \((\standardsectionsystem_{z^3}^1, \standardsectionsystem_{z^3}^0)\) for \(\StandardExtension(z^3)\): The group part of \(\StandardExtension(z^3)\) is given by \(\GroupPart \StandardExtension(z^3) := F\), the module part is given by \(\ModulePart \StandardExtension(z^3) := M \directprod \Kernel \pi\) and the structure morphism is given by \(\structuremorphism[\StandardExtension(z^3)] := \mu\). We have the canonical monomorphism \(\canonicalmonomorphism[\StandardExtension(z^3)] := \iota\) and the canonical epimorphism \(\canonicalepimorphism[\StandardExtension(z^3)] := \pi\). The section system \((\standardsectionsystem_{z^3}^1, \standardsectionsystem_{z^3}^0)\) is defined by \(\standardsectionsystem_{z^3}^0 := s^0\) and \(\standardsectionsystem_{z^3}^1 := s^1\).

By construction, the \(3\)-cocycle of \(\StandardExtension(z^3)\) with respect to the section system \((\standardsectionsystem_{z^3}^1, \standardsectionsystem_{z^3}^0)\) is \(z^3\). In particular, \(\cohomologyclassofextension(\StandardExtension(z^3)) = z^3 \CoboundaryGroup[3]_{\text{cpt}}(G, M)\).

\section{Low dimensional cohomology of a simplicial group} \label{sec:low_dimensional_cohomology_of_a_simplicial_group}

In this section, we will show that the zeroth cohomology group of a simplicial group depends only on the coefficient module, that the first cohomology group depends only on the \(0\)-truncation and that the second cohomology group depends only on the \(1\)-truncation.

Our results shall be achieved by means of calculations with analysed cocycles and coboundaries in low dimensions. Therefore, we restate their definitions explicitly.

\begin{workingbase} \label{wb:low_dimensional_analysed_cocycles} \
\begin{enumerate}
\item \label{wb:low_dimensional_analysed_cocycles:simplicial_group} We suppose given a simplicial group \(G\) and an abelian \(\HomotopyGroup[0](G)\)-module \(M\). The analysed cochain complex \(\CochainComplex_{\text{an}}(G, M)\) starts with the following entries. (\footnote{To simplify notation, we identify \((\MooreComplex[1] G \cart \MooreComplex[0] G) \cart (\MooreComplex[0] G)\) with \(\MooreComplex[1] G \cart \MooreComplex[0] G \cart \MooreComplex[0] G\), etc.})
\begin{align*}
\CochainComplex[0]_{\text{an}}(G, M) & = \Map(\{1\}, M), \\
\CochainComplex[1]_{\text{an}}(G, M) & = \Map(\MooreComplex[0] G, M), \\
\CochainComplex[2]_{\text{an}}(G, M) & = \Map(\MooreComplex[1] G \cart \MooreComplex[0] G \cart \MooreComplex[0] G, M), \\
\CochainComplex[3]_{\text{an}}(G, M) & = \Map(\MooreComplex[2] G \cart \MooreComplex[1] G \cart \MooreComplex[1] G \cart \MooreComplex[0] G \cart \MooreComplex[1] G \cart \MooreComplex[0] G \cart \MooreComplex[0] G, M).
\end{align*}
The differentials are given by
\[(g_0) (c \differential) = 1 c - g_0 \BoundaryGroup[0] \MooreComplex G \act 1 c\]
for \(g_0 \in \MooreComplex[0] G\), \(c \in \CochainComplex[0]_{\text{an}}(G, M)\), by
\[(g_1, h_0, g_0) (c \differential) = (g_1 h_0) c - (h_0 g_0) c + h_0 \BoundaryGroup[0] \MooreComplex G \act (g_0) c\]
for \(g_0, h_0 \in \MooreComplex[0] G\), \(g_1 \in \MooreComplex[1] G\), \(c \in \CochainComplex[1]_{\text{an}}(G, M)\), and by
\begin{align*}
& (g_2, k_1, h_1, k_0, g_1, h_0, g_0) (c \differential) \\
& = ((g_2 \differential) k_1, (h_1 \differential) k_0, (g_1 \differential) h_0) c - (k_1 h_1, k_0, h_0 g_0) c + (h_1 \, {^{k_0 \degeneracy{0}}}g_1, k_0 h_0, g_0) c - k_0 \BoundaryGroup[0] \MooreComplex G \act (g_1, h_0, g_0) c
\end{align*}
for \(g_0, h_0, k_0 \in \MooreComplex[0] G\), \(g_1, h_1, k_1 \in \MooreComplex[1] G\), \(g_2 \in \MooreComplex[2] G\), \(c \in \CochainComplex[2]_{\text{an}}(G, M)\).
\item \label{wb:low_dimensional_analysed_cocycles:crossed_module} We suppose given a crossed module \(V\) and an abelian \(\HomotopyGroup[0](V)\)-module \(M\). The cochain complex \(\CochainComplex(V, M)\) starts with the following entries.
\begin{align*}
\CochainComplex[0](V, M) & = \Map(\{1\}, M), \\
\CochainComplex[1](V, M) & = \Map(\GroupPart V, M), \\
\CochainComplex[2](V, M) & = \Map(\ModulePart V \cart \GroupPart V \cart \GroupPart V, M), \\
\CochainComplex[3](V, M) & = \Map(\ModulePart V \cart \ModulePart V \cart \GroupPart V \cart \ModulePart V \cart \GroupPart V \cart \GroupPart V, M).
\end{align*}
The differentials are given by
\[(g) (c \differential) = 1 c - g (\Image \structuremorphism) \act 1 c\]
for \(g \in \GroupPart V\), \(c \in \CochainComplex[0](V, M)\), by
\[(m, h, g) (c \differential) = (m h) c - (h g) c + h (\Image \structuremorphism) \act (g) c\]
for \(g, h \in \GroupPart V\), \(m \in \ModulePart V\), \(c \in \CochainComplex[1](V, M)\), and by
\[(p, n, k, m, h, g) (c \differential) = (p, n k, m h) c - (p n, k, h g) c + (n \, {^k}m, k h, g) c - k (\Image \structuremorphism) \act (m, h, g) c\]
for \(g, h, k \in \GroupPart V\), \(m, n, p \in \ModulePart V\), \(c \in \CochainComplex[2](V, M)\).
\end{enumerate}
\end{workingbase}
\begin{proof} \
\begin{enumerate}
\item We show how the differential \(\differential\colon \CochainComplex[2]_{\text{an}}(G, M) \map \CochainComplex[3]_{\text{an}}(G, M)\) of the analysed cochain complex is computed using transport of structure, the easier lower dimensional cases are left to the reader.

The corresponding entries of the cochain complex are \(\CochainComplex[2](G, M) = \Map(G_1 \cart G_0, M)\) and \(\CochainComplex[3](G, M) = \Map(G_2 \cart G_1 \cart G_0, M)\). Now the semidirect product decompositions of \(G_0\), \(G_1\) and \(G_2\) are given by the isomorphisms
\begin{align*}
& \varphi_0\colon G_0 \map \MooreComplex[0] G, g_0 \mapsto g_0, \\
& \varphi_0^{- 1}\colon \MooreComplex[0] G \map G_0, g_0 \mapsto g_0, \\
& \varphi_1\colon G_1 \map \MooreComplex[1] G \semidirect \MooreComplex[0] G, g_1 \mapsto (g_1 (g_1 \face{1} \degeneracy{0})^{- 1}, g_1 \face{1}), \\
& \varphi_1^{- 1}\colon \MooreComplex[1] G \semidirect \MooreComplex[0] G \map G_1, (g_1, g_0) \mapsto g_1 (g_0 \degeneracy{0}), \\
& \varphi_2\colon G_2 \map (\MooreComplex[2] G \semidirect \MooreComplex[1] G) \semidirect (\MooreComplex[1] G \semidirect \MooreComplex[0] G), \\
& \qquad g_2 \mapsto ((g_2 (g_2 \face{2} \degeneracy{1})^{- 1} (g_2 \face{2} \degeneracy{0}) (g_2 \face{1} \degeneracy{0})^{- 1}, (g_2 \face{1}) (g_2 \face{2})^{- 1}), ((g_2 \face{2}) (g_2 \face{2} \face{1} \degeneracy{0})^{- 1}, g_2 \face{2} \face{1})), \\
& \varphi_2^{- 1}\colon (\MooreComplex[2] G \semidirect \MooreComplex[1] G) \semidirect (\MooreComplex[1] G \semidirect \MooreComplex[0] G) \map G_2, ((g_2, h_1), (g_1, g_0)) \mapsto g_2 (h_1 \degeneracy{0}) (g_1 \degeneracy{1}) (g_0 \degeneracy{0} \degeneracy{1}).
\end{align*}
Moreover, the image \(c' \differential \in \CochainComplex[3](G, M)\) of a \(2\)-cochain \(c' \in \CochainComplex[2](G, M)\) is defined by
\[(g_2, g_1, g_0) (c' \differential) = (g_2 \face{0}, g_1 \face{0}) c' - (g_2 \face{1}, (g_1 \face{1}) g_0) c' + ((g_2 \face{2}) g_1, g_0) c' - (g_2 \face{2} \face{1} \BoundaryGroup[0] \MooreComplex G) (g_1, g_0) c'.\]
Hence we obtain
\begin{align*}
\CochainComplex[2]_{\text{an}}(G, M) & = \Map((\MooreComplex[1] G \cart \MooreComplex[0] G) \cart \MooreComplex[0] G, M), \\
\CochainComplex[3]_{\text{an}}(G, M) & = \Map((\MooreComplex[2] G \cart \MooreComplex[1] G \cart \MooreComplex[1] G \cart \MooreComplex[0] G) \cart (\MooreComplex[1] G \cart \MooreComplex[0] G) \cart \MooreComplex[0] G, M),
\end{align*}
and, using the isomorphisms \(\varphi_i\) for \(i \in \{0, 1, 2\}\), the image \(c \differential \in \CochainComplex[3]_{\text{an}}(G, M)\) of an analysed \(2\)-cochain \(c \in \CochainComplex[2]_{\text{an}}(G, M)\) is given by
\[c \differential = (\varphi_2^{- 1} \cart \varphi_1^{- 1} \cart \varphi_0^{- 1}) (((\varphi_1 \cart \varphi_0) c) \differential),\]
that is, we have
\begin{align*}
& ((g_2, k_1, h_1, k_0), (g_1, h_0), g_0) (c \differential) = ((g_2, k_1, h_1, k_0) \varphi_2^{- 1}, (g_1, h_0) \varphi_1^{- 1}, g_0 \varphi_0^{- 1}) (((\varphi_1 \cart \varphi_0) c) \differential) \\
& = (g_2 (k_1 \degeneracy{0}) (h_1 \degeneracy{1}) (k_0 \degeneracy{0} \degeneracy{1}), g_1 (h_0 \degeneracy{0}), g_0) (((\varphi_1 \cart \varphi_0) c) \differential) \\
& = ((g_2 (k_1 \degeneracy{0}) (h_1 \degeneracy{1}) (k_0 \degeneracy{0} \degeneracy{1})) \face{0}, (g_1 (h_0 \degeneracy{0})) \face{0}) ((\varphi_1 \cart \varphi_0) c) \\
& \qquad - ((g_2 (k_1 \degeneracy{0}) (h_1 \degeneracy{1}) (k_0 \degeneracy{0} \degeneracy{1})) \face{1}, ((g_1 (h_0 \degeneracy{0})) \face{1}) g_0) ((\varphi_1 \cart \varphi_0) c) \\
& \qquad + (((g_2 (k_1 \degeneracy{0}) (h_1 \degeneracy{1}) (k_0 \degeneracy{0} \degeneracy{1})) \face{2}) (g_1 (h_0 \degeneracy{0})), g_0) ((\varphi_1 \cart \varphi_0) c) \\
& \qquad - (g_2 (k_1 \degeneracy{0}) (h_1 \degeneracy{1}) (k_0 \degeneracy{0} \degeneracy{1})) \face{2} \face{1} \BoundaryGroup[0] \MooreComplex G \act (g_1 (h_0 \degeneracy{0}), g_0) ((\varphi_1 \cart \varphi_0) c) \\
& = (((g_2 \differential) k_1 (h_1 \differential \degeneracy{0}) (k_0 \degeneracy{0})) \varphi_1, ((g_1 \differential) h_0) \varphi_0) c - ((k_1 h_1 (k_0 \degeneracy{0})) \varphi_1, (h_0 g_0) \varphi_0) c \\
& \qquad + ((h_1 (k_0 \degeneracy{0}) g_1 (h_0 \degeneracy{0})) \varphi_1, g_0 \varphi_0) c - k_0 \BoundaryGroup[0] \MooreComplex G \act ((g_1 (h_0 \degeneracy{0})) \varphi_1, g_0 \varphi_0) c \\
& = (((g_2 \differential) k_1 (h_1 \differential \degeneracy{0}) (k_0 \degeneracy{0}) (k_0 \degeneracy{0})^{- 1} (h_1 \differential \degeneracy{0})^{- 1}, (h_1 \differential) k_0), (g_1 \differential) h_0) c - ((k_1 h_1 (k_0 \degeneracy{0}) (k_0 \degeneracy{0})^{- 1}, k_0), h_0 g_0) c \\
& \qquad + ((h_1 (k_0 \degeneracy{0}) g_1 (h_0 \degeneracy{0}) (h_0 \degeneracy{0})^{- 1} (k_0 \degeneracy{0})^{- 1}, k_0 h_0), g_0) c - k_0 \BoundaryGroup[0] \MooreComplex G \act ((g_1 (h_0 \degeneracy{0}) (h_0 \degeneracy{0})^{- 1}, h_0), g_0) c \\
& = (((g_2 \differential) k_1, (h_1 \differential) k_0), g_1 h_0) c - (((k_1 \differential) h_1, k_0), h_0 g_0) c + ((h_1 \, {^{k_0}}g_1, k_0 h_0), g_0) c \\
& \qquad - k_0 \BoundaryGroup[0] \MooreComplex G \act ((g_1, h_0), g_0) c
\end{align*}
for \(g_0, h_0, k_0 \in \MooreComplex[0] G\), \(g_1, h_1, k_1 \in \MooreComplex[1] G\), \(g_2 \in \MooreComplex[2] G\).
\item This follows from~\ref{wb:low_dimensional_analysed_cocycles:simplicial_group} and the definition of crossed module cohomology via \(\Coskeleton_1\), cf.\ section~\ref{ssec:cohomology_of_groups_and_cohomology_of_crossed_modules}. \qedhere
\end{enumerate}
\end{proof}

We immediately obtain the following result about the zeroth cohomology group, which states that it only depends on the module of coefficients (and therefore implicitly on the zeroth homotopy group by our choice of coefficients).

\begin{proposition} \label{prop:zeroth_cohomology_group_of_a_simplicial_group}
Given a simplicial group \(G\) and an abelian \(\HomotopyGroup[0](G)\)-module \(M\), we have
\[\CohomologyGroup[0](G, M) \isomorphic \CohomologyGroup[0](\HomotopyGroup[0](G), M) \isomorphic \{m \in M \mid \text{\(p m = m\) for all \(p \in \HomotopyGroup[0](G)\)}\}.\]
\end{proposition}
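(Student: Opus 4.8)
The plan is to read off $\CohomologyGroup[0](G, M)$ directly from the analysed cochain complex of working base~\ref{wb:low_dimensional_analysed_cocycles}\ref{wb:low_dimensional_analysed_cocycles:simplicial_group}, and then to obtain $\CohomologyGroup[0](\HomotopyGroup[0](G), M)$ as the special case in which $G$ is replaced by $\Coskeleton_0 \HomotopyGroup[0](G)$.

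First I would note that the complex of groups $\CochainComplex_{\text{an}}(G, M)$ has trivial entries in negative degrees, so there are no $0$-coboundaries and hence $\CohomologyGroup[0](G, M) = \CocycleGroup[0](G, M) = \Kernel(\differential\colon \CochainComplex[0]_{\text{an}}(G, M) \map \CochainComplex[1]_{\text{an}}(G, M))$. The group $\CochainComplex[0]_{\text{an}}(G, M) = \Map(\{1\}, M)$ is canonically isomorphic to $M$ by evaluation $c \mapsto 1c$; I write $m := 1c$. By the lowest differential formula from the working base,
\[(g_0)(c \differential) = m - (g_0 \BoundaryGroup[0] \MooreComplex G) \act m\]
for all $g_0 \in \MooreComplex[0] G = G_0$.

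Consequently $c$ is a cocycle if and only if $m = (g_0 \BoundaryGroup[0] \MooreComplex G) \act m$ for every $g_0 \in G_0$. Since the quotient map $G_0 \map \HomotopyGroup[0](G) = G_0 / \BoundaryGroup[0] \MooreComplex G$, $g_0 \mapsto g_0 \BoundaryGroup[0] \MooreComplex G$, is surjective, this condition is equivalent to $m = p \act m$ for all $p \in \HomotopyGroup[0](G)$. Thus the evaluation isomorphism restricts to an isomorphism
\[\CohomologyGroup[0](G, M) \isomorphic \{m \in M \mid pm = m \text{ for all } p \in \HomotopyGroup[0](G)\},\]
which is the right-hand isomorphism of the proposition.

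For the remaining isomorphism I would apply exactly this computation to the simplicial group $\Coskeleton_0 \HomotopyGroup[0](G)$ instead of $G$. By definition $\CohomologyGroup[0](\HomotopyGroup[0](G), M) = \CohomologyGroup[0](\Coskeleton_0 \HomotopyGroup[0](G), M)$, and using $\HomotopyGroup[0] = \Truncation^0$ together with $\Truncation^0 \comp \Coskeleton_0 \isomorphic \id_{\Grp}$ from section~\ref{ssec:truncation_and_coskeleton} we have $\HomotopyGroup[0](\Coskeleton_0 \HomotopyGroup[0](G)) \isomorphic \HomotopyGroup[0](G)$, compatibly with the action on $M$. Hence the same argument identifies $\CohomologyGroup[0](\HomotopyGroup[0](G), M)$ with the identical fixed-point set, and composing the two isomorphisms gives the claim. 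There is no serious obstacle here: the only steps requiring attention are the vanishing of $0$-coboundaries (giving $\CohomologyGroup[0] = \CocycleGroup[0]$) and the replacement of the quantifier over $G_0$ by one over $\HomotopyGroup[0](G)$, which relies solely on surjectivity of $G_0 \map \HomotopyGroup[0](G)$.
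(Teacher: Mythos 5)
Your proposal is correct and follows the same route the paper intends: the paper offers no separate proof, stating that the result is obtained immediately from the formulas of working base~\ref{wb:low_dimensional_analysed_cocycles}, which is exactly the computation you carry out (vanishing of \(0\)-coboundaries, evaluation isomorphism \(\CochainComplex[0]_{\text{an}}(G,M) \isomorphic M\), and replacement of the quantifier over \(G_0\) by one over \(\HomotopyGroup[0](G)\) via surjectivity of the quotient map). The specialisation to \(\Coskeleton_0 \HomotopyGroup[0](G)\) for the middle term is likewise the intended reading.
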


\begin{corollary} \label{cor:zeroth_cohomology_group_of_a_crossed_module}
Given a crossed module \(V\) and an abelian \(\HomotopyGroup[0](V)\)-module \(M\), we have
\[\CohomologyGroup[0](V, M) \isomorphic \CohomologyGroup[0](\HomotopyGroup[0](V), M) \isomorphic \{m \in M \mid \text{\(p m = m\) for all \(p \in \HomotopyGroup[0](V)\)}\}.\]
\end{corollary}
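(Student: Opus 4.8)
The plan is to reduce the statement to Proposition~\ref{prop:zeroth_cohomology_group_of_a_simplicial_group} by passing to the coskeleton simplicial group \(\Coskeleton_1 V\). Since crossed module cohomology is defined by \(\CohomologyGroup[0](V, M) = \CohomologyGroup[0](\Coskeleton_1 V, M)\), cf.\ section~\ref{ssec:cohomology_of_groups_and_cohomology_of_crossed_modules}, the left-hand group is literally the zeroth cohomology group of a simplicial group, so the proposition should apply verbatim once the coefficients are interpreted correctly.

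First I would observe that \(\HomotopyGroup[0](V) \isomorphic \HomotopyGroup[0](\Coskeleton_1 V)\), cf.\ section~\ref{ssec:homotopy_groups}; transporting the given \(\HomotopyGroup[0](V)\)-module structure along this isomorphism turns \(M\) into an abelian \(\HomotopyGroup[0](\Coskeleton_1 V)\)-module, so that the hypotheses of the proposition are met with \(G := \Coskeleton_1 V\). Applying the proposition then yields
\[\CohomologyGroup[0](V, M) = \CohomologyGroup[0](\Coskeleton_1 V, M) \isomorphic \CohomologyGroup[0](\HomotopyGroup[0](\Coskeleton_1 V), M) \isomorphic \{m \in M \mid p m = m \text{ for all } p \in \HomotopyGroup[0](\Coskeleton_1 V)\}.\]

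Finally I would translate the right-hand side back to \(V\) using the same isomorphism \(\HomotopyGroup[0](V) \isomorphic \HomotopyGroup[0](\Coskeleton_1 V)\): by functoriality of group cohomology the middle term becomes \(\CohomologyGroup[0](\HomotopyGroup[0](V), M)\), and, since the isomorphism intertwines the two actions on \(M\), the fixed-point set \(\{m \in M \mid p m = m \text{ for all } p \in \HomotopyGroup[0](\Coskeleton_1 V)\}\) coincides with \(\{m \in M \mid p m = m \text{ for all } p \in \HomotopyGroup[0](V)\}\). There is essentially no obstacle here: the content is carried entirely by the proposition, and the only point requiring a moment's care is to check that the module structure transported along \(\HomotopyGroup[0](V) \isomorphic \HomotopyGroup[0](\Coskeleton_1 V)\) agrees with the one implicitly used when writing \(\CohomologyGroup[0](\Coskeleton_1 V, M)\), which holds by the naturality of this isomorphism.
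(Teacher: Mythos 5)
Your proposal is correct and is exactly the argument the paper intends: the corollary is stated without proof as an immediate consequence of proposition~\ref{prop:zeroth_cohomology_group_of_a_simplicial_group}, obtained by applying it to \(G = \Coskeleton_1 V\) via the definition \(\CohomologyGroup[0](V, M) = \CohomologyGroup[0](\Coskeleton_1 V, M)\) and the identification \(\HomotopyGroup[0](V) \isomorphic \HomotopyGroup[0](\Coskeleton_1 V)\). Your extra care about transporting the module structure is sound (and essentially automatic here, since \(\MooreComplex(\Coskeleton_1 V)\) gives \(\HomotopyGroup[0](\Coskeleton_1 V) = \Cokernel \structuremorphism = \HomotopyGroup[0](V)\) on the nose).
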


We suppose given a simplicial group \(G\), an abelian group \(A\) and \(n \in \{0, 1\}\). In propositions~\ref{prop:first_cohomology_group_of_a_simplicial_group_and_its_0-truncation} and~\ref{prop:second_cohomology_group_of_a_simplicial_group_and_its_1-truncation}, we will show that \(\CohomologyGroup[n + 1](G, A) \isomorphic \CohomologyGroup[n + 1](\Truncation^n G, A)\). Using homotopy theory of topological spaces, this can be seen as follows.

We consider the unit component \(\unit_G\colon G \map \Coskeleton_n \Truncation^n G\) of the adjunction \(\Truncation^n \leftadjoint \Coskeleton_n\) and claim that \(\HomotopyGroup[k] \unit_G\) is an isomorphism for \(k \in [0, n]\), cf.\ section~\ref{ssec:truncation_and_coskeleton}. If \(n = 0\), one reads off that \(\Truncation^0 \unit_G\) is an isomorphism and hence \(\HomotopyGroup[0] \unit_G\) is an isomorphism since \(\HomotopyGroup[0] = \Truncation^0\). If \(n = 1\), one reads off that \(\GroupPart(\Truncation^1 \unit_G)\) and \(\ModulePart(\Truncation^1 \unit_G)\) are isomorphisms, hence \(\Truncation^1 \unit_G\) is an isomorphism and thus \(\HomotopyGroup[k] \unit_G = \HomotopyGroup[k](\Truncation^1 \unit_G)\) are isomorphisms for \(k \in [0, 1]\), cf.~\cite[prop.~(6.25)]{thomas:2007:co_homology_of_crossed_modules}.

The canonical simplicial map \(\KanResolvingSimplicialSet G \map \KanClassifyingSimplicialSet G\) is a Kan fibration with fiber \(G\), and \(\KanResolvingSimplicialSet G\) is contractible, see~\cite[ch.~V, lem.~4.1, lem.~4.6]{goerss_jardine:1999:simplicial_homotopy_theory}. Analogously for \(\Coskeleton_n \Truncation^n G\), so the induced long exact homotopy sequence~\cite[ch.~VII, 4.1, 4.2, 5.3]{lamotke:1968:semisimpliziale_algebraische_topologie} shows that \(\HomotopyGroup[k](\KanClassifyingSimplicialSet \unit_G)\) are isomorphisms for \(k \in [0, n + 1]\). It follows that \(\HomotopyGroup[k](|{\KanClassifyingSimplicialSet \unit_G}|)\) are isomorphisms for \(k \in [0, n + 1]\), see~\cite[ch.~I, prop.~11.1]{goerss_jardine:1999:simplicial_homotopy_theory} and~\cite[ch.~VII, 10.9]{lamotke:1968:semisimpliziale_algebraische_topologie}. The Whitehead theorem~\cite[ch.~VII, th.~11.2~I(b)]{bredon:1993:topology_and_geometry} provides isomorphisms \(\HomologyGroup[k](|{\KanClassifyingSimplicialSet \unit_G}|)\) for \(k \in [0, n + 1]\). The universal coefficient theorem~\cite[ch.~V, cor.~7.2]{bredon:1993:topology_and_geometry} yields isomorphisms \(\CohomologyGroup[k](|{\KanClassifyingSimplicialSet \unit_G}|, A)\) for \(k \in [0, n + 1]\). Finally, \(\CohomologyGroup[k]({\KanClassifyingSimplicialSet \unit_G}, A)\) are isomorphisms for \(k \in [0, n + 1]\) by~\cite[th.~6.3]{kan:1957:on_c_s_s_complexes}. In particular, one obtains \(\CohomologyGroup[n + 1](G, A) \isomorphic \CohomologyGroup[n + 1](\Coskeleton_n \Truncation^n G, A) = \CohomologyGroup[n + 1](\Truncation^n G, A)\), as desired.

However, we will not make use of these topological arguments. Following the overall intention of this article, we will give direct algebraic proofs of these results. Moreover, we will use proposition~\ref{prop:characterising_properties_of_analysed_2-cocycles}\ref{prop:characterising_properties_of_analysed_2-cocycles:crossed_modules} several times in section~\ref{sec:crossed_module_extensions_and_standard_2-cocycles}, in particular in the proofs of proposition~\ref{prop:module_part_and_group_part_of_standardisation_of_pointed_2-cocycles_and_coboundaries} and proposition~\ref{prop:characterising_properties_of_standard_2-cocycles_and_coboundaries_of_crossed_module_extensions}.

\begin{proposition} \label{prop:analysed_1-cocycle_group_of_simplicial_groups_as_a_kernel}
We suppose given a simplicial group \(G\) and an abelian \(\HomotopyGroup[0](G)\)-module \(M\). The first analysed cocycle group \(\CocycleGroup[1]_{\text{an}}(G, M)\) is the kernel of
\[\inc^{\CocycleGroup[1](\MooreComplex[0] G, M)} \CochainComplex[1](\differential^{\MooreComplex G}, M)\colon \CocycleGroup[1](\MooreComplex[0] G, M) \map \CochainComplex[1](\MooreComplex[1] G, M),\]
that is, we have
\[\CocycleGroup[1]_{\text{an}}(G, M) = \{z_0 \in \CocycleGroup[1](\MooreComplex[0] G, M) \mid z_0|_{\BoundaryGroup[0] \MooreComplex G} = 0\}.\]
\end{proposition}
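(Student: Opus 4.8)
The plan is to unwind the definition $\CocycleGroup[1]_{\text{an}}(G, M) = \Kernel(\differential \colon \CochainComplex[1]_{\text{an}}(G, M) \map \CochainComplex[2]_{\text{an}}(G, M))$ by means of the degree-$1$ differential formula of working base~\ref{wb:low_dimensional_analysed_cocycles}\ref{wb:low_dimensional_analysed_cocycles:simplicial_group}, and to show that the resulting condition on a cochain $c$ decouples into the group-theoretic $1$-cocycle condition together with the vanishing of $c$ on $\BoundaryGroup[0] \MooreComplex G$. First I would observe that, as groups of maps, $\CochainComplex[1]_{\text{an}}(G, M) = \Map(\MooreComplex[0] G, M) = \CochainComplex[1](\MooreComplex[0] G, M)$, and that the composite $\inc^{\CocycleGroup[1](\MooreComplex[0] G, M)} \CochainComplex[1](\differential^{\MooreComplex G}, M)$ is, by contravariant functoriality of $\CochainComplex[1](-, M)$ in degree $1$, precomposition with the Moore differential $\face{0}|_{\MooreComplex[1] G}^{\MooreComplex[0] G} \colon \MooreComplex[1] G \map \MooreComplex[0] G$; it sends $z_0$ to the map $g_1 \mapsto (g_1 \face{0}) z_0$. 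Since $\BoundaryGroup[0] \MooreComplex G = \{g_1 \face{0} \mid g_1 \in \MooreComplex[1] G\}$, its kernel is exactly $\{z_0 \in \CocycleGroup[1](\MooreComplex[0] G, M) \mid z_0|_{\BoundaryGroup[0] \MooreComplex G} = 0\}$, so it remains to identify $\CocycleGroup[1]_{\text{an}}(G, M)$ with this set.

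By the working base, a cochain $c \in \CochainComplex[1]_{\text{an}}(G, M)$ lies in $\CocycleGroup[1]_{\text{an}}(G, M)$ if and only if
\[((g_1 \face{0}) h_0) c = (h_0 g_0) c - (h_0 \BoundaryGroup[0] \MooreComplex G) \act (g_0) c\]
for all $g_0, h_0 \in \MooreComplex[0] G$ and $g_1 \in \MooreComplex[1] G$, where $g_1 h_0$ abbreviates $(g_1 \face{0}) h_0$ as in the notation of section~\ref{ssec:crossed_modules}. The key step is to specialise and then reassemble. Setting $g_1 = 1$ turns the condition into $(h_0 g_0) c = (h_0) c + (h_0 \BoundaryGroup[0] \MooreComplex G) \act (g_0) c$, which is precisely the inhomogeneous $1$-cocycle identity for the $\MooreComplex[0] G$-module $M$, the action being through $\HomotopyGroup[0](G) = \MooreComplex[0] G / \BoundaryGroup[0] \MooreComplex G$; that is, $c \in \CocycleGroup[1](\MooreComplex[0] G, M)$. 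Granting this identity, the left-hand side of the general condition rewrites as $((g_1 \face{0}) h_0) c = (g_1 \face{0}) c + (g_1 \face{0} \BoundaryGroup[0] \MooreComplex G) \act (h_0) c = (g_1 \face{0}) c + (h_0) c$, since $g_1 \face{0} \in \BoundaryGroup[0] \MooreComplex G$ acts trivially, while the right-hand side equals $(h_0) c$ by the cocycle identity; hence the general condition reduces to $(g_1 \face{0}) c = 0$ for all $g_1$, i.e.\ $c|_{\BoundaryGroup[0] \MooreComplex G} = 0$. Running the same two computations backwards shows that the $1$-cocycle identity together with $c|_{\BoundaryGroup[0] \MooreComplex G} = 0$ imply the full condition, so the two requirements are jointly equivalent to $c \in \CocycleGroup[1]_{\text{an}}(G, M)$.

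Combining both observations gives $\CocycleGroup[1]_{\text{an}}(G, M) = \{z_0 \in \CocycleGroup[1](\MooreComplex[0] G, M) \mid z_0|_{\BoundaryGroup[0] \MooreComplex G} = 0\} = \Kernel(\inc^{\CocycleGroup[1](\MooreComplex[0] G, M)} \CochainComplex[1](\differential^{\MooreComplex G}, M))$, which is both assertions of the proposition. I do not expect a genuine obstacle beyond bookkeeping; the one point requiring care is the convention that $g_1 h_0$ stands for $(g_1 \face{0}) h_0$, so that the entire $g_1$-dependence of the cocycle condition factors through $g_1 \face{0} \in \BoundaryGroup[0] \MooreComplex G$. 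This is exactly what forces the condition to collapse to a statement about the restriction of $c$ to $\BoundaryGroup[0] \MooreComplex G$, and it is also why the $\MooreComplex[1] G$-action on $M$ underlying $\CochainComplex[1](\MooreComplex[1] G, M)$ is trivial.
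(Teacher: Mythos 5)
Your proposal is correct and follows essentially the same route as the paper's proof: unwind the analysed degree-one differential from working base~\ref{wb:low_dimensional_analysed_cocycles}, specialise to extract the ordinary \(1\)-cocycle condition on \(\MooreComplex[0] G\) and the vanishing on \(\BoundaryGroup[0] \MooreComplex G\), and observe that these two conditions conversely imply the full analysed cocycle condition because the \(g_1\)-dependence factors through \(g_1 \face{0} \in \BoundaryGroup[0] \MooreComplex G\), which acts trivially on \(M\). The only cosmetic difference is that the paper obtains \(z|_{\BoundaryGroup[0] \MooreComplex G} = 0\) by the direct specialisation \((g_1, 1, 1)\) rather than by subtracting the cocycle identity from the general condition.
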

\begin{proof}
For every element \(z \in \CocycleGroup[1]_{\text{an}}(G, M)\), we have
\[0 = (1, h_0, g_0) (z \differential^{\CochainComplex_{\text{an}}(G, M)}) = (h_0) z - (h_0 g_0) z + h_0 \BoundaryGroup[0] \MooreComplex G \act (g_0) z = (h_0, g_0) (z \differential^{\CochainComplex(\MooreComplex[0] G, M)})\]
for all \(g_0, h_0 \in \MooreComplex[0] G\) as well as
\[0 = (g_1, 1, 1) (z \differential^{\CochainComplex_{\text{an}}(G, M)}) = (g_1 \differential) z\]
for all \(g_1 \in \MooreComplex[1] G\), that is, \(\CocycleGroup[1]_{\text{an}}(G, M) \subseteq \CocycleGroup[1](\MooreComplex[0] G, M)\) and \(z|_{\BoundaryGroup[0] \MooreComplex G} = 0\). Conversely, given a \(1\)-cocycle \(z_0 \in \CocycleGroup[1](\MooreComplex[0] G, M)\) with \(z_0|_{\BoundaryGroup[0] \MooreComplex G} = 0\), it follows that
\begin{align*}
(g_1, h_0, g_0) (z_0 \differential^{\CochainComplex_{\text{an}}(G, M)}) & = ((g_1 \differential) h_0) z_0 - (h_0 g_0) z_0 + h_0 \BoundaryGroup[0] \MooreComplex G \act (g_0) z_0 \\
& = (g_1) z_0 + (g_1 \differential) \BoundaryGroup[0] \MooreComplex G \act (h_0) z_0 - (h_0 g_0) z_0 + h_0 \BoundaryGroup[0] \MooreComplex G \act (g_0) z_0 \\
& = (h_0) z_0 - (h_0 g_0) z_0 + h_0 \BoundaryGroup[0] \MooreComplex G \act (g_0) z_0 = (h_0, g_0) (z_0 \differential^{\CochainComplex(\MooreComplex[0] G, M)}) = 0
\end{align*}
for \(g_1 \in \MooreComplex[1] G\), \(g_0, h_0 \in \MooreComplex[0] G\), that is, \(z_0 \in \CocycleGroup[1]_{\text{an}}(G, M)\). Altogether, we have
\[\CocycleGroup[1]_{\text{an}}(G, M) = \{z_0 \in \CocycleGroup[1](\MooreComplex[0] G, M) \mid z_0|_{\BoundaryGroup[0] \MooreComplex G} = 0\}. \qedhere\]
\end{proof}

Recall that \(\CochainComplex(\Truncation^0 G, M) = \CochainComplex_{\text{an}}(\Coskeleton_0 \Truncation^0 G, M)\) for every simplicial group \(G\).

\begin{proposition} \label{prop:first_cohomology_group_of_a_simplicial_group_and_its_0-truncation}
Given a simplicial group \(G\) and an abelian \(\HomotopyGroup[0](G)\)-module \(M\), the unit component \(\unit_G\colon G \map \Coskeleton_0 \Truncation^0 G\) of the adjunction \(\Truncation^0 \leftadjoint \Coskeleton_0\) induces an isomorphism
\[\CocycleGroup[1]_{\text{an}}(\unit_G, M)\colon \CocycleGroup[1](\Truncation^0 G, M) \map \CocycleGroup[1]_{\text{an}}(G, M),\]
which in turn induces isomorphisms \(\CoboundaryGroup[1]_{\text{an}}(\unit_G, M)\) and \(\CohomologyGroup[1]_{\text{an}}(\unit_G, M)\). In particular, we have
\[\CohomologyGroup[1](G, M) \isomorphic \CohomologyGroup[1](\Truncation^0 G, M).\]
\end{proposition}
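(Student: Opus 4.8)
The plan is to combine Proposition~\ref{prop:analysed_1-cocycle_group_of_simplicial_groups_as_a_kernel} with an elementary descent argument for derivations, and then to deduce the statements about coboundaries and cohomology formally. First I would make both cocycle groups explicit. By Proposition~\ref{prop:analysed_1-cocycle_group_of_simplicial_groups_as_a_kernel} we have $\CocycleGroup[1]_{\text{an}}(G, M) = \{z_0 \in \CocycleGroup[1](\MooreComplex[0] G, M) \mid z_0|_{\BoundaryGroup[0] \MooreComplex G} = 0\}$. On the other side, $\CochainComplex(\Truncation^0 G, M) = \CochainComplex_{\text{an}}(\Coskeleton_0 \Truncation^0 G, M)$, and since $\MooreComplex(\Coskeleton_0 \Truncation^0 G) = (\dots \morphism 1 \morphism 1 \morphism \Truncation^0 G)$ has $\BoundaryGroup[0] \MooreComplex(\Coskeleton_0 \Truncation^0 G) = 1$, the same proposition applied to $\Coskeleton_0 \Truncation^0 G$ identifies $\CocycleGroup[1](\Truncation^0 G, M)$ with the ordinary group $1$-cocycles of $\Truncation^0 G = \HomotopyGroup[0](G)$ with values in $M$, the vanishing condition being vacuous. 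Because $\MooreComplex[0] G = G_0$ and no analysis occurs in simplicial degree $1$, the morphism $\CocycleGroup[1]_{\text{an}}(\unit_G, M)$ is precomposition with the quotient homomorphism $(\unit_G)_0\colon G_0 \map \HomotopyGroup[0](G), g_0 \mapsto g_0 \BoundaryGroup[0] \MooreComplex G$.

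Injectivity is then immediate from surjectivity of $(\unit_G)_0$, so the crux is surjectivity. Given $z_0 \in \CocycleGroup[1]_{\text{an}}(G, M)$, I would regard it as a derivation $G_0 \map M$ — where $G_0$ acts on $M$ through $\HomotopyGroup[0](G)$ — vanishing on $\BoundaryGroup[0] \MooreComplex G$, and show it descends. Feeding $g_0 = n_0 \in \BoundaryGroup[0] \MooreComplex G$ into the ordinary $1$-cocycle identity $(h_0 g_0) z_0 = (h_0) z_0 + h_0 \BoundaryGroup[0] \MooreComplex G \act (g_0) z_0$ and using $(n_0) z_0 = 0$ gives $(h_0 n_0) z_0 = (h_0) z_0$, so $z_0$ is constant on the cosets $h_0 \BoundaryGroup[0] \MooreComplex G$. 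As $\BoundaryGroup[0] \MooreComplex G$ is normal in $G_0$, it follows that $z_0 = (\unit_G)_0 \bar z_0$ for a well-defined map $\bar z_0\colon \HomotopyGroup[0](G) \map M$; since the $G_0$-action on $M$ factors through $\HomotopyGroup[0](G)$, the cocycle identity for $z_0$ descends to $\bar z_0$, so $\bar z_0 \in \CocycleGroup[1](\Truncation^0 G, M)$ is the required preimage. This makes $\CocycleGroup[1]_{\text{an}}(\unit_G, M)$ an isomorphism.

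The remaining assertions I would obtain by chain-map bookkeeping. The cochain map $\CochainComplex_{\text{an}}(\unit_G, M)$ is the identity on $\CochainComplex[0] = \Map(\{1\}, M)$, hence an isomorphism in degree $0$. Combined with injectivity in degree $1$, this forces $\CochainComplex[1]_{\text{an}}(\unit_G, M)$ to restrict to an isomorphism $\CoboundaryGroup[1](\Truncation^0 G, M) \map \CoboundaryGroup[1]_{\text{an}}(G, M)$: writing an analysed $1$-coboundary as $c \differential$ with $c = c' \CochainComplex[0]_{\text{an}}(\unit_G, M)$ and using that $\CochainComplex_{\text{an}}(\unit_G, M)$ commutes with $\differential$, one gets $c \differential = (c' \differential) \CochainComplex[1]_{\text{an}}(\unit_G, M)$. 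Passing to the quotients $\CocycleGroup[1]/\CoboundaryGroup[1]$ yields the isomorphism $\CohomologyGroup[1]_{\text{an}}(\unit_G, M)$, and composing with the canonical isomorphism $\CohomologyGroup[1]_{\text{an}}(G, M) \isomorphic \CohomologyGroup[1](G, M)$ induced by $\CochainComplex(G, M) \isomorphic \CochainComplex_{\text{an}}(G, M)$ gives $\CohomologyGroup[1](G, M) \isomorphic \CohomologyGroup[1](\Truncation^0 G, M)$. I expect the only genuine obstacle to be the descent step in the second paragraph; once $\CocycleGroup[1]_{\text{an}}(\unit_G, M)$ is known to be an isomorphism and the degree-$0$ map is an isomorphism, the rest is formal.
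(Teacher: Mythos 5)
Your argument is correct and follows essentially the same route as the paper: both rest on Proposition~\ref{prop:analysed_1-cocycle_group_of_simplicial_groups_as_a_kernel}, identify \(\CocycleGroup[1]_{\text{an}}(\unit_G, M)\) as precomposition with the quotient map \(G_0 \map \Truncation^0 G\) (whence injectivity), and transfer the isomorphism from cocycles to coboundaries using that the degree-zero cochain entries coincide. The only divergence is in the surjectivity step, where you descend a cocycle directly along the quotient via its constancy on the cosets of the normal subgroup \(\BoundaryGroup[0] \MooreComplex G\), while the paper instead chooses a pointed section \(s\) of the quotient and defines the preimage by precomposition with \(s\); the underlying computation (coset-invariance of the cocycle, obtained by feeding elements of \(\BoundaryGroup[0] \MooreComplex G\) into the cocycle identity) is identical, and your formulation merely avoids the choice of section.
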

\begin{proof}
We let \(\canonicalepimorphism\colon \MooreComplex[0] G \map \MooreComplex[0] G / \BoundaryGroup[0] \MooreComplex G = \Truncation^0 G\) denote the canonical epimorphism, cf.\ section~\ref{ssec:truncation_and_coskeleton}. The induced group homomorphism \(\CocycleGroup[1]_{\text{an}}(\unit_G, M)\) is given by \((g_0) (z' \CocycleGroup[1]_{\text{an}}(\unit_G, M)) = (g_0 \canonicalepimorphism) z'\) for \(g_0 \in \MooreComplex[0] G\), \(z' \in \CocycleGroup[1](\Truncation^0 G, M)\). Thus we have \(z' \CocycleGroup[1]_{\text{an}}(\unit_G, M) = 0\) if and only if already \(z' = 0\), that is, \(\CocycleGroup[1]_{\text{an}}(\unit_G, M)\) is injective.

To show surjectivity, we suppose given an analysed \(1\)-cochain \(z \in \CocycleGroup[1]_{\text{an}}(G, M)\). We choose a section of the underlying pointed map of \(\canonicalepimorphism\), that is, a pointed map \(s\colon \Truncation^0 G \map \MooreComplex[0] G\) with \(s \canonicalepimorphism = \id_{\Truncation^0 G}\). Then \((q s) (p s) ((q p) s)^{- 1} \in \Kernel \canonicalepimorphism = \BoundaryGroup[0] \MooreComplex G\) and therefore, by proposition~\ref{prop:analysed_1-cocycle_group_of_simplicial_groups_as_a_kernel},
\begin{align*}
((q s) (p s)) z & = ((q s) (p s) ((q p) s)^{- 1} ((q p) s)) z = ((q s) (p s) ((q p) s)^{- 1}) z + ((q s) (p s) ((q p) s)^{- 1}) \BoundaryGroup[0] \MooreComplex G \act ((q p) s) z \\
& = ((q p) s) z
\end{align*}
for all \(p, q \in \Truncation^0 G\). Now the pointed map \(z'\colon \Truncation^0 G \map M\) defined by \((p) z' := (p s) z\) for \(p \in \Truncation^0 G\) is a \(1\)-cocycle in \(\CocycleGroup[1](\Truncation^0 G, M)\)
since
\begin{align*}
(q, p) (z' \differential^{\CochainComplex(\Truncation^0 G, M)}) & = (q) z' - (q p) z' + q \act (p) z' = (q s) z - ((q p) s) z + q s \canonicalepimorphism \act (p s) z \\
& = (q s) z - ((q s) (p s)) z + (q s) \BoundaryGroup[0] \MooreComplex G \act (p s) z = (1, q s, p s) (z \differential^{\CochainComplex_{\text{an}}(G, M)}) = 0
\end{align*}
for all \(p, q \in \Truncation^0 G\). Further, \(g_0 (g_0 \canonicalepimorphism s)^{- 1} \in \Kernel \canonicalepimorphism = \BoundaryGroup[0] \MooreComplex G\) implies, using proposition~\ref{prop:analysed_1-cocycle_group_of_simplicial_groups_as_a_kernel},
\begin{align*}
0 & = (g_0 (g_0 \canonicalepimorphism s)^{- 1}) z = (g_0) z + g_0 \BoundaryGroup[0] \MooreComplex G \act ((g_0 \canonicalepimorphism s)^{- 1}) z = (g_0) z + (g_0 \canonicalepimorphism s) \BoundaryGroup[0] \MooreComplex G \act ((g_0 \canonicalepimorphism s)^{- 1}) z \\
& = (g_0) z - (g_0 \canonicalepimorphism s) z + ((g_0 \canonicalepimorphism s) (g_0 \canonicalepimorphism s)^{- 1}) z= (g_0) z - (g_0) (z' \CocycleGroup[1]_{\text{an}}(\unit_G, M))
\end{align*}
and therefore \((g_0) (z' \CocycleGroup[1]_{\text{an}}(\unit_G, M)) = (g_0) z\) for all \(g_0 \in \MooreComplex[0] G\), that is, \(z' \CocycleGroup[1]_{\text{an}}(\unit_G, M) = z\). Thus \(\CocycleGroup[1]_{\text{an}}(\unit_G, M)\) is surjective. Altogether, \(\CocycleGroup[1]_{\text{an}}(\unit_G, M)\) is an isomorphism of abelian groups.

Now the injectivity of \(\CocycleGroup[1]_{\text{an}}(\unit_G, M)\) implies the injectivity of the restriction \(\CoboundaryGroup[1]_{\text{an}}(\unit_G, M)\). To show that this is also an isomorphism, it remains to show that for every analysed \(1\)-coboundary \(b \in \CoboundaryGroup[1]_{\text{an}}(G, M)\), the \(1\)-cocycle \(b' \in \CocycleGroup[1](\Truncation^0 G, M)\) given by \((p) b' := (p s) b\) for \(p \in \Truncation^0 G\) is in fact a \(1\)-coboundary, that is, an element in \(\CoboundaryGroup[1](\Truncation^0 G, M)\). Indeed, given \(b \in \CoboundaryGroup[1]_{\text{an}}(G, M)\) and an analysed \(0\)-cochain \(c \in \CochainComplex[0]_{\text{an}}(G, M)\) with \(b = c \differential^{\CochainComplex_{\text{an}}(G, M)}\), it follows that
\[(p) b' = (p s) b = 1 c - (p s) \BoundaryGroup[0] \MooreComplex G \act 1 c = 1 c - p \act 1 c = (p) (c \differential^{\CochainComplex(\Truncation^0 G, M)})\]
for all \(p \in \Truncation^0 G\) and hence \(b' = c \differential^{\CochainComplex(\Truncation^0 G, M)} \in \CoboundaryGroup[1](\Truncation^0 G, M)\).

Thus we have shown that \(\CocycleGroup[1]_{\text{an}}(\unit_G, M)\) and \(\CoboundaryGroup[1]_{\text{an}}(\unit_G, M)\) are isomorphisms, and hence \(\CohomologyGroup[1]_{\text{an}}(\unit_G, M)\) is also an isomorphism. In particular, we have
\[\CohomologyGroup[1](G, M) \isomorphic \CohomologyGroup[1]_{\text{an}}(G, M) \isomorphic \CohomologyGroup[1](\Truncation^0 G, M). \qedhere\]
\end{proof}

\begin{corollary} \label{cor:first_cohomology_group_of_a_simplicial_group}
Given a simplicial group \(G\) and an abelian \(\HomotopyGroup[0](G)\)-module \(M\), we have
\[\CohomologyGroup[1](G, M) \isomorphic \CohomologyGroup[1](\HomotopyGroup[0](G), M).\]
\end{corollary}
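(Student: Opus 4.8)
The plan is to deduce this directly from Proposition~\ref{prop:first_cohomology_group_of_a_simplicial_group_and_its_0-truncation}, which already establishes that \(\CohomologyGroup[1](G, M) \isomorphic \CohomologyGroup[1](\Truncation^0 G, M)\). The only point that needs to be made explicit is the identification of the two groups appearing on the right-hand sides of the proposition and of the corollary. As recorded in section~\ref{ssec:homotopy_groups}, for a simplicial group we have \(\HomotopyGroup[0] = \Truncation^0\), so that \(\HomotopyGroup[0](G)\) and \(\Truncation^0 G\) are literally the same group. Consequently an abelian \(\HomotopyGroup[0](G)\)-module \(M\) is exactly the same datum as an abelian \(\Truncation^0 G\)-module, and the group cohomologies \(\CohomologyGroup[1](\HomotopyGroup[0](G), M)\) and \(\CohomologyGroup[1](\Truncation^0 G, M)\) coincide on the nose rather than merely up to isomorphism.

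With this identification in hand, Proposition~\ref{prop:first_cohomology_group_of_a_simplicial_group_and_its_0-truncation} immediately yields
\[\CohomologyGroup[1](G, M) \isomorphic \CohomologyGroup[1](\Truncation^0 G, M) = \CohomologyGroup[1](\HomotopyGroup[0](G), M),\]
which is precisely the assertion. I expect no genuine obstacle here: the entire mathematical content resides in the preceding proposition, whose proof constructs the isomorphism at the level of analysed \(1\)-cocycles and \(1\)-coboundaries via the section \(s\) of the canonical epimorphism \(\canonicalepimorphism\colon \MooreComplex[0] G \map \Truncation^0 G\). This corollary simply restates that result in the homotopy-invariant language of \(\HomotopyGroup[0](G)\), which is the form in which the \eigenname{Eilenberg}--\eigenname{Mac\,Lane}-type description will ultimately be phrased.
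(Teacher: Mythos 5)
Your proposal is correct and matches the paper's (implicit) reasoning exactly: the corollary is stated without proof precisely because, as recorded in section~\ref{ssec:homotopy_groups}, \(\HomotopyGroup[0] = \Truncation^0\) for simplicial groups, so the statement is an immediate restatement of proposition~\ref{prop:first_cohomology_group_of_a_simplicial_group_and_its_0-truncation}. Nothing further is needed.
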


\begin{corollary} \label{cor:first_cohomology_group_of_a_crossed_module}
Given a crossed module \(V\) and an abelian \(\HomotopyGroup[0](V)\)-module \(M\), we have
\[\CohomologyGroup[1](V, M) \isomorphic \CohomologyGroup[1](\HomotopyGroup[0](V), M).\]
\end{corollary}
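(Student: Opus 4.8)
The plan is to deduce this from its simplicial group counterpart, corollary~\ref{cor:first_cohomology_group_of_a_simplicial_group}, via the coskeleton construction, so that essentially no new computation is required. By the definition of crossed module cohomology recalled in section~\ref{ssec:cohomology_of_groups_and_cohomology_of_crossed_modules}, we have \(\CohomologyGroup[1](V, M) = \CohomologyGroup[1](\Coskeleton_1 V, M)\), that is, the first cohomology of the crossed module \(V\) is by fiat the first cohomology of the simplicial group \(\Coskeleton_1 V\). Hence it suffices to apply corollary~\ref{cor:first_cohomology_group_of_a_simplicial_group} to the simplicial group \(G := \Coskeleton_1 V\), which directly yields \(\CohomologyGroup[1](\Coskeleton_1 V, M) \isomorphic \CohomologyGroup[1](\HomotopyGroup[0](\Coskeleton_1 V), M)\).

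To finish, I would identify the coefficient group on the right. As recorded in section~\ref{ssec:homotopy_groups}, there is a natural isomorphism \(\HomotopyGroup[0](V) \isomorphic \HomotopyGroup[0](\Coskeleton_1 V)\) (equivalently, this follows from \(\Truncation^0 = \Truncation_1^0 \comp \Truncation^1\) together with \(\Truncation^1 \comp \Coskeleton_1 \isomorphic \id_{\CrMod}\) and \(\HomotopyGroup[0](V) = \Cokernel \structuremorphism = \Truncation_1^0 V\)). Transporting along this isomorphism gives \(\CohomologyGroup[1](\HomotopyGroup[0](\Coskeleton_1 V), M) \isomorphic \CohomologyGroup[1](\HomotopyGroup[0](V), M)\), and composing the two isomorphisms proves the claim.

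The only point requiring genuine care — and it is minor — is the bookkeeping of the coefficient module structure. The module \(M\) is given as an abelian \(\HomotopyGroup[0](V)\)-module, whereas corollary~\ref{cor:first_cohomology_group_of_a_simplicial_group} expects an abelian \(\HomotopyGroup[0](\Coskeleton_1 V)\)-module. I would make \(M\) into the latter by transport of structure along the inverse of \(\HomotopyGroup[0](V) \isomorphic \HomotopyGroup[0](\Coskeleton_1 V)\), and then observe that this is exactly the action under which the adjunction unit \(\unit\colon \id_{\sGrp} \map \Coskeleton_0 \comp \HomotopyGroup[0]\) turns \(\ConstantFunctor M\) into a \(\Coskeleton_1 V\)-simplicial module, so the \(\HomotopyGroup[0](V)\)-action on \(M\) is faithfully reproduced. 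The concluding step then amounts to the functoriality of ordinary group cohomology under an isomorphism of the pair (group, coefficient module), which is immediate. Thus the corollary is an essentially formal consequence of corollary~\ref{cor:first_cohomology_group_of_a_simplicial_group} and the homotopy group identifications for \(\Coskeleton_1\), with the compatibility of module actions being the single substantive verification.
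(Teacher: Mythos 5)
Your proof is correct and is exactly the route the paper intends: since \(\CohomologyGroup[1](V, M)\) is by definition \(\CohomologyGroup[1](\Coskeleton_1 V, M)\), one applies corollary~\ref{cor:first_cohomology_group_of_a_simplicial_group} to \(\Coskeleton_1 V\) and identifies \(\HomotopyGroup[0](\Coskeleton_1 V)\) with \(\HomotopyGroup[0](V) = \Cokernel \structuremorphism\) via the Moore complex of the coskeleton. Your extra care about transporting the module structure along this identification is harmless and correctly resolved; the paper treats it as an equality and leaves the corollary without proof.
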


We recall a simple fact of \(2\)-cocycles of (ordinary) groups:

\begin{remark} \label{rem:projections_of_2-cocycles_of_groups}
We let \(G\) be a group and \(M\) be an abelian \(G\)-module. For every \(2\)-cocycle \(z \in \CocycleGroup[2](G, M)\), we have \((g, 1) z = g \act (1, 1) z\) and \((1, g) z = (1, 1) z\) for all \(g \in G\).
\end{remark}
\begin{proof}
Given a \(2\)-cocycle \(z \in \CocycleGroup[2](G, M)\), we have
\[0 = (g, 1, 1) (z \differential) = (g, 1) z - (g, 1) z + (g, 1) z - g \act (1, 1) z = (g, 1) z - g \act (1, 1) z,\]
that is, \((g, 1) z = g \act (1, 1) z\), and
\[0 = (1, 1, g) (z \differential) = (1, 1) z - (1, g) z + (1, g) z - (1, g) z = (1, 1) z - (1, g) z,\]
that is, \((1, g) z = (1, 1) z\) for all \(g \in G\).
\end{proof}

\begin{corollary} \label{cor:2-cocycle_of_groups_are_compentwise_pointed_if_and_only_if_they_are_pointed}
We let \(G\) be a group and \(M\) be an abelian \(G\)-module. A \(2\)-cocycle \(z \in \CocycleGroup[2](G, M)\) is componentwise pointed if and only if it is pointed.
\end{corollary}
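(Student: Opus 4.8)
The plan is to read off both implications directly from the definitions, using the preceding remark~\ref{rem:projections_of_2-cocycles_of_groups} for the non-trivial direction. First I would unwind what the two notions amount to for a $2$-cochain. Since the cohomology of $G$ is defined via $\Coskeleton_0 G$ and $\KanClassifyingSimplicialSet \Coskeleton_0 G = \Nerve G$, a $2$-cochain of $G$ with coefficients in $M$ is simply a map $z\colon G \cart G \map M$. Interpreting $G$ as a pointed set with distinguished point $1$, such a $z$ is \emph{pointed} precisely when $(1, 1) z = 0$, while it is \emph{componentwise pointed} precisely when $(g, 1) z = 0$ and $(1, g) z = 0$ for all $g \in G$ --- these being the only tuples in the two-fold product $G \cart G$ having a component equal to the distinguished point.

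The forward implication is then immediate: if $z$ is componentwise pointed, then specialising either condition to $g = 1$ yields $(1, 1) z = 0$, so $z$ is pointed. For the reverse implication I would invoke remark~\ref{rem:projections_of_2-cocycles_of_groups}, which asserts that every $2$-cocycle $z \in \CocycleGroup[2](G, M)$ satisfies $(g, 1) z = g \act (1, 1) z$ and $(1, g) z = (1, 1) z$ for all $g \in G$. Assuming $z$ is pointed, so $(1, 1) z = 0$, substitution gives $(g, 1) z = g \act 0 = 0$ and $(1, g) z = 0$ for all $g \in G$, using that the module action fixes the zero element of $M$. These are exactly the defining conditions for $z$ to be componentwise pointed.

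There is essentially no obstacle here, since the cocycle content has already been isolated in remark~\ref{rem:projections_of_2-cocycles_of_groups}; the only point worth stating explicitly is the bookkeeping observation that a tuple in $G \cart G$ carries a distinguished component if and only if it is of the form $(g, 1)$ or $(1, g)$, so that the two families of equations above genuinely exhaust the componentwise-pointedness condition. Thus the corollary follows by combining these two observations.
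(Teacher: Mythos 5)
Your proof is correct and is exactly the argument the paper intends: the corollary is stated as an immediate consequence of remark~\ref{rem:projections_of_2-cocycles_of_groups}, with the forward direction being the trivial specialisation to \((1,1)\) and the reverse direction following from \((g,1)z = g \act (1,1)z\) and \((1,g)z = (1,1)z\) once \((1,1)z = 0\). Your explicit bookkeeping of what componentwise pointedness means on \(G \cart G\) matches the paper's definitions.
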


To simplify our calculations, we give a bit more convenient description of the analysed \(2\)-cocycles.

\begin{definition}[Moore decomposition of analysed \(2\)-cochains] \label{def:moore_decomposition_of_analysed_2-cochains} \
\begin{enumerate}
\item \label{def:moore_decomposition_of_analysed_2-cochains:simplicial_groups} We let \(G\) be a simplicial group and \(M\) be an abelian \(\HomotopyGroup[0](G)\)-module. Given an analysed \(2\)-cochain \(c \in \CochainComplex[2]_{\text{an}}(G, M)\), the \(1\)-cochain \(c_{\MooreComplex[1]} \in \CochainComplex[1](\MooreComplex[1] G, M)\) defined by \((g_1) c_{\MooreComplex[1]} := (g_1, 1, 1) c\) for \(g_1 \in \MooreComplex[1] G\) is called the \newnotion{\(\MooreComplex[1]\)-part} of \(c\), and the \(2\)-cochain \(c_{\MooreComplex[0]} \in \CochainComplex[2](\MooreComplex[0] G, M)\) defined by \((h_0, g_0) c_{\MooreComplex[0]} := (1, h_0, g_0) c\) for \(g_0, h_0 \in \MooreComplex[0] G\) is called the \newnotion{\(\MooreComplex[0]\)-part} of \(c\).
\item \label{def:moore_decomposition_of_analysed_2-cochains:crossed_modules} We let \(V\) be a crossed module and \(M\) be an abelian \(\HomotopyGroup[0](V)\)-module. Given a \(2\)-cochain \(c \in \CochainComplex[2](V, M)\), we call the \(\MooreComplex[1]\)-part of \(c\) also the \newnotion{module part} of \(c\) and write \(c_{\ModulePart} := c_{\MooreComplex[1]}\), and we call the \(\MooreComplex[0]\)-part of \(c\) also the \newnotion{group part} of \(c\) and write \(c_{\GroupPart} := c_{\MooreComplex[0]}\). That is, \((m) c_{\ModulePart} = (m, 1, 1) c\) for \(m \in \ModulePart V\) and \((h, g) c_{\GroupPart} = (1, h, g) c\) for \(g, h \in \GroupPart V\).
\end{enumerate}
\end{definition}

\begin{proposition} \label{prop:characterising_properties_of_analysed_2-cocycles} \
\begin{enumerate}
\item \label{prop:characterising_properties_of_analysed_2-cocycles:simplicial_groups} We suppose given a simplicial group \(G\) and an abelian \(\HomotopyGroup[0](G)\)-module \(M\). An analysed \(2\)-cochain \(z \in \CochainComplex[2]_{\text{an}}(G, M)\) is an analysed \(2\)-cocycle if and only if it fulfills the following conditions.
\begin{enumerate}
\item \label{prop:characterising_properties_of_analysed_2-cocycles:simplicial_groups:decomposition} We have \((g_1, h_0, g_0) z = (g_1) z_{\MooreComplex[1]} - (g_1 \differential, h_0) z_{\MooreComplex[0]} + (h_0, g_0) z_{\MooreComplex[0]}\) for \(g_1 \in \MooreComplex[1] G\), \(g_0, h_0 \in \MooreComplex[0] G\).
\item \label{prop:characterising_properties_of_analysed_2-cocycles:simplicial_groups:group_part_is_2-cocycle} The \(\MooreComplex[0]\)-part \(z_{\MooreComplex[0]}\) is a \(2\)-cocycle of \(\MooreComplex[0] G\) with coefficients in \(M\), that is, \(z_{\MooreComplex[0]} \in \CocycleGroup[2](\MooreComplex[0] G, M)\).
\item \label{prop:characterising_properties_of_analysed_2-cocycles:simplicial_groups:multiplicativity} We have \((h_1 g_1) z_{\MooreComplex[1]} = (h_1) z_{\MooreComplex[1]} + (g_1) z_{\MooreComplex[1]} - (h_1 \differential, g_1 \differential) z_{\MooreComplex[0]}\) for \(g_1, h_1 \in \MooreComplex[1] G\).
\item \label{prop:characterising_properties_of_analysed_2-cocycles:simplicial_groups:action} We have \(({^{g_0 \degeneracy{0}}}g_1) z_{\MooreComplex[1]} = g_0 \BoundaryGroup[0] \MooreComplex G \act (g_1) z_{\MooreComplex[1]} + ({^{g_0}}(g_1 \differential), g_0) z_{\MooreComplex[0]} - (g_0, g_1 \differential) z_{\MooreComplex[0]}\) for \(g_1 \in \MooreComplex[1] G\), \(g_0 \in \MooreComplex[0] G\).
\item \label{prop:characterising_properties_of_analysed_2-cocycles:simplicial_groups:m2-images} We have \((g_2 \differential) z_{\MooreComplex[1]} = (1) z_{\MooreComplex[1]}\) for \(g_2 \in \MooreComplex[2] G\).
\end{enumerate}
\item \label{prop:characterising_properties_of_analysed_2-cocycles:crossed_modules} We suppose given a crossed module \(V\) and an abelian \(\HomotopyGroup[0](V)\)-module \(M\). A \(2\)-cochain \(z \in \CochainComplex[2](V, M)\) is a \(2\)-cocycle if and only if it fulfills the following conditions.
\begin{enumerate}
\item \label{prop:characterising_properties_of_analysed_2-cocycles:crossed_modules:decomposition} We have \((m, h, g) z = (m) z_{\ModulePart} - (m, h) z_{\GroupPart} + (h, g) z_{\GroupPart}\) for \(m \in \ModulePart V\), \(g, h \in \GroupPart V\).
\item \label{prop:characterising_properties_of_analysed_2-cocycles:crossed_modules:group_part_is_2-cocycle} The group part \(z_{\GroupPart}\) is a \(2\)-cocycle of \(\GroupPart V\) with coefficients in \(M\), that is, \(z_{\GroupPart} \in \CocycleGroup[2](\GroupPart V, M)\).
\item \label{prop:characterising_properties_of_analysed_2-cocycles:crossed_modules:multiplicativity} We have \((n m) z_{\ModulePart} = (n) z_{\ModulePart} + (m) z_{\ModulePart} - (n, m) z_{\GroupPart}\) for \(m, n \in \ModulePart V\).
\item \label{prop:characterising_properties_of_analysed_2-cocycles:crossed_modules:action} We have \(({^g}m) z_{\ModulePart} = g (\Image \structuremorphism) \act (m) z_{\ModulePart} + ({^g}m, g) z_{\GroupPart} - (g, m) z_{\GroupPart}\) for \(m \in \ModulePart V\), \(g \in \GroupPart V\).
\end{enumerate}
\end{enumerate}
\end{proposition}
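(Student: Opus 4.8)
The plan is to prove the simplicial statement~\ref{prop:characterising_properties_of_analysed_2-cocycles:simplicial_groups} in full and to obtain the crossed module statement~\ref{prop:characterising_properties_of_analysed_2-cocycles:crossed_modules} from it by specialisation. Indeed, applying the simplicial case to \(\Coskeleton_1 V\) and using \(\CochainComplex(V, M) = \CochainComplex_{\sGrp}(\Coskeleton_1 V, M)\), we have \(\MooreComplex[0](\Coskeleton_1 V) = \GroupPart V\), \(\MooreComplex[1](\Coskeleton_1 V) = \ModulePart V\), \(\MooreComplex[2](\Coskeleton_1 V) = \{1\}\), \(\differential = \structuremorphism[V]\), \(\BoundaryGroup[0] \MooreComplex(\Coskeleton_1 V) = \Image \structuremorphism[V]\) and \({^{g_0 \degeneracy{0}}}g_1 = {^{g_0}}g_1\); since \(\MooreComplex[2](\Coskeleton_1 V)\) is trivial, condition~\ref{prop:characterising_properties_of_analysed_2-cocycles:simplicial_groups:m2-images} holds automatically and the remaining four conditions become exactly the conditions of~\ref{prop:characterising_properties_of_analysed_2-cocycles:crossed_modules} verbatim.

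For the simplicial case the starting point is the differential \(\differential\colon \CochainComplex[2]_{\text{an}}(G, M) \map \CochainComplex[3]_{\text{an}}(G, M)\) of working base~\ref{wb:low_dimensional_analysed_cocycles}\ref{wb:low_dimensional_analysed_cocycles:simplicial_group}: by definition, \(z \in \CochainComplex[2]_{\text{an}}(G, M)\) is an analysed \(2\)-cocycle if and only if
\begin{align*}
& ((g_2 \differential) k_1, (h_1 \differential) k_0, (g_1 \differential) h_0) z - (k_1 h_1, k_0, h_0 g_0) z \\
& \qquad + (h_1 \, {^{k_0 \degeneracy{0}}}g_1, k_0 h_0, g_0) z - k_0 \BoundaryGroup[0] \MooreComplex G \act (g_1, h_0, g_0) z = 0
\end{align*}
holds for all \(g_0, h_0, k_0 \in \MooreComplex[0] G\), \(g_1, h_1, k_1 \in \MooreComplex[1] G\), \(g_2 \in \MooreComplex[2] G\). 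The whole proof consists in reading the five conditions off this single identity by specialising its seven arguments, and conversely recovering the identity from the conditions; throughout, \(z_{\MooreComplex[1]}\) and \(z_{\MooreComplex[0]}\) are the parts of definition~\ref{def:moore_decomposition_of_analysed_2-cochains}.

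For the forward implication (cocycle implies conditions) I would specialise as follows. Setting \(k_1 = h_1 = g_1 = 1\) and \(k_0 = h_0 = g_0 = 1\) leaves only \((g_2 \differential) z_{\MooreComplex[1]} - (1) z_{\MooreComplex[1]} = 0\), which is~\ref{prop:characterising_properties_of_analysed_2-cocycles:simplicial_groups:m2-images}. Setting \(g_2 = k_1 = h_1 = g_1 = 1\) collapses the identity to the \(2\)-cocycle condition for \(z_{\MooreComplex[0]}\), giving~\ref{prop:characterising_properties_of_analysed_2-cocycles:simplicial_groups:group_part_is_2-cocycle}; this in particular licenses remark~\ref{rem:projections_of_2-cocycles_of_groups} for \(z_{\MooreComplex[0]}\). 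Keeping only \(g_1, h_1\) nontrivial yields the multiplicativity~\ref{prop:characterising_properties_of_analysed_2-cocycles:simplicial_groups:multiplicativity}. A short specialisation varying the last argument, then trivialising the middle one and invoking remark~\ref{rem:projections_of_2-cocycles_of_groups}, produces the auxiliary identity \((g_1, 1, g_0) z = (g_1) z_{\MooreComplex[1]}\); feeding this into the substitution \(g_2 = k_1 = g_1 = k_0 = 1\) with \(h_1, h_0, g_0\) general gives the decomposition~\ref{prop:characterising_properties_of_analysed_2-cocycles:simplicial_groups:decomposition}. Finally, the substitution \(k_0 = g_0\), \(h_0 = g_0^{-1}\), all other arguments trivial, places \({^{g_0 \degeneracy{0}}}g_1\) into the \(\MooreComplex[1]\)-slot of the third summand; expanding the remaining summand \((g_1, g_0^{-1}, 1) z\) by the decomposition just obtained and simplifying the surviving \(z_{\MooreComplex[0]}\)-terms with~\ref{prop:characterising_properties_of_analysed_2-cocycles:simplicial_groups:group_part_is_2-cocycle} yields the action formula~\ref{prop:characterising_properties_of_analysed_2-cocycles:simplicial_groups:action}.

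For the converse I would substitute the decomposition~\ref{prop:characterising_properties_of_analysed_2-cocycles:simplicial_groups:decomposition} into each of the four \(z\)-evaluations in the identity, including the argument \((g_1, h_0, g_0) z\) of the \(\HomotopyGroup[0](G)\)-action in the last summand, so that both sides are expressed purely through \(z_{\MooreComplex[1]}\) and \(z_{\MooreComplex[0]}\). The \(\MooreComplex[1]\)-products \((g_2 \differential) k_1\), \(k_1 h_1\) and \(h_1 \, {^{k_0 \degeneracy{0}}}g_1\) are split by multiplicativity~\ref{prop:characterising_properties_of_analysed_2-cocycles:simplicial_groups:multiplicativity}, the twisted conjugate \({^{k_0 \degeneracy{0}}}g_1\) is rewritten by the action formula~\ref{prop:characterising_properties_of_analysed_2-cocycles:simplicial_groups:action}, the contribution of \(g_2 \differential\) is absorbed by~\ref{prop:characterising_properties_of_analysed_2-cocycles:simplicial_groups:m2-images} (using that consecutive differentials of the Moore complex compose to the trivial homomorphism), and the accumulated \(z_{\MooreComplex[0]}\)-terms cancel by the \(2\)-cocycle identity~\ref{prop:characterising_properties_of_analysed_2-cocycles:simplicial_groups:group_part_is_2-cocycle}. \textbf{The main obstacle is purely computational and lies in two places:} in the forward direction, checking that the leftover \(z_{\MooreComplex[0]}\)-terms in the derivation of~\ref{prop:characterising_properties_of_analysed_2-cocycles:simplicial_groups:action} really reassemble into \(({^{g_0}}(g_1 \differential), g_0) z_{\MooreComplex[0]} - (g_0, g_1 \differential) z_{\MooreComplex[0]}\) via the cocycle identity for \(z_{\MooreComplex[0]}\) together with the triviality of the \(\BoundaryGroup[0] \MooreComplex G\)-action on \(M\); and in the converse, the bookkeeping of the full cancellation, where the \(\degeneracy{0}\)-twist in the conjugation and the placement of the \(\HomotopyGroup[0](G)\)-action must be tracked with care. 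The conceptual content is entirely in the choice of specialisations above.
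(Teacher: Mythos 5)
Your proposal is correct and follows essentially the same route as the paper: both directions are obtained by specialising the seven arguments of the analysed cocycle identity from the working base, and conversely by substituting the decomposition back into that identity and cancelling via the \(2\)-cocycle identity for \(z_{\MooreComplex[0]}\), with part~(b) deduced from part~(a) by passing to \(\Coskeleton_1 V\). The only deviations are cosmetic choices of specialisation -- for instance, the paper obtains the action formula from the substitution \((1,1,1,g_0,g_1,1,1)\) rather than your \(k_0 = g_0\), \(h_0 = g_0^{-1}\) variant, which works too but costs a little extra juggling with the cocycle identity for \(z_{\MooreComplex[0]}\).
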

\begin{proof} \
\begin{enumerate}
\item First, we suppose given an analysed \(2\)-cocycle \(z \in \CocycleGroup[2]_{\text{an}}(G, M)\). We verify the asserted formulas:
\begin{enumerate}
\setcounter{enumii}{1}
\item We have
\begin{align*}
0 & = (1, 1, 1, k_0, 1, h_0, g_0) (z \differential) \\
& = (1, k_0, h_0) z - (1, k_0, h_0 g_0) z + (1, k_0 h_0, g_0) z - k_0 \BoundaryGroup[0] \MooreComplex G \act (1, h_0, g_0) z \\
& = (k_0, h_0) z_{\MooreComplex[0]} - (k_0, h_0 g_0) z_{\MooreComplex[0]} + (k_0 h_0, g_0) z_{\MooreComplex[0]} - k_0 \BoundaryGroup[0] \MooreComplex G \act (h_0, g_0) z_{\MooreComplex[0]}
\end{align*}
for \(g_0, h_0, k_0 \in \MooreComplex[0] G\), that is, \(z_{\MooreComplex[0]} \in \CocycleGroup[2](\MooreComplex[0] G, M)\).
\setcounter{enumii}{0}
\item First, we prove the formula for \(h_0 = 1\), then for \(g_0 = 1\) and finally for the general case.

We have
\begin{align*}
0 & = (1, g_1, 1, 1, 1, g_0, g_0^{- 1}) (z \differential) = (g_1, 1, g_0) z - (g_1, 1, 1) z + (1, g_0, g_0^{- 1}) z - (1, g_0, g_0^{- 1}) z \\
& = (g_1, 1, g_0) z - (g_1) z_{\MooreComplex[1]},
\end{align*}
that is, \((g_1, 1, g_0) z = (g_1) z_{\MooreComplex[1]}\) for \(g_1 \in \MooreComplex[1] G\), \(g_0 \in \MooreComplex[0] G\).

Next, we obtain
\begin{align*}
0 & = (1, 1, g_1, 1, 1, h_0, 1) (z \differential) = (1, g_1 \differential, h_0) z - (g_1, 1, h_0) z + (g_1, h_0, 1) z - (1, h_0, 1) z \\
& = (g_1 \differential, h_0) z_{\MooreComplex[0]} - (g_1) z_{\MooreComplex[1]} + (g_1, h_0, 1) z - (h_0, 1) z_{\MooreComplex[0]},
\end{align*}
that is, \((g_1, h_0, 1) z = (g_1) z_{\MooreComplex[1]} - (g_1 \differential, h_0) z_{\MooreComplex[0]} + (h_0, 1) z_{\MooreComplex[0]}\) for \(g_1 \in \MooreComplex[1] G\), \(h_0 \in \MooreComplex[0] G\).

Finally, we get, using~\ref{prop:characterising_properties_of_analysed_2-cocycles:simplicial_groups:group_part_is_2-cocycle} and remark~\ref{rem:projections_of_2-cocycles_of_groups}, 
\begin{align*}
0 & = (1, g_1, 1, h_0, 1, 1, g_0) (z \differential) = (g_1, h_0, 1) z - (g_1, h_0, g_0) z + (1, h_0, g_0) z - h_0 \BoundaryGroup[0] \MooreComplex G \act (1, 1, g_0) z \\
& = (g_1) z_{\MooreComplex[1]} - (g_1 \differential, h_0) z_{\MooreComplex[0]} + (h_0, 1) z_{\MooreComplex[0]} - (g_1, h_0, g_0) z + (h_0, g_0) z_{\MooreComplex[0]} - h_0 \BoundaryGroup[0] \MooreComplex G \act (1, g_0) z_{\MooreComplex[0]} \\
& = (g_1) z_{\MooreComplex[1]} - (g_1 \differential, h_0) z_{\MooreComplex[0]} - (g_1, h_0, g_0) z + (h_0, g_0) z_{\MooreComplex[0]}
\end{align*}
that is, \((g_1, h_0, g_0) z = (g_1) z_{\MooreComplex[1]} - (g_1 \differential, h_0) z_{\MooreComplex[0]} + (h_0, g_0) z_{\MooreComplex[0]}\) for \(g_1 \in \MooreComplex[1] G\), \(g_0, h_0 \in \MooreComplex[0] G\).
\setcounter{enumii}{2}
\item We have
\begin{align*}
0 & = (1, 1, h_1, 1, g_1, 1, 1) (z \differential) = (1, h_1 \differential, g_1 \differential) z - (h_1, 1, 1) z + (h_1 g_1, 1, 1) z - (g_1, 1, 1) z \\
& = (h_1 \differential, g_1 \differential) z_{\MooreComplex[0]} - (h_1) z_{\MooreComplex[1]} + (h_1 g_1) z_{\MooreComplex[1]} - (g_1) z_{\MooreComplex[1]},
\end{align*}
that is, \((h_1 g_1) z_{\MooreComplex[1]} = (h_1) z_{\MooreComplex[1]} + (g_1) z_{\MooreComplex[1]} - (h_1 \differential, g_1 \differential) z_{\MooreComplex[0]}\) for \(g_1, h_1 \in \MooreComplex[1] G\).
\item We have, using~\ref{prop:characterising_properties_of_analysed_2-cocycles:simplicial_groups:decomposition},
\begin{align*}
0 & = (1, 1, 1, g_0, g_1, 1, 1) (z \differential) = (1, g_0, g_1 \differential) z - (1, g_0, 1) z + ({^{g_0 \degeneracy{0}}}g_1, g_0, 1) z - g_0 \BoundaryGroup[0] \MooreComplex G \act (g_1, 1, 1) z \\
& = (g_0, g_1 \differential) z_{\MooreComplex[0]} + ({^{g_0 \degeneracy{0}}}g_1) z_{\MooreComplex[1]} - ({^{g_0}}(g_1 \differential), g_0) z_{\MooreComplex[0]} - g_0 \BoundaryGroup[0] \MooreComplex G \act (g_1) z_{\MooreComplex[1]},
\end{align*}
that is, \(({^{g_0 \degeneracy{0}}}g_1) z_{\MooreComplex[1]} = g_0 \BoundaryGroup[0] \MooreComplex G \act (g_1) z_{\MooreComplex[1]} + ({^{g_0}}(g_1 \differential), g_0) z_{\MooreComplex[0]} - (g_0, g_1 \differential) z_{\MooreComplex[0]}\) for \(g_1 \in \MooreComplex[1] G\), \(g_0 \in \MooreComplex[0] G\).
\item We have
\begin{align*}
0 & = (g_2, 1, 1, 1, 1, 1, 1) (z \differential) = (g_2 \differential, 1, 1) z - (1, 1, 1) z + (1, 1, 1) z - (1, 1, 1) z \\
& = (g_2 \differential) z_{\MooreComplex[1]} - (1) z_{\MooreComplex[1]},
\end{align*}
that is, \((g_2 \differential) z_{\MooreComplex[1]} = (1) z_{\MooreComplex[1]}\) for \(g_2 \in \MooreComplex[2] G\).
\end{enumerate}

Now let us conversely suppose given an analysed \(2\)-cochain \(z \in \CochainComplex[2]_{\text{an}}(G, M)\) that fulfills the properties~\ref{prop:characterising_properties_of_analysed_2-cocycles:simplicial_groups:decomposition} to~\ref{prop:characterising_properties_of_analysed_2-cocycles:simplicial_groups:m2-images}. Then we compute
\begin{align*}
& (g_2, k_1, h_1, g_1, k_0, h_0, g_0) (z \differential) \\
& = ((g_2 \differential) k_1, (h_1 \differential) k_0, (g_1 \differential) h_0) z - (k_1 h_1, k_0, h_0 g_0) z + (h_1 \, {^{k_0 \degeneracy{0}}}g_1, k_0 h_0, g_0) z - k_0 \BoundaryGroup[0] \MooreComplex G \act (g_1, h_0, g_0) z \\
& = ((g_2 \differential) k_1) z_{\MooreComplex[1]} - (k_1 \differential, (h_1 \differential) k_0) z_{\MooreComplex[0]} + ((h_1 \differential) k_0, (g_1 \differential) h_0) z_{\MooreComplex[0]} - (k_1 h_1) z_{\MooreComplex[1]} + ((k_1 h_1) \differential, k_0) z_{\MooreComplex[0]} \\
& \qquad - (k_0, h_0 g_0) z_{\MooreComplex[0]} + (h_1 \, {^{k_0 \degeneracy{0}}}g_1) z_{\MooreComplex[1]} - ((h_1 \, {^{k_0 \degeneracy{0}}}g_1) \differential, k_0 h_0) z_{\MooreComplex[0]} + (k_0 h_0, g_0) z_{\MooreComplex[0]} \\
& \qquad - k_0 \BoundaryGroup[0] \MooreComplex G \act (g_1) z_{\MooreComplex[1]} + k_0 \BoundaryGroup[0] \MooreComplex G \act (g_1 \differential, h_0) z_{\MooreComplex[0]} - k_0 \BoundaryGroup[0] \MooreComplex G \act (h_0, g_0) z_{\MooreComplex[0]} \\
& = ((g_2 \differential) k_1) z_{\MooreComplex[1]} - (k_1 h_1) z_{\MooreComplex[1]} + (h_1 \, {^{k_0 \degeneracy{0}}}g_1) z_{\MooreComplex[1]} - k_0 \BoundaryGroup[0] \MooreComplex G \act (g_1) z_{\MooreComplex[1]} - (k_1 \differential, (h_1 \differential) k_0) z_{\MooreComplex[0]} \\
& \qquad + ((h_1 \differential) k_0, (g_1 \differential) h_0) z_{\MooreComplex[0]} + ((k_1 \differential) (h_1 \differential), k_0) z_{\MooreComplex[0]} - ((h_1 \differential) \, {^{k_0}}(g_1 \differential), k_0 h_0) z_{\MooreComplex[0]} \\
& \qquad + k_0 \BoundaryGroup[0] \MooreComplex G \act (g_1 \differential, h_0) z_{\MooreComplex[0]} - (k_0, h_0 g_0) z_{\MooreComplex[0]} + (k_0 h_0, g_0) z_{\MooreComplex[0]} - k_0 \BoundaryGroup[0] \MooreComplex G \act (h_0, g_0) z_{\MooreComplex[0]} \\
& = (g_2 \differential) z_{\MooreComplex[1]} + (k_1) z_{\MooreComplex[1]} - (1, k_1 \differential) z_{\MooreComplex[0]} - (k_1 h_1) z_{\MooreComplex[1]} + (h_1) z_{\MooreComplex[1]} + ({^{k_0 \degeneracy{0}}}g_1) z_{\MooreComplex[1]} - (h_1 \differential, ({^{k_0 \degeneracy{0}}}g_1) \differential) z_{\MooreComplex[0]} \\
& \qquad - ({^{k_0 \degeneracy{0}}}g_1) z_{\MooreComplex[1]} + (({^{k_0 \degeneracy{0}}}g_1) \differential, k_0) z_{\MooreComplex[0]} - (k_0, g_1 \differential) z_{\MooreComplex[0]} - (k_1 \differential, (h_1 \differential) k_0) z_{\MooreComplex[0]} + ((h_1 \differential) k_0, (g_1 \differential) h_0) z_{\MooreComplex[0]} \\
& \qquad + ((k_1 \differential) (h_1 \differential), k_0) z_{\MooreComplex[0]} - ((h_1 \differential) \, {^{k_0}}(g_1 \differential), k_0 h_0) z_{\MooreComplex[0]} + k_0 \BoundaryGroup[0] \MooreComplex G \act (g_1 \differential, h_0) z_{\MooreComplex[0]} - (k_0, h_0) z_{\MooreComplex[0]} \\
& = (k_1) z_{\MooreComplex[1]} - (k_1 h_1) z_{\MooreComplex[1]} + (h_1) z_{\MooreComplex[1]} - (h_1 \differential, {^{k_0}}(g_1 \differential)) z_{\MooreComplex[0]} + ({^{k_0}}(g_1 \differential), k_0) z_{\MooreComplex[0]} - (k_0, g_1 \differential) z_{\MooreComplex[0]} \\
& \qquad - (k_1 \differential, (h_1 \differential) k_0) z_{\MooreComplex[0]} + ((h_1 \differential) k_0, (g_1 \differential) h_0) z_{\MooreComplex[0]} + ((k_1 \differential) (h_1 \differential), k_0) z_{\MooreComplex[0]} - ((h_1 \differential) \, {^{k_0}}(g_1 \differential), k_0 h_0) z_{\MooreComplex[0]} \\
& \qquad + k_0 \BoundaryGroup[0] \MooreComplex G \act (g_1 \differential, h_0) z_{\MooreComplex[0]} - (k_0, h_0) z_{\MooreComplex[0]} \\
& = (k_1 \differential, h_1 \differential) z_{\MooreComplex[0]} - (k_1 \differential, (h_1 \differential) k_0) z_{\MooreComplex[0]} + ((k_1 \differential) (h_1 \differential), k_0) z_{\MooreComplex[0]} - (h_1 \differential, {^{k_0}}(g_1 \differential)) z_{\MooreComplex[0]} \\
& \qquad - ((h_1 \differential) \, {^{k_0}}(g_1 \differential), k_0 h_0) z_{\MooreComplex[0]} + ({^{k_0}}(g_1 \differential), k_0) z_{\MooreComplex[0]} - (k_0, h_0) z_{\MooreComplex[0]} - (k_0, g_1 \differential) z_{\MooreComplex[0]} \\
& \qquad + k_0 \BoundaryGroup[0] \MooreComplex G \act (g_1 \differential, h_0) z_{\MooreComplex[0]} + ((h_1 \differential) k_0, (g_1 \differential) h_0) z_{\MooreComplex[0]} \\
& = (h_1 \differential, k_0) z_{\MooreComplex[0]} - ({^{k_0}}(g_1 \differential), k_0 h_0) z_{\MooreComplex[0]} - (h_1 \differential, {^{k_0}}(g_1 \differential) k_0 h_0) z_{\MooreComplex[0]} + ({^{k_0}}(g_1 \differential), k_0 h_0) z_{\MooreComplex[0]} \\
& \qquad - ({^{k_0}}(g_1 \differential) k_0, h_0) z_{\MooreComplex[0]} - (k_0, (g_1 \differential) h_0) z_{\MooreComplex[0]} + (k_0 (g_1 \differential), h_0) z_{\MooreComplex[0]} + ((h_1 \differential) k_0, (g_1 \differential) h_0) z_{\MooreComplex[0]} \\
& = (h_1 \differential, k_0) z_{\MooreComplex[0]} + ((h_1 \differential) k_0, (g_1 \differential) h_0) z_{\MooreComplex[0]} - (h_1 \differential, k_0 (g_1 \differential) h_0) z_{\MooreComplex[0]} - (k_0, (g_1 \differential) h_0) z_{\MooreComplex[0]} = 0
\end{align*}
for all \(g_0, h_0, k_0 \in \MooreComplex[0] G\), \(g_1, h_1, k_1 \in \MooreComplex[1] G\), \(g_2 \in \MooreComplex[2] G\), that is, \(z \in \CocycleGroup[2]_{\text{an}}(G, M)\).
\item This follows from~\ref{prop:characterising_properties_of_analysed_2-cocycles:simplicial_groups} by definition of the \(2\)-cocycles of \(V\) via \(\Coskeleton_1 V\) and the fact that \(\MooreComplex[0] \Coskeleton_1 V = \GroupPart V\), \(\MooreComplex[1] \Coskeleton_1 V = \ModulePart V\) and \(\MooreComplex[2] \Coskeleton_1 V = \{1\}\) (up to simplified notation). \qedhere
\end{enumerate}
\end{proof}

With the preceeding proposition we can now establish a description of the second analysed cocycle group of a simplicial group resp.\ of a crossed module as a pullback. This can be seen as a continuation of proposition~\ref{prop:analysed_1-cocycle_group_of_simplicial_groups_as_a_kernel}.

\begin{corollary} \label{cor:moore_decomposition_determines_the_2-cocycle} \
\begin{enumerate}
\item \label{cor:moore_decomposition_determines_the_2-cocycle:simplicial_groups} Given a simplicial group \(G\) and an abelian \(\HomotopyGroup[0](G)\)-module \(M\), the diagram
\[\begin{tikzpicture}[baseline=(m-2-1.base)]
  \matrix (m) [matrix of math nodes, row sep=2.5em, column sep=11.5em, column 1/.style={anchor=base east}, column 2/.style={anchor=base west}, text height=1.6ex, text depth=0.45ex, inner sep=0pt, nodes={inner sep=0.333em}]{
    \CocycleGroup[2]_{\text{an}}(G, M) & \CochainComplex[1](\MooreComplex[1] G, M) \\
    \CocycleGroup[2](\MooreComplex[0] G, M) & \CochainComplex[2](\MooreComplex[1] G, M) \directprod \CochainComplex[1](\MooreComplex[1] G \cart \MooreComplex[0] G, M) \directprod \CochainComplex[1](\MooreComplex[2] G, M) \\};
  \path[->, font=\scriptsize] let \p1=(1.25em, 0) in
    (m-1-1) edge node[above] {\({-}_{\MooreComplex[1]}|_{\CocycleGroup[2]_{\text{an}}(G, M)}\)} (m-1-2)
    ($(m-1-1.south east)-(\p1)$) edge node[left] {\({-}_{\MooreComplex[0]}|_{\CocycleGroup[2]_{\text{an}}(G, M)}^{\CocycleGroup[2](\MooreComplex[0] G, M)}\)} ($(m-2-1.north east)-(\p1)$)
    ($(m-1-2.south west)+(\p1)$) edge node[right] {\(\begin{smallpmatrix} \differential^{\CochainComplex(\MooreComplex G, M)} & \,\,\alpha_1\,\, & \CochainComplex[1](\differential^{\MooreComplex G}, M) \end{smallpmatrix}\)} ($(m-2-2.north west)+(\p1)$)
    (m-2-1) edge node[above] {\(\inc \begin{smallpmatrix} \CochainComplex[2](\differential^{\MooreComplex G}, M) & \,\,\alpha_0\,\, & \Map(1, M) \end{smallpmatrix}\)} (m-2-2);
\end{tikzpicture}\]
is a pullback of abelian groups, where \((g_1, g_0) (c_1 \alpha_1) := ({^{g_0 \degeneracy{0}}}g_1) c_1 - g_0 \BoundaryGroup[0] \MooreComplex G \act (g_1) c_1\) and \((g_1, g_0) (c_0 \alpha_0) := ({^{g_0}}(g_1 \differential), g_0) c_0 - (g_0, g_1 \differential) c_0\) for \(g_1 \in \MooreComplex[1] G\), \(g_0 \in \MooreComplex[0] G\), \(c_1 \in \CochainComplex[1](\MooreComplex[1] G, M)\), \(c_0 \in \CochainComplex[2](\MooreComplex[0] G, M)\), and where \(M\) is considered as a trivial \(\MooreComplex[1] G\)-module.
\item \label{cor:moore_decomposition_determines_the_2-cocycle:crossed_modules} Given a crossed module \(V\) and an abelian \(\HomotopyGroup[0](V)\)-module \(M\), the diagram
\[\begin{tikzpicture}[baseline=(m-2-1.base)]
  \matrix (m) [matrix of math nodes, row sep=2.5em, column sep=7.0em, column 1/.style={anchor=base east}, column 2/.style={anchor=base west}, text height=1.6ex, text depth=0.45ex, inner sep=0pt, nodes={inner sep=0.333em}]{
    \CocycleGroup[2](V, M) & \CochainComplex[1](\ModulePart V, M) \\
    \CocycleGroup[2](\GroupPart V, M) & \CochainComplex[2](\ModulePart V, M) \directprod \CochainComplex[1](\ModulePart V \cart \GroupPart V, M) \\};
  \path[->, font=\scriptsize] let \p1=(1.25em, 0) in
    (m-1-1) edge node[above] {\({-}_{\ModulePart}|_{\CocycleGroup[2](V, M)}\)} (m-1-2)
    ($(m-1-1.south east)-(\p1)$) edge node[left] {\({-}_{\GroupPart}|_{\CocycleGroup[2](V, M)}^{\CocycleGroup[2](\GroupPart V, M)}\)} ($(m-2-1.north east)-(\p1)$)
    ($(m-1-2.south west)+(\p1)$) edge node[right] {\(\begin{smallpmatrix} \differential^{\CochainComplex(\ModulePart V, M)} & \,\,\alpha_1 \end{smallpmatrix}\)} ($(m-2-2.north west)+(\p1)$)
    (m-2-1) edge node[above] {\(\inc \begin{smallpmatrix} \CochainComplex[2](\structuremorphism, M) & \,\,\alpha_0 \end{smallpmatrix}\)} (m-2-2);
\end{tikzpicture}\]
is a pullback of abelian groups, where \((m, g) (c_1 \alpha_1) := ({^g}m) c_1 - g (\Image \structuremorphism) \act (m) c_1\) and \((m, g) (c_0 \alpha_0) := ({^g}m, g) c_0 - (g, m) c_0\) for \(m \in \ModulePart V\), \(g \in \GroupPart V\), \(c_1 \in \CochainComplex[1](\ModulePart V, M)\), \(c_0 \in \CochainComplex[2](\GroupPart V, M)\), and where \(M\) is considered as a trivial \(\ModulePart V\)-module. In particular, we have an isomorphism
\begin{align*}
\CocycleGroup[2](V, M) & \map \{(c_1, z_0) \in \CochainComplex[1](\ModulePart V, M) \directprod \CocycleGroup[2](\GroupPart V, M) \mid \text{\((n m) c_1 = n c_1 + m c_1 - (n, m) z_0\) and } \\
& \qquad \text{\(({^g}m) c_1 = g (\Image \structuremorphism) \act (m) c_1 + ({^g}m, g) z_0 - (g, m) z_0\) for all \(m, n \in \ModulePart V\), \(g \in \GroupPart V\)}\}, \\
z & \mapsto (z_{\MooreComplex[1]}, z_{\MooreComplex[0]}).
\end{align*}
\end{enumerate}
\end{corollary}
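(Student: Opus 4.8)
The plan is to recognise both diagrams as nothing but a repackaging of proposition~\ref{prop:characterising_properties_of_analysed_2-cocycles}: the Moore decomposition $z \mapsto (z_{\MooreComplex[1]}, z_{\MooreComplex[0]})$ should identify the second analysed cocycle group with the fibre product of the right-hand and bottom maps. Recall that a commutative square is a pullback precisely when the comparison morphism from its initial vertex to the fibre product of the other two edges is an isomorphism; so it suffices to show that the square commutes and that $z \mapsto (z_{\MooreComplex[1]}, z_{\MooreComplex[0]})$ is a bijection onto the set of compatible pairs. Since all the maps in sight — the two differentials, the inclusion $\inc$, and the auxiliary maps $\alpha_0$, $\alpha_1$ — are additive in $M$, everything lives in $\AbGrp$, and the comparison morphism is automatically a homomorphism.

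First I would check commutativity. A genuine analysed $2$-cocycle $z \in \CocycleGroup[2]_{\text{an}}(G, M)$ has $z_{\MooreComplex[0]} \in \CocycleGroup[2](\MooreComplex[0] G, M)$ by proposition~\ref{prop:characterising_properties_of_analysed_2-cocycles}, so the left-hand leg is well defined, and the three remaining identities of that proposition — multiplicativity, the action identity, and the $\MooreComplex[2]$-image identity — are exactly the statements that the two composites into the threefold direct sum agree in their first, second and third coordinate respectively. For the third coordinate one must first record the auxiliary identity $(1) z_{\MooreComplex[1]} = (1, 1) z_{\MooreComplex[0]}$, obtained by feeding $g_1 = h_1 = 1$ into multiplicativity; this is what lets the constant slot $\Map(1, M)$, which depends only on $z_{\MooreComplex[0]}$, reproduce the value $(1) z_{\MooreComplex[1]}$ appearing on the right of the $\MooreComplex[2]$-image identity $(g_2 \differential) z_{\MooreComplex[1]} = (1) z_{\MooreComplex[1]}$.

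Next I would establish the bijection. Injectivity is immediate from the decomposition formula of proposition~\ref{prop:characterising_properties_of_analysed_2-cocycles}, which expresses $(g_1, h_0, g_0) z$ through $z_{\MooreComplex[1]}$ and $z_{\MooreComplex[0]}$ alone, so a $2$-cocycle is determined by its two parts. For surjectivity, given a pair $(c_1, z_0)$ in the fibre product, I would define an analysed $2$-cochain $z$ by that same decomposition formula, with $z_{\MooreComplex[1]} := c_1$ and $z_{\MooreComplex[0]} := z_0$, and then invoke the converse direction of proposition~\ref{prop:characterising_properties_of_analysed_2-cocycles}: the three hypotheses needed there are precisely the coordinatewise equalities encoded by membership in the fibre product, the cocycle condition on $z_{\MooreComplex[0]}$ holds since $z_0$ was chosen in $\CocycleGroup[2](\MooreComplex[0] G, M)$, and the decomposition identity holds by construction. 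Hence $z \in \CocycleGroup[2]_{\text{an}}(G, M)$ decomposes back to $(c_1, z_0)$, so the comparison morphism is an isomorphism and the square a pullback.

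Finally, part~(b) follows by specialising part~(a) to $\Coskeleton_1 V$, using $\MooreComplex[0] \Coskeleton_1 V = \GroupPart V$, $\MooreComplex[1] \Coskeleton_1 V = \ModulePart V$ and $\MooreComplex[2] \Coskeleton_1 V = \{1\}$. The vanishing of $\MooreComplex[2]$ makes the $\MooreComplex[2]$-image identity vacuous and collapses the third summand of the target, leaving the two-term diagram stated; the explicit ``in particular'' isomorphism is just the fibre product written out as the set of pairs $(c_1, z_0)$ satisfying the multiplicativity and action identities. I expect the main obstacle to be purely bookkeeping: matching the opaque matrix entries of the two parallel maps — especially $\alpha_0$, $\alpha_1$ and the constant slot $\Map(1, M)$ — coordinate by coordinate against the three identities, and confirming en route that $\alpha_0$ and $\alpha_1$ are homomorphisms, so that the fibre product is genuinely formed in $\AbGrp$.
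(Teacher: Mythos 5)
Your proposal is correct and follows essentially the same route as the paper: both reduce everything to proposition~\ref{prop:characterising_properties_of_analysed_2-cocycles}, deducing commutativity from its identities and obtaining the universal property by reconstructing a cocycle from a compatible pair via the decomposition formula \((g_1, h_0, g_0) z = (g_1) z_{\MooreComplex[1]} - (g_1 \differential, h_0) z_{\MooreComplex[0]} + (h_0, g_0) z_{\MooreComplex[0]}\); your reformulation of the pullback condition as bijectivity of the comparison map onto the explicit fibre product is equivalent to the paper's verification against an arbitrary test group \(T\). Your observation that the third coordinate needs the auxiliary identity \((1) z_{\MooreComplex[1]} = (1, 1) z_{\MooreComplex[0]}\) (from multiplicativity with \(g_1 = h_1 = 1\)) is a correct and welcome detail that the paper leaves implicit, and the same identity, together with remark~\ref{rem:projections_of_2-cocycles_of_groups}, is what makes the constructed cochain decompose back to \((c_1, z_0)\).
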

\begin{proof} \
\begin{enumerate}
\item We note that \(\alpha_0\) and \(\alpha_1\) are group homomorphisms. By proposition~\ref{prop:characterising_properties_of_analysed_2-cocycles}\ref{prop:characterising_properties_of_analysed_2-cocycles:simplicial_groups}\ref{prop:characterising_properties_of_analysed_2-cocycles:simplicial_groups:group_part_is_2-cocycle} to~\ref{prop:characterising_properties_of_analysed_2-cocycles:simplicial_groups:m2-images}, the diagram is well-defined and commutes. To show that it is a pullback, we suppose given an arbitrary abelian group \(T\) and group homomorphisms \(\varphi_0\colon T \map \CocycleGroup[2](\MooreComplex[0] G, M)\) and \(\varphi_1\colon T \map \CochainComplex[1](\MooreComplex[1] G, M)\) with \(\varphi_1 \differential^{\CochainComplex(\MooreComplex G, M)} = \varphi_0 \inc \CochainComplex[2](\differential^{\MooreComplex G}, M)\), \(\varphi_1 \alpha_1 = \varphi_0 \inc \alpha_0\) and \(\varphi_1 \CochainComplex[1](\differential^{\MooreComplex G}, M) = \varphi_0 \inc \Map(1, M)\). For every \(t \in T\), we define a \(2\)-cochain \(t \varphi \in \CochainComplex[2]_{\text{an}}(G, M)\) by \((g_1, h_0, g_0) (t \varphi) := (g_1) (t \varphi_1) - (g_1 \differential, h_0) (t \varphi_0) + (h_0, g_0) (t \varphi_0)\) for \(g_1 \in \MooreComplex[1] G\), \(g_0, h_0 \in \MooreComplex[0] G\). Since
\[(g_1) (t \varphi)_{\MooreComplex[1]} = (g_1, 1, 1) (t \varphi) = (g_1) (t \varphi_1) - (g_1 \differential, 1) (t \varphi_0) + (1, 1) (t \varphi_0) = (g_1) (t \varphi_1)\]
for all \(g_1 \in \MooreComplex[1] G\) and
\[(h_0, g_0) (t \varphi)_{\MooreComplex[0]} = (1, h_0, g_0) (t \varphi) = (1) (t \varphi_1) - (1, h_0) (t \varphi_0) + (h_0, g_0) (t \varphi_0) = (h_0, g_0) (t \varphi_0)\]
for all \(g_0, h_0 \in \MooreComplex[0] G\), it follows that \((t \varphi)_{\MooreComplex[1]} = t \varphi_1\) and \((t \varphi)_{\MooreComplex[0]} = t \varphi_0\) and hence \(t \varphi \in \CocycleGroup[2]_{\text{an}}(G, M)\) for all \(t \in T\) by proposition~\ref{prop:characterising_properties_of_analysed_2-cocycles}\ref{prop:characterising_properties_of_analysed_2-cocycles:simplicial_groups}. Thus we obtain a well-defined group homomorphism \(\varphi\colon T \map \CocycleGroup[2]_{\text{an}}(G, M)\) with \((t \varphi)_{\MooreComplex[1]} = t \varphi_1\) and \((t \varphi)_{\MooreComplex[0]} = t \varphi_0\) for all \(t \in T\). The uniqueness of such a map follows from~\ref{prop:characterising_properties_of_analysed_2-cocycles}\ref{prop:characterising_properties_of_analysed_2-cocycles:simplicial_groups}\ref{prop:characterising_properties_of_analysed_2-cocycles:simplicial_groups:decomposition}. \qedhere
\end{enumerate}
\end{proof}

Now we are able to show that the second cohomology group of a simplicial group only depends on its \(1\)-segment.

\begin{proposition} \label{prop:second_cohomology_group_of_a_simplicial_group_and_its_1-truncation}
Given a simplicial group \(G\) and an abelian \(\HomotopyGroup[0](G)\)-module \(M\), the unit component \(\unit_G\colon G \map \Coskeleton_1 \Truncation^1 G\) of the adjunction \(\Truncation^1 \leftadjoint \Coskeleton_1\) induces an isomorphism
\[\CocycleGroup[2]_{\text{an}}(\unit_G, M)\colon \CocycleGroup[2](\Truncation^1 G, M) \map \CocycleGroup[2]_{\text{an}}(G, M),\]
which in turn induces isomorphisms \(\CoboundaryGroup[2](\unit_G, M)\) and \(\CohomologyGroup[2](\unit_G, M)\). In particular, we have
\[\CohomologyGroup[2](G, M) \isomorphic \CohomologyGroup[2](\Truncation^1 G, M).\]
\end{proposition}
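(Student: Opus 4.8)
The plan is to reduce the statement to the explicit descriptions of the two cocycle groups furnished by proposition~\ref{prop:characterising_properties_of_analysed_2-cocycles} and corollary~\ref{cor:moore_decomposition_determines_the_2-cocycle}. Write \(V := \Truncation^1 G\), so that \(\GroupPart V = \MooreComplex[0] G = G_0\), \(\ModulePart V = \MooreComplex[1] G / \BoundaryGroup[1] \MooreComplex G\) and \(\structuremorphism[V]\) is induced by \(\differential = \face{0}\), cf.\ section~\ref{ssec:truncation_and_coskeleton}. By proposition~\ref{prop:characterising_properties_of_analysed_2-cocycles}\ref{prop:characterising_properties_of_analysed_2-cocycles:simplicial_groups} an analysed \(2\)-cocycle \(z \in \CocycleGroup[2]_{\text{an}}(G, M)\) is determined by its Moore parts \((z_{\MooreComplex[1]}, z_{\MooreComplex[0]})\), and by corollary~\ref{cor:moore_decomposition_determines_the_2-cocycle}\ref{cor:moore_decomposition_determines_the_2-cocycle:crossed_modules} a \(2\)-cocycle \(z'\) of \(V\) is determined by its parts \((z'_{\ModulePart}, z'_{\GroupPart}) \in \CochainComplex[1](\ModulePart V, M) \directprod \CocycleGroup[2](\GroupPart V, M)\). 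First I would compute, from the unit formula \(g_1 (\unit_G)_1 = (g_1 (g_1 \face{1} \degeneracy{0})^{- 1} \BoundaryGroup[1] \MooreComplex G, g_1 \face{1})\) and \(g_0 (\unit_G)_0 = (g_0)\), together with the definition of the analysed complex, that \(\CocycleGroup[2]_{\text{an}}(\unit_G, M)\) reads off, on Moore parts, as the identity on the \(\MooreComplex[0]\)-part and as precomposition with the quotient \(\quo\colon \MooreComplex[1] G \map \ModulePart V\) on the module part; that is, it sends \((z'_{\ModulePart}, z'_{\GroupPart})\) to \((\quo \, z'_{\ModulePart}, z'_{\GroupPart})\). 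Indeed, for \(g_1 \in \MooreComplex[1] G\) one has \(g_1 \face{1} = 1\), so \((g_1, 1, 1) z = (\overline{g_1}) z'_{\ModulePart}\), while the group coordinates are left fixed.

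With this description injectivity is immediate: since \(\quo\) is surjective, \(\quo \, z'_{\ModulePart} = 0\) forces \(z'_{\ModulePart} = 0\), and together with \(z'_{\GroupPart} = 0\) the decomposition of corollary~\ref{cor:moore_decomposition_determines_the_2-cocycle}\ref{cor:moore_decomposition_determines_the_2-cocycle:crossed_modules} gives \(z' = 0\). Surjectivity is the heart of the argument: given \(z \in \CocycleGroup[2]_{\text{an}}(G, M)\), I must show that \(z_{\MooreComplex[1]}\) factors through the quotient \(\MooreComplex[1] G \map \MooreComplex[1] G / \BoundaryGroup[1] \MooreComplex G\). Since \(\BoundaryGroup[1] \MooreComplex G = \Image(\differential\colon \MooreComplex[2] G \map \MooreComplex[1] G)\) and \(\MooreComplex G\) is a complex, every \(b \in \BoundaryGroup[1] \MooreComplex G\) satisfies \(b \differential = 1\). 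Writing \(b = g_2 \differential\) and combining the multiplicativity property~\ref{prop:characterising_properties_of_analysed_2-cocycles:simplicial_groups:multiplicativity}, the property~\ref{prop:characterising_properties_of_analysed_2-cocycles:simplicial_groups:m2-images} \((g_2 \differential) z_{\MooreComplex[1]} = (1) z_{\MooreComplex[1]}\), remark~\ref{rem:projections_of_2-cocycles_of_groups}, and the identity \((1) z_{\MooreComplex[1]} = (1, 1) z_{\MooreComplex[0]}\) (itself a special case of~\ref{prop:characterising_properties_of_analysed_2-cocycles:simplicial_groups:multiplicativity}), one computes \((b g_1) z_{\MooreComplex[1]} = (g_1) z_{\MooreComplex[1]}\) for all \(b \in \BoundaryGroup[1] \MooreComplex G\), \(g_1 \in \MooreComplex[1] G\). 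As \(\BoundaryGroup[1] \MooreComplex G\) is normal, \(z_{\MooreComplex[1]}\) is constant on its cosets and hence equals \(\quo \, c_1\) for a unique \(c_1 \in \CochainComplex[1](\ModulePart V, M)\). Finally I would check that \((c_1, z_{\MooreComplex[0]})\) satisfies the two defining conditions of the pullback in corollary~\ref{cor:moore_decomposition_determines_the_2-cocycle}\ref{cor:moore_decomposition_determines_the_2-cocycle:crossed_modules}; these translate the properties~\ref{prop:characterising_properties_of_analysed_2-cocycles:simplicial_groups:multiplicativity} and~\ref{prop:characterising_properties_of_analysed_2-cocycles:simplicial_groups:action} for \(z_{\MooreComplex[1]}\) verbatim under \(\quo\), using \(\overline{g_1} \structuremorphism[V] = g_1 \differential\) and \(\Image \structuremorphism[V] = \BoundaryGroup[0] \MooreComplex G\). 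This yields a preimage \(z' \in \CocycleGroup[2](V, M)\), so \(\CocycleGroup[2]_{\text{an}}(\unit_G, M)\) is an isomorphism.

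For coboundaries and cohomology I would use that \(\CochainComplex[1]_{\text{an}}(G, M) = \Map(\MooreComplex[0] G, M) = \Map(\GroupPart V, M) = \CochainComplex[1](V, M)\) and that, by \(g_0 (\unit_G)_0 = (g_0)\), the induced map \(\CochainComplex[1](\unit_G, M)\) is the identity. Since \(\unit_G\) is a simplicial group homomorphism, \(\CocycleGroup[2]_{\text{an}}(\unit_G, M)\) is the degree-\(2\) component of a morphism of cochain complexes, so it carries \(\CoboundaryGroup[2](V, M)\) into \(\CoboundaryGroup[2]_{\text{an}}(G, M)\); as \(\CochainComplex[1](\unit_G, M)\) is onto, every analysed \(2\)-coboundary \(c \differential\) with \(c \in \CochainComplex[1]_{\text{an}}(G, M) = \CochainComplex[1](V, M)\) is hit, so the restriction \(\CoboundaryGroup[2](\unit_G, M)\) is an isomorphism too. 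Passing to quotients, \(\CohomologyGroup[2](\unit_G, M)\) is an isomorphism, and the final claim follows from \(\CohomologyGroup[2](G, M) \isomorphic \CohomologyGroup[2]_{\text{an}}(G, M)\).

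The main obstacle is exactly the descent of the module part in the surjectivity step: this is the one place where the \(2\)-dimensional data of \(G\), namely \(\MooreComplex[2] G\) and condition~\ref{prop:characterising_properties_of_analysed_2-cocycles:simplicial_groups:m2-images}, is genuinely used, and it is what collapses \(\MooreComplex[1] G\) onto \(\ModulePart \Truncation^1 G\) and thereby explains why \(\CohomologyGroup[2]\) sees nothing beyond the \(1\)-truncation. Care must be taken that the normality of \(\BoundaryGroup[1] \MooreComplex G\) is really needed to obtain a well-defined factorization, and that the convention identifying \((n, m) z'_{\GroupPart}\) with \((n \structuremorphism[V], m \structuremorphism[V]) z'_{\GroupPart}\) makes the two sets of conditions match without a hidden twist.
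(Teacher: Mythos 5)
Your proposal is correct and follows essentially the same route as the paper: both reduce to the Moore-part decomposition of proposition~\ref{prop:characterising_properties_of_analysed_2-cocycles} and the pullback description of corollary~\ref{cor:moore_decomposition_determines_the_2-cocycle}, with the key step being that condition~\ref{prop:characterising_properties_of_analysed_2-cocycles:simplicial_groups:m2-images} together with multiplicativity forces \(z_{\MooreComplex[1]}\) to be constant on cosets of \(\BoundaryGroup[1] \MooreComplex G\). The only cosmetic difference is that you descend \(z_{\MooreComplex[1]}\) to the quotient directly, whereas the paper chooses a pointed section \(s\) of the canonical epimorphism and defines the preimage via \(s\); the underlying computation is identical.
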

\begin{proof}
For \(n \in \Naturals_0\), we denote by \(\varphi_n\) the isomorphisms from \(G_n\) to its semidirect product decomposition, cf.\ section~\ref{ssec:semidirect_product_decomposition}. Then we have \((g_0) \varphi_0^{- 1} (\unit_G)_0 = (g_0)\) and \((g_1, h_0) \varphi_1^{- 1} (\unit_G)_1 = (g_1 \canonicalepimorphism, h_0)\) for \(g_1 \in \MooreComplex[1] G\), \(g_0, h_0 \in \MooreComplex[0] G\), where we let \(\canonicalepimorphism\colon \MooreComplex[1] G \map \MooreComplex[1] G / \BoundaryGroup[1] \MooreComplex G = \ModulePart \Truncation^1 G\) denote the canonical epimorphism, cf.\ section~\ref{ssec:truncation_and_coskeleton}. Therefore the group homomorphism \(\CocycleGroup[2]_{\text{an}}(\unit_G, M)\) is given by \((g_1, h_0, g_0) (z' \CocycleGroup[2]_{\text{an}}(\unit_G, M)) = (g_1 \canonicalepimorphism, h_0, g_0) z'\) for \(g_1 \in \MooreComplex[1] G\), \(g_0, h_0 \in \MooreComplex[0] G\), \(z' \in \CocycleGroup[2](\Truncation^1 G, M)\). Thus we have \(z' \CocycleGroup[2]_{\text{an}}(\unit_G, M) = 0\) if and only if already \(z' = 0\), that is, \(\CocycleGroup[2]_{\text{an}}(\unit_G, M)\) is injective.

To show surjectivity, we suppose given an analysed \(2\)-cochain \(z \in \CocycleGroup[2]_{\text{an}}(G, M)\). We choose a section of the underlying pointed map of \(\canonicalepimorphism\), that is, a pointed map \(s\colon \ModulePart \Truncation^1 G \map \MooreComplex[1] G\) with \(s \canonicalepimorphism = \id_{\ModulePart \Truncation^1 G}\). Then \((n s) (m s) ((n m) s)^{- 1} \in \Kernel \canonicalepimorphism = \BoundaryGroup[1] \MooreComplex G\) and therefore
\[((n s) (m s)) z_{\MooreComplex[1]} = ((n s) (m s) ((n m) s)^{- 1} ((n m) s)) z_{\MooreComplex[1]} = ((n m) s) z_{\MooreComplex[1]}\]
for all \(m, n \in \ModulePart \Truncation^1 G\). Moreover, \((({^g}m) s) ({^{g \degeneracy{0}}}(m s))^{- 1} \in \Kernel \canonicalepimorphism = \BoundaryGroup[1] \MooreComplex G\) implies
\[(({^g}m) s) z_{\MooreComplex[1]} = ((({^g}m) s) ({^{g \degeneracy{0}}}(m s))^{- 1} \, {^{g \degeneracy{0}}}(m s)) z_{\MooreComplex[1]} = ({^{g \degeneracy{0}}}(m s)) z_{\MooreComplex[1]}\]
for all \(m \in \ModulePart \Truncation^1 G\), \(g \in \GroupPart \Truncation^1 G\). Defining \(c_1'\colon \ModulePart \Truncation^1 G \map M\) by \((m) c_1' := (m s) z_{\MooreComplex[1]}\) for \(m \in \ModulePart \Truncation^1 G\), we obtain
\[(n m) c_1' = ((n m) s) z_{\MooreComplex[1]} = ((n s) (m s)) z_{\MooreComplex[1]} = (n s) z_{\MooreComplex[1]} + (m s) z_{\MooreComplex[1]} - (n s \differential, m s \differential) = (n) c_1' + (m) c_1' - (n, m) z_{\MooreComplex[0]}\]
for all \(m, n \in \ModulePart \Truncation^1 G\) as well as
\begin{align*}
({^g}m) c_1' & = (({^g}m) s) z_{\MooreComplex[1]} = ({^{g \degeneracy{0}}}(m s)) z_{\MooreComplex[1]} = g \BoundaryGroup[0] \MooreComplex G \act (m s) z_{\MooreComplex[1]} + ({^g}(m s \differential), g) z_{\MooreComplex[0]} - (g, m s \differential) z_{\MooreComplex[0]} \\
& = g (\Image \structuremorphism) \act (m) c_1' + ({^g}m, g) z_{\MooreComplex[0]} - (g, m) z_{\MooreComplex[0]}
\end{align*}
for all \(m \in \ModulePart \Truncation^1 G\), \(g \in \GroupPart \Truncation^1 G\). Thus we get a well-defined \(2\)-cocycle \(z' \in \CocycleGroup[2](\Truncation^1 G, M)\) with \((m) z'_{\ModulePart} = (m s) z_{\MooreComplex[1]}\) for \(m \in \ModulePart \Truncation^1 G\) and \(z'_{\GroupPart} = z_{\MooreComplex[0]}\) by corollary~\ref{cor:moore_decomposition_determines_the_2-cocycle}\ref{cor:moore_decomposition_determines_the_2-cocycle:crossed_modules}. Further, \(g_1 (g_1 \canonicalepimorphism s)^{- 1} \in \Kernel \canonicalepimorphism = \BoundaryGroup[1] \MooreComplex G\) implies
\begin{align*}
0 & = (g_1 (g_1 \canonicalepimorphism s)^{- 1}) z_{\MooreComplex[1]} - (1) z_{\MooreComplex[1]} = (g_1) z_{\MooreComplex[1]} + ((g_1 \canonicalepimorphism s)^{- 1}) z_{\MooreComplex[1]} - (g_1 \differential, (g_1 \canonicalepimorphism s)^{- 1} \differential) z_{\MooreComplex[0]} - (1) z_{\MooreComplex[1]} \\
& = (g_1) z_{\MooreComplex[1]} + ((g_1 \canonicalepimorphism s)^{- 1}) z_{\MooreComplex[1]} - (g_1 \canonicalepimorphism s \differential, (g_1 \canonicalepimorphism s)^{- 1} \differential) z_{\MooreComplex[0]} - ((g_1 \canonicalepimorphism s) (g_1 \canonicalepimorphism s)^{- 1}) z_{\MooreComplex[1]} = (g_1) z_{\MooreComplex[1]} - (g_1 \canonicalepimorphism s) z_{\MooreComplex[1]}
\end{align*}
for all \(g_1 \in \MooreComplex[1] G\). But now it follows that \(z' \CocycleGroup[2]_{\text{an}}(\unit_G, M) = z\) since
\begin{align*}
(g_1, h_0, g_0) (z' \CocycleGroup[2]_{\text{an}}(\unit_G, M)) & = (g_1 \canonicalepimorphism, h_0, g_0) z' = (g_1 \canonicalepimorphism) z'_{\ModulePart} - (g_1 \canonicalepimorphism, h_0) z'_{\GroupPart} + (h_0, g_0) z'_{\GroupPart} \\
& = (g_1 \canonicalepimorphism s) z_{\MooreComplex[1]} - (g_1 \differential, h_0) z_{\MooreComplex[0]} + (h_0, g_0) z_{\MooreComplex[0]} \\
& = (g_1) z_{\MooreComplex[1]} - (g_1 \differential, h_0) z_{\MooreComplex[0]} + (h_0, g_0) z_{\MooreComplex[0]} = (g_1, h_0, g_0) z
\end{align*}
for all \(g_1 \in \MooreComplex[1] G\), \(g_0, h_0 \in \MooreComplex[0] G\). Thus \(\CocycleGroup[2]_{\text{an}}(\unit_G, M)\) is surjective. Altogether, \(\CocycleGroup[2]_{\text{an}}(\unit_G, M)\) is bijective and hence an isomorphism of abelian groups.

The injectivity of \(\CocycleGroup[2]_{\text{an}}(\unit_G, M)\) implies the injectivity of the restriction \(\CoboundaryGroup[2]_{\text{an}}(\unit_G, M)\). To show that this is also an isomorphism, it remains to show that for a given analysed \(2\)-coboundary \(b \in \CoboundaryGroup[2]_{\text{an}}(G, M)\), the \(2\)-cocycle \(b' \in \CocycleGroup[2](\Truncation^1 G, M)\) given by \((m) b'_{\ModulePart} = (m s) b_{\MooreComplex[1]}\) for \(m \in \ModulePart \Truncation^1 G\) and \(b'_{\GroupPart} = b_{\MooreComplex[0]}\) is in fact a \(2\){\nbd}coboundary in \(\CoboundaryGroup[2](\Truncation^1 G, M)\). 

We choose \(c \in \CochainComplex[1]_{\text{an}}(G, M) = \CochainComplex[1](\Truncation^1 G, M)\) with \(b = c \differential^{\CochainComplex_{\text{an}}(G, M)}\), that is, with \((g_1, h_0, g_0) b = ((g_1 \differential) h_0) c - (h_0 g_0) c + h_0 \BoundaryGroup[0] \MooreComplex G \act (g_0) c\) for \(g_1 \in \MooreComplex[1] G\), \(g_0, h_0 \in \MooreComplex[0] G\). It follows that
\[(m) b'_{\ModulePart} = (m s) b_{\MooreComplex[1]} = (m s \differential) c = (m) c = (m) (c \differential^{\CochainComplex(\Truncation^1 G, M)})_{\ModulePart}\]
for all \(m \in \ModulePart \Truncation^1 G\), that is, \(b'_{\ModulePart} = (c \differential^{\CochainComplex(\Truncation^1 G, M)})_{\ModulePart}\), as well as
\[b'_{\GroupPart} = (c \differential^{\CochainComplex_{\text{an}}(G, M)})_{\MooreComplex[0]} = (c \differential^{\CochainComplex(\Truncation^1 G, M)})_{\GroupPart}.\]
Hence we have \(b' = c \differential^{\CochainComplex(\Truncation^1 G, M)} \in \CoboundaryGroup[2](\Truncation^1 G, M)\).

We have shown that \(\CocycleGroup[2]_{\text{an}}(\unit_G, M)\) and \(\CoboundaryGroup[2]_{\text{an}}(\unit_G, M)\) are isomorphisms, and hence \(\CohomologyGroup[2]_{\text{an}}(\unit_G, M)\) is also an isomorphism. In particular, we have
\[\CohomologyGroup[2](G, M) \isomorphic \CohomologyGroup[2]_{\text{an}}(G, M) \isomorphic \CohomologyGroup[2](\Truncation^1 G, M). \qedhere\]
\end{proof}

\section{\texorpdfstring{Crossed module extensions and standard $2$-cocycles}{Crossed module extensions and standard 2-cocycles}} \label{sec:crossed_module_extensions_and_standard_2-cocycles}

Throughout this section, we suppose given a group \(\Pi_0\) and abelian \(\Pi_0\)-modules \(\Pi_1\) and \(M\), where \(\Pi_1\) is written multiplicatively. Moreover, we suppose given a crossed module extension \(E\) of \(\Pi_0\) with \(\Pi_1\) and a section system \((s^1, s^0)\) for \(E\). The lifting system coming from \((s^1, s^0)\) will be denoted by \((Z^2, Z^1)\), that is, \(Z^1 = s^0\) and \(Z^2 = \cocycleofextension{2} s^1\). Cf.\ section~\ref{ssec:crossed_module_extensions}.

\begin{notation} \label{not:section_systems}
In this section, we use the following conventions and notations: For \(p, q, r \in \Pi_0\), we write \([p] := p Z^1\), \([q, p] := (q, p) Z^2\) and \([r, q, p] := (r, q, p) \cocycleofextension{3}\). For \(g \in \Image \structuremorphism\), we write \([g] := g s^1\). So for \(m \in \ModulePart E\), we usually write \([m] = [m \structuremorphism] = m \structuremorphism s^1\), following our convention from section~\ref{ssec:crossed_modules}. Finally, for \(g \in \GroupPart E\), we write \(\overline{g} := g \canonicalepimorphism\).

Wih these conventions, we have \(\overline{[p]} = p\), \([q, p] = [[q] [p] [q p]^{- 1}]\) and \([r, q, p] \canonicalmonomorphism = [r, q] [r q, p] [r, q p]^{- 1} \, {^{[r]}}([q, p]^{- 1})\) for \(p, q, r \in \Pi_0\) and \([m] \structuremorphism = m \structuremorphism\) for \(m \in \ModulePart E\).
\end{notation}

We have seen in section~\ref{ssec:pointed_cochains}, how the computation of cohomology groups in positive dimension can be reduced to that of pointed cohomology groups. In this section, we will see a further reduction in the case where we consider the second cohomology group of the underlying crossed module of a crossed module extension.

\begin{definition}[standardisation of pointed \(2\)-cocycles] \label{def:standardisation_of_pointed_2-cocycles} \
\begin{enumerate}
\item \label{def:standardisation_of_pointed_2-cocycles:standardisation_and_standardiser} Given a pointed \(2\)-cocycle \(z \in \CocycleGroup[2]_{\text{pt}}(E, M)\), the \newnotion{standardisation} of \(z\) (with respect to \((s^1, s^0)\)) is given by
\[\standardisation{z} = \standardisation[(s^1, s^0)]{z} := z - \standardiser{z} \differential,\]
where the \newnotion{standardiser} of \(z\) (with respect to \((s^1, s^0)\)) is defined to be the pointed \(1\)-cochain \(\standardiser{z} = \standardiser[(s^1, s^0)]{z} \in \CochainComplex[1]_{\text{pt}}(E, M)\) given by
\[(g) \standardiser{z} := ([g [\overline{g}]^{- 1}], [\overline{g}], 1) z\]
for \(g \in \GroupPart E\).
\item \label{def:standardisation_of_pointed_2-cocycles:standard_2-cocycles} A pointed \(2\)-cocycle \(z \in \CocycleGroup[2]_{\text{pt}}(E, M)\) is said to be \newnotion{standard} (with respect to \((s^1, s^0)\)) (or a \newnotion{standard \(2\)-cocycle}, for short) if \(\standardisation{z} = z\). The subgroup of \(\CocycleGroup[2]_{\text{pt}}(E, M)\) consisting of all standard \(2\)-cocycles of \(E\) with coefficients in \(M\) will be denoted by
\[\CocycleGroup[2]_{\text{st}}(E, M) = \CocycleGroup[2]_{\text{st}, (s^1, s^0)}(E, M) := \{z \in \CocycleGroup[2]_{\text{pt}}(E, M) \mid \standardisation{z} = z\}.\]
Likewise, the subgroup of \(\CoboundaryGroup[2]_{\text{pt}}(E, M)\) consisting of all standard \(2\)-coboundaries of \(E\) with coefficients in \(M\) will be denoted by
\[\CoboundaryGroup[2]_{\text{st}}(E, M) = \CoboundaryGroup[2]_{\text{st}, (s^1, s^0)}(E, M) := \{b \in \CoboundaryGroup[2]_{\text{pt}}(E, M) \mid \standardisation{b} = b\}.\]
Moreover, we set
\[\CohomologyGroup[2]_{\text{st}}(E, M) = \CohomologyGroup[2]_{\text{st}, (s^1, s^0)}(E, M) := \CocycleGroup[2]_{\text{st}}(E, M) / \CoboundaryGroup[2]_{\text{st}}(E, M).\]
\end{enumerate}
\end{definition}

\begin{remark} \label{rem:standardiser_splitted}
We have
\[(g) \standardiser{z} = ([g [\overline{g}]^{- 1}]) z_{\ModulePart} - (g [\overline{g}]^{- 1}, [\overline{g}]) z_{\GroupPart}\]
for \(g \in \GroupPart E\), \(z \in \CocycleGroup[2]_{\text{pt}}(E, M)\).
\end{remark}
\begin{proof}
This follows from proposition~\ref{prop:characterising_properties_of_analysed_2-cocycles}\ref{prop:characterising_properties_of_analysed_2-cocycles:crossed_modules}\ref{prop:characterising_properties_of_analysed_2-cocycles:crossed_modules:decomposition}.
\end{proof}

In the next proposition, we give more detailed formulas for the standardisation.

\begin{proposition} \label{prop:module_part_and_group_part_of_standardisation_of_pointed_2-cocycles_and_coboundaries} \
\begin{enumerate}
\item \label{prop:module_part_and_group_part_of_standardisation_of_pointed_2-cocycles_and_coboundaries:cocycles} For every pointed \(2\)-cocycle \(z \in \CocycleGroup[2]_{\text{pt}}(E, M)\), we have
\[(m) \standardisation{z}_{\ModulePart} = (m [m]^{- 1}) z_{\ModulePart}\]
for \(m \in \ModulePart E\), and
\[(h, g) \standardisation{z}_{\GroupPart} = ([h [\overline{h}]^{- 1}]^{- 1} \, {^h}([g [\overline{g}]^{- 1}]^{- 1}) [h g [\overline{h g}]^{- 1}]) z_{\ModulePart} - ([\overline{h}, \overline{g}], [\overline{h} \overline{g}]) z_{\GroupPart} + ([\overline{h}], [\overline{g}]) z_{\GroupPart}\]
for \(g, h \in \GroupPart E\).
\item \label{prop:module_part_and_group_part_of_standardisation_of_pointed_2-cocycles_and_coboundaries:coboundaries} For every pointed \(2\)-coboundary \(b \in \CoboundaryGroup[2]_{\text{pt}}(E, M)\), we have
\[(m) \standardisation{b}_{\ModulePart} = 0\]
for \(m \in \ModulePart E\), and, given \(c \in \CochainComplex[1]_{\text{pt}}(E, M)\) with \(b = c \differential\), we have
\[(h, g) \standardisation{b}_{\GroupPart} = (\overline{h}, \overline{g}) (c_0 \differential)\]
for \(g, h \in \GroupPart E\), where \(c_0 \in \CochainComplex[1](\Pi_0, M)\) is given by \((p) c_0 := ([p]) c\).
\end{enumerate}
\end{proposition}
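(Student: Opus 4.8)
The plan is to evaluate both parts by inserting $\standardisation{z} = z - \standardiser{z} \differential$ into the Moore decomposition of definition~\ref{def:moore_decomposition_of_analysed_2-cochains}\ref{def:moore_decomposition_of_analysed_2-cochains:crossed_modules}, so that $(m) \standardisation{z}_{\ModulePart} = (m, 1, 1) z - (m, 1, 1)(\standardiser{z} \differential)$ and $(h, g) \standardisation{z}_{\GroupPart} = (1, h, g) z - (1, h, g)(\standardiser{z} \differential)$, and then to expand the two occurring differentials by the formula for $\differential \colon \CochainComplex[1](V, M) \map \CochainComplex[2](V, M)$ from working base~\ref{wb:low_dimensional_analysed_cocycles}\ref{wb:low_dimensional_analysed_cocycles:crossed_module}. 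Throughout I will use the split form of the standardiser from remark~\ref{rem:standardiser_splitted}, the identities $[m] \structuremorphism = m \structuremorphism$ and $\overline{[p]} = p$ from notation~\ref{not:section_systems}, and I will feed the intermediate expressions into the multiplicativity and action characterisations of proposition~\ref{prop:characterising_properties_of_analysed_2-cocycles}\ref{prop:characterising_properties_of_analysed_2-cocycles:crossed_modules}.

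For the module part of~\ref{prop:module_part_and_group_part_of_standardisation_of_pointed_2-cocycles_and_coboundaries:cocycles} the computation is short. Expanding the differential at $(m, 1, 1)$ and using that $\standardiser{z}$ is pointed leaves $(m) \standardisation{z}_{\ModulePart} = (m) z_{\ModulePart} - (m \structuremorphism) \standardiser{z}$; since $\overline{m \structuremorphism} = 1$ the standardiser collapses to $(m \structuremorphism) \standardiser{z} = ([m]) z_{\ModulePart}$. It then remains to combine $(m) z_{\ModulePart} - ([m]) z_{\ModulePart}$ into $(m [m]^{-1}) z_{\ModulePart}$ by multiplicativity; the superfluous correction term $(m [m]^{-1}, [m]) z_{\GroupPart}$ vanishes because $(m [m]^{-1}) \structuremorphism = 1$ makes its first group argument trivial, whence remark~\ref{rem:projections_of_2-cocycles_of_groups} together with pointedness of $z_{\GroupPart}$ kills it.

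The group part of~\ref{prop:module_part_and_group_part_of_standardisation_of_pointed_2-cocycles_and_coboundaries:cocycles} is where the real work lies. Expanding the differential at $(1, h, g)$ gives $(h, g) \standardisation{z}_{\GroupPart} = (h, g) z_{\GroupPart} - (h) \standardiser{z} + (h g) \standardiser{z} - \overline{h} \act (g) \standardiser{z}$; I would substitute the split standardiser of remark~\ref{rem:standardiser_splitted} into each of the three evaluations, then apply the action property to rewrite $\overline{h} \act ([g [\overline{g}]^{-1}]) z_{\ModulePart}$ as $({^h}[g [\overline{g}]^{-1}]) z_{\ModulePart}$ plus group-part corrections, and finally multiplicativity to collapse the three remaining $z_{\ModulePart}$-evaluations into the single argument $[h [\overline{h}]^{-1}]^{-1} \, {^h}([g [\overline{g}]^{-1}]^{-1}) [h g [\overline{h g}]^{-1}]$ of the claim. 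The main obstacle is the ensuing bookkeeping: one accumulates many $z_{\GroupPart}$-terms, coming from multiplicativity, from the action property, and from the leading $(h, g) z_{\GroupPart}$, and must check that they reduce to exactly $- ([\overline{h}, \overline{g}], [\overline{h} \overline{g}]) z_{\GroupPart} + ([\overline{h}], [\overline{g}]) z_{\GroupPart}$. This uses the $2$-cocycle identity for $z_{\GroupPart}$ (proposition~\ref{prop:characterising_properties_of_analysed_2-cocycles}\ref{prop:characterising_properties_of_analysed_2-cocycles:crossed_modules}\ref{prop:characterising_properties_of_analysed_2-cocycles:crossed_modules:group_part_is_2-cocycle}) together with the lift identities of notation~\ref{not:section_systems}, in particular $[\overline{h}] [\overline{g}] [\overline{h g}]^{-1} = [\overline{h}, \overline{g}] \structuremorphism$ and $[g [\overline{g}]^{-1}] \structuremorphism = g [\overline{g}]^{-1}$, which rewrite the group arguments of the correction terms so that they telescope.

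Part~\ref{prop:module_part_and_group_part_of_standardisation_of_pointed_2-cocycles_and_coboundaries:coboundaries} I would deduce from part~\ref{prop:module_part_and_group_part_of_standardisation_of_pointed_2-cocycles_and_coboundaries:cocycles}, since every pointed $2$-coboundary is in particular a pointed $2$-cocycle. Writing $b = c \differential$ with $c \in \CochainComplex[1]_{\text{pt}}(E, M)$ and expanding the differential, one reads off $(n) b_{\ModulePart} = (n \structuremorphism) c$ and $(r, q) b_{\GroupPart} = (r) c - (r q) c + \overline{r} \act (q) c$. For the module part, the cocycle formula already proved gives $(m) \standardisation{b}_{\ModulePart} = (m [m]^{-1}) b_{\ModulePart} = ((m [m]^{-1}) \structuremorphism) c = (1) c = 0$, using $(m [m]^{-1}) \structuremorphism = 1$ and pointedness of $c$. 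For the group part, substituting the formulas for $b_{\ModulePart}$ and $b_{\GroupPart}$ into the group-part formula of part~\ref{prop:module_part_and_group_part_of_standardisation_of_pointed_2-cocycles_and_coboundaries:cocycles}, the $b_{\ModulePart}$-term evaluates at $[\overline{h}] [\overline{g}] [\overline{h g}]^{-1}$ (again by notation~\ref{not:section_systems}), the two $b_{\GroupPart}$-terms telescope against it, and the remainder is precisely $([\overline{h}]) c - ([\overline{h g}]) c + \overline{h} \act ([\overline{g}]) c = (\overline{h}, \overline{g})(c_0 \differential)$ for the group-cohomology coboundary of $c_0$, where $c_0$ is given by $(p) c_0 = ([p]) c$ as in the statement. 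This final simplification is a direct cancellation rather than a genuine obstacle.
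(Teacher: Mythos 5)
Your proposal is correct and follows essentially the same route as the paper's proof: expand \(\standardisation{z} = z - \standardiser{z}\,\differential\) through the Moore decomposition, reduce everything to \(z_{\ModulePart}\) and \(z_{\GroupPart}\) via proposition~\ref{prop:characterising_properties_of_analysed_2-cocycles}\ref{prop:characterising_properties_of_analysed_2-cocycles:crossed_modules} and remark~\ref{rem:standardiser_splitted}, and deduce part~\ref{prop:module_part_and_group_part_of_standardisation_of_pointed_2-cocycles_and_coboundaries:coboundaries} from part~\ref{prop:module_part_and_group_part_of_standardisation_of_pointed_2-cocycles_and_coboundaries:cocycles} by the same cancellation the paper uses. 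The one caveat is that the telescoping of the accumulated \(z_{\GroupPart}\)-terms in the group-part formula of part~\ref{prop:module_part_and_group_part_of_standardisation_of_pointed_2-cocycles_and_coboundaries:cocycles} is asserted rather than executed -- this is precisely the page-long rearrangement in the paper -- but the ingredients you name (the \(2\)-cocycle identity for \(z_{\GroupPart}\), the multiplicativity and action relations, and \([\overline{h}][\overline{g}][\overline{h g}]^{-1} = [\overline{h}, \overline{g}]\structuremorphism\)) do suffice to complete it.
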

\pagebreak 
\begin{proof} \
\begin{enumerate}
\item We suppose given a pointed \(2\)-cocycle \(z \in \CocycleGroup[2]_{\text{pt}}(E, M)\). By proposition~\ref{prop:characterising_properties_of_analysed_2-cocycles}\ref{prop:characterising_properties_of_analysed_2-cocycles:crossed_modules}, we have
\begin{align*}
(m) \standardisation{z}_{\ModulePart} & = (m) z_{\ModulePart} - (m) (\standardiser{z} \differential)_{\ModulePart} = (m) z_{\ModulePart} - (m) \standardiser{z} \\
& = (m) z_{\ModulePart} - ([m]) z_{\ModulePart} = (m) z_{\ModulePart} + ([m]^{- 1}) z_{\ModulePart} - (m, m^{- 1}) z_{\GroupPart} = (m [m]^{- 1}) z_{\ModulePart}
\end{align*}
for \(m \in \ModulePart E\), and
\begin{align*}
& (h, g) \standardisation{z}_{\GroupPart} = (h, g) z_{\GroupPart} - (h, g) (\standardiser{z} \differential)_{\GroupPart} = (h, g) z_{\GroupPart} - (h) \standardiser{z} + (h g) \standardiser{z} - \overline{h} \act (g) \standardiser{z} \\
& = (h, g) z_{\GroupPart} - ([h [\overline{h}]^{- 1}]) z_{\ModulePart} + (h [\overline{h}]^{- 1}, [\overline{h}]) z_{\GroupPart} + ([h g [\overline{h g}]^{- 1}]) z_{\ModulePart} - (h g [\overline{h g}]^{- 1}, [\overline{h g}]) z_{\GroupPart} \\
& \qquad - \overline{h} \act ([g [\overline{g}]^{- 1}]) z_{\ModulePart} + \overline{h} \act (g [\overline{g}]^{- 1}, [\overline{g}]) z_{\GroupPart} \\
& = (h, g) z_{\GroupPart} + ([h [\overline{h}]^{- 1}]^{- 1}) z_{\ModulePart} - (h [\overline{h}]^{- 1}, [\overline{h}] h^{- 1}) z_{\GroupPart} + (h [\overline{h}]^{- 1}, [\overline{h}]) z_{\GroupPart} + ([h g [\overline{h g}]^{- 1}]) z_{\ModulePart} \\
& \qquad - (h g [\overline{h g}]^{- 1}, [\overline{h g}]) z_{\GroupPart} + \overline{h} \act ([g [\overline{g}]^{- 1}]^{- 1}) z_{\ModulePart} - \overline{h} \act (g [\overline{g}]^{- 1}, [\overline{g}] g^{- 1}) z_{\GroupPart} + \overline{h} \act (g [\overline{g}]^{- 1}, [\overline{g}]) z_{\GroupPart} \\
& = (h, g) z_{\GroupPart} + ([h [\overline{h}]^{- 1}]^{- 1}) z_{\ModulePart} - (h [\overline{h}]^{- 1}, [\overline{h}] h^{- 1}) z_{\GroupPart} + (h [\overline{h}]^{- 1}, [\overline{h}]) z_{\GroupPart} + ([h g [\overline{h g}]^{- 1}]) z_{\ModulePart} \\
& \qquad - (h g [\overline{h g}]^{- 1}, [\overline{h g}]) z_{\GroupPart} + ({^h}([g [\overline{g}]^{- 1}]^{- 1})) z_{\ModulePart} - ({^h}([\overline{g}] g^{- 1}), h) z_{\GroupPart} + (h, [\overline{g}] g^{- 1}) z_{\GroupPart} \\
& \qquad - \overline{h} \act (g [\overline{g}]^{- 1}, [\overline{g}] g^{- 1}) z_{\GroupPart} + \overline{h} \act (g [\overline{g}]^{- 1}, [\overline{g}]) z_{\GroupPart} \\
& = ([h [\overline{h}]^{- 1}]^{- 1}) z_{\ModulePart} + ({^h}([g [\overline{g}]^{- 1}]^{- 1})) z_{\ModulePart} + ([h g [\overline{h g}]^{- 1}]) z_{\ModulePart} + (h, g) z_{\GroupPart} - (h [\overline{h}]^{- 1}, [\overline{h}] h^{- 1}) z_{\GroupPart} \\
& \qquad + (h [\overline{h}]^{- 1}, [\overline{h}]) z_{\GroupPart} - (h g [\overline{h g}]^{- 1}, [\overline{h g}]) z_{\GroupPart} - ({^h}([\overline{g}] g^{- 1}), h) z_{\GroupPart} + (h, [\overline{g}] g^{- 1}) z_{\GroupPart} \\
& \qquad - \overline{h} \act (g [\overline{g}]^{- 1}, [\overline{g}] g^{- 1}) z_{\GroupPart} + \overline{h} \act (g [\overline{g}]^{- 1}, [\overline{g}]) z_{\GroupPart} \\
& = ([h [\overline{h}]^{- 1}]^{- 1}) z_{\ModulePart} + ({^h}([g [\overline{g}]^{- 1}]^{- 1}) [h g [\overline{h g}]^{- 1}]) z_{\ModulePart} + (h [\overline{g}] g^{- 1} h^{- 1}, h g [\overline{h g}]^{- 1}) z_{\GroupPart} + (h, g) z_{\GroupPart} \\
& \qquad - (h [\overline{h}]^{- 1}, [\overline{h}] h^{- 1}) z_{\GroupPart} + (h [\overline{h}]^{- 1}, [\overline{h}]) z_{\GroupPart} - (h g [\overline{h g}]^{- 1}, [\overline{h g}]) z_{\GroupPart} - ({^h}([\overline{g}] g^{- 1}), h) z_{\GroupPart} \\
& \qquad + (h, [\overline{g}] g^{- 1}) z_{\GroupPart} - \overline{h} \act (g [\overline{g}]^{- 1}, [\overline{g}] g^{- 1}) z_{\GroupPart} + \overline{h} \act (g [\overline{g}]^{- 1}, [\overline{g}]) z_{\GroupPart} \\
& = ([h [\overline{h}]^{- 1}]^{- 1} \, {^h}([g [\overline{g}]^{- 1}]^{- 1}) [h g [\overline{h g}]^{- 1}]) z_{\ModulePart} + ([\overline{h}] h^{- 1}, h [\overline{g}] [\overline{h g}]^{- 1}) z_{\GroupPart} + (h [\overline{g}] g^{- 1} h^{- 1}, h g [\overline{h g}]^{- 1}) z_{\GroupPart} \\
& \qquad + (h, g) z_{\GroupPart} - (h [\overline{h}]^{- 1}, [\overline{h}] h^{- 1}) z_{\GroupPart} + (h [\overline{h}]^{- 1}, [\overline{h}]) z_{\GroupPart} - (h g [\overline{h g}]^{- 1}, [\overline{h g}]) z_{\GroupPart} - (h [\overline{g}] g^{- 1} h^{- 1}, h) z_{\GroupPart} \\
& \qquad + (h, [\overline{g}] g^{- 1}) z_{\GroupPart} - \overline{h} \act (g [\overline{g}]^{- 1}, [\overline{g}] g^{- 1}) z_{\GroupPart} + \overline{h} \act (g [\overline{g}]^{- 1}, [\overline{g}]) z_{\GroupPart} \\
& = ([h [\overline{h}]^{- 1}]^{- 1} \, {^h}([g [\overline{g}]^{- 1}]^{- 1}) [h g [\overline{h g}]^{- 1}]) z_{\ModulePart} + ([\overline{h}] [\overline{g}], [\overline{h g}]^{- 1}) z_{\GroupPart} - (h [\overline{g}], [\overline{h g}]^{- 1}) z_{\GroupPart} \\
& \qquad + ([\overline{h}] h^{- 1}, h [\overline{g}]) z_{\GroupPart} + (h [\overline{g}], [\overline{h g}]^{- 1}) z_{\GroupPart} - (h g, [\overline{h g}]^{- 1}) z_{\GroupPart} + (h [\overline{g}] g^{- 1} h^{- 1}, h g) z_{\GroupPart} + (h, g) z_{\GroupPart} \\
& \qquad + (h g, [\overline{h g}]^{- 1}) z_{\GroupPart} - (h, h^{- 1}) z_{\GroupPart} + ([\overline{h}], h^{- 1}) z_{\GroupPart} - \overline{h g} \act ([\overline{h g}]^{- 1}, [\overline{h g}]) z_{\GroupPart} - (h [\overline{g}], g^{- 1}) z_{\GroupPart} \\
& \qquad + (h [\overline{g}], g^{- 1} h^{- 1}) z_{\GroupPart} - \overline{h g} \act (g^{- 1} h^{- 1}, h) z_{\GroupPart} + (h [\overline{g}], g^{- 1}) z_{\GroupPart} - \overline{h} \act ([\overline{g}], g^{- 1}) z_{\GroupPart} + (h, [\overline{g}]) z_{\GroupPart} \\
& \qquad - \overline{h} \act (g, g^{- 1}) z_{\GroupPart} + \overline{h} \act ([\overline{g}], g^{- 1}) z_{\GroupPart} \\
& = ([h [\overline{h}]^{- 1}]^{- 1} \, {^h}([g [\overline{g}]^{- 1}]^{- 1}) [h g [\overline{h g}]^{- 1}]) z_{\ModulePart} + ([\overline{h}] [\overline{g}], [\overline{h g}]^{- 1}) z_{\GroupPart} + ([\overline{h}] h^{- 1}, h [\overline{g}]) z_{\GroupPart} \\
& \qquad + (h [\overline{g}] g^{- 1} h^{- 1}, h g) z_{\GroupPart} + (h, g) z_{\GroupPart} - (h, h^{- 1}) z_{\GroupPart} + ([\overline{h}], h^{- 1}) z_{\GroupPart} - \overline{h g} \act ([\overline{h g}]^{- 1}, [\overline{h g}]) z_{\GroupPart} \\
& \qquad + (h [\overline{g}], g^{- 1} h^{- 1}) z_{\GroupPart} - \overline{h g} \act (g^{- 1} h^{- 1}, h) z_{\GroupPart} + (h, [\overline{g}]) z_{\GroupPart} - \overline{h} \act (g, g^{- 1}) z_{\GroupPart} \\
& = ([h [\overline{h}]^{- 1}]^{- 1} \, {^h}([g [\overline{g}]^{- 1}]^{- 1}) [h g [\overline{h g}]^{- 1}]) z_{\ModulePart} + ([\overline{h}] [\overline{g}], [\overline{h g}]^{- 1}) z_{\GroupPart} + ([\overline{h}], [\overline{g}]) z_{\GroupPart} - (h, [\overline{g}]) z_{\GroupPart} \\
& \qquad + ([\overline{h}] h^{- 1}, h) z_{\GroupPart} - (h [\overline{g}], g^{- 1} h^{- 1}) z_{\GroupPart} + \overline{h g} \act (g^{- 1} h^{- 1}, h g) z_{\GroupPart} + (h, g) z_{\GroupPart} - (h, h^{- 1}) z_{\GroupPart} \\
& \qquad + ([\overline{h}], h^{- 1}) z_{\GroupPart} - \overline{h g} \act ([\overline{h g}]^{- 1}, [\overline{h g}]) z_{\GroupPart} + (h [\overline{g}], g^{- 1} h^{- 1}) z_{\GroupPart} - \overline{h g} \act (g^{- 1} h^{- 1}, h) z_{\GroupPart} + (h, [\overline{g}]) z_{\GroupPart} \\
& \qquad - \overline{h} \act (g, g^{- 1}) z_{\GroupPart} \\
& = ([h [\overline{h}]^{- 1}]^{- 1} \, {^h}([g [\overline{g}]^{- 1}]^{- 1}) [h g [\overline{h g}]^{- 1}]) z_{\ModulePart} + ([\overline{h}] [\overline{g}], [\overline{h g}]^{- 1}) z_{\GroupPart} + ([\overline{h}], [\overline{g}]) z_{\GroupPart} + ([\overline{h}] h^{- 1}, h) z_{\GroupPart} \\
& \qquad + \overline{h g} \act (g^{- 1} h^{- 1}, h g) z_{\GroupPart} + (h, g) z_{\GroupPart} - (h, h^{- 1}) z_{\GroupPart} + ([\overline{h}], h^{- 1}) z_{\GroupPart} - \overline{h g} \act ([\overline{h g}]^{- 1}, [\overline{h g}]) z_{\GroupPart} \\
& \qquad - \overline{h g} \act (g^{- 1} h^{- 1}, h) z_{\GroupPart} - \overline{h} \act (g, g^{- 1}) z_{\GroupPart} \\
& = ([h [\overline{h}]^{- 1}]^{- 1} \, {^h}([g [\overline{g}]^{- 1}]^{- 1}) [h g [\overline{h g}]^{- 1}]) z_{\ModulePart} + ([\overline{h}] [\overline{g}], [\overline{h g}]^{- 1}) z_{\GroupPart} + ([\overline{h}], [\overline{g}]) z_{\GroupPart} - ([\overline{h}], h^{- 1}) z_{\GroupPart} \\
& \qquad + \overline{h} \act (h^{- 1}, h) z_{\GroupPart} + \overline{h g} \act (g^{- 1}, g) z_{\GroupPart} - (h, g) z_{\GroupPart} + \overline{h g} \act (g^{- 1} h^{- 1}, h) z_{\GroupPart} + (h, g) z_{\GroupPart} - (h, h^{- 1}) z_{\GroupPart} \\
& \qquad + ([\overline{h}], h^{- 1}) z_{\GroupPart} - \overline{h g} \act ([\overline{h g}]^{- 1}, [\overline{h g}]) z_{\GroupPart} - \overline{h g} \act (g^{- 1} h^{- 1}, h) z_{\GroupPart} - \overline{h} \act (g, g^{- 1}) z_{\GroupPart} \\
& = ([h [\overline{h}]^{- 1}]^{- 1} \, {^h}([g [\overline{g}]^{- 1}]^{- 1}) [h g [\overline{h g}]^{- 1}]) z_{\ModulePart} + ([\overline{h}] [\overline{g}], [\overline{h g}]^{- 1}) z_{\GroupPart} + ([\overline{h}], [\overline{g}]) z_{\GroupPart} + \overline{h} \act (h^{- 1}, h) z_{\GroupPart} \\
& \qquad + \overline{h g} \act (g^{- 1}, g) z_{\GroupPart} - (h, h^{- 1}) z_{\GroupPart} - \overline{h g} \act ([\overline{h g}]^{- 1}, [\overline{h g}]) z_{\GroupPart} - \overline{h} \act (g, g^{- 1}) z_{\GroupPart} \\
& = ([h [\overline{h}]^{- 1}]^{- 1} \, {^h}([g [\overline{g}]^{- 1}]^{- 1}) [h g [\overline{h g}]^{- 1}]) z_{\ModulePart} + ([\overline{h}] [\overline{g}], [\overline{h g}]^{- 1}) z_{\GroupPart} + ([\overline{h}], [\overline{g}]) z_{\GroupPart} + (h, h^{- 1}) z_{\GroupPart} \\
& \qquad + \overline{h} \act (g, g^{- 1}) z_{\GroupPart} - (h, h^{- 1}) z_{\GroupPart} - \overline{h g} \act ([\overline{h g}]^{- 1}, [\overline{h g}]) z_{\GroupPart} - \overline{h} \act (g, g^{- 1}) z_{\GroupPart} \\
& = ([h [\overline{h}]^{- 1}]^{- 1} \, {^h}([g [\overline{g}]^{- 1}]^{- 1}) [h g [\overline{h g}]^{- 1}]) z_{\ModulePart} + ([\overline{h}] [\overline{g}], [\overline{h g}]^{- 1}) z_{\GroupPart} - \overline{h g} \act ([\overline{h g}]^{- 1}, [\overline{h g}]) z_{\GroupPart} + ([\overline{h}], [\overline{g}]) z_{\GroupPart} \\
& = ([h [\overline{h}]^{- 1}]^{- 1} \, {^h}([g [\overline{g}]^{- 1}]^{- 1}) [h g [\overline{h g}]^{- 1}]) z_{\ModulePart} - ([\overline{h}] [\overline{g}] [\overline{h g}]^{- 1}, [\overline{h g}]) z_{\GroupPart} + ([\overline{h}], [\overline{g}]) z_{\GroupPart} \\
& = ([h [\overline{h}]^{- 1}]^{- 1} \, {^h}([g [\overline{g}]^{- 1}]^{- 1}) [h g [\overline{h g}]^{- 1}]) z_{\ModulePart} - ([\overline{h}, \overline{g}], [\overline{h g}]) z_{\GroupPart} + ([\overline{h}], [\overline{g}]) z_{\GroupPart}
\end{align*}
for \(g, h \in \GroupPart E\).
\item By~\ref{prop:module_part_and_group_part_of_standardisation_of_pointed_2-cocycles_and_coboundaries:cocycles}, we have
\[(m) \standardisation{b}_{\ModulePart} = (m [m]^{- 1}) b_{\ModulePart} = (m [m]^{- 1}) (c \differential)_{\ModulePart} = (m m^{- 1}) c = 0\]
for \(m \in \ModulePart E\) and
\begin{align*}
& (h, g) \standardisation{b}_{\GroupPart} = ([h [\overline{h}]^{- 1}]^{- 1} \, {^h}([g [\overline{g}]^{- 1}]^{- 1}) [h g [\overline{h g}]^{- 1}]) b_{\ModulePart} - ([\overline{h}, \overline{g}], [\overline{h} \overline{g}]) b_{\GroupPart} + ([\overline{h}], [\overline{g}]) b_{\GroupPart} \\
& = ([h [\overline{h}]^{- 1}]^{- 1} \, {^h}([g [\overline{g}]^{- 1}]^{- 1}) [h g [\overline{h g}]^{- 1}]) (c \differential)_{\ModulePart} - ([\overline{h}, \overline{g}], [\overline{h} \overline{g}]) (c \differential)_{\GroupPart} + ([\overline{h}], [\overline{g}]) (c \differential)_{\GroupPart} \\
& = ((h [\overline{h}]^{- 1})^{- 1} \, {^h}((g [\overline{g}]^{- 1})^{- 1}) (h g [\overline{h g}]^{- 1})) c - ([\overline{h}, \overline{g}]) c + ([\overline{h}, \overline{g}] [\overline{h} \overline{g}]) c - ([\overline{h} \overline{g}]) c + ([\overline{h}]) c - ([\overline{h}] [\overline{g}]) c \\
& \qquad + \overline{h} \act ([\overline{g}]) c \\
& = - ([\overline{h} \overline{g}]) c + ([\overline{h}]) c + \overline{h} \act ([\overline{g}]) c = (\overline{h}) c_0 - (\overline{h} \overline{g}) c_0 + \overline{h} \act (\overline{g}) c_0 = (\overline{h}, \overline{g}) (c_0 \differential)
\end{align*}
for \(g, h \in \GroupPart E\). \qedhere
\end{enumerate}
\end{proof}

\begin{corollary} \label{cor:second_cohomology_group_computable_by_standard_2-cocycles} \
\begin{enumerate}
\item \label{cor:second_cohomology_group_computable_by_standard_2-cocycles:properties_of_standardised_cocycles} Given a pointed \(2\)-cocycle \(z \in \CocycleGroup[2]_{\text{pt}}(E, M)\), we have
\[([m]) \standardisation{z}_{\ModulePart} = (g [\overline{g}]^{- 1}, [\overline{g}]) \standardisation{z}_{\GroupPart} = 0\]
for \(m \in \ModulePart E\), \(g \in \GroupPart E\).
\item \label{cor:second_cohomology_group_computable_by_standard_2-cocycles:characterisation_of_standard_cocycles} We have
\[\CocycleGroup[2]_{\text{st}}(E, M) = \{z \in \CocycleGroup[2]_{\text{pt}}(E, M) \mid \text{\(([m]) z_{\ModulePart} = (g [\overline{g}]^{- 1}, [\overline{g}]) z_{\GroupPart} = 0\) for all \(m \in \ModulePart E\), \(g \in \GroupPart E\)}\}.\]
In particular, the standardisation \(\standardisation{z}\) of every \(z \in \CocycleGroup[2](E, M)\) is standard.
\item \label{cor:second_cohomology_group_computable_by_standard_2-cocycles:isomorphism_on_cohomology} The embedding \(\CocycleGroup[2]_{\text{st}}(E, M) \map \CocycleGroup[2]_{\text{pt}}(E, M)\) and the standardisation homomorphism \(\CocycleGroup[2]_{\text{pt}}(E, M) \map \CocycleGroup[2]_{\text{st}}(E, M)\), \(z \mapsto \standardisation{z}\) induce mutually inverse isomorphisms between \(\CohomologyGroup[2]_{\text{st}}(E, M)\) and \(\CohomologyGroup[2]_{\text{pt}}(E, M)\). In particular,
\[\CohomologyGroup[2](E, M) \isomorphic \CohomologyGroup[2]_{\text{st}}(E, M).\]
\end{enumerate}
\end{corollary}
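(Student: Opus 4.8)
The plan is to dispatch the three parts in turn, relying throughout on the explicit module-part and group-part formulas for the standardisation recorded in proposition~\ref{prop:module_part_and_group_part_of_standardisation_of_pointed_2-cocycles_and_coboundaries}, so that only bookkeeping with the section-system identities of notation~\ref{not:section_systems} remains. For part~\ref{cor:second_cohomology_group_computable_by_standard_2-cocycles:properties_of_standardised_cocycles} I would substitute the two distinguished arguments into those formulas. For the module part, evaluating $(m) \standardisation{z}_{\ModulePart} = (m [m]^{-1}) z_{\ModulePart}$ with $m$ replaced by $[m]$ and using $[[m]] = [m]$ (which holds because $[m] \structuremorphism = m \structuremorphism$) collapses the argument to $1$, so pointedness of $z$ gives $([m]) \standardisation{z}_{\ModulePart} = (1) z_{\ModulePart} = 0$. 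For the group part, I would evaluate the group-part formula at $(h, g) = (g [\overline{g}]^{-1}, [\overline{g}])$; here $\overline{g [\overline{g}]^{-1}} = 1$ and $\overline{[\overline{g}]} = \overline{g}$, so $g [\overline{g}]^{-1} \in \Image \structuremorphism$ and all correction factors trivialise. Using $[1] = 1$ (both $s^0$ and $s^1$ are pointed), the module-part contribution reduces to $(1) z_{\ModulePart} = 0$, while the two remaining summands become expressions of the form $(1, \cdot) z_{\GroupPart}$, which vanish by remark~\ref{rem:projections_of_2-cocycles_of_groups}, the componentwise pointedness of $Z^2$, and pointedness of $z$. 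This group-part simplification is the only laborious step, but since proposition~\ref{prop:module_part_and_group_part_of_standardisation_of_pointed_2-cocycles_and_coboundaries} has already absorbed the long manipulation, what is left is merely specialising arguments and invoking pointedness.

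For part~\ref{cor:second_cohomology_group_computable_by_standard_2-cocycles:characterisation_of_standard_cocycles}, the inclusion of $\CocycleGroup[2]_{\text{st}}(E, M)$ in the right-hand set is immediate from part~\ref{cor:second_cohomology_group_computable_by_standard_2-cocycles:properties_of_standardised_cocycles}, since $z$ standard means $\standardisation{z} = z$. For the reverse inclusion I would show that the two stated conditions force the standardiser to vanish: by remark~\ref{rem:standardiser_splitted}, $(g) \standardiser{z} = ([g [\overline{g}]^{-1}]) z_{\ModulePart} - (g [\overline{g}]^{-1}, [\overline{g}]) z_{\GroupPart}$, whose second summand is killed directly by the group-part hypothesis, while $g [\overline{g}]^{-1} \in \Kernel \canonicalepimorphism = \Image \structuremorphism$ makes $[g [\overline{g}]^{-1}]$ of the form $[m]$, so the first summand is killed by the module-part hypothesis. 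Hence $\standardiser{z} = 0$ and $\standardisation{z} = z$, i.e.\ $z$ is standard. The ``in particular'' is then the combination of the two parts: for any pointed $z$, the element $\standardisation{z}$ satisfies both conditions by part~\ref{cor:second_cohomology_group_computable_by_standard_2-cocycles:properties_of_standardised_cocycles} and therefore lies in $\CocycleGroup[2]_{\text{st}}(E, M)$ by the set equality just proved.

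For part~\ref{cor:second_cohomology_group_computable_by_standard_2-cocycles:isomorphism_on_cohomology} I would argue formally. Since the standardiser is additive in $z$, the map $z \mapsto \standardisation{z} = z - \standardiser{z} \differential$ is a homomorphism $\CocycleGroup[2]_{\text{pt}}(E, M) \map \CocycleGroup[2]_{\text{st}}(E, M)$, the codomain being justified by the ``in particular'' of part~\ref{cor:second_cohomology_group_computable_by_standard_2-cocycles:characterisation_of_standard_cocycles}. It carries $\CoboundaryGroup[2]_{\text{pt}}(E, M)$ into $\CoboundaryGroup[2]_{\text{st}}(E, M)$: for $b = c \differential$ with $c \in \CochainComplex[1]_{\text{pt}}(E, M)$ one has $\standardisation{b} = (c - \standardiser{b}) \differential$, a pointed $2$-coboundary, which is standard by part~\ref{cor:second_cohomology_group_computable_by_standard_2-cocycles:characterisation_of_standard_cocycles}. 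The embedding trivially maps $\CoboundaryGroup[2]_{\text{st}}(E, M)$ into $\CoboundaryGroup[2]_{\text{pt}}(E, M)$, so both maps descend to cohomology. They are mutually inverse there: standardising a standard cocycle returns it unchanged, and for pointed $z$ the difference $z - \standardisation{z} = \standardiser{z} \differential$ is a pointed $2$-coboundary, so embedding after standardising is the identity on $\CohomologyGroup[2]_{\text{pt}}(E, M)$. Finally, $\CohomologyGroup[2](E, M) \isomorphic \CohomologyGroup[2]_{\text{pt}}(E, M)$ by the discussion of pointed cochains in section~\ref{ssec:pointed_cochains} (applied to $\Coskeleton_1 E$), whence $\CohomologyGroup[2](E, M) \isomorphic \CohomologyGroup[2]_{\text{st}}(E, M)$.
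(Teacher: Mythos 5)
Your proof is correct and follows essentially the same route as the paper: parts~\ref{cor:second_cohomology_group_computable_by_standard_2-cocycles:properties_of_standardised_cocycles} and~\ref{cor:second_cohomology_group_computable_by_standard_2-cocycles:characterisation_of_standard_cocycles} are obtained exactly as in the text by specialising the formulas of proposition~\ref{prop:module_part_and_group_part_of_standardisation_of_pointed_2-cocycles_and_coboundaries} and remark~\ref{rem:standardiser_splitted} and using \([[m]] = [m]\), \([1] = 1\) and the componentwise pointedness of \(Z^2\). In part~\ref{cor:second_cohomology_group_computable_by_standard_2-cocycles:isomorphism_on_cohomology} the paper packages your direct ``mutually inverse on cohomology'' verification as an application of Noether's first isomorphism law to \(\CocycleGroup[2]_{\text{pt}}(E, M) = \CocycleGroup[2]_{\text{st}}(E, M) + \CoboundaryGroup[2]_{\text{pt}}(E, M)\) and \(\CoboundaryGroup[2]_{\text{st}}(E, M) = \CocycleGroup[2]_{\text{st}}(E, M) \intersection \CoboundaryGroup[2]_{\text{pt}}(E, M)\), but the substance is identical.
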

\begin{proof} \
\begin{enumerate}
\item We suppose given a pointed \(2\)-cocycle \(z \in \CocycleGroup[2]_{\text{pt}}(E, M)\). Proposition~\ref{prop:module_part_and_group_part_of_standardisation_of_pointed_2-cocycles_and_coboundaries}\ref{prop:module_part_and_group_part_of_standardisation_of_pointed_2-cocycles_and_coboundaries:cocycles} implies
\[([m]) \standardisation{z}_{\ModulePart} = ([m] [m]^{- 1}) z_{\ModulePart} = 0\]
for \(m \in \ModulePart E\) and
\[(g [\overline{g}]^{- 1}, [\overline{g}]) \standardisation{z}_{\GroupPart} = ([g [\overline{g}]^{- 1}]^{- 1} [g [\overline{g}]^{- 1}]) z_{\ModulePart} - ([1, \overline{g}], [\overline{g}]) z_{\GroupPart} + (1, [\overline{g}]) z_{\GroupPart} = 0\]
for \(g \in \GroupPart E\).
\item Given a standard \(2\)-cocycle \(z \in \CocycleGroup[2]_{\text{st}}(E, M)\), we have \(([m]) z_{\ModulePart} = ([m]) \standardisation{z}_{\ModulePart} = 0\) for all \(m \in \ModulePart E\) and \((g [\overline{g}]^{- 1}, [\overline{g}]) z_{\GroupPart} = (g [\overline{g}]^{- 1}, [\overline{g}]) \standardisation{z}_{\GroupPart} = 0\) for all \(g \in \GroupPart E\) by~\ref{cor:second_cohomology_group_computable_by_standard_2-cocycles:properties_of_standardised_cocycles}. Conversely, given a pointed \(2\)-cocycle \(z \in \CocycleGroup[2]_{\text{pt}}(E, M)\) with \(([m]) z_{\ModulePart} = (g [\overline{g}]^{- 1}, [\overline{g}]) z_{\GroupPart} = 0\) for all \(m \in \ModulePart E\), \(g \in \GroupPart E\), it follows that
\[(g) \standardiser{z} = ([g [\overline{g}]^{- 1}]) z_{\ModulePart} - (g [\overline{g}]^{- 1}, [\overline{g}]) z_{\GroupPart} = 0\]
for all \(g \in \GroupPart E\), that is, \(\standardiser{z} = 0\). Hence \(\standardisation{z} = z - \standardiser{z} \differential = z\), that is, \(z\) is standard. Altogether, we have
\[\CocycleGroup[2]_{\text{st}}(E, M) = \{z \in \CocycleGroup[2]_{\text{pt}}(E, M) \mid \text{\(([m]) z_{\ModulePart} = (g [\overline{g}]^{- 1}, [\overline{g}]) z_{\GroupPart} = 0\) for all \(m \in \ModulePart E\), \(g \in \GroupPart E\)}\}\]
and a further application of~\ref{cor:second_cohomology_group_computable_by_standard_2-cocycles:properties_of_standardised_cocycles} shows that \(\standardisation{z} \in \CocycleGroup[2]_{\text{st}}(E, M)\) for all \(z \in \CocycleGroup[2](E, M)\).
\item By definition of the standardisation, we have \(z = \standardisation{z} + \standardiser{z} \differential\) for every pointed \(2\)-cocycle \(z \in \CocycleGroup[2]_{\text{pt}}(E, M)\) and since the standardisation \(\standardisation{z}\) is standard by~\ref{cor:second_cohomology_group_computable_by_standard_2-cocycles:characterisation_of_standard_cocycles}, it follows that
\[\CohomologyGroup[2]_{\text{pt}}(E, M) = \CocycleGroup[2]_{\text{pt}}(E, M) / \CoboundaryGroup[2]_{\text{pt}}(E, M) = (\CocycleGroup[2]_{\text{st}}(E, M) + \CoboundaryGroup[2]_{\text{pt}}(E, M)) / \CoboundaryGroup[2]_{\text{pt}}(E, M).\]
Moreover,
\[\CohomologyGroup[2]_{\text{st}}(E, M) = \CocycleGroup[2]_{\text{st}}(E, M) / \CoboundaryGroup[2]_{\text{st}}(E, M) = \CocycleGroup[2]_{\text{st}}(E, M) / (\CocycleGroup[2]_{\text{st}}(E, M) \intersection \CoboundaryGroup[2]_{\text{pt}}(E, M)),\]
and thus Noether's first law of isomorphism provides the asserted isomorphisms
\begin{align*}
& \CohomologyGroup[2]_{\text{st}}(E, M) \map \CohomologyGroup[2]_{\text{pt}}(E, M), z + \CoboundaryGroup[2]_{\text{st}}(E, M) \mapsto z + \CoboundaryGroup[2]_{\text{pt}}(E, M) \text{ and} \\
& \CohomologyGroup[2]_{\text{pt}}(E, M) \map \CohomologyGroup[2]_{\text{st}}(E, M), z + \CoboundaryGroup[2]_{\text{pt}}(E, M) \mapsto \standardisation{z} + \CoboundaryGroup[2]_{\text{st}}(E, M).
\end{align*}
In particular, we have
\[\CohomologyGroup[2](E, M) \isomorphic \CohomologyGroup[2]_{\text{pt}}(E, M) \isomorphic \CohomologyGroup[2]_{\text{st}}(E, M),\]
cf.\ section~\ref{ssec:pointed_cochains}. \qedhere
\end{enumerate}
\end{proof}

Similarly to proposition~\ref{prop:characterising_properties_of_analysed_2-cocycles}, we will give in proposition~\ref{prop:characterising_properties_of_standard_2-cocycles_and_coboundaries_of_crossed_module_extensions} a characterisation of standard \(2\)-cocycles and \(2\)-coboundaries. For convenience, we introduce the following abbreviation first.

\begin{notation} \label{not:kernel_element}
For \(g, h \in \GroupPart E\), we abbreviate \((h, g) \kappa := [h [\overline{h}]^{- 1}]^{- 1} \, {^h}([g [\overline{g}]^{- 1}]^{- 1}) [h g [\overline{h g}]^{- 1}] [\overline{h}, \overline{g}]^{- 1} \in \Kernel \structuremorphism\).
\end{notation}

\begin{proposition} \label{prop:characterising_properties_of_standard_2-cocycles_and_coboundaries_of_crossed_module_extensions} \
\begin{enumerate}
\item \label{prop:characterising_properties_of_standard_2-cocycles_and_coboundaries_of_crossed_module_extensions:cocycles} A pointed \(2\)-cochain \(z \in \CochainComplex[2]_{\text{pt}}(E, M)\) is a standard \(2\)-cocycle if and only if the following conditions hold:
\begin{enumerate}
\item \label{prop:characterising_properties_of_standard_2-cocycles_and_coboundaries_of_crossed_module_extensions:cocycles:decomposition} We have \((m, h, g) z = (m) z_{\ModulePart} - (m, h) z_{\GroupPart} + (h, g) z_{\GroupPart}\) for \(m \in \ModulePart V\), \(g, h \in \GroupPart V\).
\item \label{prop:characterising_properties_of_standard_2-cocycles_and_coboundaries_of_crossed_module_extensions:cocycles:module_part} We have \((m) z_{\ModulePart} = (m [m]^{- 1}) z_{\ModulePart}\) for \(m \in \ModulePart E\).
\item \label{prop:characterising_properties_of_standard_2-cocycles_and_coboundaries_of_crossed_module_extensions:cocycles:group_part} We have \((h, g) z_{\GroupPart} = ((h, g) \kappa) z_{\ModulePart} + ([\overline{h}], [\overline{g}]) z_{\GroupPart}\)
 for \(g, h \in \GroupPart E\).
\item \label{prop:characterising_properties_of_standard_2-cocycles_and_coboundaries_of_crossed_module_extensions:cocycles:pi1} We have \(\canonicalmonomorphism z_{\ModulePart} \in \Hom_{\Pi_0}(\Pi_1, M)\).
\item \label{prop:characterising_properties_of_standard_2-cocycles_and_coboundaries_of_crossed_module_extensions:cocycles:pi0} We have \(([r, q, p] \canonicalmonomorphism) z_{\ModulePart} = (r, q, p) (((s^0 \cart s^0) z_{\GroupPart}) \differential)\) for \(p, q, r \in \Pi_0\).
\end{enumerate}
\item \label{prop:characterising_properties_of_standard_2-cocycles_and_coboundaries_of_crossed_module_extensions:coboundaries} A pointed \(2\)-cochain \(b \in \CochainComplex[2]_{\text{pt}}(E, M)\) is a standard \(2\)-coboundary if and only if the following conditions hold:
\begin{enumerate}
\item \label{prop:characterising_properties_of_standard_2-cocycles_and_coboundaries_of_crossed_module_extensions:coboundaries:module_part} We have \(b_{\ModulePart} = 0\).
\item \label{prop:characterising_properties_of_standard_2-cocycles_and_coboundaries_of_crossed_module_extensions:coboundaries:group_part} There exists a pointed \(1\)-cochain \(c_0 \in \CochainComplex[1]_{\text{pt}}(\Pi_0, M)\) such that \((h, g) b_{\GroupPart} = (\overline{h}, \overline{g}) (c_0 \differential)\) for \(g, h \in \GroupPart E\).
\end{enumerate}
\end{enumerate}
\end{proposition}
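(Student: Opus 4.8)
The plan is to intersect two earlier characterisations. Being a \emph{cocycle} is governed by proposition~\ref{prop:characterising_properties_of_analysed_2-cocycles}\ref{prop:characterising_properties_of_analysed_2-cocycles:crossed_modules} (decomposition, $z_{\GroupPart}$ a group $2$-cocycle, multiplicativity, action), while being \emph{standard} is governed by corollary~\ref{cor:second_cohomology_group_computable_by_standard_2-cocycles}\ref{cor:second_cohomology_group_computable_by_standard_2-cocycles:characterisation_of_standard_cocycles}, i.e.\ by $([m]) z_{\ModulePart} = 0$ and $(g [\overline{g}]^{- 1}, [\overline{g}]) z_{\GroupPart} = 0$. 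I will show that the conjunction of these is equivalent to the listed conditions, the translation being carried by the explicit standardisation formulas of proposition~\ref{prop:module_part_and_group_part_of_standardisation_of_pointed_2-cocycles_and_coboundaries}. Throughout I use that $\Image \canonicalmonomorphism = \Kernel \structuremorphism$ is central in $\ModulePart E$, that $\Image \structuremorphism = \Kernel \canonicalepimorphism$, that $[m] \structuremorphism = m \structuremorphism$ and $[[m]] = [m]$, the relations of notation~\ref{not:section_systems}, and remark~\ref{rem:projections_of_2-cocycles_of_groups}.

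For the forward implication of part~\ref{prop:characterising_properties_of_standard_2-cocycles_and_coboundaries_of_crossed_module_extensions:cocycles} I suppose $z$ standard. The decomposition condition is the one holding for any $2$-cocycle. The module-part condition is immediate from $(m) \standardisation{z}_{\ModulePart} = (m [m]^{- 1}) z_{\ModulePart}$ and $\standardisation{z} = z$. For the group-part condition I feed into the group-part standardisation formula the standardness relation $([\overline{h}] [\overline{g}] [\overline{h g}]^{- 1}, [\overline{h g}]) z_{\GroupPart} = 0$ (which is standardness applied to $[\overline{h}] [\overline{g}]$, whose image under $\canonicalepimorphism$ is $\overline{h g}$) together with multiplicativity of $z_{\ModulePart}$; the surviving terms collapse to $((h, g) \kappa) z_{\ModulePart} + ([\overline{h}], [\overline{g}]) z_{\GroupPart}$ because $(h, g) \kappa \in \Kernel \structuremorphism$ kills the correction $2$-cochains via remark~\ref{rem:projections_of_2-cocycles_of_groups}. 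The $\Pi_1$-condition is obtained by restricting the multiplicativity and action properties of $z_{\ModulePart}$ to $\Image \canonicalmonomorphism$, where each $z_{\GroupPart}$-correction has a trivial argument. The substantial step is the $\Pi_0$-condition: I expand $([r, q, p] \canonicalmonomorphism) z_{\ModulePart}$ via the defining relation $[r, q, p] \canonicalmonomorphism = [r, q] [r q, p] [r, q p]^{- 1} \, {}^{[r]}([q, p]^{- 1})$, apply multiplicativity and the action formula factor by factor, and verify that the result telescopes into $(r, q, p) (((s^0 \cart s^0) z_{\GroupPart}) \differential)$. This telescoping, which mirrors the construction of $\cocycleofextension{3}$, is the first main obstacle.

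For the converse of part~\ref{prop:characterising_properties_of_standard_2-cocycles_and_coboundaries_of_crossed_module_extensions:cocycles} I start from the module-part condition: applying it to $[m]$ and using $[[m]] = [m]$ yields $([m]) z_{\ModulePart} = 0$, the first standardness condition. The key identity is $([a] [b] [a b]^{- 1}) z_{\ModulePart} = - (a, b) z_{\GroupPart}$ for $a, b \in \Image \structuremorphism$: specialising the group-part condition to $h, g \in \Image \structuremorphism$ (so $[\overline{h}] = [\overline{g}] = 1$) and evaluating $\kappa$ by the Peiffer identity gives $(h, g) \kappa = ([h] [g] [h g]^{- 1})^{- 1}$, after which the identity follows from the additivity of $\canonicalmonomorphism z_{\ModulePart}$ on the central kernel. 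Reducing to the kernel through the module-part condition and combining this identity with the additivity and equivariance of $\canonicalmonomorphism z_{\ModulePart}$ then produces the multiplicativity and action properties. To obtain that $z_{\GroupPart}$ is a $2$-cocycle of $\GroupPart E$ I rewrite $z_{\GroupPart}(h, g) = ((h, g) \kappa) z_{\ModulePart} + ([\overline{h}], [\overline{g}]) z_{\GroupPart}$, so that the group cocycle condition splits into a $\kappa$-contribution and the coboundary of $(q, p) \mapsto ([q], [p]) z_{\GroupPart}$; the latter is absorbed by the $\Pi_0$-condition, and the two combine into an element of $\Kernel \structuremorphism$ that is trivial by the structural cocycle identity satisfied by $\kappa$ (reflecting $\cocycleofextension{3} \differential = 0$), which the additive, equivariant $z_{\ModulePart}$ sends to $0$. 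This verification is the second main obstacle. Finally the second standardness condition follows by computing $(g [\overline{g}]^{- 1}, [\overline{g}]) \kappa = 1$ and invoking remark~\ref{rem:projections_of_2-cocycles_of_groups} with the now-established cocycle property. Proposition~\ref{prop:characterising_properties_of_analysed_2-cocycles} and corollary~\ref{cor:second_cohomology_group_computable_by_standard_2-cocycles} then certify $z$ as a standard $2$-cocycle.

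For part~\ref{prop:characterising_properties_of_standard_2-cocycles_and_coboundaries_of_crossed_module_extensions:coboundaries}, if $b$ is a standard $2$-coboundary I write $b = c \differential$ with $c \in \CochainComplex[1]_{\text{pt}}(E, M)$ (possible since $\CoboundaryGroup[2]_{\text{pt}}(E, M) = (\CochainComplex[1]_{\text{pt}}(E, M)) \differential$), and proposition~\ref{prop:module_part_and_group_part_of_standardisation_of_pointed_2-cocycles_and_coboundaries}\ref{prop:module_part_and_group_part_of_standardisation_of_pointed_2-cocycles_and_coboundaries:coboundaries} with $\standardisation{b} = b$ reads off $b_{\ModulePart} = 0$ and $(h, g) b_{\GroupPart} = (\overline{h}, \overline{g}) (c_0 \differential)$ for the pointed $1$-cochain $(p) c_0 := ([p]) c$. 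Conversely, given these two conditions I set $(g) c := (\overline{g}) c_0$; a direct computation of $c \differential$ gives $(c \differential)_{\ModulePart} = 0$ and $(c \differential)_{\GroupPart} = b_{\GroupPart}$, and proposition~\ref{prop:module_part_and_group_part_of_standardisation_of_pointed_2-cocycles_and_coboundaries}\ref{prop:module_part_and_group_part_of_standardisation_of_pointed_2-cocycles_and_coboundaries:coboundaries} shows $c \differential$ is standard. Since a standard $2$-cocycle is determined by its module and group parts through the decomposition, this forces $b = c \differential$, so $b$ is a standard $2$-coboundary. This part is routine.
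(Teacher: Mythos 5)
Your proposal is correct and follows essentially the same route as the paper: both directions are obtained by intersecting the cocycle characterisation of proposition~\ref{prop:characterising_properties_of_analysed_2-cocycles}\ref{prop:characterising_properties_of_analysed_2-cocycles:crossed_modules} with the standardness characterisation of corollary~\ref{cor:second_cohomology_group_computable_by_standard_2-cocycles}\ref{cor:second_cohomology_group_computable_by_standard_2-cocycles:characterisation_of_standard_cocycles}, translated through the standardisation formulas of proposition~\ref{prop:module_part_and_group_part_of_standardisation_of_pointed_2-cocycles_and_coboundaries}, and the two computational obstacles you single out (the telescoping for condition~\ref{prop:characterising_properties_of_standard_2-cocycles_and_coboundaries_of_crossed_module_extensions:cocycles}\ref{prop:characterising_properties_of_standard_2-cocycles_and_coboundaries_of_crossed_module_extensions:cocycles:pi0} and the \(\kappa\)-cocycle identity yielding \(z_{\GroupPart} \in \CocycleGroup[2](\GroupPart E, M)\)) are exactly the two long computations in the paper's proof. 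Your intermediate identity \(([a][b][a b]^{-1}) z_{\ModulePart} = -(a, b) z_{\GroupPart}\) is only a repackaging of the paper's reduction to a single evaluation of \(z_{\ModulePart}\) on a product of central kernel elements, and your treatment of part~\ref{prop:characterising_properties_of_standard_2-cocycles_and_coboundaries_of_crossed_module_extensions:coboundaries} (including the tacit use of the decomposition property of \(b\) to pass from equality of module and group parts to \(b = c \differential\)) coincides with the paper's.
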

\begin{proof} \
\begin{enumerate}
\item First, we suppose given a standard \(2\)-cocycle \(z \in \CocycleGroup[2]_{\text{st}}(E, M)\). We verify the asserted formulas:
\begin{enumerate}
\item Since \(z\) is in particular a \(2\)-cocycle, this property holds by proposition~\ref{prop:characterising_properties_of_analysed_2-cocycles}\ref{prop:characterising_properties_of_analysed_2-cocycles:crossed_modules}\ref{prop:characterising_properties_of_analysed_2-cocycles:crossed_modules:decomposition}.
\item By corollary~\ref{cor:second_cohomology_group_computable_by_standard_2-cocycles}\ref{cor:second_cohomology_group_computable_by_standard_2-cocycles:characterisation_of_standard_cocycles}, we have
\[(m) z_{\ModulePart} = (m [m]^{- 1} [m]) z_{\ModulePart} = (m [m]^{- 1}) z_{\ModulePart} + ([m])z_{\ModulePart} - (1, m) z_{\GroupPart} = (m [m]^{- 1}) z_{\ModulePart}\]
for \(m \in \ModulePart E\).
\item By proposition~\ref{prop:module_part_and_group_part_of_standardisation_of_pointed_2-cocycles_and_coboundaries}\ref{prop:module_part_and_group_part_of_standardisation_of_pointed_2-cocycles_and_coboundaries:cocycles}, proposition~\ref{prop:characterising_properties_of_analysed_2-cocycles}\ref{prop:characterising_properties_of_analysed_2-cocycles:crossed_modules}\ref{prop:characterising_properties_of_analysed_2-cocycles:crossed_modules:multiplicativity}, corollary~\ref{cor:second_cohomology_group_computable_by_standard_2-cocycles}\ref{cor:second_cohomology_group_computable_by_standard_2-cocycles:characterisation_of_standard_cocycles} and~\ref{prop:characterising_properties_of_standard_2-cocycles_and_coboundaries_of_crossed_module_extensions:cocycles:module_part}, we have
\begin{align*}
& (h, g) z_{\GroupPart} = (h, g) \standardisation{z}_{\GroupPart} \\
& = ([h [\overline{h}]^{- 1}]^{- 1} \, {^h}([g [\overline{g}]^{- 1}]^{- 1}) [h g [\overline{h g}]^{- 1}]) z_{\ModulePart} - ([\overline{h}, \overline{g}], [\overline{h} \overline{g}]) z_{\GroupPart} + ([\overline{h}], [\overline{g}]) z_{\GroupPart} \\
& = (([h [\overline{h}]^{- 1}]^{- 1} \, {^h}([g [\overline{g}]^{- 1}]^{- 1}) [h g [\overline{h g}]^{- 1}] [\overline{h}, \overline{g}]^{- 1}) [\overline{h}, \overline{g}]) z_{\ModulePart} - ([\overline{h}] [\overline{g}] [\overline{h} \overline{g}]^{- 1}, [\overline{h} \overline{g}]) z_{\GroupPart} + ([\overline{h}], [\overline{g}]) z_{\GroupPart} \\
& = ([h [\overline{h}]^{- 1}]^{- 1} \, {^h}([g [\overline{g}]^{- 1}]^{- 1}) [h g [\overline{h g}]^{- 1}] [\overline{h}, \overline{g}]^{- 1}) z_{\ModulePart} + ([\overline{h}], [\overline{g}]) z_{\GroupPart} \\
& = ((h, g) \kappa) z_{\ModulePart} + ([\overline{h}], [\overline{g}]) z_{\GroupPart}
\end{align*}
for \(g, h \in \GroupPart E\).
\item We have \(\canonicalmonomorphism z_{\ModulePart} \in \Hom_{\Pi_0}(\Pi_1, M)\) by proposition~\ref{prop:characterising_properties_of_analysed_2-cocycles}\ref{prop:characterising_properties_of_analysed_2-cocycles:crossed_modules}\ref{prop:characterising_properties_of_analysed_2-cocycles:crossed_modules:multiplicativity} and~\ref{prop:characterising_properties_of_analysed_2-cocycles:crossed_modules:action}.
\item Using proposition~\ref{prop:characterising_properties_of_analysed_2-cocycles}\ref{prop:characterising_properties_of_analysed_2-cocycles:crossed_modules} and corollary~\ref{cor:second_cohomology_group_computable_by_standard_2-cocycles}\ref{cor:second_cohomology_group_computable_by_standard_2-cocycles:characterisation_of_standard_cocycles}, we compute
\begin{align*}
& ([r, q, p] \canonicalmonomorphism) z_{\ModulePart} = ([r, q] [r q, p] [r, q p]^{- 1} ({^{[r]}}[q, p])^{- 1}) z_{\ModulePart} = ([r, q] [r q, p]) z_{\ModulePart} - ({^{[r]}}[q, p] [r, q p]) z_{\ModulePart} \\
& = ([r, q]) z_{\ModulePart} + ([r q, p]) z_{\ModulePart} - ([r, q], [r q, p]) z_{\GroupPart} - ({^{[r]}}[q, p]) z_{\ModulePart} - ([r, q p]) z_{\ModulePart} \\
& \qquad + ({^{[r]}}[q, p], [r, q p]) z_{\GroupPart} \\
& = - ([r, q], [r q, p]) z_{\GroupPart} - r \act ([q, p]) z_{\ModulePart} - ({^{[r]}}[q, p], [r]) z_{\GroupPart} + ([r], [q, p]) z_{\GroupPart} + ({^{[r]}}[q, p], [r, q p]) z_{\GroupPart} \\
& = - ([r, q], [r q] [p] [r q p]^{- 1}) z_{\GroupPart} - ([r] [q, p] [r]^{- 1}, [r]) z_{\GroupPart} + ([r], [q, p]) z_{\GroupPart} \\
& \qquad + ([r] [q, p] [r]^{- 1}, [r] [q p] [r q p]^{- 1}) z_{\GroupPart} \\
& = - ([r, q] [r q], [p] [r q p]^{- 1}) z_{\GroupPart} + ([r q], [p] [r q p]^{- 1}) z_{\GroupPart} - ([r, q], [r q]) z_{\GroupPart} + ([r], [q, p] [r]^{- 1}) z_{\GroupPart} \\
& \qquad - r \act ([q, p] [r]^{- 1}, [r]) z_{\GroupPart} + ([r], [q, p] [q p] [r q p]^{- 1}) z_{\GroupPart} - ([r], [q, p] [r]^{- 1}) z_{\GroupPart} \\
& \qquad + r \act ([q, p] [r]^{- 1}, [r] [q p] [r q p]^{- 1}) z_{\GroupPart} \\
& = - ([r] [q], [p] [r q p]^{- 1}) z_{\GroupPart} + ([r q], [p] [r q p]^{- 1}) z_{\GroupPart} - ([r] [q] [r q]^{- 1}, [r q]) z_{\GroupPart} - r \act ([q, p] [r]^{- 1}, [r]) z_{\GroupPart} \\
& \qquad + ([r], [q] [p] [r q p]^{- 1}) z_{\GroupPart} + r \act ([q, p] [r]^{- 1}, [r] [q p] [r q p]^{- 1}) z_{\GroupPart} \\
& = ([r], [q]) z_{\GroupPart} - r \act ([q], [p] [r q p]^{- 1}) z_{\GroupPart} + ([r q] [p], [r q p]^{- 1}) z_{\GroupPart} - r q \act ([p], [r q p]^{- 1}) z_{\GroupPart} + ([r q], [p]) z_{\GroupPart} \\
& \qquad + r \act ([q, p], [q p] [r q p]^{- 1}) z_{\GroupPart} - ([r], [q p] [r q p]^{- 1}) z_{\GroupPart} \\
& = ([r], [q]) z_{\GroupPart} - r \act ([q] [p], [r q p]^{- 1}) z_{\GroupPart} - r \act ([q], [p]) z_{\GroupPart} + ([r q] [p], [r q p]^{- 1}) z_{\GroupPart} + ([r q], [p]) z_{\GroupPart} \\
& \qquad + r \act ([q, p] [q p], [r q p]^{- 1}) z_{\GroupPart} - r \act ([q p], [r q p]^{- 1}) z_{\GroupPart} + r \act ([q, p], [q p]) z_{\GroupPart} - ([r] [q p], [r q p]^{- 1}) z_{\GroupPart} \\
& \qquad + r \act ([q p], [r q p]^{- 1}) z_{\GroupPart} - ([r], [q p]) z_{\GroupPart} \\
& = ([r], [q]) z_{\GroupPart} - r \act ([q], [p]) z_{\GroupPart} + ([r q, p] [r q p], [r q p]^{- 1}) z_{\GroupPart} + ([r q], [p]) z_{\GroupPart} \\
& \qquad - ([r, q p] [r q p], [r q p]^{- 1}) z_{\GroupPart} - ([r], [q p]) z_{\GroupPart} \\
& = ([r], [q]) z_{\GroupPart} - r \act ([q], [p]) z_{\GroupPart} + ([r q, p], 1) z_{\GroupPart} - ([r q, p], [r q p]) z_{\GroupPart} + ([r q p], [r q p]^{- 1}) z_{\GroupPart} \\
& \qquad + ([r q], [p]) z_{\GroupPart} - ([r, q p], 1) z_{\GroupPart} + ([r, q p], [r q p]) z_{\GroupPart} - ([r q p], [r q p]^{- 1}) z_{\GroupPart} - ([r], [q p]) z_{\GroupPart} \\
& = ([r], [q]) z_{\GroupPart} - ([r], [q p]) z_{\GroupPart} + ([r q], [p]) z_{\GroupPart} - r \act ([q], [p]) z_{\GroupPart} \\
& = (r, q) ((s^0 \cart s^0) z_{\GroupPart}) - (r, q p) ((s^0 \cart s^0) z_{\GroupPart}) + (r q, p) ((s^0 \cart s^0) z_{\GroupPart}) - r \act (q, p) ((s^0 \cart s^0) z_{\GroupPart}) \\
& = (r, q, p) (((s^0 \cart s^0) z_{\GroupPart}) \differential)
\end{align*}
for \(p, q, r \in \Pi_0\).
\end{enumerate}

Conversely, we suppose given a pointed \(2\)-cochain \(z \in \CochainComplex[2]_{\text{pt}}(E, M)\) that fulfills conditions~\ref{prop:characterising_properties_of_standard_2-cocycles_and_coboundaries_of_crossed_module_extensions:cocycles:decomposition} to~\ref{prop:characterising_properties_of_standard_2-cocycles_and_coboundaries_of_crossed_module_extensions:cocycles:pi0}. To show that \(z\) is a \(2\)-cocycle, we use the characterisation given in proposition~\ref{prop:characterising_properties_of_analysed_2-cocycles}\ref{prop:characterising_properties_of_analysed_2-cocycles:crossed_modules}. First of all, we show that \(z_{\GroupPart} \in \CocycleGroup[2](\GroupPart E, M)\). Indeed, we have
\begin{align*}
& (k, h) \kappa (k h, g) \kappa ((k, h g) \kappa)^{- 1} ({^{[\overline{k}]}}((h, g) \kappa))^{- 1} [\overline{k}, \overline{h}, \overline{g}] \canonicalmonomorphism \\
& = ((k, h) \kappa [\overline{k}, \overline{h}]) ((k h, g) \kappa [\overline{k} \, \overline{h}, \overline{g}]) ((k, h g) \kappa [\overline{k}, \overline{h} \, \overline{g}])^{- 1} \, ({^{[\overline{k}]}}((h, g) \kappa [\overline{h}, \overline{g}]))^{- 1} \\
& = ({^{[\overline{k}]}}([h [\overline{h}]^{- 1}]^{- 1}) [k [\overline{k}]^{- 1}]^{- 1} [k h [\overline{k h}]^{- 1}]) ({^{[\overline{k h}]}}([g [\overline{g}]^{- 1}]^{- 1}) [k h [\overline{k h}]^{- 1}]^{- 1} [k h g [\overline{k h g}]^{- 1}]) \\
& \qquad ({^{[\overline{k}]}}([h g [\overline{h g}]^{- 1}]^{- 1}) [k [\overline{k}]^{- 1}]^{- 1} [k h g [\overline{k h g}]^{- 1}])^{- 1} ({^{[\overline{k}][\overline{h}]}}([g [\overline{g}]^{- 1}]^{- 1}) \, {^{[\overline{k}]}}[h [\overline{h}]^{- 1}]^{- 1} \, {^{[\overline{k}]}}[h g [\overline{h g}]^{- 1}])^{- 1} \\
& = {^{[\overline{k}]}}([h [\overline{h}]^{- 1}]^{- 1}) [k [\overline{k}]^{- 1}]^{- 1} [k h [\overline{k h}]^{- 1}] \, {^{[\overline{k h}]}}([g [\overline{g}]^{- 1}]^{- 1}) [k h [\overline{k h}]^{- 1}]^{- 1} [k h g [\overline{k h g}]^{- 1}] [k h g [\overline{k h g}]^{- 1}]^{- 1} [k [\overline{k}]^{- 1}] \\
& \qquad {^{[\overline{k}]}}[h g [\overline{h g}]^{- 1}] ({^{[\overline{k}]}}[h g [\overline{h g}]^{- 1}])^{- 1} \, {^{[\overline{k}]}}[h [\overline{h}]^{- 1}] \, {^{[\overline{k}][\overline{h}]}}[g [\overline{g}]^{- 1}] \\
& = {^{[\overline{k}]}}([h [\overline{h}]^{- 1}]^{- 1}) [k [\overline{k}]^{- 1}]^{- 1} [k h [\overline{k h}]^{- 1}] \, {^{[\overline{k h}]}}([g [\overline{g}]^{- 1}]^{- 1}) [k h [\overline{k h}]^{- 1}]^{- 1} [k [\overline{k}]^{- 1}] \, {^{[\overline{k}]}}[h [\overline{h}]^{- 1}] \, {^{[\overline{k}][\overline{h}]}}[g [\overline{g}]^{- 1}] \\
& = {^{[\overline{k}]}}([h [\overline{h}]^{- 1}]^{- 1}) \, {^{[k [\overline{k}]^{- 1}]^{- 1} [k h [\overline{k h}]^{- 1}] [\overline{k h}]}}([g [\overline{g}]^{- 1}]^{- 1}) \, {^{[\overline{k}]}}[h [\overline{h}]^{- 1}] \, {^{[\overline{k}][\overline{h}]}}[g [\overline{g}]^{- 1}] \\
& = {^{[\overline{k}]}}([h [\overline{h}]^{- 1}]^{- 1}) \, {^{[\overline{k}] h}}([g [\overline{g}]^{- 1}]^{- 1}) \, {^{[\overline{k}]}}[h [\overline{h}]^{- 1}] \, {^{[\overline{k}][\overline{h}]}}[g [\overline{g}]^{- 1}] \\
& = {^{[\overline{k}]}}([h [\overline{h}]^{- 1}]^{- 1} \, {^h}([g [\overline{g}]^{- 1}]^{- 1}) [h [\overline{h}]^{- 1}] \, {^{[\overline{h}]}}[g [\overline{g}]^{- 1}]) = {^{[\overline{k}]}}( {^{[h [\overline{h}]^{- 1}]^{- 1} h}}([g [\overline{g}]^{- 1}]^{- 1}) \, {^{[\overline{h}]}}[g [\overline{g}]^{- 1}]) \\
& = {^{[\overline{k}]}}( {^{[\overline{h}]}}([g [\overline{g}]^{- 1}]^{- 1}) \, {^{[\overline{h}]}}[g [\overline{g}]^{- 1}]) = 1
\end{align*}
and hence
\begin{align*}
& (k, h, g) (z_{\GroupPart} \differential) = (k, h) z_{\GroupPart} - (k, h g) z_{\GroupPart} + (k h, g) z_{\GroupPart} - \overline{k} \act (h, g) z_{\GroupPart} \\
& = ((k, h) \kappa) z_{\ModulePart} + ([\overline{k}], [\overline{h}]) z_{\GroupPart} - ((k, h g) \kappa) z_{\ModulePart} - ([\overline{k}], [\overline{h g}]) z_{\GroupPart} + ((k h, g) \kappa) z_{\ModulePart} + ([\overline{k h}], [\overline{g}]) z_{\GroupPart} \\
& \qquad - \overline{k} \act ((h, g) \kappa) z_{\ModulePart} - \overline{k} \act ([\overline{h}], [\overline{g}]) z_{\GroupPart} \\
& = ((k, h) \kappa (k h, g) \kappa ((k, h g) \kappa)^{- 1} \, {^{[\overline{k}]}}(((h, g) \kappa)^{- 1})) z_{\ModulePart} + (\overline{k}, \overline{h}, \overline{g}) (((s^0 \cart s^0) z_{\GroupPart}) \differential) \\
& = ((k, h) \kappa (k h, g) \kappa ((k, h g) \kappa)^{- 1} \, ({^{[\overline{k}]}}((h, g) \kappa))^{- 1} [\overline{k}, \overline{h}, \overline{g}] \canonicalmonomorphism) z_{\ModulePart} = 0
\end{align*}
for \(g, h, k \in \GroupPart E\), that is, \(z_{\GroupPart} \in \CocycleGroup[2]_{\text{pt}}(\GroupPart E, M)\). Moreover, we have
\begin{align*}
& (n m) z_{\ModulePart} - (n) z_{\ModulePart} - (m) z_{\ModulePart} + (n, m) z_{\GroupPart} \\
& = (n m [n m]^{- 1}) z_{\ModulePart} - (n [n]^{- 1}) z_{\ModulePart} - (m [m]^{- 1}) z_{\ModulePart} + ((n, m) \kappa) z_{\ModulePart} \\
& = ((n m [n m]^{- 1}) (n [n]^{- 1})^{- 1} (m [m]^{- 1})^{- 1} (n, m) \kappa) z_{\ModulePart} \\
& = ((n [n]^{- 1})^{- 1} n (m [m]^{- 1})^{- 1} m (n, m) \kappa [n m]^{- 1}) z_{\ModulePart} \\
& = ([n] n^{- 1} n [m] m^{- 1} m [n]^{- 1} \, {^n}([m]^{- 1}) [n m] [n m]^{- 1}) z_{\ModulePart} \\
& = ([n] [m] [n]^{- 1} \, {^n}([m]^{- 1})) z_{\ModulePart} = ({^{[n]}}[m] \, {^n}([m]^{- 1})) z_{\ModulePart} = 0
\end{align*}
for \(m, n \in \ModulePart E\) and
\begin{align*}
& ({^g}m) z_{\ModulePart} - \overline{g} \act (m) z_{\ModulePart} - ({^g}m, g) z_{\GroupPart} + (g, m) z_{\GroupPart} \\
& = ({^g}m [{^g}m]^{- 1}) z_{\ModulePart} - \overline{g} \act (m [m]^{- 1}) z_{\ModulePart} - (({^g}m, g) \kappa) z_{\ModulePart} + ((g, m) \kappa) z_{\ModulePart} \\
& = (({^g}m [{^g}m]^{- 1}) \, {^g}((m [m]^{- 1})^{- 1}) (({^g}m, g) \kappa)^{- 1} (g, m) \kappa) z_{\ModulePart} \\
& = (({^g}[m] \, {^g}(m^{- 1})) ({^g}m [{^g}m]^{- 1}) (({^g}m, g) \kappa)^{- 1} (g, m) \kappa) z_{\ModulePart} \\
& = ({^g}[m] (g, m) \kappa (({^g}m, g) \kappa)^{- 1} [{^g}m]^{- 1}) z_{\ModulePart} \\
& = ({^g}[m] ([g [\overline{g}]^{- 1}]^{- 1} \, {^g}([m]^{- 1}) [g m [\overline{g}]^{- 1}]) ([{^g}m]^{- 1} \, {^{{^g}m}}([g [\overline{g}]^{- 1}]^{- 1}) [{^g}m g [\overline{g}]^{- 1}])^{- 1} [{^g}m]^{- 1}) z_{\ModulePart} \\
& = ({^g}[m] [g [\overline{g}]^{- 1}]^{- 1} \, {^g}([m]^{- 1}) [g m [\overline{g}]^{- 1}] [{^g}m g [\overline{g}]^{- 1}]^{- 1} \, {^{{^g}m}}[g [\overline{g}]^{- 1}] [{^g}m] [{^g}m]^{- 1}) z_{\ModulePart} \\
& = ({^g}[m] [g [\overline{g}]^{- 1}]^{- 1} \, {^g}([m]^{- 1}) \, {^{{^g}m}}[g [\overline{g}]^{- 1}]) z_{\ModulePart} = ({^{{^g}[m]}}([g [\overline{g}]^{- 1}]^{- 1}) \, {^{{^g}m}}[g [\overline{g}]^{- 1}]) z_{\ModulePart} = 0
\end{align*}
for \(m \in \ModulePart E\) and \(g \in \GroupPart E\). Altogether, \(z \in \CocycleGroup[2]_{\text{pt}}(E, M)\). Finally, we have
\[([m]) z_{\ModulePart} = ([m] [m]^{- 1}) z_{\ModulePart} = 0\]
for \(m \in \ModulePart E\) and
\[(g [\overline{g}]^{- 1}, [\overline{g}]) z_{\GroupPart} = ((g [\overline{g}]^{- 1}, [\overline{g}]) \kappa) z_{\ModulePart} + (1, [\overline{g}]) z_{\GroupPart} = ([g [\overline{g}]^{- 1}]^{- 1} [g [\overline{g}]^{- 1}]) z_{\ModulePart} = 0\]
for \(g \in \GroupPart E\). Hence \(z \in \CocycleGroup[2]_{\text{st}}(E, M)\) by corollary~\ref{cor:second_cohomology_group_computable_by_standard_2-cocycles}\ref{cor:second_cohomology_group_computable_by_standard_2-cocycles:characterisation_of_standard_cocycles}.
\item We suppose given a standard \(2\)-coboundary \(b \in \CoboundaryGroup[2]_{\text{st}}(E, M)\) and we choose \(c \in \CochainComplex[1]_{\text{pt}}(E, M)\) such that \(b = c \differential\). Letting \(c_0 \in \CochainComplex[1]_{\text{pt}}(\Pi_0, M)\) be defined by \((p) c_0 := ([p]) c\), proposition~\ref{prop:module_part_and_group_part_of_standardisation_of_pointed_2-cocycles_and_coboundaries}\ref{prop:module_part_and_group_part_of_standardisation_of_pointed_2-cocycles_and_coboundaries:coboundaries} implies that \((m) b_{\ModulePart} = (m) \standardisation{b}_{\ModulePart} = 0\) for \(m \in \ModulePart E\) and \((h, g) b_{\GroupPart} = (h, g) \standardisation{b}_{\GroupPart} = (\overline{h}, \overline{g}) (c_0 \differential)\) for \(g, h \in \GroupPart E\).

Conversely, let us suppose that \(b_{\ModulePart} = 0\) and suppose given a pointed \(1\)-cochain \(c_0\in \CochainComplex[1]_{\text{pt}}(\Pi_0, M)\) with \((h, g) b_{\GroupPart} = (\overline{h}, \overline{g}) (c_0 \differential)\) for \(g, h \in \GroupPart E\). Defining \(c \in \CochainComplex[1]_{\text{pt}}(E, M)\) by \((g) c := (\overline{g}) c_0\) for \(g \in \GroupPart E\), we have
\[(m) (c \differential)_{\ModulePart} = (m) c = (\overline{m}) c_0 = 0\]
for \(m \in \ModulePart E\) and
\[(h, g) (c \differential)_{\GroupPart} = (h) c - (h g) c + \overline{h} \act (g) c = (\overline{h}) c_0 - (\overline{h g}) c_0 + \overline{h} \act (\overline{g}) c_0 = (\overline{h}, \overline{g}) (c_0 \differential)\]
for \(g, h \in \GroupPart E\), that is, \(c \differential = b\). Moreover, \(([m]) b_{\ModulePart} = 0\) for all \(m \in \ModulePart E\) and \((g [\overline{g}]^{- 1}, [\overline{g}]) b_{\GroupPart} = (1, \overline{g}) (c_0 \differential) = 0\) for all \(g \in \GroupPart E\). Hence \(b \in \CocycleGroup[2]_{\text{st}}(E, M) \intersection \CoboundaryGroup[2]_{\text{pt}}(E, M) = \CoboundaryGroup[2]_{\text{st}}(E, M)\) by corollary~\ref{cor:second_cohomology_group_computable_by_standard_2-cocycles}\ref{cor:second_cohomology_group_computable_by_standard_2-cocycles:characterisation_of_standard_cocycles}. \qedhere
\end{enumerate}
\end{proof}

\begin{definition}[cocycle, coboundary and cohomology group of a \(3\)-cocycle] \label{def:cocycle_coboundary_and_cohomology_group_of_a_3-cocycle}
For a \(3\)-cocycle \(z^3 \in \CocycleGroup[3](\Pi_0, \Pi_1)\), we set 
\begin{align*}
& \CocycleGroup[2]((\Pi_0, \Pi_1, z^3), M) := \Hom_{\Pi_0}(\Pi_1, M) \fibreprod{\Map(z^3, M)|_{\Hom_{\Pi_0}(\Pi_1, M)}}{\differential} \CochainComplex[2]_{\text{cpt}}(\Pi_0, M), \\
& \CoboundaryGroup[2]((\Pi_0, \Pi_1, z^3), M) := \{0\} \directprod \CoboundaryGroup[2]_{\text{cpt}}(\Pi_0, M) \text{ and} \\
& \CohomologyGroup[2]((\Pi_0, \Pi_1, z^3), M) := \CocycleGroup[2]((\Pi_0, \Pi_1, z^3), M) / \CoboundaryGroup[2]((\Pi_0, \Pi_1, z^3), M).
\end{align*}
\end{definition}

\begin{corollary} \label{cor:second_cohomology_group_of_crossed_module_extension_via_3-cocycle}
We have group homomorphisms \(\Phi_1\colon \CocycleGroup[2]_{\text{st}}(E, M) \map \Hom_{\Pi_0}(\Pi_1, M)\) and \(\Phi_0\colon \CocycleGroup[2]_{\text{st}}(E, M) \map \CochainComplex[2]_{\text{pt}}(\Pi_0, M)\) given by \((k) (z \Phi_1) := (k \canonicalmonomorphism) z_{\ModulePart}\) for \(k \in \Pi_1\) and \((q, p) (z \Phi_0) := ([q], [p]) z_{\GroupPart}\) for \(p, q \in \Pi_0\), \(z \in \CocycleGroup[2]_{\text{st}}(E, M)\). These group homomorphisms fit into the following diagram, which is a pullback of abelian groups.
\[\begin{tikzpicture}[baseline=(m-2-1.base)]
  \matrix (m) [matrix of math nodes, row sep=2.5em, column sep=5.0em, column 1/.style={anchor=base east}, column 2/.style={anchor=base west}, text height=1.6ex, text depth=0.45ex, inner sep=0pt, nodes={inner sep=0.333em}]{
    \CocycleGroup[2]_{\text{st}}(E, M) & \Hom_{\Pi_0}(\Pi_1, M) \\
    \CochainComplex[2]_{\text{cpt}}(\Pi_0, M) & \CochainComplex[3]_{\text{cpt}}(\Pi_0, M) \\};
  \path[->, font=\scriptsize] let \p1=(1.25em, 0) in
    (m-1-1) edge node[above] {\(\Phi_1\)} (m-1-2)
    ($(m-1-1.south east)-(\p1)$) edge node[left] {\(\Phi_0\)} ($(m-2-1.north east)-(\p1)$)
    ($(m-1-2.south west)+(\p1)$) edge node[right] {\(\Map(\cocycleofextension{3}, M)|_{\Hom_{\Pi_0}(\Pi_1, M)}\)} ($(m-2-2.north west)+(\p1)$)
    (m-2-1) edge node[above] {\(\differential\)} (m-2-2);
\end{tikzpicture}\]
The induced isomorphism
\[\Phi\colon \CocycleGroup[2]_{\text{st}}(E, M) \map \CocycleGroup[2]((\Pi_0, \Pi_1, \cocycleofextension{3}), M), z \mapsto (z \Phi_1, z \Phi_0),\]
whose inverse
\[\Psi\colon \CocycleGroup[2]((\Pi_0, \Pi_1, \cocycleofextension{3}), M) \map \CocycleGroup[2]_{\text{st}}(E, M)\]
is given by \((m, h, g) ((z_1, c_0) \Psi) = ((m [m]^{- 1} ((m, h) \kappa)^{- 1} (h, g) \kappa) (\canonicalmonomorphism|^{\Image \canonicalmonomorphism})^{- 1}) z_1 + (\overline{h}, \overline{g}) c_0\) for \(m \in \ModulePart E\), \(g, h \in \GroupPart E\), induces in turn isomorphisms \(\CoboundaryGroup[2]_{\text{st}}(E, M) \map \CoboundaryGroup[2]((\Pi_0, \Pi_1, \cocycleofextension{3}), M)\) and \(\CohomologyGroup[2]_{\text{st}}(E, M) \map \CohomologyGroup[2]((\Pi_0, \Pi_1, \cocycleofextension{3}), M)\). In particular, we have
\[\CohomologyGroup[2](E, M) \isomorphic \CohomologyGroup[2]((\Pi_0, \Pi_1, \cocycleofextension{3}), M).\]
\end{corollary}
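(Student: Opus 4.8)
The plan is to read everything off the characterisation of standard \(2\)-cocycles and \(2\)-coboundaries in proposition~\ref{prop:characterising_properties_of_standard_2-cocycles_and_coboundaries_of_crossed_module_extensions}, whose five conditions for a standard \(2\)-cocycle are exactly what is needed to split such a cocycle into the pair \((z \Phi_1, z \Phi_0)\) and to recognise the fibre product \(\CocycleGroup[2]((\Pi_0, \Pi_1, \cocycleofextension{3}), M)\) of definition~\ref{def:cocycle_coboundary_and_cohomology_group_of_a_3-cocycle}. First I would check that \(\Phi_1\) and \(\Phi_0\) are group homomorphisms with the asserted codomains: that \(z \Phi_1 = \canonicalmonomorphism z_{\ModulePart} \in \Hom_{\Pi_0}(\Pi_1, M)\) is condition~\ref{prop:characterising_properties_of_standard_2-cocycles_and_coboundaries_of_crossed_module_extensions:cocycles:pi1}, while \(z \Phi_0 = (s^0 \cart s^0) z_{\GroupPart}\) is componentwise pointed because \(z_{\GroupPart}\) is a pointed \(2\)-cocycle of \(\GroupPart E\), hence componentwise pointed by remark~\ref{rem:projections_of_2-cocycles_of_groups} and corollary~\ref{cor:2-cocycle_of_groups_are_compentwise_pointed_if_and_only_if_they_are_pointed}, together with \(1 s^0 = 1\). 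Additivity in \(z\) is immediate, since \(z \mapsto z_{\ModulePart}\) and \(z \mapsto z_{\GroupPart}\) are additive.

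Next I would verify that the square commutes. Unravelling the two composites at an argument \((r, q, p)\), the upper route gives \(([r, q, p] \canonicalmonomorphism) z_{\ModulePart}\), while the lower route gives \((r, q, p)(((s^0 \cart s^0) z_{\GroupPart}) \differential)\); their equality is precisely condition~\ref{prop:characterising_properties_of_standard_2-cocycles_and_coboundaries_of_crossed_module_extensions:cocycles:pi0}. (One also checks that \(\differential\) and \(\Map(\cocycleofextension{3}, M)\) carry their arguments into \(\CochainComplex[3]_{\text{cpt}}(\Pi_0, M)\), which is routine.) Thus \(\Phi = (\Phi_1, \Phi_0)\) lands in the fibre product.

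The core step is to prove \(\Phi\) bijective by showing the explicit \(\Psi\) is a two-sided inverse; this simultaneously establishes the pullback claim, as the fibre product is the pullback of its cospan. Here I would first confirm that \(z := (z_1, c_0) \Psi\) is a genuine standard \(2\)-cocycle by verifying conditions~\ref{prop:characterising_properties_of_standard_2-cocycles_and_coboundaries_of_crossed_module_extensions:cocycles:decomposition}--\ref{prop:characterising_properties_of_standard_2-cocycles_and_coboundaries_of_crossed_module_extensions:cocycles:pi0}. From the defining formula one finds \((m) z_{\ModulePart} = ((m [m]^{-1})(\canonicalmonomorphism|^{\Image \canonicalmonomorphism})^{-1}) z_1\) (using \((m \structuremorphism, 1) \kappa = 1\)) and \((h, g) z_{\GroupPart} = (((h, g) \kappa)(\canonicalmonomorphism|^{\Image \canonicalmonomorphism})^{-1}) z_1 + (\overline{h}, \overline{g}) c_0\) (using \((1, h) \kappa = 1\)); the key observations are that \(m [m]^{-1}\) and \((h, g) \kappa\) lie in \(\Kernel \structuremorphism = \Image \canonicalmonomorphism\), so that \((\canonicalmonomorphism|^{\Image \canonicalmonomorphism})^{-1}\) applies, and that \(\Kernel \structuremorphism\) is central, so that the additive \(z_1\) turns the products of the defining formula into sums. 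Conditions~\ref{prop:characterising_properties_of_standard_2-cocycles_and_coboundaries_of_crossed_module_extensions:cocycles:decomposition}, \ref{prop:characterising_properties_of_standard_2-cocycles_and_coboundaries_of_crossed_module_extensions:cocycles:module_part} and~\ref{prop:characterising_properties_of_standard_2-cocycles_and_coboundaries_of_crossed_module_extensions:cocycles:group_part} then follow from these two formulas, condition~\ref{prop:characterising_properties_of_standard_2-cocycles_and_coboundaries_of_crossed_module_extensions:cocycles:pi1} from \(\canonicalmonomorphism z_{\ModulePart} = z_1\), and condition~\ref{prop:characterising_properties_of_standard_2-cocycles_and_coboundaries_of_crossed_module_extensions:cocycles:pi0} from the fibre product relation \(\cocycleofextension{3} z_1 = c_0 \differential\). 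Evaluating \(\Phi_1, \Phi_0\) on this \(z\) returns \((z_1, c_0)\) (using \([q, p] = [[q][p][q p]^{-1}]\) to see \(([q], [p]) \kappa = 1\)), so \(\Phi \Psi = \id\); conversely, for standard \(z\) the same two formulas together with conditions~\ref{prop:characterising_properties_of_standard_2-cocycles_and_coboundaries_of_crossed_module_extensions:cocycles:module_part} and~\ref{prop:characterising_properties_of_standard_2-cocycles_and_coboundaries_of_crossed_module_extensions:cocycles:group_part} recover \(z_{\ModulePart}\) and \(z_{\GroupPart}\), hence \(z\) itself by the decomposition~\ref{prop:characterising_properties_of_standard_2-cocycles_and_coboundaries_of_crossed_module_extensions:cocycles:decomposition}, giving \(\Psi \Phi = \id\).

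Finally I would transport this isomorphism to coboundaries and cohomology. By the coboundary characterisation~\ref{prop:characterising_properties_of_standard_2-cocycles_and_coboundaries_of_crossed_module_extensions:coboundaries}, a standard \(2\)-coboundary has \(b_{\ModulePart} = 0\) and \((h, g) b_{\GroupPart} = (\overline{h}, \overline{g})(c_0 \differential)\), so \(b \Phi_1 = 0\) and \(b \Phi_0 = c_0 \differential \in \CoboundaryGroup[2]_{\text{cpt}}(\Pi_0, M)\); conversely \(\Psi\) sends \(\{0\} \directprod \CoboundaryGroup[2]_{\text{cpt}}(\Pi_0, M)\) back into \(\CoboundaryGroup[2]_{\text{st}}(E, M)\) by the same characterisation. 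Hence \(\Phi\) restricts to the asserted isomorphism \(\CoboundaryGroup[2]_{\text{st}}(E, M) \map \CoboundaryGroup[2]((\Pi_0, \Pi_1, \cocycleofextension{3}), M)\) and induces an isomorphism \(\CohomologyGroup[2]_{\text{st}}(E, M) \map \CohomologyGroup[2]((\Pi_0, \Pi_1, \cocycleofextension{3}), M)\); combining with \(\CohomologyGroup[2](E, M) \isomorphic \CohomologyGroup[2]_{\text{st}}(E, M)\) from corollary~\ref{cor:second_cohomology_group_computable_by_standard_2-cocycles}\ref{cor:second_cohomology_group_computable_by_standard_2-cocycles:isomorphism_on_cohomology} yields the final statement. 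I expect the main obstacle to be the well-definedness of \(\Psi\): turning the bare formula into a verified standard \(2\)-cocycle requires the bookkeeping above, and in particular condition~\ref{prop:characterising_properties_of_standard_2-cocycles_and_coboundaries_of_crossed_module_extensions:cocycles:pi0} is where the specific choice of \(\cocycleofextension{3}\) and the fibre product constraint become indispensable.
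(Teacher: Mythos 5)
Your proposal is correct and follows essentially the same route as the paper: the well-definedness and commutativity of the square come from conditions~\ref{prop:characterising_properties_of_standard_2-cocycles_and_coboundaries_of_crossed_module_extensions:cocycles:pi1} and~\ref{prop:characterising_properties_of_standard_2-cocycles_and_coboundaries_of_crossed_module_extensions:cocycles:pi0} of proposition~\ref{prop:characterising_properties_of_standard_2-cocycles_and_coboundaries_of_crossed_module_extensions}, the explicit formula for \(\Psi\) is exactly the map the paper constructs (there phrased via the universal property with a test group \(T\), which amounts to the same verification that the formula yields a standard \(2\)-cocycle), and the passage to coboundaries and cohomology uses proposition~\ref{prop:characterising_properties_of_standard_2-cocycles_and_coboundaries_of_crossed_module_extensions}\ref{prop:characterising_properties_of_standard_2-cocycles_and_coboundaries_of_crossed_module_extensions:coboundaries} and corollary~\ref{cor:second_cohomology_group_computable_by_standard_2-cocycles}\ref{cor:second_cohomology_group_computable_by_standard_2-cocycles:isomorphism_on_cohomology} just as in the paper. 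The only deviation is presentational (two-sided inverse versus universal property), not mathematical.
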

\begin{proof}
By proposition~\ref{prop:characterising_properties_of_standard_2-cocycles_and_coboundaries_of_crossed_module_extensions}\ref{prop:characterising_properties_of_standard_2-cocycles_and_coboundaries_of_crossed_module_extensions:cocycles}\ref{prop:characterising_properties_of_standard_2-cocycles_and_coboundaries_of_crossed_module_extensions:cocycles:pi1} and~\ref{prop:characterising_properties_of_standard_2-cocycles_and_coboundaries_of_crossed_module_extensions:cocycles:pi0}, the group homomorphisms \(\Phi_0\) and \(\Phi_1\) are well-defined and the quadrangle commutes. To show that it is a pullback of abelian groups, we suppose given an arbitrary abelian group \(T\) as well as group homomorphisms \(\varphi_0\colon T \map \CochainComplex[2]_{\text{cpt}}(\Pi_0, M)\) and \(\varphi_1\colon T \map \Hom_{\Pi_0}(\Pi_1, M)\) such that \(\varphi_1 \Map(\cocycleofextension{3}, M)|_{\Hom_{\Pi_0}(\Pi_1, M)} = \varphi_0 \differential\), that is, with \(([r, q, p]) (t \varphi_1) = (r, q, p) ((t \varphi_0) \differential)\) for all \(p, q, r \in \Pi_0\), \(t \in T\). For \(t \in T\), we define a pointed \(2\)-cochain \(t \varphi \in \CochainComplex[2]_{\text{pt}}(E, M)\) by
\[(m, h, g) (t \varphi) := ((m [m]^{- 1} ((m, h) \kappa)^{- 1} (h, g) \kappa) (\canonicalmonomorphism|^{\Image \canonicalmonomorphism})^{- 1}) (t \varphi_1) + (\overline{h}, \overline{g}) (t \varphi_0)\]
for \(m \in \ModulePart E\), \(g, h \in \GroupPart E\). Then we obtain \((m) (t \varphi)_{\ModulePart} = ((m [m]^{- 1}) (\canonicalmonomorphism|^{\Image \canonicalmonomorphism})^{- 1}) (t \varphi_1)\) for \(m \in \ModulePart E\) and \((h, g) (t \varphi)_{\GroupPart} = ((h, g) \kappa (\canonicalmonomorphism|^{\Image \canonicalmonomorphism})^{- 1}) (t \varphi_1) + (\overline{h}, \overline{g}) (t \varphi_0)\) for \(g, h \in \GroupPart E\). To show that \(t \varphi\) is a standard \(2\)-cocycle, we verify the conditions in proposition~\ref{prop:characterising_properties_of_standard_2-cocycles_and_coboundaries_of_crossed_module_extensions}\ref{prop:characterising_properties_of_standard_2-cocycles_and_coboundaries_of_crossed_module_extensions:cocycles}. Indeed, using \([m [m]^{- 1}] = ([\overline{h}], [\overline{g}]) \kappa = (1, h) \kappa = 1\) for \(m \in \ModulePart E\), \(g, h \in \GroupPart E\), we have
\begin{align*}
& (m, h, g) (t \varphi) = ((m [m]^{- 1} ((m, h) \kappa)^{- 1} (h, g) \kappa) (\canonicalmonomorphism|^{\Image \canonicalmonomorphism})^{- 1}) (t \varphi_1) + (\overline{h}, \overline{g}) (t \varphi_0) \\
& = ((m [m]^{- 1}) (\canonicalmonomorphism|^{\Image \canonicalmonomorphism})^{- 1}) (t \varphi_1) - ((m, h) \kappa (\canonicalmonomorphism|^{\Image \canonicalmonomorphism})^{- 1}) (t \varphi_1) + ((h, g) \kappa (\canonicalmonomorphism|^{\Image \canonicalmonomorphism})^{- 1}) (t \varphi_1) + (\overline{h}, \overline{g}) (t \varphi_0) \\
& = (m) (t \varphi)_{\ModulePart} - (m, h) (t \varphi)_{\GroupPart} + (h, g) (t \varphi)_{\GroupPart}
\end{align*}
since \(t \varphi_1\) is componentwise pointed as well as
\[(m) (t \varphi)_{\ModulePart} = (m [m]^{- 1}) (t \varphi_1) = (m [m]^{- 1}) (t \varphi)_{\ModulePart}\]
and
\[(h, g) (t \varphi)_{\GroupPart} = ((h, g) \kappa (\canonicalmonomorphism|^{\Image \canonicalmonomorphism})^{- 1}) (t \varphi_1) + (\overline{h}, \overline{g}) (t \varphi_0) = ((h, g) \kappa) (t \varphi)_{\ModulePart} + ([\overline{h}], [\overline{g}]) (t \varphi)_{\GroupPart}\]
for \(m \in \ModulePart E\), \(g, h \in \GroupPart E\). Moreover, \(\canonicalmonomorphism (t \varphi)_{\ModulePart} = t \varphi_1 \in \Hom_{\Pi_0}(\Pi_1, M)\) and
\[([r, q, p]) (t \varphi)_{\ModulePart} = ([r, q, p] \canonicalmonomorphism) (t \varphi_1) = (r, q, p) ((t \varphi_0) \differential) = (r, q, p) (((s^0 \cart s^0) (t \varphi)_{\GroupPart}) \differential)\]
for \(p, q, r \in \Pi_0\). Altogether, \(t \varphi \in \CocycleGroup[2]_{\text{st}}(E, M)\) for all \(t \in T\), and we have constructed a well-defined group homomorphism \(\varphi\colon T \map \CocycleGroup[2]_{\text{st}}(E, M)\). Finally, we have
\[(k) ((t \varphi) \Phi_1) = (k \canonicalmonomorphism) (t \varphi)_{\ModulePart} = (k) (t \varphi_1)\]
for \(k \in \Pi_1\), \(t \in T\), and
\[(q, p) ((t \varphi) \Phi_0) = ([q], [p]) (t \varphi)_{\GroupPart} = (([q], [p]) \kappa (\canonicalmonomorphism|^{\Image \canonicalmonomorphism})^{- 1}) (t \varphi_1) + (q, p) (t \varphi_0) = (q, p) (t \varphi_0)\]
for \(p, q \in \Pi_0\), \(t \in T\), that is, \(\varphi \Phi_1 = \varphi_1\) and \(\varphi \Phi_0 = \varphi_0\).

Conversely, given an arbitrary group homomorphism \(\varphi\colon T \map \CocycleGroup[2]_{\text{st}}(E, M)\) with \(\varphi \Phi_1 = \varphi_1\) and \(\varphi \Phi_0 = \varphi_0\), we necessarily have
\[(m) (t \varphi)_{\ModulePart} = (m [m]^{- 1}) (t \varphi)_{\ModulePart} = ((m [m]^{- 1}) (\canonicalmonomorphism|^{\Image \canonicalmonomorphism})^{- 1}) (t \varphi \Phi_1) = ((m [m]^{- 1}) (\canonicalmonomorphism|^{\Image \canonicalmonomorphism})^{- 1}) (t \varphi_1)\]
for \(m \in \ModulePart E\), and
\begin{align*}
(h, g) (t \varphi)_{\GroupPart} & = ((h, g) \kappa) (t \varphi)_{\ModulePart} + ([\overline{h}], [\overline{g}]) (t \varphi)_{\GroupPart} = ((h, g) \kappa) (t \varphi)_{\ModulePart} + (\overline{h}, \overline{g}) (t \varphi \Phi_0) \\
& = ((h, g) \kappa (\canonicalmonomorphism|^{\Image \canonicalmonomorphism})^{- 1}) (t \varphi_1) + (\overline{h}, \overline{g}) (t \varphi_0)
\end{align*}
for \(g, h \in \GroupPart E\). This shows the uniqueness of the induced homomorphism. Altogether, the diagram under consideration is a pullback of abelian groups.

Our next step is to show that the induced isomorphism \(\Phi\colon \CocycleGroup[2]_{\text{st}}(E, M) \map \CocycleGroup[2]((\Pi_0, \Pi_1, \cocycleofextension{3}), M)\) restricts to an isomorphism \(\CoboundaryGroup[2]_{\text{st}}(E, M) \map \CoboundaryGroup[2]((\Pi_0, \Pi_1, \cocycleofextension{3}), M)\). Given a standard \(2\)-coboundary \(b \in \CoboundaryGroup[2]_{\text{st}}(E, M)\), proposition~\ref{prop:characterising_properties_of_standard_2-cocycles_and_coboundaries_of_crossed_module_extensions}\ref{prop:characterising_properties_of_standard_2-cocycles_and_coboundaries_of_crossed_module_extensions:coboundaries} states that \(b_{\ModulePart} = 0\) and that there exists a pointed \(1\)-cochain \(c_0 \in \CochainComplex[1]_{\text{pt}}(\Pi_0, M)\) with \((h, g) b_{\GroupPart} = (\overline{h}, \overline{g}) (c_0 \differential)\) for \(g, h \in \GroupPart E\). In particular, \(b \Phi_1 = 0\) and
\[(q, p) (b \Phi_0) = ([q], [p]) b_{\GroupPart} = (q, p) (c_0 \differential)\]
for \(p, q \in \Pi_0\) and hence \(b \Phi_0 \in \CoboundaryGroup[2](\Pi_0, M)\). Conversely, we suppose given a standard \(2\)-cocycle \(b \in \CocycleGroup[2]_{\text{st}}(E, M)\) with \(b \Phi_1 = 0\) and \(b \Phi_0 \in \CoboundaryGroup[2]_{\text{pt}}(\Pi_0, M)\), that is, there exists a pointed \(1\)-cochain \(c_0 \in \CochainComplex[1]_{\text{pt}}(\Pi_0, M)\) with \(b \Phi_0 = c_0 \differential\). Then
\[(m) b_{\ModulePart} = (m [m]^{- 1}) b_{\ModulePart} = ((m [m]^{- 1}) (\canonicalmonomorphism|^{\Image \canonicalmonomorphism})^{- 1}) (b \Phi_1) = 0\]
for all \(m \in \ModulePart E\) and
\[(h, g) b_{\GroupPart} = ((h, g) \kappa) b_{\ModulePart} + ([\overline{h}], [\overline{g}]) b_{\GroupPart} = (\overline{h}, \overline{g}) (b \Phi_0) = (\overline{h}, \overline{g}) (c_0 \differential)\]
for all \(g, h \in \GroupPart E\). Hence \(b\) is a standard \(2\)-coboundary by proposition~\ref{prop:characterising_properties_of_standard_2-cocycles_and_coboundaries_of_crossed_module_extensions}\ref{prop:characterising_properties_of_standard_2-cocycles_and_coboundaries_of_crossed_module_extensions:coboundaries}.

Altogether, \(\Phi\) restricts to an isomorphism \(\CoboundaryGroup[2]_{\text{st}}(E, M) \map \CoboundaryGroup[2]((\Pi_0, \Pi_1, \cocycleofextension{3}), M)\) and hence induces also an isomorphism \(\CohomologyGroup[2]_{\text{st}}(E, M) \map \CohomologyGroup[2]((\Pi_0, \Pi_1, \cocycleofextension{3}), M)\). Moreover, corollary~\ref{cor:second_cohomology_group_computable_by_standard_2-cocycles}\ref{cor:second_cohomology_group_computable_by_standard_2-cocycles:isomorphism_on_cohomology} implies that
\[\CohomologyGroup[2](E, M) \isomorphic \CohomologyGroup[2]_{\text{st}}(E, M) \isomorphic \CohomologyGroup[2]((\Pi_0, \Pi_1, \cocycleofextension{3}), M). \qedhere\]
\end{proof}

\begin{corollary} \label{cor:cohomology_groups_of_cohomologous_3-cocycles_are_isomorphic}
For \(z^3, \tilde z^3 \in \CocycleGroup[3]_{\text{cpt}}(\Pi_0, \Pi_1)\) with \(z^3 \CoboundaryGroup[3]_{\text{cpt}}(\Pi_0, \Pi_1) = \tilde z^3 \CoboundaryGroup[3]_{\text{cpt}}(\Pi_0, \Pi_1)\), we have
\[\CohomologyGroup[2]((\Pi_0, \Pi_1, z^3), M) \isomorphic \CohomologyGroup[2]((\Pi_0, \Pi_1, \tilde z^3), M).\]
\end{corollary}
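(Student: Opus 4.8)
The plan is to exhibit an explicit isomorphism between the two fibre products of Definition~\ref{def:cocycle_coboundary_and_cohomology_group_of_a_3-cocycle} that fixes the common $2$-coboundary subgroup. Recall that an element of $\CocycleGroup[2]((\Pi_0, \Pi_1, z^3), M)$ is a pair $(\varphi, c_0)$ with $\varphi \in \Hom_{\Pi_0}(\Pi_1, M)$ and $c_0 \in \CochainComplex[2]_{\text{cpt}}(\Pi_0, M)$ satisfying $z^3 \varphi = c_0 \differential$, and likewise for $\tilde z^3$; in both cases the distinguished subgroup is $\{0\} \directprod \CoboundaryGroup[2]_{\text{cpt}}(\Pi_0, M)$, which is literally the \emph{same} subgroup of the common ambient group $\Hom_{\Pi_0}(\Pi_1, M) \directprod \CochainComplex[2]_{\text{cpt}}(\Pi_0, M)$ (for $\varphi = 0$ the fibre-product condition reads $0 = c_0 \differential$, which holds automatically once $c_0$ is a coboundary).

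First I would fix a componentwise pointed $2$-cochain $w \in \CochainComplex[2]_{\text{cpt}}(\Pi_0, \Pi_1)$ with $\tilde z^3 - z^3 = w \differential$ (writing the abelian cochain groups additively). The hypothesis $z^3 \CoboundaryGroup[3]_{\text{cpt}}(\Pi_0, \Pi_1) = \tilde z^3 \CoboundaryGroup[3]_{\text{cpt}}(\Pi_0, \Pi_1)$ says that $\tilde z^3 - z^3$ is a componentwise pointed $3$-coboundary, and such a coboundary is the differential of a componentwise pointed $2$-cochain (the chain-level normalization underlying $\CohomologyGroup[3](\Pi_0, \Pi_1) \isomorphic \CohomologyGroup[3]_{\text{cpt}}(\Pi_0, \Pi_1)$, cf.\ section~\ref{ssec:crossed_module_extensions}). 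With $w$ fixed, define
\[\Theta\colon \CocycleGroup[2]((\Pi_0, \Pi_1, z^3), M) \map \CocycleGroup[2]((\Pi_0, \Pi_1, \tilde z^3), M), (\varphi, c_0) \mapsto (\varphi, c_0 + w \varphi),\]
where $w \varphi \in \CochainComplex[2]_{\text{cpt}}(\Pi_0, M)$ is the post-composite of $w$ with $\varphi$ (componentwise pointed since $w$ is and $\varphi$ is pointed). Since $\varphi$ is $\Pi_0$-equivariant, post-composition with $\varphi$ is a cochain map $\CochainComplex(\Pi_0, \Pi_1) \map \CochainComplex(\Pi_0, M)$, so $(w \differential) \varphi = (w \varphi) \differential$ and hence $\tilde z^3 \varphi = z^3 \varphi + (w \differential) \varphi = c_0 \differential + (w \varphi) \differential = (c_0 + w \varphi) \differential$; thus $\Theta$ is well-defined. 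It is visibly a group homomorphism, with inverse $(\psi, d_0) \mapsto (\psi, d_0 - w \psi)$.

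The decisive point is then that $\Theta$ fixes the common $2$-coboundary subgroup: on $\{0\} \directprod \CoboundaryGroup[2]_{\text{cpt}}(\Pi_0, M)$ we have $\varphi = 0$, so $w \varphi = 0$ and $\Theta(0, c_0) = (0, c_0)$. Consequently $\Theta$ descends to an isomorphism of quotients $\CohomologyGroup[2]((\Pi_0, \Pi_1, z^3), M) \map \CohomologyGroup[2]((\Pi_0, \Pi_1, \tilde z^3), M)$, as desired.

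The main obstacle is the one external input used to produce $w$, namely that every componentwise pointed $3$-coboundary of $\Pi_0$ with values in $\Pi_1$ is already the differential of a componentwise pointed $2$-cochain. This refines the stated isomorphism $\CohomologyGroup[3](\Pi_0, \Pi_1) \isomorphic \CohomologyGroup[3]_{\text{cpt}}(\Pi_0, \Pi_1)$ to the chain level and follows from the standard normalization retraction on the cochain complex of $\Pi_0$, which is a cochain map restricting to the identity on componentwise pointed cochains; I would cite this or verify it directly. As an alternative route avoiding this lemma, one can instead apply corollary~\ref{cor:second_cohomology_group_of_crossed_module_extension_via_3-cocycle} to the standard extensions $\StandardExtension(z^3)$ and $\StandardExtension(\tilde z^3)$ (whose $3$-cocycles with respect to the standard section systems are $z^3$ and $\tilde z^3$), observe that they are extension equivalent because cohomologous cocycles determine the same class under the bijection $z^3 \CoboundaryGroup[3]_{\text{cpt}}(\Pi_0, \Pi_1) \mapsto [\StandardExtension(z^3)]_{\extensionequivalent}$ recalled in section~\ref{ssec:crossed_module_extensions}, and then read off the common value of $\CohomologyGroup[2]$ from the compatible section systems provided there.
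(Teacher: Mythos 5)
Your main argument is correct and goes by a genuinely different route than the paper. The paper never touches the fibre products directly: it takes the single standard extension \(\StandardExtension(z^3)\), whose \(3\)-cocycle with respect to the standard section system is \(z^3\), invokes an external result (prop.~(6.5) of the cited manuscript) to produce a second section system \((\tilde s^1, \standardsectionsystem_{z^3}^0)\) for the \emph{same} extension realising \(\tilde z^3\), and then applies corollary~\ref{cor:second_cohomology_group_of_crossed_module_extension_via_3-cocycle} twice to identify both \(\CohomologyGroup[2]((\Pi_0, \Pi_1, z^3), M)\) and \(\CohomologyGroup[2]((\Pi_0, \Pi_1, \tilde z^3), M)\) with \(\CohomologyGroup[2](\StandardExtension(z^3), M)\). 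Your shearing map \((\varphi, c_0) \mapsto (\varphi, c_0 + w \varphi)\) is instead a purely cochain-level isomorphism; the verification that it lands in the right fibre product (using \((w \differential) \varphi = (w \varphi) \differential\), which holds because \(\varphi\) is additive and \(\Pi_0\)-equivariant) and that it fixes \(\{0\} \directprod \CoboundaryGroup[2]_{\text{cpt}}(\Pi_0, M)\) is exactly right. What your route buys is self-containedness: no crossed module extensions are needed at all. What it costs is the one input you correctly isolate, namely that a componentwise pointed \(3\)-coboundary is the differential of a \emph{componentwise pointed} \(2\)-cochain — note that the paper defines \(\CoboundaryGroup[3]_{\text{cpt}}(\Pi_0, \Pi_1)\) as \(\CoboundaryGroup[3](\Pi_0, \Pi_1) \intersection \CochainComplex[3]_{\text{cpt}}(\Pi_0, \Pi_1)\), so this is a genuine refinement and not a restatement of \(\CohomologyGroup[3] \isomorphic \CohomologyGroup[3]_{\text{cpt}}\); it does follow from the standard normalisation retraction (the componentwise pointed subcomplex is a direct summand of the cochain complex as a complex), so the gap is real but routinely fillable. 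Your fallback route via the two standard extensions \(\StandardExtension(z^3)\) and \(\StandardExtension(\tilde z^3)\) and proposition~\ref{prop:compatible_choices_of_section_systems_along_an_extension_equivalence_lead_to_isomorphic_standard_cohomology_groups} is closer in spirit to the paper, though slightly more roundabout than its one-extension, two-section-systems argument.
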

\begin{proof}
We suppose given \(3\)-cocycles \(z^3, \tilde z^3 \in \CocycleGroup[3]_{\text{cpt}}(\Pi_0, \Pi_1)\) with \(z^3 \CoboundaryGroup[3]_{\text{cpt}}(\Pi_0, \Pi_1) = \tilde z^3 \CoboundaryGroup[3]_{\text{cpt}}(\Pi_0, \Pi_1)\). By construction of the standard extension \(\StandardExtension(z^3)\), the \(3\)-cocycle of the standard extension \(\StandardExtension(z^3)\) with respect to the standard section system \((\standardsectionsystem_{z^3}^1, \standardsectionsystem_{z^3}^0)\) is given by \(\cocycleofextension{3}_{\StandardExtension(z^3), (\standardsectionsystem_{z^3}^1, \standardsectionsystem_{z^3}^0)} = z^3\), cf.\ section~\ref{ssec:crossed_module_extensions}. Moreover, by~\cite[prop.~(6.5)]{thomas:2009:the_third_cohomology_group_classifies_crossed_module_extensions} there exists a section system \((\tilde s^1, \standardsectionsystem_{z^3}^0)\) for \(\StandardExtension(z^3)\) such that \(\cocycleofextension{3}_{\StandardExtension(z^3), (\tilde s^1, \standardsectionsystem_{z^3}^0)} = \tilde z^3\). Thus corollary~\ref{cor:second_cohomology_group_of_crossed_module_extension_via_3-cocycle} implies
\[\CohomologyGroup[2]((\Pi_0, \Pi_1, z^3), M) \isomorphic \CohomologyGroup[2](\StandardExtension(z^3), M) \isomorphic \CohomologyGroup[2]((\Pi_0, \Pi_1, \tilde z^3), M). \qedhere\]
\end{proof}

We finish this section by a direct algebraic proof that extension equivalent crossed module extensions yield the same second cohomology group, as to be expected from a weak homotopy equivalence, cf.\ for example~\cite[rem.~(4.5)]{thomas:2009:the_third_cohomology_group_classifies_crossed_module_extensions}.

\begin{proposition} \label{prop:compatible_choices_of_section_systems_along_an_extension_equivalence_lead_to_isomorphic_standard_cohomology_groups}
We suppose given crossed module extensions \(E\) and \(\tilde E\) of \(\Pi_0\) with \(\Pi_1\) and an extension equivalence \(\varphi\colon E \map \tilde E\). Moreover, we suppose given a section system \((s^1, s^0)\) for \(E\) and a section system \((\tilde s^1, \tilde s^0)\) for \(\tilde E\) such that \(\tilde s^0 = s^0 (\GroupPart \varphi)\) and \(s^1 (\ModulePart \varphi) = (\GroupPart \varphi)|_{\Image \structuremorphism[E]}^{\Image \structuremorphism[\tilde E]} \tilde s^1\). (\footnote{Such section systems exist, cf.\ for example~\cite[prop.~(5.16)(b)]{thomas:2009:the_third_cohomology_group_classifies_crossed_module_extensions}.})

The induced group homomorphism \(\CocycleGroup[2](\varphi, M)\colon \CocycleGroup[2](\tilde E, M) \map \CocycleGroup[2](E, M)\) restricts to a group isomorphism \linebreak 
\(\CocycleGroup[2]_{\text{st}, (\tilde s^1, \tilde s^0)}(\tilde E, M) \map \CocycleGroup[2]_{\text{st}, (s^1, s^0)}(E, M)\), which induces in turn isomorphisms \(\CoboundaryGroup[2]_{\text{st}, (\tilde s^1, \tilde s^0)}(\tilde E, M) \map \CoboundaryGroup[2]_{\text{st}, (s^1, s^0)}(E, M)\) and \(\CohomologyGroup[2]_{\text{st}, (\tilde s^1, \tilde s^0)}(\tilde E, M) \map \CohomologyGroup[2]_{\text{st}, (s^1, s^0)}(E, M)\).
\end{proposition}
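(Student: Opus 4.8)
The plan is to exploit that an extension equivalence is automatically an isomorphism of crossed modules. Applying the five lemma to the two defining exact sequences, which agree on $\Pi_1$ and on $\Pi_0$ via the vertical identities and the maps $\ModulePart\varphi$, $\GroupPart\varphi$, shows that both $\GroupPart\varphi$ and $\ModulePart\varphi$ are isomorphisms; hence $\varphi$ is invertible and the induced map $\CocycleGroup[2](\varphi, M)\colon \CocycleGroup[2](\tilde E, M) \map \CocycleGroup[2](E, M)$ is already an isomorphism. It therefore remains only to check that it carries standard cocycles onto standard cocycles and standard coboundaries onto standard coboundaries; the cohomology isomorphism is then formal. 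Since $\varphi$ is an extension equivalence we have $\HomotopyGroup[0](\varphi) = \id_{\Pi_0}$, so the coefficient $\Pi_0$\nbd module structures on both sides match and $\CocycleGroup[2](\varphi, M)$ is a genuine homomorphism of $M$\nbd valued cochain groups. First I would record, from the definition of the Moore parts (definition~\ref{def:moore_decomposition_of_analysed_2-cochains}) together with functoriality of $\Coskeleton_1$ and of the cochain complex, that $\CocycleGroup[2](\varphi, M)$ acts componentwise, so that for $z := \tilde z \CocycleGroup[2](\varphi, M)$ one has $(m) z_{\ModulePart} = (m \ModulePart\varphi) \tilde z_{\ModulePart}$ and $(h, g) z_{\GroupPart} = (h \GroupPart\varphi, g \GroupPart\varphi) \tilde z_{\GroupPart}$ for $m \in \ModulePart E$, $g, h \in \GroupPart E$.

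The heart of the argument is to transport the bracket notation of notation~\ref{not:section_systems} across $\varphi$; write $[\,\cdot\,]$ for the brackets of $E$ and $[\,\cdot\,]^{\sim}$ for those of $\tilde E$. Writing $\overline{g} = g \canonicalepimorphism[E]$, the relation $\canonicalepimorphism[E] = (\GroupPart\varphi)\canonicalepimorphism[\tilde E]$ gives $\overline{g \GroupPart\varphi} = \overline{g}$, so $\varphi$ preserves classes in $\Pi_0$. Using $\tilde s^0 = s^0 (\GroupPart\varphi)$ I would deduce $[\overline{g}]^{\sim} = [\overline{g}] \GroupPart\varphi$, and using $s^1 (\ModulePart\varphi) = (\GroupPart\varphi)|_{\Image \structuremorphism[E]}^{\Image \structuremorphism[\tilde E]} \tilde s^1$ together with $\structuremorphism[E] (\GroupPart\varphi) = (\ModulePart\varphi) \structuremorphism[\tilde E]$ that $[m \ModulePart\varphi]^{\sim} = [m] \ModulePart\varphi$ for $m \in \ModulePart E$. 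Consequently, setting $\tilde g := g \GroupPart\varphi$ and using $\overline{\tilde g} = \overline{g}$, we get $(g [\overline{g}]^{-1}) \GroupPart\varphi = \tilde g ([\overline{g}]^{\sim})^{-1}$.

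With these identities I would invoke the characterisation of standard cocycles in corollary~\ref{cor:second_cohomology_group_computable_by_standard_2-cocycles}\ref{cor:second_cohomology_group_computable_by_standard_2-cocycles:characterisation_of_standard_cocycles}. Its defining conditions $([m]) z_{\ModulePart} = 0$ and $(g [\overline{g}]^{-1}, [\overline{g}]) z_{\GroupPart} = 0$ translate, via the Moore-part formulas and the bracket identities, into $([m \ModulePart\varphi]^{\sim}) \tilde z_{\ModulePart} = 0$ and $(\tilde g ([\overline{g}]^{\sim})^{-1}, [\overline{g}]^{\sim}) \tilde z_{\GroupPart} = 0$; since $\ModulePart\varphi$ and $\GroupPart\varphi$ are bijective, $m \ModulePart\varphi$ and $\tilde g$ range over all of $\ModulePart \tilde E$ resp.\ $\GroupPart \tilde E$, so $z$ is standard for $(s^1, s^0)$ if and only if $\tilde z$ is standard for $(\tilde s^1, \tilde s^0)$. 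For coboundaries I would apply proposition~\ref{prop:characterising_properties_of_standard_2-cocycles_and_coboundaries_of_crossed_module_extensions}\ref{prop:characterising_properties_of_standard_2-cocycles_and_coboundaries_of_crossed_module_extensions:coboundaries}: the condition $b_{\ModulePart} = 0$ is preserved since $b_{\ModulePart} = (\ModulePart\varphi) \tilde b_{\ModulePart}$, and the condition $(h, g) b_{\GroupPart} = (\overline{h}, \overline{g})(c_0 \differential)$ is preserved with the same witness $c_0 \in \CochainComplex[1]_{\text{pt}}(\Pi_0, M)$ because it refers only to the classes $\overline{h}, \overline{g}$, which $\varphi$ fixes. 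Running the same reasoning with $\varphi^{-1}$ gives the reverse containments, so $\CocycleGroup[2](\varphi, M)$ restricts to isomorphisms $\CocycleGroup[2]_{\text{st}, (\tilde s^1, \tilde s^0)}(\tilde E, M) \map \CocycleGroup[2]_{\text{st}, (s^1, s^0)}(E, M)$ and on the corresponding standard coboundary groups, inducing the asserted isomorphism on $\CohomologyGroup[2]_{\text{st}}$.

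The main obstacle I anticipate is the bookkeeping in the middle step: verifying cleanly that the two relations $\tilde s^0 = s^0 (\GroupPart\varphi)$ and $s^1 (\ModulePart\varphi) = (\GroupPart\varphi)|_{\Image \structuremorphism[E]}^{\Image \structuremorphism[\tilde E]} \tilde s^1$ genuinely intertwine the \emph{entire} bracket calculus $[\,\cdot\,]$, $[\overline{\,\cdot\,}]$ of the two extensions, rather than just the individual sections. Once that intertwining is in place, everything else is a direct translation through the characterisations already proved, with no further cocycle manipulation required.
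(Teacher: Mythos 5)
Your argument has a genuine gap at its foundation: an extension equivalence \(\varphi\colon E \map \tilde E\) is \emph{not} in general an isomorphism of crossed modules, and the five lemma does not show that it is. In the exact sequence \(1 \map \Pi_1 \map \ModulePart E \map \GroupPart E \map \Pi_0 \map 1\) the given isomorphisms sit at the two outer pairs of positions, whereas the five (or four) lemma needs isomorphisms at \emph{adjacent} interior positions: proving \(\ModulePart \varphi\) injective requires \(\GroupPart \varphi\) injective and vice versa, so the chase is circular. A concrete counterexample: take \(\Pi_0 = \Pi_1 = 1\), let \(E\) be the crossed module with group part and module part both equal to a nontrivial group \(G\), structure morphism \(\id_G\) and conjugation action, let \(\tilde E = (1 \map 1)\), and let \(\varphi = \triv\); this is an extension equivalence but not an isomorphism. (This is precisely why \(\extensionequivalent\) is defined as the equivalence relation \emph{generated by} the existence of an extension equivalence.) Consequently \(\CocycleGroup[2](\varphi, M)\) need not be an isomorphism on the full cocycle groups, \(\varphi^{-1}\) is not available for your ``reverse containment'' step, and neither injectivity nor surjectivity of the restricted map is established by your argument. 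Your biconditional ``\(z\) is standard iff \(\tilde z\) is standard'' also silently uses the unproved bijectivity of \(\ModulePart\varphi\) and \(\GroupPart\varphi\).

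What does survive is the forward direction: your bracket intertwining \([\overline{g}]^{\sim} = [\overline g](\GroupPart\varphi)\) and \([m(\ModulePart\varphi)]^{\sim} = [m](\ModulePart\varphi)\), combined with the characterisation in corollary~\ref{cor:second_cohomology_group_computable_by_standard_2-cocycles}\ref{cor:second_cohomology_group_computable_by_standard_2-cocycles:characterisation_of_standard_cocycles}, is exactly how the paper shows that \(\CocycleGroup[2](\varphi, M)\) carries standard cocycles of \(\tilde E\) into standard cocycles of \(E\). For bijectivity, however, the paper takes a route you would need to adopt: the compatible section systems give the literal equality \(\cocycleofextension{3}_{E,(s^1,s^0)} = \cocycleofextension{3}_{\tilde E,(\tilde s^1,\tilde s^0)}\), so the pullback isomorphisms \(\Phi\) and \(\tilde\Phi\) of corollary~\ref{cor:second_cohomology_group_of_crossed_module_extension_via_3-cocycle} identify \emph{both} standard cocycle groups with one and the same group \(\CocycleGroup[2]((\Pi_0,\Pi_1,\cocycleofextension{3}),M)\); one then verifies that \(\tilde\Phi\) equals the composite of the restricted map with \(\Phi\), which forces the restriction to be an isomorphism, and argues analogously for coboundaries. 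Your observation that the coboundary conditions refer only to the classes \(\overline h, \overline g\) is correct, but again yields only one containment without this pullback argument.
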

\begin{proof}
To show that \(\CocycleGroup[2](\varphi, M)\) restricts to a homomorphism \(\CocycleGroup[2]_{\text{st}}(\tilde E, M) \map \CocycleGroup[2]_{\text{st}}(E, M)\), we have to show that \(\tilde z \CocycleGroup[2](\varphi, M) \in \CocycleGroup[2]_{\text{st}}(E, M)\) for every given standard \(2\)-cocycle \(\tilde z \in \CocycleGroup[2]_{\text{st}}(\tilde E, M)\). By corollary~\ref{cor:second_cohomology_group_computable_by_standard_2-cocycles}\ref{cor:second_cohomology_group_computable_by_standard_2-cocycles:characterisation_of_standard_cocycles}, we have
\((\tilde m \tilde s^1) \tilde z_{\ModulePart} = (\tilde g (\tilde g \canonicalepimorphism[\tilde E] \tilde s^0)^{- 1}, \tilde g \canonicalepimorphism[\tilde E] \tilde s^0) \tilde z_{\GroupPart} = 0\) for all \(\tilde m \in \ModulePart \tilde E\), \(\tilde g \in \GroupPart \tilde E\). Since \(s^0 (\GroupPart \varphi) = \tilde s^0\) and \(s^1 (\ModulePart \varphi) = (\GroupPart \varphi)|_{\Image \structuremorphism[E]}^{\Image \structuremorphism[\tilde E]} \tilde s^1\), it follows that
\[(m s^1) (\tilde z \CocycleGroup[2](\varphi, M))_{\ModulePart} = (m s^1 \varphi) \tilde z_{\ModulePart} = (m \varphi \tilde s^1) \tilde z_{\ModulePart} = 0\]
for all \(m \in \ModulePart E\) and
\begin{align*}
(g (g \canonicalepimorphism[E] s^0)^{- 1}, g \canonicalepimorphism[E] s^0) (\tilde z \CocycleGroup[2](\varphi, M))_{\GroupPart} & = ((g \varphi) (g \canonicalepimorphism[E] s^0 \varphi)^{- 1}, g \canonicalepimorphism[E] s^0 \varphi) \tilde z_{\GroupPart} \\
& = ((g \varphi) ((g \varphi) \canonicalepimorphism[\tilde E] \tilde s^0)^{- 1}, (g \varphi) \canonicalepimorphism[\tilde E] \tilde s^0) \tilde z_{\GroupPart} = 0
\end{align*}
for all \(g \in \GroupPart E\), that is, \(\tilde z \CocycleGroup[2](\varphi, M) \in \CocycleGroup[2]_{\text{st}}(E, M)\) by corollary~\ref{cor:second_cohomology_group_computable_by_standard_2-cocycles}\ref{cor:second_cohomology_group_computable_by_standard_2-cocycles:characterisation_of_standard_cocycles}. Hence \(\CocycleGroup[2](\varphi, M)\) restricts to a well-defined homomorphism
\[\CocycleGroup[2](\varphi, M)|_{\CocycleGroup[2]_{\text{st}}(\tilde E, M)}^{\CocycleGroup[2]_{\text{st}}(E, M)}\colon \CocycleGroup[2]_{\text{st}}(\tilde E, M) \map \CocycleGroup[2]_{\text{st}}(E, M).\]

Now,~\cite[prop.~(5.14)(c)]{thomas:2009:the_third_cohomology_group_classifies_crossed_module_extensions} implies that \(\cocycleofextension{3}_{E, (s^1, s^0)} = \cocycleofextension{3}_{\tilde E, (\tilde s^1, \tilde s^0)}\). By corollary~\ref{cor:second_cohomology_group_of_crossed_module_extension_via_3-cocycle}, we have isomorphisms
\[\Phi\colon \CocycleGroup[2]_{\text{st}}(E, M) \map \CocycleGroup[2]((\Pi_0, \Pi_1, \cocycleofextension{3}), M), z \mapsto (z \Phi_1, z \Phi_0)\]
given by \((k) (z \Phi_1) := (k \canonicalmonomorphism[E]) z_{\ModulePart}\) for \(k \in \Pi_1\) and \((q, p) (z \Phi_0) := (q s^0, p s^0) z_{\GroupPart}\) for \(p, q \in \Pi_0\), \(z \in \CocycleGroup[2]_{\text{st}}(E, M)\), and
\[\tilde \Phi\colon \CocycleGroup[2]_{\text{st}}(\tilde E, M) \map \CocycleGroup[2]((\Pi_0, \Pi_1, \cocycleofextension{3}), M), \tilde z \mapsto (\tilde z \tilde \Phi_1, \tilde z \tilde \Phi_0)\]
given by \((k) (\tilde z \tilde \Phi_1) := (k \canonicalmonomorphism[\tilde E]) \tilde z_{\ModulePart}\) for \(k \in \Pi_1\) and \((q, p) (\tilde z \tilde \Phi_0) := (q \tilde s^0, p \tilde s^0) \tilde z_{\GroupPart}\) for \(p, q \in \Pi_0\), \(\tilde z \in \CocycleGroup[2]_{\text{st}}(\tilde E, M)\). To show that \(\CocycleGroup[2](\varphi, M)|_{\CocycleGroup[2]_{\text{st}}(\tilde E, M)}^{\CocycleGroup[2]_{\text{st}}(E, M)}\) is an isomorphism, it suffices to verify that \(\tilde \Phi = (\CocycleGroup[2](\varphi, M))|_{\CocycleGroup[2]_{\text{st}}(\tilde E, M)}^{\CocycleGroup[2]_{\text{st}}(E, M)} \Phi\). Indeed, given \(\tilde z \in \CocycleGroup[2]_{\text{st}}(\tilde E, M)\), we have
\[k (\tilde z \CocycleGroup[2](\varphi, M) \Phi_1) = (k \canonicalmonomorphism[E]) (\tilde z \CocycleGroup[2](\varphi, M))_{\ModulePart} = (k \canonicalmonomorphism[E] \varphi) \tilde z_{\ModulePart} = (k \canonicalmonomorphism[\tilde E]) \tilde z_{\ModulePart} = k (\tilde z \tilde \Phi_1)\]
for all \(k \in \Pi_1\) and
\[(q, p) (\tilde z \CocycleGroup[2](\varphi, M) \Phi_0) = (q s^0, p s^0) (\tilde z \CocycleGroup[2](\varphi, M))_{\GroupPart} = (q s^0 \varphi, p s^0 \varphi) \tilde z_{\GroupPart} = (q \tilde s^0, p \tilde s^0) \tilde z_{\GroupPart} = (q, p) (\tilde z \tilde \Phi_0)\]
for all \(p, q \in \Pi_0\), that is, \(\tilde \Phi = (\CocycleGroup[2](\varphi, M))|_{\CocycleGroup[2]_{\text{st}}(\tilde E, M)}^{\CocycleGroup[2]_{\text{st}}(E, M)} \Phi\).

Moreover, the induced homomorphism \(\CoboundaryGroup[2](\varphi, M)\) also restricts to a well-defined homomorphism
\[\CoboundaryGroup[2](\varphi, M)|_{\CoboundaryGroup[2]_{\text{st}}(\tilde E, M)}^{\CoboundaryGroup[2]_{\text{st}}(E, M)}\colon \CoboundaryGroup[2]_{\text{st}}(\tilde E, M) \map \CoboundaryGroup[2]_{\text{st}}(E, M),\]
cf.\ definition~\ref{def:standardisation_of_pointed_2-cocycles}\ref{def:standardisation_of_pointed_2-cocycles:standard_2-cocycles}, which is an isomorphism since
\[\tilde \Phi|_{\CoboundaryGroup[2]_{\text{st}}(\tilde E, M)}^{\CoboundaryGroup[2]_{\text{EM}}((\Pi_0, \Pi_1, \cocycleofextension{3}), M)} = (\CoboundaryGroup[2](\varphi, M)|_{\CoboundaryGroup[2]_{\text{st}}(\tilde E, M)}^{\CoboundaryGroup[2]_{\text{st}}(E, M)}) (\Phi|_{\CoboundaryGroup[2]_{\text{st}}(E, M)}^{\CoboundaryGroup[2]_{\text{EM}}((\Pi_0, \Pi_1, \cocycleofextension{3}), M)})\]
and since \(\Phi|_{\CoboundaryGroup[2]_{\text{st}}(E, M)}^{\CoboundaryGroup[2]_{\text{EM}}((\Pi_0, \Pi_1, \cocycleofextension{3}), M)}\) and \(\tilde \Phi|_{\CoboundaryGroup[2]_{\text{st}}(\tilde E, M)}^{\CoboundaryGroup[2]_{\text{EM}}((\Pi_0, \Pi_1, \cocycleofextension{3}), M)}\) are isomorphisms by corollary~\ref{cor:second_cohomology_group_of_crossed_module_extension_via_3-cocycle}.

Finally, it follows that we get an induced isomorphism
\[\CohomologyGroup[2]_{\text{st}}(\tilde E, M) \map \CohomologyGroup[2]_{\text{st}}(E, M). \qedhere\]
\end{proof}

\section{\texorpdfstring{Second Eilenberg-Mac\,Lane cohomology group}{Second Eilenberg-Mac Lane cohomology group}} \label{sec:second_eilenberg-maclane_cohomology_group}

Until now, we have worked with crossed module extensions. Since every crossed module gives rise to a canonical crossed module extension, we can now formulate \eigenname{Eilenberg}s and \eigenname{Mac\,Lane}s theorem in the context of crossed modules and simplicial groups.

\begin{definition}[first Postnikov invariant] \label{def:first_postnikov_invariant} \
\begin{enumerate}
\item \label{def:first_postnikov_invariant:crossed_module} Given a crossed module \(V\), the cohomology class associated to the canonical extension
\[\HomotopyGroup[1](V) \morphism[\inc] \ModulePart V \morphism[\structuremorphism] \GroupPart V \morphism[\quo] \HomotopyGroup[0](V)\]
will be denoted by \(\postnikovinvariant{3}{V} := \cohomologyclassofextension(V) \in \CohomologyGroup[3]_{\text{cpt}}(\HomotopyGroup[0](V), \HomotopyGroup[1](V))\) and is called the (\newnotion{first}) \newnotion{Postnikov invariant} of \(V\).
\item \label{def:first_postnikov_invariant:simplicial_group} Given a simplicial group \(G\), we call \(\postnikovinvariant{3}{G} := \cohomologyclassofextension(\Truncation^1 G) \in \CohomologyGroup[3]_{\text{cpt}}(\HomotopyGroup[0](G), \HomotopyGroup[1](G))\) the \newnotion{first Postnikov invariant} of \(G\).
\end{enumerate}
\end{definition}

\begin{definition}[{second Eilenberg-Mac\,Lane cohomology group, cf.~\cite[sec.~3]{eilenberg_maclane:1946:determination_of_the_second_homology_and_cohomology_groups_of_a_space_by_means_of_homotopy_invariants}}] \label{def:second_eilenberg-maclane_cohomology_group} \
\begin{enumerate}
\item \label{def:second_eilenberg-maclane_cohomology:crossed_module} We suppose given a crossed module \(V\) and a componentwise pointed \(3\)-cocycle \(z^3 \in \CocycleGroup[3]_{\text{cpt}}(\HomotopyGroup[0](V), \HomotopyGroup[1](V))\) with \(\postnikovinvariant{3}{V} = z^3 \CoboundaryGroup[3]_{\text{cpt}}(\HomotopyGroup[0](V), \HomotopyGroup[1](V))\). The \newnotion{second Eilenberg-Mac\,Lane cohomology group} of \(V\) with respect to \(z^3\) and with coefficients in \(M\) is defined by
\[\CohomologyGroup[2]_{\text{EM}, z^3}(V, M) := \CohomologyGroup[2]((\HomotopyGroup[0](V), \HomotopyGroup[1](V), z^3), M).\]
\item \label{def:second_eilenberg-maclane_cohomology:simplicial_group} We suppose given a simplicial group \(G\) and a componentwise pointed \(3\)-cocycle \(z^3 \in \CocycleGroup[3]_{\text{cpt}}(\HomotopyGroup[0](G), \HomotopyGroup[1](G))\) with \(\postnikovinvariant{3}{G} = z^3 \CoboundaryGroup[3]_{\text{cpt}}(\HomotopyGroup[0](G), \HomotopyGroup[1](G))\). The \newnotion{second Eilenberg-Mac\,Lane cohomology group} of \(G\) with respect to \(z^3\) and with coefficients in \(M\) is defined by
\[\CohomologyGroup[2]_{\text{EM}, z^3}(G, M) := \CohomologyGroup[2]((\HomotopyGroup[0](G), \HomotopyGroup[1](G), z^3), M).\]
\end{enumerate}
\end{definition}

We have already seen that the isomorphism class of the second Eilenberg-Mac\,Lane cohomology group of a crossed module does not depend on the choice of a specific \(3\)-cocycle in its associated cohomology class:

\begin{remark} \label{rem:eilenberg-maclane_cohomology_groups_depend_only_on_the_cohomology_class_of_the_crossed_module}
Given a crossed module \(V\) and componentwise pointed \(3\)-cocycles \(z^3, \tilde z^3 \in \CocycleGroup[3]_{\text{cpt}}(\HomotopyGroup[0](V), \HomotopyGroup[1](V))\) with \(\postnikovinvariant{3}{V} = z^3 \CoboundaryGroup[3]_{\text{cpt}}(\HomotopyGroup[0](V), \HomotopyGroup[1](V)) = \tilde z^3 \CoboundaryGroup[3]_{\text{cpt}}(\HomotopyGroup[0](V), \HomotopyGroup[1](V))\), we have
\[\CohomologyGroup[2]_{\text{EM}, z^3}(V, M) \isomorphic \CohomologyGroup[2]_{\text{EM}, \tilde z^3}(V, M).\]
\end{remark}
\begin{proof}
This follows from corollary~\ref{cor:cohomology_groups_of_cohomologous_3-cocycles_are_isomorphic}.
\end{proof}

\pagebreak 

\begin{theorem}[{cf.~\cite[th.~2]{eilenberg_maclane:1946:determination_of_the_second_homology_and_cohomology_groups_of_a_space_by_means_of_homotopy_invariants}}] \label{th:second_cohomology_group_of_crossed_module_via_homotopy_invariants} \
\begin{enumerate}
\item \label{th:second_cohomology_group_of_crossed_module_via_homotopy_invariants:crossed_module} Given a crossed module \(V\), an abelian \(\HomotopyGroup[0](V)\)-module \(M\) and a componentwise pointed \(3\)-cocycle \(z^3 \in \CocycleGroup[3]_{\text{cpt}}(\HomotopyGroup[0](V), \HomotopyGroup[1](V))\) with \(\postnikovinvariant{3}{V} = z^3 \CoboundaryGroup[3]_{\text{cpt}}(\HomotopyGroup[0](V), \HomotopyGroup[1](V))\), we have 
\[\CohomologyGroup[2](V, M) \isomorphic \CohomologyGroup[2]_{\text{EM}, z^3}(V, M).\]
\item \label{th:second_cohomology_group_of_crossed_module_via_homotopy_invariants:simplicial_group} Given a simplicial group \(G\), an abelian \(\HomotopyGroup[0](G)\)-module \(M\) and a componentwise pointed \(3\)-cocycle \(z^3 \in \CocycleGroup[3]_{\text{cpt}}(\HomotopyGroup[0](G), \HomotopyGroup[1](G))\) with \(\postnikovinvariant{3}{G} = z^3 \CoboundaryGroup[3]_{\text{cpt}}(\HomotopyGroup[0](G), \HomotopyGroup[1](G))\), we have 
\[\CohomologyGroup[2](G, M) \isomorphic \CohomologyGroup[2]_{\text{EM}, z^3}(G, M).\]
\end{enumerate}
\end{theorem}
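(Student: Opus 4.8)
The plan is to obtain both statements by assembling the structural results of Section~\ref{sec:crossed_module_extensions_and_standard_2-cocycles} with the reduction of Section~\ref{sec:low_dimensional_cohomology_of_a_simplicial_group}; no genuinely new computation will be needed. For part~\ref{th:second_cohomology_group_of_crossed_module_via_homotopy_invariants:crossed_module}, I would regard the crossed module \(V\) as its own canonical crossed module extension
\[\HomotopyGroup[1](V) \morphism[\inc] \ModulePart V \morphism[\structuremorphism] \GroupPart V \morphism[\quo] \HomotopyGroup[0](V)\]
of \(\Pi_0 = \HomotopyGroup[0](V)\) with \(\Pi_1 = \HomotopyGroup[1](V)\), denoted again by \(E := V\); since the cohomology of a crossed module extension is by definition that of its underlying crossed module, we have \(\CohomologyGroup[2](V, M) = \CohomologyGroup[2](E, M)\).

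First I would fix a section system \((s^1, s^0)\) for \(E\), which exists because \(\canonicalepimorphism[E]\) and \(\structuremorphism|^{\Image \structuremorphism}\) are surjective and hence admit pointed set-theoretic sections. This produces the associated componentwise pointed \(3\)-cocycle \(\cocycleofextension{3}_{E, (s^1, s^0)} \in \CocycleGroup[3]_{\text{cpt}}(\Pi_0, \Pi_1)\) whose class is \(\cohomologyclassofextension(E) = \cohomologyclassofextension(V) = \postnikovinvariant{3}{V}\), by definition~\ref{def:first_postnikov_invariant}\ref{def:first_postnikov_invariant:crossed_module}. Corollary~\ref{cor:second_cohomology_group_of_crossed_module_extension_via_3-cocycle} then yields \(\CohomologyGroup[2](V, M) \isomorphic \CohomologyGroup[2]((\Pi_0, \Pi_1, \cocycleofextension{3}_{E, (s^1, s^0)}), M)\). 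As \(\cocycleofextension{3}_{E, (s^1, s^0)}\) and the prescribed \(z^3\) are both componentwise pointed representatives of the single class \(\postnikovinvariant{3}{V}\), corollary~\ref{cor:cohomology_groups_of_cohomologous_3-cocycles_are_isomorphic} identifies the right-hand side with \(\CohomologyGroup[2]((\Pi_0, \Pi_1, z^3), M) = \CohomologyGroup[2]_{\text{EM}, z^3}(V, M)\), which finishes part~\ref{th:second_cohomology_group_of_crossed_module_via_homotopy_invariants:crossed_module}.

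For part~\ref{th:second_cohomology_group_of_crossed_module_via_homotopy_invariants:simplicial_group}, I would first reduce from \(G\) to its crossed module segment \(\Truncation^1 G\). Proposition~\ref{prop:second_cohomology_group_of_a_simplicial_group_and_its_1-truncation} gives \(\CohomologyGroup[2](G, M) \isomorphic \CohomologyGroup[2](\Truncation^1 G, M)\), while \(\HomotopyGroup[0](G) = \HomotopyGroup[0](\Truncation^1 G)\) and \(\HomotopyGroup[1](G) = \HomotopyGroup[1](\Truncation^1 G)\), and, by definition~\ref{def:first_postnikov_invariant}, \(\postnikovinvariant{3}{G} = \cohomologyclassofextension(\Truncation^1 G) = \postnikovinvariant{3}{\Truncation^1 G}\). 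Hence \(z^3\) is equally a representative of the Postnikov invariant of \(\Truncation^1 G\), and the two Eilenberg-Mac\,Lane groups are literally the same triple, \(\CohomologyGroup[2]_{\text{EM}, z^3}(G, M) = \CohomologyGroup[2]_{\text{EM}, z^3}(\Truncation^1 G, M)\). Applying part~\ref{th:second_cohomology_group_of_crossed_module_via_homotopy_invariants:crossed_module} to \(\Truncation^1 G\) and chaining the isomorphisms then delivers the claim.

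I expect no real computational obstacle: the substantive work is already carried out in the pullback descriptions and the standardisation machinery of the corollaries invoked. The only points demanding care are bookkeeping ones --- namely arranging that the section system chosen for \(E\) yields a cocycle in the same componentwise pointed class as the given \(z^3\) (which is why I route through corollary~\ref{cor:cohomology_groups_of_cohomologous_3-cocycles_are_isomorphic} rather than trying to realise \(z^3\) on the nose by a section system), and, in part~\ref{th:second_cohomology_group_of_crossed_module_via_homotopy_invariants:simplicial_group}, confirming that the homotopy invariants and Postnikov class of \(G\) coincide with those of \(\Truncation^1 G\) so that the two Eilenberg-Mac\,Lane groups are identical and not merely isomorphic.
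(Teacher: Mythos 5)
Your proposal is correct and follows essentially the same route as the paper: part (a) is obtained by combining corollary~\ref{cor:second_cohomology_group_of_crossed_module_extension_via_3-cocycle} with corollary~\ref{cor:cohomology_groups_of_cohomologous_3-cocycles_are_isomorphic} (the paper cites the latter via remark~\ref{rem:eilenberg-maclane_cohomology_groups_depend_only_on_the_cohomology_class_of_the_crossed_module}, which is just its restatement for Eilenberg-Mac\,Lane groups), and part (b) reduces to part (a) via proposition~\ref{prop:second_cohomology_group_of_a_simplicial_group_and_its_1-truncation} exactly as you describe. Your bookkeeping remarks about matching the homotopy invariants of \(G\) and \(\Truncation^1 G\) are sound and consistent with the paper's definitions.
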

\begin{proof} \
\begin{enumerate}
\item This follows from corollary~\ref{cor:second_cohomology_group_of_crossed_module_extension_via_3-cocycle} and remark~\ref{rem:eilenberg-maclane_cohomology_groups_depend_only_on_the_cohomology_class_of_the_crossed_module}.
\item Applying proposition~\ref{prop:second_cohomology_group_of_a_simplicial_group_and_its_1-truncation} and~\ref{th:second_cohomology_group_of_crossed_module_via_homotopy_invariants:crossed_module}, we obtain
\[\CohomologyGroup[2](G, M) \isomorphic \CohomologyGroup[2](\Truncation^1 G, M) \isomorphic \CohomologyGroup[2]_{\text{EM}, z^3}(\Truncation^1 G, M) = \CohomologyGroup[2]_{\text{EM}, z^3}(G, M). \qedhere\]
\end{enumerate}
\end{proof}

\begin{corollary}[{cf.~\cite[sec.~4]{eilenberg_maclane:1946:determination_of_the_second_homology_and_cohomology_groups_of_a_space_by_means_of_homotopy_invariants}}] \label{cor:second_cohomology_group_of_a_crossed_module_with_trivial_cohomology_class_or_finite_first_homotopy_group} \
\begin{enumerate}
\item \label{cor:second_cohomology_group_of_a_crossed_module_with_trivial_cohomology_class_or_finite_first_homotopy_group:simplicial_groups} We suppose given a simplicial group \(G\) and an abelian \(\HomotopyGroup[0](G)\)-module \(M\).
\begin{enumerate}
\item \label{cor:second_cohomology_group_of_a_crossed_module_with_trivial_cohomology_class_or_finite_first_homotopy_group:simplicial_groups:trivial_cohomology_class} If \(\postnikovinvariant{3}{G} = 1\), then
\[\CohomologyGroup[2](G, M) \isomorphic \Hom_{\HomotopyGroup[0](G)}(\HomotopyGroup[1](G), M) \directsum \CohomologyGroup[2](\HomotopyGroup[0](G), M).\]
\item \label{cor:second_cohomology_group_of_a_crossed_module_with_trivial_cohomology_class_or_finite_first_homotopy_group:simplicial_groups:vanishing_homomorphism_group} If \(\Hom_{\HomotopyGroup[0](G)}(\HomotopyGroup[1](G), M) = \{0\}\), then
\[\CohomologyGroup[2](G, M) \isomorphic \CohomologyGroup[2](\HomotopyGroup[0](G), M).\]
\end{enumerate}
\item \label{cor:second_cohomology_group_of_a_crossed_module_with_trivial_cohomology_class_or_finite_first_homotopy_group:crossed_modules} We suppose given a crossed module \(V\) and an abelian \(\HomotopyGroup[0](V)\)-module \(M\).
\begin{enumerate}
\item \label{cor:second_cohomology_group_of_a_crossed_module_with_trivial_cohomology_class_or_finite_first_homotopy_group:crossed_modules:trivial_cohomology_class} If \(\postnikovinvariant{3}{V} = 1\), then
\[\CohomologyGroup[2](V, M) \isomorphic \Hom_{\HomotopyGroup[0](V)}(\HomotopyGroup[1](V), M) \directsum \CohomologyGroup[2](\HomotopyGroup[0](V), M).\]
\item \label{cor:second_cohomology_group_of_a_crossed_module_with_trivial_cohomology_class_or_finite_first_homotopy_group:crossed_modules:vanishing_homomorphism_group} If \(\Hom_{\HomotopyGroup[0](V)}(\HomotopyGroup[1](V), M) = \{0\}\), then
\[\CohomologyGroup[2](V, M) \isomorphic \CohomologyGroup[2](\HomotopyGroup[0](V), M).\]
\end{enumerate}
\end{enumerate}
\end{corollary}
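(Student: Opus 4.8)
The plan is to derive everything from Theorem~\ref{th:second_cohomology_group_of_crossed_module_via_homotopy_invariants}, which already identifies $\CohomologyGroup[2](V, M)$ (resp.\ $\CohomologyGroup[2](G, M)$) with the second Eilenberg-Mac\,Lane cohomology group $\CohomologyGroup[2]((\HomotopyGroup[0](V), \HomotopyGroup[1](V), z^3), M)$. Writing $\Pi_0 := \HomotopyGroup[0](V)$ and $\Pi_1 := \HomotopyGroup[1](V)$, all that then remains is to evaluate the quotient
\[\CocycleGroup[2]((\Pi_0, \Pi_1, z^3), M) / \CoboundaryGroup[2]((\Pi_0, \Pi_1, z^3), M)\]
of definition~\ref{def:cocycle_coboundary_and_cohomology_group_of_a_3-cocycle} under the two separate hypotheses, that is, to understand how the defining pullback degenerates. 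I will use throughout that an element of $\CocycleGroup[2]((\Pi_0, \Pi_1, z^3), M)$ is a pair $(\varphi, c)$ with $\varphi \in \Hom_{\Pi_0}(\Pi_1, M)$, $c \in \CochainComplex[2]_{\text{cpt}}(\Pi_0, M)$ and $z^3 \varphi = c \differential$, while $\CoboundaryGroup[2]((\Pi_0, \Pi_1, z^3), M) = \{0\} \directprod \CoboundaryGroup[2]_{\text{cpt}}(\Pi_0, M)$.

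In the trivial-cohomology-class case, where $\postnikovinvariant{3}{V} = 1$, I first replace $z^3$ by the zero cocycle: by remark~\ref{rem:eilenberg-maclane_cohomology_groups_depend_only_on_the_cohomology_class_of_the_crossed_module} the isomorphism type of the group is independent of the representative chosen in $\postnikovinvariant{3}{V}$, and $0$ is a componentwise pointed representative. With $z^3 = 0$ the vertical leg $\Map(z^3, M)|_{\Hom_{\Pi_0}(\Pi_1, M)}$ becomes the zero map, so the pullback condition reduces to $c \differential = 0$, forcing $c \in \CocycleGroup[2]_{\text{cpt}}(\Pi_0, M)$; the fibre product thus collapses to the direct product $\Hom_{\Pi_0}(\Pi_1, M) \directprod \CocycleGroup[2]_{\text{cpt}}(\Pi_0, M)$. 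Dividing by $\{0\} \directprod \CoboundaryGroup[2]_{\text{cpt}}(\Pi_0, M)$ yields $\Hom_{\Pi_0}(\Pi_1, M) \directsum \CohomologyGroup[2]_{\text{cpt}}(\Pi_0, M)$, and I finish by invoking the isomorphism $\CohomologyGroup[2]_{\text{cpt}}(\Pi_0, M) \isomorphic \CohomologyGroup[2](\Pi_0, M)$ (cf.\ section~\ref{ssec:crossed_module_extensions} and corollary~\ref{cor:2-cocycle_of_groups_are_compentwise_pointed_if_and_only_if_they_are_pointed}) to obtain the claimed splitting.

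In the vanishing-$\Hom$ case the argument is shorter and needs no change of representative: if $\Hom_{\Pi_0}(\Pi_1, M) = \{0\}$ then the first factor of the pullback is trivial, so necessarily $\varphi = 0$ and the condition $z^3 \varphi = c \differential$ reads $c \differential = 0$ for \emph{every} choice of $z^3$; hence $\CocycleGroup[2]((\Pi_0, \Pi_1, z^3), M) = \{0\} \directprod \CocycleGroup[2]_{\text{cpt}}(\Pi_0, M)$, and passing to the quotient gives $\CohomologyGroup[2]_{\text{cpt}}(\Pi_0, M) \isomorphic \CohomologyGroup[2](\Pi_0, M)$.

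Finally, the simplicial-group statements follow formally from the crossed-module ones applied to $\Truncation^1 G$: one has $\HomotopyGroup[n](G) = \HomotopyGroup[n](\Truncation^1 G)$ for $n \in \{0, 1\}$ and $\postnikovinvariant{3}{G} = \postnikovinvariant{3}{\Truncation^1 G}$ by definition~\ref{def:first_postnikov_invariant}, together with $\CohomologyGroup[2](G, M) \isomorphic \CohomologyGroup[2](\Truncation^1 G, M)$ from proposition~\ref{prop:second_cohomology_group_of_a_simplicial_group_and_its_1-truncation}. I expect no genuine obstacle here beyond bookkeeping; the only two points that require care are the legitimacy of setting $z^3 = 0$ in the trivial-class case -- which is exactly what the representative-independence remark supplies -- and the routine identification of componentwise pointed group cohomology with ordinary group cohomology.
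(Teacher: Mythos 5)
Your proposal is correct and follows essentially the same route as the paper: both reduce to Theorem~\ref{th:second_cohomology_group_of_crossed_module_via_homotopy_invariants} and observe that with the trivial representative $z^3$ the fibre product of Definition~\ref{def:cocycle_coboundary_and_cohomology_group_of_a_3-cocycle} degenerates to a direct product (resp.\ that vanishing of $\Hom_{\Pi_0}(\Pi_1, M)$ kills the first factor), then pass to the quotient. The only cosmetic difference is the direction of transfer between the two halves of the statement -- the paper proves the simplicial-group case first and obtains the crossed-module case by applying it to $\Coskeleton_1 V$, whereas you prove the crossed-module case first and transfer via $\Truncation^1 G$ and Proposition~\ref{prop:second_cohomology_group_of_a_simplicial_group_and_its_1-truncation} -- which is immaterial.
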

\begin{proof} \
\begin{enumerate}
\item
\begin{enumerate}
\item If \(\postnikovinvariant{3}{G} = 1\), then we have \(\CocycleGroup[2]((\HomotopyGroup[0](G), \HomotopyGroup[1](G), 1), M) = \Hom_{\HomotopyGroup[0](G)}(\HomotopyGroup[1](G), M) \directprod \CocycleGroup[2]_{\text{cpt}}(\HomotopyGroup[0](G), M)\) and hence
\begin{align*}
\CohomologyGroup[2](G, M) & \isomorphic \CohomologyGroup[2]_{\text{EM}, 1}(G, M) = \CohomologyGroup[2](\HomotopyGroup[0](G), \HomotopyGroup[1](G), 1), M) \\
& \isomorphic \Hom_{\HomotopyGroup[0](G)}(\HomotopyGroup[1](G), M) \directprod \CohomologyGroup[2]_{\text{cpt}}(\HomotopyGroup[0](G), M) \isomorphic \Hom_{\HomotopyGroup[0](G)}(\HomotopyGroup[1](G), M) \directsum \CohomologyGroup[2](\HomotopyGroup[0](G), M)
\end{align*}
by theorem~\ref{th:second_cohomology_group_of_crossed_module_via_homotopy_invariants}.
\item If \(\Hom_{\HomotopyGroup[0](G)}(\HomotopyGroup[1](G), M) = \{0\}\), then we get
\[\CohomologyGroup[2](G, M) \isomorphic \CohomologyGroup[2]_{\text{EM}, z^3}(G, M) = \CohomologyGroup[2](\HomotopyGroup[0](G), \HomotopyGroup[1](G), z^3), M) \isomorphic \CohomologyGroup[2]_{\text{cpt}}(\HomotopyGroup[0](G), M) \isomorphic \CohomologyGroup[2](\HomotopyGroup[0](G), M),\]
where \(z^3 \in \CocycleGroup[3]_{\text{cpt}}(\HomotopyGroup[0](G), \HomotopyGroup[1](G))\) with \(\postnikovinvariant{3}{G} = z^3 \CoboundaryGroup[3]_{\text{cpt}}(\HomotopyGroup[0](G), \HomotopyGroup[1](G))\).
\end{enumerate}
\item This follows from~\ref{cor:second_cohomology_group_of_a_crossed_module_with_trivial_cohomology_class_or_finite_first_homotopy_group:simplicial_groups} applied to the simplicial group \(\Coskeleton_1 V\). \qedhere
\end{enumerate}
\end{proof}

\pagebreak 

\begin{question}[{cf.~\cite[sec.~5]{eilenberg_maclane:1946:determination_of_the_second_homology_and_cohomology_groups_of_a_space_by_means_of_homotopy_invariants}}] \label{qu:descriptions_by_homotopy_invariants_in_higher_dimensions_and_homology} \
\begin{enumerate}
\item \label{qu:descriptions_by_homotopy_invariants_in_higher_dimensions_and_homology:crossed_modules} We suppose given a crossed module \(V\) and an abelian \(\HomotopyGroup[0](V)\)-module \(M\). How can theorem~\ref{th:second_cohomology_group_of_crossed_module_via_homotopy_invariants} be generalised to obtain a description of \(\CohomologyGroup[n](V, M)\) for \(n \geq 3\) in terms of \(\HomotopyGroup[0](V)\), \(\HomotopyGroup[1](V)\) and \(\postnikovinvariant{3}{V}\)? What about such descriptions for homology?
\item \label{qu:descriptions_by_homotopy_invariants_in_higher_dimensions_and_homology:simplicial_groups} We suppose given a simplicial group \(G\) and an abelian \(\HomotopyGroup[0](V)\)-module \(M\). How can theorem~\ref{th:second_cohomology_group_of_crossed_module_via_homotopy_invariants} be generalised to obtain a description of \(\CohomologyGroup[n](G, M)\) for \(n \geq 3\) in terms of homotopy groups and Postnikov invariants? What about such descriptions for homology?
\end{enumerate}
\end{question}

Finally, we discuss some examples.

\begin{example} \label{ex:second_cohomology_group_of_trivial_homomorphism_crossed_module}
We suppose given a group \(\Pi_0\) and abelian \(\Pi_0\)-modules \(\Pi_1\) and \(M\). We let \(E\) be the crossed module extension
\[\Pi_1 \morphism[\id_{\Pi_1}] \Pi_1 \morphism[\triv] \Pi_0 \morphism[\id_{\Pi_0}] \Pi_0.\]
Then we have
\[\CohomologyGroup[2](E, M) \isomorphic \Hom_{\Pi_0}(\Pi_1, M) \directsum \CohomologyGroup[2](\Pi_0, M).\]
\end{example}
\begin{proof}
The \(3\)-cocycle of \(E\) with respect to the unique section system \((\triv, \id_{\Pi_0})\) for \(E\) is trivial and hence
\[\CohomologyGroup[2](E, M) \isomorphic \Hom_{\Pi_0}(\Pi_1, M) \directsum \CohomologyGroup[2](\Pi_0, M)\]
by corollary~\ref{cor:second_cohomology_group_of_a_crossed_module_with_trivial_cohomology_class_or_finite_first_homotopy_group}\ref{cor:second_cohomology_group_of_a_crossed_module_with_trivial_cohomology_class_or_finite_first_homotopy_group:crossed_modules}\ref{cor:second_cohomology_group_of_a_crossed_module_with_trivial_cohomology_class_or_finite_first_homotopy_group:crossed_modules:trivial_cohomology_class}.
\end{proof}

\begin{example} \label{ex:second_integral_cohomology_group_when_first_homotopy_group_is_finite}
We suppose given a simplicial group \(G\) such that \(\HomotopyGroup[1](G)\) is finite. Then we have
\[\CohomologyGroup[2](G, \Integers) \isomorphic \CohomologyGroup[2](\HomotopyGroup[0](G), \Integers).\]
\end{example}
\begin{proof}
Since \(\HomotopyGroup[1](G)\) is finite, we have \(\Hom_{\HomotopyGroup[0](G)}(\HomotopyGroup[1](G), \Integers) = \{0\}\), whence corollary~\ref{cor:second_cohomology_group_of_a_crossed_module_with_trivial_cohomology_class_or_finite_first_homotopy_group}\ref{cor:second_cohomology_group_of_a_crossed_module_with_trivial_cohomology_class_or_finite_first_homotopy_group:simplicial_groups}\ref{cor:second_cohomology_group_of_a_crossed_module_with_trivial_cohomology_class_or_finite_first_homotopy_group:simplicial_groups:vanishing_homomorphism_group} applies.
\end{proof}

\begin{example} \label{ex:second_cohomology_group_with_coefficients_in_an_cyclic_group_when_first_homotopy_group_has_order_2}
We suppose given a simplicial group \(G\) with \(\HomotopyGroup[0](G) \isomorphic \HomotopyGroup[1](G) \isomorphic \CyclicGroup_{2}\). For \(n \in \Naturals_0\), we have
\begin{align*}
\CohomologyGroup[2](G, \Integers / n) & \isomorphic \left. \begin{cases} \Hom(\CyclicGroup_2, \Integers / n) \directsum \CohomologyGroup[2](\CyclicGroup_2, \Integers / n) & \text{if \(\postnikovinvariant{3}{G} = 1\),} \\ \CohomologyGroup[2](\CyclicGroup_2, \Integers / n) & \text{if \(\postnikovinvariant{3}{G} \neq 1\),} \end{cases} \right\} \\
& \isomorphic \begin{cases} \Integers / 2 & \text{if \(n = 0\),} \\ \{0\} & \text{if \(n \in \Naturals\), \(2 \nmid n\),} \\ \Integers / 2 \directsum \Integers / 2 & \text{if \(n \in \Naturals\), \(2 \mid n\), \(\postnikovinvariant{3}{G} = 1\),} \\ \Integers / 2 & \text{if \(n \in \Naturals\), \(2 \mid n\), \(\postnikovinvariant{3}{G} \neq 1\),} \end{cases}
\end{align*}
where \(\Integers / n\) is considered as a trivial \(\CyclicGroup_2\)-module.
\end{example}
\begin{proof}
The assertion for \(\postnikovinvariant{3}{G} = 1\) is a particular case of corollary~\ref{cor:second_cohomology_group_of_a_crossed_module_with_trivial_cohomology_class_or_finite_first_homotopy_group}\ref{cor:second_cohomology_group_of_a_crossed_module_with_trivial_cohomology_class_or_finite_first_homotopy_group:simplicial_groups}\ref{cor:second_cohomology_group_of_a_crossed_module_with_trivial_cohomology_class_or_finite_first_homotopy_group:simplicial_groups:trivial_cohomology_class}, so let us suppose that \(\postnikovinvariant{3}{G} \neq 1\). For \(n = 0\), we get the assertion from example~\ref{ex:second_integral_cohomology_group_when_first_homotopy_group_is_finite}. So let us suppose given an \(n \in \Naturals\). By the additivity of \(\CohomologyGroup[2](G, -)\) resp.\ \(\CohomologyGroup[2](\HomotopyGroup[0](G), -)\) and the Chinese Remainder Theorem, it suffices to consider the case where \(n = p^e\) for a prime \(p\) and \(e \in \Naturals\). If \(p > 2\), we have \(\Hom_{\HomotopyGroup[0](G)}(\HomotopyGroup[1](G), \Integers / p^e) = \{0\}\) and hence
\[\CohomologyGroup[2](G, \Integers / p^e) \isomorphic \CohomologyGroup[2](\HomotopyGroup[0](G), \Integers / p^e)\]
by corollary~\ref{cor:second_cohomology_group_of_a_crossed_module_with_trivial_cohomology_class_or_finite_first_homotopy_group}\ref{cor:second_cohomology_group_of_a_crossed_module_with_trivial_cohomology_class_or_finite_first_homotopy_group:simplicial_groups}\ref{cor:second_cohomology_group_of_a_crossed_module_with_trivial_cohomology_class_or_finite_first_homotopy_group:simplicial_groups:vanishing_homomorphism_group}. 

It remains to consider the case \(n = 2^e\) for some \(e \in \Naturals\). We let \(x\) be the generator of \(\HomotopyGroup[0](G)\), we let \(y\) be the generator of \(\HomotopyGroup[1](G)\) and we let \(z^3 \in \CocycleGroup[3]_{\text{cpt}}(\HomotopyGroup[0](G), \HomotopyGroup[1](G))\) be a componentwise pointed \(3\)-cocycle with \(\postnikovinvariant{3}{G} = z^3 \CoboundaryGroup[3]_{\text{cpt}}(\HomotopyGroup[0](G), \HomotopyGroup[1](G))\). Since \(\postnikovinvariant{3}{G} \neq 1\), we have \(z^3 \neq 1\) and hence
\[(r, q, p) z^3 = \begin{cases} 1 & \text{for \((r, q, p) \neq (x, x, x)\),} \\ y & \text{for \((r, q, p) = (x, x, x)\).} \end{cases}\]
Now \(\Hom_{\HomotopyGroup[0](G)}(\HomotopyGroup[1](G), \Integers / 2^e) = \Hom(\HomotopyGroup[1](G), \Integers / 2^e)\) has a unique non-trivial element \(z_1\colon \HomotopyGroup[1](G) \map \Integers / 2^e\), which maps \(y\) to \(y z_1 = 2^{e - 1}\). But for all \(c_0 \in \CochainComplex[2]_{\text{cpt}}(\HomotopyGroup[0](G), \Integers / 2^e)\), we have
\[(x, x, x) (c_0 \differential) = (x, x) c_0 - (x, 1) c_0 + (1, x) c_0 - (x, x) c_0 = 0 \neq 2^{e - 1} = y z_1 = (x, x, x) z^3 z_1.\]
Hence there does not exist a cochain \(c_0 \in \CochainComplex[2]_{\text{cpt}}(\HomotopyGroup[0](G), \Integers / 2^e)\) with \(z^3 z_1 = c_0 \differential\). It follows that
\[\CocycleGroup[2]_{\text{EM}, z^3}(G, \Integers / 2^e) = \{0\} \directprod \CocycleGroup[2]_{\text{cpt}}(\HomotopyGroup[0](G), \Integers / 2^e)\]
and thus
\[\CohomologyGroup[2](G, \Integers / 2^e) \isomorphic \CohomologyGroup[2]_{\text{EM}, z^3}(G, \Integers / 2^e) \isomorphic \CohomologyGroup[2]_{\text{cpt}}(\HomotopyGroup[0](G), \Integers / 2^e) \isomorphic \CohomologyGroup[2](\HomotopyGroup[0](G), \Integers / 2^e). \qedhere\]
\end{proof}

\begin{example} \label{ex:second_cohomology_group_of_crossed_module_considering_of_cyclic_groups_of_order_4_with_coefficients_in_an_cyclic_group}
We consider the crossed module \(V\) with group part \(\GroupPart V = \langle a \mid a^4 = 1 \rangle\), module part \(\ModulePart V = \langle b \mid b^4 = 1 \rangle\), structure morphism given by \(b \structuremorphism = a^2\) and action given by \({^a}b = b^{- 1}\), cf.~\cite[ex.~(5.6)]{thomas:2007:co_homology_of_crossed_modules}. Then we have
\[\CohomologyGroup[2](V, \Integers / n) \isomorphic \begin{cases} \Integers / 2 & \text{for \(n \in \Naturals_0\) even,} \\ \{0\} & \text{for \(n \in \Naturals_0\) odd.} \end{cases}\]
\end{example}
\begin{proof}
The homotopy groups of \(V\) are given by \(\HomotopyGroup[0](V) = \langle x \rangle\) with \(x := a (\Image \structuremorphism)\) and \(\HomotopyGroup[1](V) = \langle y \rangle\) with \(y := b^2\), and we have \(\HomotopyGroup[0](V) \isomorphic \HomotopyGroup[1](V) \isomorphic \CyclicGroup_{2}\). Now \((s^1, s^0)\) defined by \(s^0\colon \HomotopyGroup[0](V) \map \GroupPart V, 1 \mapsto 1, x \mapsto a\) and \(s^1\colon \Image \structuremorphism \map \ModulePart V, 1 \mapsto 1, a^2 \mapsto b\) is a section system for \(V\). We let \((Z^2, Z^1)\) be the lifting system coming from \((s^1, s^0)\). It follows that \((x, x) \cocycleofextension{2} = (x s^0) (x s^0) (1 s^0)^{- 1} = a^2\) and therefore \((x, x) Z^2 = a^2 s^1 = b\). Finally,
\[(x, x, x) \cocycleofextension{3} = (x, x) Z^2 (1, x) Z^2 ((x, 1) Z^2)^{- 1} ({^{x Z^1}}(x, x) Z^2)^{- 1} = b \, {^a}(b^{- 1}) = b^2 = y\]
and therefore \(\cocycleofextension{3} \neq 1\). Since
\[(x, x, x) (c^2 \differential) = (x, x) c^2 ((x, 1) c^2)^{- 1} (1, x) c^2 ({^x}(x, x) c^2)^{- 1} = (x, x) c^2 ((x, x) c^2)^{- 1} = 1\]
for every componentwise pointed \(2\)-cochain \(c^2 \in \CochainComplex[2]_{\text{cpt}}(\HomotopyGroup[0](V), \HomotopyGroup[1](V))\), we conclude that \(\cocycleofextension{3} \notin \CoboundaryGroup[3]_{\text{cpt}}(\HomotopyGroup[0](V), \HomotopyGroup[1](V))\) and hence \(\postnikovinvariant{3}{V} \neq 1\). The assertion follows now from example~\ref{ex:second_cohomology_group_with_coefficients_in_an_cyclic_group_when_first_homotopy_group_has_order_2}.
\end{proof}


\bigskip

{\raggedleft Sebastian Thomas \\ Lehrstuhl D f{\"u}r Mathematik \\ RWTH Aachen University \\ Templergraben 64 \\ 52062 Aachen \\ Germany \\ sebastian.thomas@math.rwth-aachen.de \\ \url{http://www.math.rwth-aachen.de/~Sebastian.Thomas/} \\}

\end{document}